\documentclass[10pt]{article}
\usepackage{amsmath,amsfonts,amssymb,latexsym,epic,eepic}
\usepackage[overload]{empheq}
\usepackage[dvips]{graphicx,graphics,epsfig,color}
\usepackage{ifthen}
\usepackage{geometry}
\geometry{scale=0.75, nohead}
\usepackage{amsthm}
\usepackage{mathrsfs}
\usepackage{tikz,pstricks,fp}
\usepackage{multirow}
\usepackage{array}
\usepackage{dsfont}
\newtheorem{thrm}{Theorem}[section]
\newtheorem{lem}[thrm]{Lemma}

\newtheorem{prop}[thrm]{Proposition}
\newtheorem{defi}{Definition}[section]
\newtheorem{rmrk}{Remark}[section]
\usepackage[colorlinks=true, pdfstartview=FitV, linkcolor=blue, citecolor=red, urlcolor=blue]{hyperref}
%
%==========================================================
% OWN MACROS GO HERE
%==========================================================

\def\dsp{\displaystyle}

\newcommand{\mcal}[1]{\mathcal{#1}}

%colors
\definecolor{vertf}{rgb}{0,0.55,0.1}
\definecolor{bclairf}{rgb}{0.40,0.65,0.89}
\definecolor{or}{rgb}{0.98,0.6,.1}
\definecolor{light-gray}{gray}{0.8}

% mesh:
\newcommand{\disc}{\mathcal{D}}
\newcommand{\edge}{\sigma}

\newcommand{\edges}{{\mathcal E}}
\newcommand{\edgesd}{\tilde {\edges}}
\newcommand{\edgesint}{{\mathcal E}_{{\rm int}}}
\newcommand{\edgesintn}{{\mathcal E}_{n,{\rm int}}}
\newcommand{\edgesext}{{\mathcal E}_{{\rm ext}}}
\newcommand{\edgesdint}{{\edgesd}_{{\rm int}}}
\newcommand{\edgesdext}{{\edgesd}_{{\rm ext}}}

\newcommand{\mesh}{{\mathcal M}}
\newcommand{\edged}{\epsilon}

\newcommand{\fluxd}{F_{\edge,\edged}}

%
% sets
\def\xR{{\mathbb R}}

\def\xN{{\mathbb N}}
\def\xH{{\rm H}}
\def\xbfH{{\rm \bf H}}
\def\xbfW{{\rm \bf W}}
\def\xbfL{{\rm \bf L}}
\def\xW{{\rm W}}
\def\xL{{\rm L}} 
 
%
% symboles en gras:

\newcommand{\bff}{{\boldsymbol f}}
\newcommand{\bfn}{{\boldsymbol n}}
\newcommand{\bfu}{{\boldsymbol u}}
\newcommand{\bfa}{{\boldsymbol a}}
\newcommand{\bfb}{{\boldsymbol b}}
\newcommand{\bfc}{{\boldsymbol c}}
\newcommand{\bfd}{{\boldsymbol d}}
\newcommand{\bfQ}{{\boldsymbol Q}}

\newcommand{\bfpsi}{{\boldsymbol \psi}}
\newcommand{\bfdelta}{{\boldsymbol \delta}}
\newcommand{\bfv}{{\boldsymbol v}}
\newcommand{\bfw}{{\boldsymbol w}}
\newcommand{\bfx}{{\boldsymbol x}}

\newcommand{\bfy}{{\boldsymbol y}}

%
% operators:
\newcommand{\dv}{\partial}

\newcommand{\gradi}{{\boldsymbol \nabla}}
\newcommand{\lapi}{{\boldsymbol \Delta}}
\newcommand{\dive}{{\rm div}}
\newcommand{\rot}{{\rm curl}}
\newcommand{\divv}{\boldsymbol{\rm div}}
\newcommand{\norm}[1]{{\lVert #1 \rVert}}

\newcommand{\snorm}[1]{{| #1 |}}

\newcommand{\xPi}{\Pi}
\newcommand{\xP}{\mcal{P}}
%
% misc:

\newcommand{\n}{_{n}}
\newcommand{\dx}{\,{\rm d}\bfx}
\newcommand{\dy}{\,{\rm d}\bfy}

\newcommand{\dedge}{\,{\rm d}\sigma}

\newcommand{\dt}{\,{\rm d}t}

\newcommand{\eg}{\emph{e.g.}}
\newcommand{\ie}{\emph{i.e.}}
\newcommand{\Ind}{\mathcal{X}}

\newcommand{\Ds}{{\scalebox{0.6}{$D_\edge$}}}

\newcommand{\bfomega}{{\boldsymbol \omega}}

\numberwithin{equation}{section} 
\setcounter{tocdepth}{2}     % Dans la table des matieres
%------------------------------------------------------------------------------------------------------------------------------------------

\title
{\large
\textbf{A CONVERGENT FV -- FEM SCHEME FOR THE STATIONARY COMPRESSIBLE NAVIER-STOKES EQUATIONS}}

\author{Charlotte Perrin\footnote{Aix Marseille Univ, CNRS, Centrale Marseille, I2M, Marseille, France. Email: charlotte.perrin@univ-amu.fr} \hspace{2ex} Khaled Saleh\footnote{Universit\'e de Lyon, CNRS UMR 5208, Universit\'e Lyon 1, Institut Camille Jordan, 43 bd 11 novembre 1918; F-69622 Villeurbanne cedex, France. Email: saleh@math.univ-lyon1.fr}}

%%%%%%%%%%%%%%%%%%%%%%%%%%%%%%%%%
%%%%%%%%%%%%%%%%%%%%%%%%%%%%%%%%%
\begin{document}

\maketitle

\begin{small}
\begin{center}
{\bf Abstract}
\end{center}
In this paper, we propose a discretization of the multi-dimensional stationary compressible Navier-Stokes equations combining finite element and finite volume techniques.
As the mesh size tends to $0$, the numerical solutions are shown to converge (up to a subsequence) towards a weak solution of the continuous problem for ideal gas pressure laws $p(\rho)=a \rho^\gamma$, with $\gamma > 3/2$ in the three-dimensional case.  
It is the first convergence result for a numerical method with adiabatic exponents $\gamma$ less than $3$ when the space dimension is three.
The present convergence result can be seen as a discrete counterpart of the construction of weak solutions established by P.-L. Lions and by S. Novo, A. Novotn\'y. 

\bigskip
\noindent{\bf Keywords:} Stationary compressible Navier-Stokes equations, staggered discretization, finite volume - finite element method, convergence analysis. 

\bigskip
\noindent{\bf MSC 2010:} 35Q30,65N12,76M10,76M12
\end{small}

\small
\tableofcontents
\normalsize
%%%%%%%%%%%%%%%%%%%%%%%%%%%%%%%%%
%%%%%%%%%%%%%%%%%%%%%%%%%%%%%%%%%
%%%%%%%%%%%%%%%%%%%%%%%%%%%%%%%%%
\section{Introduction}
Let $\Omega$ be an open bounded connected subset of $\xR^d$, with $d=2$ or $3$, with Lipschitz boundary.
We consider the system of stationary isentropic Navier-Stokes equations, posed for $\bfx\in\Omega$: 
\begin{subequations} \label{eq:pb}
\begin{align}\label{eq:pb_mass} &
\dive( \rho \bfu) = 0,
\\[1ex] \label{eq:pb_mom} &
\divv(\rho \bfu \otimes \bfu) 
 - \mu \lapi \bfu- (\mu+\lambda)\gradi (\dive \,\bfu) + a\gradi \rho^\gamma = \bff. %\\[1ex]
%%\label{eq:pb_eos} & p=\wp(\rho).
\end{align}
\end{subequations}
The quantities $\rho \geq 0$ and $\bfu=(u_1,..,u_d)^T$ are respectively the density and velocity of the fluid, while $\bff$ is an external force.
The pressure satisfies the ideal gas law with $ a > 0$ and $\gamma >1$.
Equation \eqref{eq:pb_mass} expresses the local conservation of the mass of the fluid while equation \eqref{eq:pb_mom} expresses the local balance between momentum and forces. The viscosity coefficients $\mu$ and $\lambda$ are such that $\mu>0$ and $\mu+\lambda>0$.
System \eqref{eq:pb} is complemented with homogeneous Dirichlet boundary conditions on the velocity:
\begin{equation} \label{eq:pb_CL}
\bfu|_{\dv\Omega}=0,
\end{equation}
%System \eqref{eq:pb} is under-determined. 
%Indeed setting $u\equiv 0$, $\bff = \gradi \phi$, then 
%\[
%\rho = \Big(\frac{\phi + C}{a}\Big)^{\frac 1\gamma}
%\]
%is a solution for any $C \in \xR$ such that $\phi + C \geq 0$.
%To remove this indeterminacy, 
%we prescribe the average of the density (up to the normalization by $|\Omega|$ it is the same as prescribing the total mass)
and the following average density constraint (up to the normalization by $|\Omega|$ it is the same as prescribing the total mass)
\begin{equation}\label{hyp:mass}
\dfrac{1}{|\Omega|}\int_{\Omega}{\rho} \dx = \rho^\star > 0.
\end{equation}

\medskip
Regarding the theoretical results on these equations, the existence of weak solutions has been first proved by Lions in \cite{lio-98-comp} for adiabatic exponents $\gamma > \frac{5}{3}$ in dimension $d=3$, a result which has then been extended to coefficients $\gamma \in (\frac{3}{2},\frac{5}{3})$ by Novo and Novotn{\' y} in \cite{novo2002}.
%The compactness method developed in these papers is based on a priori estimates, namely the energy estimate and a crucial estimate on the pressure in $\xL^q(\Omega)$, for some $q > 1$.
%Then, as detailed in Section \ref{sec:continuous}, the core of the proof consists in passing to the limit in the non-linear pressure term by proving the strong convergence of the density. 
%The difficult step relies on a weak compactness property satisfied by the so called \emph{effective viscous flux}, $(2\mu +\lambda)\dive\, \bfu - a \rho^\gamma$, and on the use of \emph{renormalized solutions} to the continuity equation introduced in the DiPerna-Lions transport theory.
%
%
\medskip
From the numerical viewpoint, compressible fluid equations have been intensively studied and several approximations have been designed in the last few years. In this paper, we consider a stabilized version of a numerical scheme implemented in the industrial software CALIF$^3$S \cite{califs} developed by the French \emph{Institut de Radioprotection et de S\^uret\'e Nucl\'eaire} (IRSN, a research center devoted to nuclear safety). This scheme falls in the class of \emph{staggered} discretizations in the sense that the scalar variables (density, pressure) are associated with the cells of a primal mesh $\mesh$ while the vectorial variables (velocity, external force) are associated with the set $\edges$ of faces of the primal mesh. Such decoupling, associated here with a Crouzeix-Raviart finite element discretization \cite{cro-73-con} (but other non-conforming finite elements are possible, such as the Rannacher-Turek discretization \cite{ran-92-sim}) of the viscous stress tensor, provides a discrete 
%equivalent of the
pressure estimate, thanks to the so-called discrete \emph{inf-sup} stability condition (see for instance \cite{girault2012}). This condition, which is also satisfied by the MAC scheme (see \cite{har-65-num}, \cite{har-68-num}, \cite{har-71-num}) on structured grids, ensures the unconditional stability of the scheme in almost incompressible regimes (for instance in the low Mach regime, see \cite{gal-08-unc} and \cite{herbin2018}).
Let us mention that, contrary to the MAC scheme (where the domain $\Omega$ is assumed to be a finite union of orthogonal parallelepipeds, and the mesh is composed by a structured partition of rectangular parallelepipeds with cell faces normal to the coordinate axis), the scheme considered in this paper is able to cope with unstructured meshes. 
%From the implementation viewpoint, the price to pay in our case (unstructured meshes based on general simplices) is the number of unknowns:
%for the MAC scheme, the unknowns for the $i$-th component of the velocity are associated with the cell faces normal to the $i$-th coordinate axis. 
%In our case, the velocity unknowns are the same for each component and are associated with all the faces of the mesh, which means that we have $d$ (space dimension) times more unknowns.

\medskip
In its reduced form, our numerical scheme reads
\begin{align*}
 & 
	\dive_\mesh(\rho \bfu)  + T_{\rm stab}^1 + T_{\rm stab}^2 = 0, \\[2ex]  
&
	\divv_\edges(\rho \bfu\otimes \bfu) - \mu \lapi_\edges \bfu - (\mu+\lambda) (\gradi \circ \dive)_\edges \bfu + a\gradi_\edges (\rho^\gamma) + T_{\rm stab}^3   = \widetilde{\Pi}_\edges\bff,
\end{align*}
where, as suggested by the notations used for the discrete differential operators, the (scalar) mass equation is discretized on the primal mesh $\mesh$, whereas the (vectorial) momentum equation is discretized on a dual mesh associated with the set of faces $\edges$. 

\medskip
The finite element discretization for the viscous stress tensor is here coupled with finite volume discretizations of the convective terms which allow, thanks to standard techniques, to obtain discrete convection operators satisfying maximum principles (\eg\ \cite{lar-91-how}).
The discrete mass convection operator is a standard finite volume operator defined on the cells of the primal mesh $\mesh$ while the discrete momentum convection operator is also a finite volume operator written on {\em dual cells}, \ie\ cells centered at the location of the velocity unknowns, namely the faces $\edges$.
A difficulty implied by such staggered discretization lies in the fact that, as in the continuous case, the derivation of the energy inequality needs that a mass balance equation be satisfied on the same (dual) cells, while the mass balance in the scheme is naturally written on the primal cells. A procedure has therefore been developed to define the density on the dual mesh cells and the mass fluxes through the dual faces from the primal cell density and the primal faces mass fluxes, which ensures a discrete mass balance on dual cells.

\medskip
Compared to the continuous problem \eqref{eq:pb}, the discrete equations contain three additional ``stabilization'' terms $T_{\rm stab}^i$ that ensure the convergence (up to extracting a subsequence) of the numerical solutions towards weak solutions of \eqref{eq:pb}-\eqref{eq:pb_CL}-\eqref{hyp:mass} as the mesh size tends to $0$.
The first stabilization term $T_{\rm stab}^1$ guarantees the total mass constraint \eqref{hyp:mass} at the discrete level. 
The second stabilization term $T_{\rm stab}^2$, which is a discrete counterpart of a diffusion term for the density, provides an additional (mesh dependent) estimate on the discrete gradient of the density.
As we will explain in details in the core of the paper, this artificial discrete diffusion is used to show the crucial convergence property satisfied by the effective viscous flux.
The last stabilization term $T_{\rm stab}^3$ is an artificial pressure gradient which is necessary only if $\gamma \leq 3$.
The precise definitions of the discrete operators and stabilization terms are given in Section \ref{sec:discrete}.

\medskip
There exist in the literature several recent convergence results for finite element or mixed finite volume - finite element schemes. 
In \cite{eym-10-conv-isen}, Eymard \emph{et al.} (see also \cite{gal-09-conv} for the particular case $\gamma=1$, {\it i.e.} a linear pressure term) study the compressible Stokes equations, that correspond to \eqref{eq:pb} where the nonlinear convective term $\dive(\rho \bfu \otimes \bfu)$ is neglected. 
At the discrete level, two stabilization terms, namely, $T_{\rm stab}^1$ and a term similar to $T_{\rm stab}^2$, are introduced for the convergence analysis of the numerical scheme. 
In the case of Equations \eqref{eq:pb}-\eqref{eq:pb_CL}-\eqref{hyp:mass}, {\it i.e.} with the additional convective term, Gallou\"et \emph{et al.} prove in the recent paper \cite{gal-18-conv} the convergence of the MAC scheme under the condition $\gamma > 3$, with only one stabilization term $T_{\rm stab}^1$ ensuring the mass constraint \eqref{hyp:mass}
(we refer to Remark \ref{rmrk:MAC} below which explains why $T_{\rm stab}^2$ is unnecessary for the MAC scheme). 
Finally, Karper proved in \cite{karper2013} (see also the recent book \cite{feireisl2016}) a convergence result in the evolution case, again for $\gamma > 3$, and an equivalent of the artificial diffusion term $T_{\rm stab}^2$ is also introduced (note that in the evolutionary case there is no additional mass constraint and thus no need for $T_{\rm stab}^1$). Let us mention that for the evolutionary case, error estimates are available in \cite{gallouet2015} for the whole range $\gamma > \frac{3}{2}$, and that convergence results have been obtained in \cite{feireisl2018} for $1 < \gamma < 2$ within the framework of dissipative measure-valued solutions, a ``weaker'' framework than ours.

\medskip
To the best of our knowledge, our result is the first convergence result in the three-dimensional case for values $\gamma \in (\frac{3}{2},3]$ within the framework of weak solutions with finite energy (see Definition \ref{df:c-weak-sol}). 
It provides an alternative proof of the existence result obtained by Lions or by Novo and Novotn\'y. 
Compared to the previous numerical studies dealing with coefficients $\gamma > 3$, it requires the introduction of a third stabilization term $T_{\rm stab}^3$, an artificial pressure term weighted by some power of the mesh size: $h^\xi \gradi_\edges( \rho^\Gamma)$ with $\Gamma > 3$. Note that the stabilization terms $T_{\rm stab}^2$ and $T_{\rm stab}^3$ are not implemented in practice and are introduced here for the convergence analysis.

\medskip
Let us emphasize that the evolution case is beyond the scope of this paper and left for future work.

\medskip
The paper is organized as follows: in Section \ref{sec:continuous}, we present the main ingredients for the analysis of the continuous problem \eqref{eq:pb}-\eqref{eq:pb_CL}-\eqref{hyp:mass}. This section does not present any substantial novelty compared to the work of Novo and Novotn\`y~\cite{novo2002}, and the reader already familiar with the analysis of the compressible Navier-Stokes equations can directly pass to the next sections concerning the discrete problem. Then, in Section \ref{sec:discrete}, we introduce our numerical scheme and state precisely our main convergence result. 
We derive in Section \ref{sec:estimates} mesh independent estimates and show the existence of solutions to the numerical scheme.
Finally, Section \ref{sec:pass-limit} is devoted to the proof of convergence of the numerical method as the mesh size tends to $0$.
We provide in the Appendix additional material and proofs.

%%%%%%%%%%%%%%%%%%%%%%%%%%%%%%%%%%
%%%%%%%%%%%%%%%%%%%%%%%%%%%%%%%%%%

\section{The continuous setting}
{\label{sec:continuous}}

The aim of this section is to present the main ingredients involved in the analysis of the continuous problem \eqref{eq:pb}-\eqref{eq:pb_CL}-\eqref{hyp:mass} for readers who are not familiar with the compressible Navier-Stokes equations.
Although the existence theory of weak solutions to these equations is now well understood since the works of Lions \cite{lio-98-comp} and Feireisl \cite{feireisl2001} (see also \cite{novotny2004}), the analysis developed there involves advanced tools (such as weak compactness methods based on energy estimates, renormalized solutions, effective viscous flux, etc.) that are to our opinion worth recalling.
Especially as these tools will be also crucial in the convergence analysis of our numerical scheme.

\medskip
It turns out that the estimates and compactness arguments differ significantly according to the value of the adiabatic exponent $\gamma$ appearing in the pressure law.
For $d=3$ and $\gamma > 3$, the case treated in previous numerical studies, a sketch of the proof of the stability of weak solutions can be found for instance in \cite{gal-18-conv}.
We focus here, as in the other sections, on the case $d=3$ and $\gamma \in (\frac 32,3]$ which is the case covered by the study of Novo and Novotn{\' y}.\\
Essentially, the minimal value $\gamma^* = 3$ is the one that ensures a control of the pressure $\rho^\gamma$ and of the convective term $\rho \bfu \otimes \bfu$ in $\xL^2(\Omega)$.
The value $\gamma^* = \frac{5}{3}$ exhibited by Lions corresponds to the minimal exponent guaranteeing that $\rho$ is controlled in $\xL^2(\Omega)$. 
As we will explain later on (see Remark \ref{rmk:renorm} below), this control is required to prove that weak solutions are renormalized solutions.
This constraint on $\gamma$ has been relaxed by Novo, Novotn{\' y} \cite{novo2002} (and Feireisl \cite{feireisl2001} in the evolutionary case) to reach $\gamma > \frac 32$ which corresponds to the minimal exponent ensuring that $\rho \bfu \otimes \bfu$ is controlled in $\xL^p(\Omega)$, with $p > 1$.

%\medskip
%As said before, this section does not present any substantial novelty compared to Novo and Novotn{\'y}'s work and the reader already familiar with the analysis of compressible Navier-Stokes equations can directly pass to the next sections concerning the discrete problem.
%Note that for numerical purposes, we present below an original treatment of the convective term in the analysis of the effective flux (see Subsection \ref{sec:c-effectiveflux}).
%This alternative method allows to circumvent the use of abstract tools for the compensated compactness theory, namely the famous Div-Curl Lemma and a Commutator Lemma (see the method employed by Novotn{\' y} and Straskr{\v a}ba in \cite{novotny2004} Section 4.4). 
%These tools are indeed a little bit cumbersome to adapt in the discrete setting (although this has been achieved by Karper in \cite{karper2013,feireisl2016} for the evolutionary case), which motivates our alternative method based on a regularization of the velocity.

\medskip
This section is organized as follows: in the first subsection we recall the classical definition of weak solutions to problem \eqref{eq:pb}-\eqref{eq:pb_CL}-\eqref{hyp:mass} and state a stability result on these solutions. 
We present in the second subsection the main arguments for the key step of the proof of the stability, that is the proof of the strong convergence of the density.
In the last subsection we explain how to approximate \eqref{eq:pb} in order to construct effectively weak solutions.

\subsection{Definition of weak solutions, stability result}

\begin{defi}{\label{df:c-weak-sol}}
	Let $\Omega$ be a Lipschitz bounded domain of $\xR^3$. Let $\gamma>\frac 32$. Let $\bff\in\xbfL^2(\Omega)$ and $\rho^\star>0$.
	A pair $(\rho,\bfu) \in \xL^{3(\gamma-1)}(\Omega) \times \xbfH^1_0(\Omega) $ is said to be a weak solution to Problem~\eqref{eq:pb}-\eqref{eq:pb_CL}-\eqref{hyp:mass} if it satisfies:
	
	\medskip
	\noindent
	Positivity of the density and global mass constraint:
	    \begin{equation}
	      \rho \geq 0 ~ \text{a.e. in } \ \Omega \quad \text{and} \qquad \frac{1}{|\Omega|}\int_\Omega \rho \dx = \rho^\star.
	    \end{equation}
		Equations~\eqref{eq:pb_mass}--\eqref{eq:pb_mom} are satisfied in the weak sense:
	\begin{equation}\label{eq:c-weak-mass}
	\int_{\Omega}{\rho \bfu \cdot \gradi \phi \dx} = 0 \qquad \forall \phi \in \mcal{C}_c^{\infty}(\Omega),
	\end{equation} 
	\begin{multline}\label{eq:c-weak-momentum}
	 -\int_{\Omega}{\rho \bfu \otimes \bfu : \gradi \bfv \dx} 
	  -a\int_{\Omega}{\rho^\gamma \ \dive\,\bfv \dx}
	  + \mu \int_{\Omega}{\gradi \bfu : \gradi\bfv \dx} \\
	 + (\lambda + \mu)\int_{\Omega}{\dive\,\bfu \ \dive\,\bfv \dx}
	 = \int_\Omega{\bff \cdot \bfv \dx},
	  \qquad \forall \bfv \in \mcal{C}_c^{\infty}(\Omega)^3.
	\end{multline}
	The pair $(\rho,\bfu) \in  L^{3(\gamma-1)}(\Omega) \times \xbfH^1_0(\Omega) $ is said to be a weak solution with bounded energy if, in addition to the previous conditions, it satisfies the energy inequality
	\begin{equation}\label{eq:c-energy}
	\mu \int_\Omega{|\gradi \bfu|^2 \dx} + (\lambda+\mu)\int_{\Omega}{(\dive\,\bfu)^2 \dx}
	\leq \int_\Omega \bff\cdot \bfu \dx.
	\end{equation}
	Finally the pair $(\rho,\bfu) \in  L^{3(\gamma-1)}(\Omega) \times \xbfH^1_0(\Omega) $ is said to be a weak renormalized solution if, in addition to the previous conditions and for any $b \in \mathcal{C}^0([0,+\infty)) \cap \mathcal{C}^1((0,+\infty))$ such that 
	\begin{equation}\label{eq:cond-renorm}
	|b'(t)| \leq 
	\begin{cases} 
	\ c t^{-\lambda_0}, ~\lambda_0 < 1 \quad &\text{if}~ t < 1,\\
	\ c t^{\lambda_1}, ~\lambda_1 + 1 \leq \frac{3(\gamma-1)}{2} \quad &\text{if}~ t \geq 1,
	\end{cases}
	\end{equation}
	the pair $(\rho,\bfu)$ satisfies
	\begin{equation}\label{eq:c-renormalized-eq}
	\dive(b(\rho) \bfu) + \big(b'(\rho)\rho - b(\rho)\big) \dive\,\bfu = 0 \quad \text{in}~\mathcal{D}'(\mathbb{R}^3),
	\end{equation}
	where $\rho$ and $\bfu$ have been extended by $0$ outside $\Omega$.
\end{defi}

\medskip
\begin{rmrk}
 In the whole paper, we adopt the following notations:
 \[
  \xbfH^1_0(\Omega) := \xH^1_0(\Omega)^d, \qquad \xbfW^{1,p}_0(\Omega) := \xW^{1,p}_0(\Omega)^d, \qquad \xbfL^p(\Omega):=\xL^p(\Omega)^d, \ p\in[1,+\infty].
 \]
\end{rmrk}

\medskip
\begin{rmrk}
 Since $\gamma > \frac{3}{2}$, we have $\rho\bfu\in\xbfL^{\frac 65}(\Omega)$ and by density \eqref{eq:c-weak-mass} is valid for all $\phi\in\xW^{1,6}_0(\Omega)$. In addition, $\rho\bfu\otimes\bfu\in\xbfL^{1+\eta}(\Omega)^3$ and $\rho^\gamma\in\xL^{1+\eta}(\Omega)$ for some $\eta > 0$ so that \eqref{eq:c-weak-momentum} is valid for all $\bfv\in\xbfW^{1,q}_0(\Omega)$ for all $q\in[1,+\infty)$.
\end{rmrk}

\medskip
\begin{rmrk}
For $d=3$, $\gamma > 3$, and $d=2$, $\gamma>1$, we would get better integrability on $\rho$. 
Precisely, we would have $\rho\in \xL^{2\gamma}(\Omega)$.
\end{rmrk}

\medskip

\begin{rmrk}{\label{rmk:renorm}} \ 
	\begin{itemize}
		\item 
	When $\gamma$ is large enough, namely $\gamma \geq \frac{5}{3}$, the following lemma, initially proved by Di~Perna and Lions \cite{diperna1989}, shows that any weak solution with finite energy of \eqref{eq:pb} is a renormalized weak solution.
	
	\begin{lem}[\cite{novotny2004}~Lemma 3.3]{\label{lem:diperna}}
	Assume that $\gamma \geq \frac{5}{3}$ and let $\rho \in \xL^{3(\gamma-1)}_{\rm loc}(\mathbb{R}^3)$, $\bfu \in \xbfH^1_{\rm loc}(\mathbb{R}^3)$ satisfying the continuity equation
	\[
	\dive(\rho \bfu) = 0 \quad \text{in}~~\mathcal{D}'(\mathbb{R}^3).
	\]
	Then, equation \eqref{eq:c-renormalized-eq} holds for any $b \in \mathcal{C}^0([0,+\infty)) \cap \mathcal{C}^1((0,+\infty))$ satisfying \eqref{eq:cond-renorm}.
	\end{lem}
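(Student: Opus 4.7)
My plan is the classical DiPerna--Lions regularization by convolution. Let $\eta_\eps$ be a standard radial mollifier on $\mathbb{R}^3$ and set $\rho_\eps := \rho * \eta_\eps$. Convolving $\dive(\rho \bfu) = 0$ with $\eta_\eps$ and using $\dive\bigl((\rho\bfu)*\eta_\eps\bigr) = 0$, I obtain in $\mathcal{D}'(\mathbb{R}^3)$ the identity $\dive(\rho_\eps \bfu) = r_\eps$, where the commutator error decomposes as
\[
r_\eps = \bigl(\bfu \cdot \gradi \rho_\eps - (\bfu \cdot \gradi \rho)*\eta_\eps\bigr) + \bigl(\rho_\eps \dive \bfu - (\rho \dive \bfu)*\eta_\eps\bigr).
\]
Since $\rho_\eps$ is smooth, the chain rule applied to $b(\rho_\eps)$ (after first truncating $b$ away from the origin, see below) yields
\[
\dive\bigl(b(\rho_\eps) \bfu\bigr) + \bigl(b'(\rho_\eps)\rho_\eps - b(\rho_\eps)\bigr) \dive \bfu = b'(\rho_\eps) \, r_\eps,
\]
and the whole proof reduces to passing to the limit $\eps \to 0$ in this identity.

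The first task is to show $r_\eps \to 0$ in $\xL^r_{\rm loc}(\mathbb{R}^3)$ with $\tfrac{1}{r} = \tfrac{1}{3(\gamma-1)} + \tfrac{1}{2}$. The first summand of $r_\eps$ is the classical Friedrichs-type commutator between a first-order transport operator and a mollifier, which tends to zero by the DiPerna--Lions commutator lemma applied with $\rho \in \xL^{3(\gamma-1)}_{\rm loc}$ and $\gradi \bfu \in \xL^2_{\rm loc}$; the second summand tends to zero in the same norm by continuity of mollification on $\xL^r$. The hypothesis $\gamma \geq 5/3$ enters exactly here: it guarantees $3(\gamma-1) \geq 2$, hence $r \geq 1$, so that $\xL^r_{\rm loc}$ is an honest Lebesgue space in which the commutator estimate makes sense.

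Next I would extract a subsequence along which $\rho_\eps \to \rho$ a.e. The growth conditions on $b'$ translate into $|b(t)| \lesssim t^{\lambda_1+1}$ for $t \geq 1$, $|b(t)| \lesssim t^{1-\lambda_0}$ for $t \leq 1$, and analogous bounds for $b'(t)t - b(t)$. Together with the uniform $\xL^{3(\gamma-1)}_{\rm loc}$ bound on $\rho_\eps$ inherited from $\rho$, the condition $\lambda_1 + 1 \leq 3(\gamma-1)/2$ forces $b(\rho_\eps)$ and $b'(\rho_\eps)\rho_\eps - b(\rho_\eps)$ to be uniformly bounded in $\xL^2_{\rm loc}$, while $\bfu \in \xbfL^6_{\rm loc}$ (Sobolev) and $\dive \bfu \in \xL^2_{\rm loc}$ let me send the two left-hand terms to their limits in $\xL^1_{\rm loc}$ by dominated convergence. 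For the right-hand side, $|b'(t)| \lesssim t^{\lambda_1}$ gives a uniform bound on $b'(\rho_\eps)$ in $\xL^{3(\gamma-1)/\lambda_1}_{\rm loc}$; pairing this by H\"older with $r_\eps \to 0$ in $\xL^r_{\rm loc}$ requires $\tfrac{\lambda_1}{3(\gamma-1)} + \tfrac{1}{r} \leq 1$, which after substituting the value of $r$ is exactly the assumption $\lambda_1 + 1 \leq \tfrac{3(\gamma-1)}{2}$; hence $b'(\rho_\eps) r_\eps \to 0$ in $\xL^1_{\rm loc}$. The main technical obstacle is the commutator lemma itself, which relies on a change of variables inside the convolution to exploit the $\xW^{1,2}_{\rm loc}$ regularity of $\bfu$; the singularity of $b'$ at the origin is handled by a routine truncation (replace $b$ by $b_n$ with $b_n' \in \mathcal{C}^0_c([0,+\infty))$, apply the above to $b_n$, then pass $n \to \infty$ using $\lambda_0 < 1$ to control $b(\rho)\bfu$ and $(b'(\rho)\rho - b(\rho))\dive\bfu$ on $\{\rho \leq 1/n\}$).
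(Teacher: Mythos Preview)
The paper does not give its own proof of this lemma; it is cited as Lemma~3.3 from \cite{novotny2004}, and the only commentary appears in Remark~\ref{rmk:renorm}, which sketches exactly the mechanism you describe: regularize the density by convolution, and control the resulting commutator through products of the type $\rho\,\dive\,\bfu$, which lie in $\xL^1_{\rm loc}$ precisely when $\rho\in\xL^2_{\rm loc}$, i.e.\ when $\gamma\geq\tfrac{5}{3}$. Your proposal is the classical DiPerna--Lions argument and matches this outline; the H\"older balance you work out for $b'(\rho_\eps)\,r_\eps$ correctly recovers the growth constraint $\lambda_1+1\leq\tfrac{3(\gamma-1)}{2}$, and the truncation near the origin to handle the singularity of $b'$ is indeed routine. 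Nothing to add.
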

	
	\medskip
	%The condition $\gamma \geq \frac{5}{3}$ guarantees that $\rho \in \xL^2(\Omega)$, the dual space of $\xL^2(\Omega)$ to which $\dive\, \bfu$ belongs.
	More precisely, the justification of the renormalized equation requires a preliminary regularization of the density. 
	The commutator term resulting from this regularization involves in particular products such as $\rho\, \dive\, \bfu$ which are then controlled precisely under the condition that $\rho \in \xL^2_{\rm loc}(\mathbb{R}^3)$ since $\dive \, \bfu \in \xL^2_{\rm loc}(\mathbb{R}^3)$ (see for instance \cite{novotny2004} Lemma 3.1).
	This condition is achieved as soon as $3(\gamma-1) \geq 2$, i.e. $\gamma \geq \frac{5}{3}$.
	The interested reader is also referred to \cite{fettah2013} (Appendix B) for a discussion on the criticality of the assumption on $\rho$.
	
	\item If the pair $(\rho,\bfu) \in \xL^{3(\gamma-1)}(\Omega) \times \xbfH^1_0(\Omega)$ is a renormalized solution which satisfies, instead of the continuity equation~\eqref{eq:c-weak-mass},
	\[
	\dive(\rho \bfu) = g ~~\text{in} ~\mathcal{D}'(\Omega) ~~\text{for some}~g \in \xL^{1}_{\rm loc}(\mathbb{R}^3), 
	\]
	then, extending $\rho$, $\bfu$ by zero outside $\Omega$ (denoting again $\rho,\bfu,g$ the extended functions), the previous equation also holds in $\mathcal{D}'(\xR^3)$.
	Moreover, for any $b \in \mathcal{C}^1([0,+\infty))$ satisfying~\eqref{eq:cond-renorm}, denoting $b_M$ the truncated function such that
	\[
	b_M(t) = \begin{cases}
			\ b(t) \quad & \text{if} ~ t < M, \\
			\ b(M) \quad & \text{if} ~ t \geq M,
	\end{cases}
	\]
	then we have
	\begin{equation}
	\label{eq:c-renormalized-eq-trunc}
	 \dive(b_M(\rho) \bfu) + \big([b_M]'_+(\rho)\rho - b_M(\rho)\big) \dive \, \bfu = g\, [b_M]'_+(\rho)
	\quad \text{in}~\mathcal{D}'(\xR^3)
	\end{equation}
    where
		\[
		[b_M]'_+(t) = \begin{cases}
		\ b'(t) \quad & \text{if} ~ t < M, \\
		\ 0 \quad & \text{if} ~ t \geq M.
		\end{cases}
		\]	
	\end{itemize}
\end{rmrk}

\medskip
We now focus on the stability of weak solutions the proof of which is essential for the analysis of the numerical scheme in the next sections. In Section \ref{sec:elements:construct}, some elements are given for the approximation procedure that allows to construct such weak solutions.

%
%\medskip
%\begin{rmrk}
%    This section is devoted to the stability of weak solutions for system \eqref{eq:pb}. 
%    We only give some elements for the complete construction at the end of this section.
%	Complete proofs of stability and existence of such renormalized weak solutions with bounded energy can be found in~\cite{novotny2004} (Chapter 4) or~\cite{lio-98-comp} (Chapter 6). 
%	Note that the convergence of the discrete solutions of our numerical scheme provides another proof of the existence of weak solutions to~\eqref{eq:pb}.
%	In~\cite{novotny2004}, the existence result in dimension 3 is obtained even for $\gamma \in (\frac 32,\frac{5}{3}]$ by means of a truncation of the density.
%	For the moment, the methods leading to this improvement are still difficult to adapt to the discrete setting.
%\end{rmrk}

\medskip

\begin{thrm}\label{thm:stab}
	Let $\Omega$ be a Lipschitz bounded domain of $\xR^3$. Assume that $\gamma \in (\frac 32,3]$.
	Consider sequences of external forces $(\bff\n)_{n\in\xN}\subset\xbfL^2(\Omega)$ and masses $(\rho^\star\n)_{n\in\xN}\subset\xR_+^*$, and an associated sequence $(\rho\n,\bfu\n)_{n\in\xN}$ of renormalized weak solutions with bounded energy.
	Assume that $\rho^\star\n \rightarrow \rho^\star>0$ and that $(\bff\n)_{n\in\xN}$ converges strongly in $\xbfL^2(\Omega)$ to $\bff$.
	Then, there exist $(\rho,\bfu)\in\xL^{3(\gamma-1)}(\Omega)\times \xbfH^{1}_0(\Omega)$ and a subsequence of $(\rho\n,\bfu\n)_{n\in\xN}$, still denoted $(\rho\n,\bfu\n)_{n\in\xN}$ such that:
\begin{itemize}
\item The sequence $(\bfu\n)_{n\in\xN}$ converges to $\bfu$ in $\xbfL^q(\Omega)$ for all $q\in[1,6)$,
%\item The sequence $(\gradi_{\mesh\n}\bfu\n)_{n\in\xN}$ converges to $\gradi\bfu$ in $\xL^2(\Omega)^{3\times 3}$,
\item The sequence $(\rho\n)_{n\in\xN}$ converges to $\rho$ in $\xL^q(\Omega)$ for all $q\in[1,3(\gamma-1))$ and weakly in $\xL^{3(\gamma-1)}(\Omega)$,
\item The sequence $(\rho\n^\gamma)_{n\in\xN}$ converges to $\rho^\gamma$ in $\xL^q(\Omega)$ for all $q\in[1,\frac{3(\gamma-1)}{\gamma})$ and weakly in $\xL^{\frac{3(\gamma-1)}{\gamma}}(\Omega)$,
\item The pair $(\rho,\bfu)$ is a weak solution of Problem \eqref{eq:pb}-\eqref{eq:pb_CL}-\eqref{hyp:mass} with finite energy.
\end{itemize}
\end{thrm}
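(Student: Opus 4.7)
The plan is to follow the classical Lions--Feireisl weak compactness strategy, adapted to the low-integrability range $\gamma\in(\tfrac{3}{2},3]$ as in \cite{novo2002}. First, the energy inequality \eqref{eq:c-energy} together with Poincar\'e and the uniform $\xbfL^2$ bound on $\bff_n$ gives
\[
\mu\,\|\gradi\bfu_n\|_{\xbfL^2(\Omega)}^{2} \;\le\; \|\bff_n\|_{\xbfL^2(\Omega)}\,\|\bfu_n\|_{\xbfL^2(\Omega)} \;\le\; C\,\|\gradi\bfu_n\|_{\xbfL^2(\Omega)},
\]
so $\bfu_n$ is bounded in $\xbfH^1_0(\Omega)$. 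To upgrade the density integrability beyond the crude $\xL^\gamma$ bound already provided by the pressure--energy balance, I would plug the Bogovskii test function $\bfv_n\in\xbfH^1_0(\Omega)$ satisfying $\dive\bfv_n = \rho_n^\theta - |\Omega|^{-1}\int_\Omega \rho_n^\theta\dx$ with $\theta=\gamma-1$ into \eqref{eq:c-weak-momentum}. Estimating each term and using the fixed mass $\rho^\star_n$ yields the Lions higher integrability $\|\rho_n\|_{\xL^{3(\gamma-1)}(\Omega)}\le C$, and in turn $\|\rho_n^\gamma\|_{\xL^{3(\gamma-1)/\gamma}(\Omega)}+\|\rho_n\bfu_n\otimes\bfu_n\|_{\xL^{1+\eta}(\Omega)}\le C$ for some $\eta>0$ coming from $\gamma>\tfrac32$.

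Extracting subsequences, I obtain $\bfu_n\rightharpoonup\bfu$ in $\xbfH^1_0(\Omega)$ and strongly in $\xbfL^q(\Omega)$ for every $q<6$ by Rellich--Kondrachov, together with weak limits $\rho_n\rightharpoonup\rho$ in $\xL^{3(\gamma-1)}(\Omega)$ and $\rho_n^\gamma\rightharpoonup\overline{\rho^\gamma}$ in $\xL^{3(\gamma-1)/\gamma}(\Omega)$, where the overline denotes a weak limit \emph{a priori} unrelated to the pointwise $\gamma$-th power of $\rho$. The linear and external-force terms in \eqref{eq:c-weak-momentum} pass to the limit immediately. For the convective term, the bound $\rho_n\bfu_n\otimes\bfu_n\in\xL^{1+\eta}$ combined with the strong convergence of $\bfu_n$ gives $\rho_n\bfu_n\otimes\bfu_n\to\rho\bfu\otimes\bfu$ in $\mathcal{D}'(\Omega)$, and the same argument handles $\rho_n\bfu_n\to\rho\bfu$ in \eqref{eq:c-weak-mass}. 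At this stage $(\rho,\bfu)$ satisfies all the limit equations, except that $\rho^\gamma$ is replaced by $\overline{\rho^\gamma}$.

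The main step, and the principal obstacle, is the identification $\overline{\rho^\gamma}=\rho^\gamma$. My plan here is to use the Lions--Feireisl effective viscous flux identity, obtained by testing both \eqref{eq:c-weak-momentum} for $(\rho_n,\bfu_n)$ and the limit momentum equation against
\[
\bfv_n = \phi\,\gradi\,\Delta^{-1}\bigl[T_k(\rho_n)\,\mathds{1}_\Omega\bigr],
\qquad
\bfv = \phi\,\gradi\,\Delta^{-1}\bigl[\overline{T_k(\rho)}\,\mathds{1}_\Omega\bigr],
\]
with $\phi\in\mathcal{C}_c^\infty(\Omega)$ and $T_k$ a smooth concave truncation at level $k$. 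Subtracting the two resulting identities and treating the commutator $\bfu_n\cdot[R_iR_j,T_k(\rho_n)]$ by a Div--Curl / Coifman--Lions--Meyer--Semmes argument yields
\begin{equation}\label{eq:evf-plan}
\lim_{n\to\infty}\int_\Omega \phi\,\bigl(\rho_n^\gamma - (2\mu+\lambda)\dive\bfu_n\bigr)\,T_k(\rho_n)\dx
\;=\; \int_\Omega \phi\,\bigl(\overline{\rho^\gamma} - (2\mu+\lambda)\dive\bfu\bigr)\,\overline{T_k(\rho)}\dx.
\end{equation}
Working with the truncation $T_k(\rho_n)$ rather than $\rho_n$ itself is essential in the range $\gamma\in(\tfrac32,\tfrac53)$, where $\rho_n$ is no longer controlled in $\xL^2(\Omega)$; this is precisely the technical point that distinguishes the Novo--Novotn\'y extension from the original proof of Lions, and where I expect the bulk of the work to lie.

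With \eqref{eq:evf-plan} in hand, I would conclude by exploiting the renormalized structure. Applying \eqref{eq:c-renormalized-eq-trunc} with $b=T_k$ to $(\rho_n,\bfu_n)$, which is admissible by the renormalization assumption (automatic for $\gamma\geq\tfrac53$ by Lemma \ref{lem:diperna}, but used as a hypothesis in the range $\gamma\in(\tfrac32,\tfrac53)$), passing to the limit $n\to\infty$ via \eqref{eq:evf-plan} and then sending $k\to\infty$, the strict convexity of $s\mapsto s\ln s$ combined with the monotonicity of $s\mapsto s^\gamma$ and the uniform $\xL^{3(\gamma-1)}$ bound force the equality $\overline{\rho\ln\rho}=\rho\ln\rho$, hence $\rho_n\to\rho$ almost everywhere along a subsequence. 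Vitali's theorem and the uniform $\xL^{3(\gamma-1)}$ bound then upgrade this pointwise convergence to the stated $\xL^q$ convergences of $\rho_n$ and $\rho_n^\gamma$, identify $\overline{\rho^\gamma}=\rho^\gamma$, and thereby close the momentum equation. The energy inequality for the limit $(\rho,\bfu)$ follows from weak lower semicontinuity of $\bfu\mapsto \mu\|\gradi\bfu\|_{\xbfL^2(\Omega)}^{2}+(\mu+\lambda)\|\dive\bfu\|_{\xL^2(\Omega)}^{2}$ and the strong convergence of $\bff_n\cdot\bfu_n$ in $\xL^1(\Omega)$.
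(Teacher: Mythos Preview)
Your overall architecture is correct and matches the paper: uniform estimates via energy plus Bogovskii, weak limits, effective viscous flux with truncations $T_k$, then strong convergence of the density. Two points deserve comment.

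\medskip
\textbf{A genuine methodological difference.} For the convective term in the effective viscous flux identity you invoke Div--Curl and Coifman--Lions--Meyer--Semmes commutator estimates. The paper deliberately avoids these: it mollifies the velocity, $\bfu_{n,\delta}=\bfu_n*\bfomega_\delta$, writes
\[
\int_\Omega \rho_n\bfu_n\otimes\bfu_n:\gradi(\phi\bfw_n)\dx
= -\int_\Omega (\rho_n\bfu_n\cdot\gradi)\bfu_{n,\delta}\cdot(\phi\bfw_n)\dx + R^{n,\delta},
\]
uses $\dive(\rho_n\bfu_n)=0$ to integrate by parts, passes first $n\to\infty$ (now the product is weak $\times$ strong) and then $\delta\to 0$ uniformly in $n$. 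Both routes work; the paper's is more elementary and self-contained, while yours is the classical one from \cite{novotny2004}.

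\medskip
\textbf{A gap in the final step for $\gamma\in(\tfrac32,\tfrac53)$.} You apply the renormalized equation with $b=T_k$ to $(\rho_n,\bfu_n)$ and then claim that, via the effective viscous flux identity and $k\to\infty$, one obtains $\overline{\rho\ln\rho}=\rho\ln\rho$. The missing ingredient is a renormalized equation for the \emph{limit} pair $(\rho,\bfu)$: to compare $\int_\Omega T_k(\rho_n)\,\dive\,\bfu_n\dx$ with $\int_\Omega T_k(\rho)\,\dive\,\bfu\dx$ you need
\[
\dive\bigl(L_k(\rho)\bfu\bigr)+T_k(\rho)\,\dive\,\bfu=0 \quad\text{in }\mathcal D'(\xR^3),
\]
and for $\gamma<\tfrac53$ the limit density $\rho$ is only in $\xL^{3(\gamma-1)}(\Omega)\not\subset\xL^2(\Omega)$, so Lemma~\ref{lem:diperna} does not apply directly. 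The paper closes this via the Feireisl bootstrap: pass to the limit in the $T_M$-renormalized equation to obtain an equation for $\overline{T_M(\rho)}\in\xL^\infty(\Omega)$, apply DiPerna--Lions to \emph{that} with $b=L_{k,\delta}$, and let $M\to\infty$ using the oscillation defect bound $\sup_k\limsup_n\|T_k(\rho_n)-T_k(\rho)\|_{\xL^{\gamma+1}(\Omega)}\le C$ (itself a consequence of the effective viscous flux identity). Your sketch names the right tools but skips precisely this step, which is the heart of the extension to $\gamma\le\tfrac53$.

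\medskip
A minor remark: the Bogovskii exponent $\theta=\gamma-1$ you propose does not directly yield $\rho_n\in\xL^{3(\gamma-1)}(\Omega)$; the paper (and \cite{novo2002}) takes $\dive\,\bfv_n=(\rho_n^\gamma)^\eta-\langle(\rho_n^\gamma)^\eta\rangle$ with $\eta=\tfrac{2\gamma-3}{\gamma}$, so that the pressure term produces $\int_\Omega\rho_n^{3(\gamma-1)}\dx$ in one shot.
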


%%%%%%%%%%%%%%%%%%%%%%%%%%%%%%%%

\medskip
For the sake of brevity, we shall not present the complete proof of Theorem \ref{thm:stab}. 
Let us recall the methodology: first we derive the basic uniform estimates which enable us in the next step to derive compactness results on the sequence  $(\rho\n,\bfu\n)_{n\in\xN}$ and to pass to the limit in the mass and momentum equations as $n\to+\infty$. 
These steps do not present any remarkable difficulty and are just sketched below. 
They will be performed in details in the next sections concerning the discrete case. 
We prefer to focus here on the final step which consists in proving the strong convergence of the density, convergence which is essential to pass to the limit in 
%Then, we prove some weak compactness property on the ``effective viscous flux'' which eventually allows us to prove the strong convergence of the density and to pass to the limit in
the equation of state ({\it i.e.} in the non linear pressure law). 

\medskip
The basic uniform estimates are recalled in the next proposition.
\begin{prop}[Uniform controls]
	\label{unif:est:cont}
	Let $\Omega$ be a Lipschitz bounded domain of $\xR^3$. Assume that $\gamma \in (\frac 32,3]$. Let $(\rho\n,\bfu\n)_{n\in\xN}$ be the sequence defined in Theorem \ref{thm:stab}. Then, we have the following \emph{a priori} controls:
	\begin{equation}\label{eq:c-control}
	\norm{\bfu\n}_{\xbfH^1_0(\Omega)} + \norm{\rho\n^\gamma}_{\xL^{1+\eta}(\Omega)} + \norm{\rho\n}_{\xL^{3(\gamma-1)}(\Omega)}  \leq C(\Omega,(\norm{\bff_n}_{\xbfL^2(\Omega)})_{n\in\xN}), \qquad \forall n\in\xN,
	\end{equation}
	where $\eta = \frac{2\gamma -3}{\gamma} >0$.
\end{prop}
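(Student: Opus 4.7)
\emph{Plan of proof.} The velocity estimate is immediate: applying Cauchy--Schwarz and the Poincar\'e inequality to the right-hand side of the energy inequality \eqref{eq:c-energy} gives
\begin{equation*}
 \mu\, \|\gradi \bfu\n\|_{\xbfL^2(\Omega)}^2 \leq \|\bff\n\|_{\xbfL^2(\Omega)}\, \|\bfu\n\|_{\xbfL^2(\Omega)} \leq C_\Omega\, \|\bff\n\|_{\xbfL^2(\Omega)}\, \|\gradi \bfu\n\|_{\xbfL^2(\Omega)},
\end{equation*}
so that $\|\bfu\n\|_{\xbfH^1_0(\Omega)} \leq C$ uniformly in $n$. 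The Sobolev embedding $\xbfH^1_0(\Omega)\hookrightarrow \xbfL^6(\Omega)$ in dimension $d=3$ will be used systematically below.

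For the density and pressure bounds, the plan is the classical Bogovskii test function argument used by Lions and by Novo--Novotn\'y. Set $\theta := 2\gamma - 3$, positive because $\gamma > 3/2$, and let $p := 3(\gamma-1)/(2\gamma-3)$, which satisfies $\theta p = 3(\gamma-1)$. Let $\mcal{B}$ denote a Bogovskii inverse-divergence operator on $\Omega$, and define
\begin{equation*}
 \bfv\n := \mcal{B}\bigl(\rho\n^\theta - m\n\bigr), \qquad m\n := \frac{1}{|\Omega|}\int_\Omega \rho\n^\theta \dx.
\end{equation*}
Since $\rho\n \in \xL^{3(\gamma-1)}(\Omega)$ by the weak-solution hypothesis, $\rho\n^\theta - m\n$ belongs to $\xL^p(\Omega)$ with zero mean, hence $\bfv\n \in \xbfW^{1,p}_0(\Omega)$ with $\|\bfv\n\|_{\xbfW^{1,p}_0(\Omega)} \leq C\,\|\rho\n\|_{\xL^{3(\gamma-1)}(\Omega)}^{2\gamma-3}$ and $\dive\,\bfv\n = \rho\n^\theta - m\n$. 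Plugged into \eqref{eq:c-weak-momentum}, the pressure term reproduces exactly the target quantity:
\begin{equation*}
 a\int_\Omega \rho\n^\gamma\, \dive\,\bfv\n \dx = a\, \|\rho\n\|_{\xL^{3(\gamma-1)}(\Omega)}^{3(\gamma-1)} - a\, m\n \int_\Omega \rho\n^\gamma \dx.
\end{equation*}

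The remaining step is to bound every other term in \eqref{eq:c-weak-momentum} by a strictly sub-critical power of $\|\rho\n\|_{\xL^{3(\gamma-1)}(\Omega)}$, so that Young's inequality allows absorption into the pressure contribution. The viscous and source terms produce the exponent $2\gamma-3$ (for the former one uses that $p \geq 2$ when $\gamma \leq 3$, so $\|\gradi\bfv\n\|_{\xbfL^2}$ is controlled by $\|\gradi\bfv\n\|_{\xbfL^p}$ on the bounded domain); the lower-order term $a\, m\n \int \rho\n^\gamma$ is handled using H\"older and the mass constraint \eqref{hyp:mass}. The main obstacle is the convective term: H\"older's inequality with exponents $3(\gamma-1)$, $3$, $p$ (which is readily checked to satisfy $1/(3(\gamma-1)) + 1/3 + 1/p = 1$) gives
\begin{equation*}
\Bigl|\int_\Omega \rho\n\, \bfu\n\otimes\bfu\n : \gradi \bfv\n \dx\Bigr| \leq C\, \|\rho\n\|_{\xL^{3(\gamma-1)}(\Omega)}^{2(\gamma-1)}\, \|\bfu\n\|_{\xbfH^1_0(\Omega)}^2,
\end{equation*}
and the exponent $2(\gamma-1)$ is strictly less than $3(\gamma-1)$ precisely because $\gamma > 1$. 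This exponent matching is the crux of the whole computation, and in particular explains why the borderline $\gamma = 3/2$ cannot be reached by this technique. Applying Young's inequality and absorbing all the sub-critical powers into the pressure term on the left yields $\|\rho\n\|_{\xL^{3(\gamma-1)}(\Omega)} \leq C$, and the $\xL^{1+\eta}$-bound on $\rho\n^\gamma$ with $\eta = (2\gamma-3)/\gamma$ then follows from the identity $\gamma(1+\eta) = 3(\gamma-1)$.
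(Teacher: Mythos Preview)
Your proof is correct and follows the same Bogovskii test-function strategy that the paper sketches. Your choice of test function $\mcal{B}(\rho\n^{2\gamma-3}-m\n)$ is exactly the one needed for $\gamma\in(\tfrac32,3]$ and coincides (up to a multiplicative constant, since $\gamma\eta=2\gamma-3$) with the choice $\mcal{B}(P\n^{\eta}-\langle P\n^{\eta}\rangle)$ that the paper spells out in detail in the discrete analogue (Proposition~\ref{prop:estimate:p}).
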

The control of the velocity easily follows from the energy inequality \eqref{eq:c-energy}, while the control of the pressure (and the density) derives from an estimate linked to the so-called Bogovskii operator $\mathcal{B}$ defined in the next Lemma.

%
%%%%%%%%%%%%%%%%%%%%%%%%%%%%%%%%
%\subsection{Uniform estimates}
%
%\begin{prop}[Control of the velocity]
%\label{unif:est:cont:u}
%Let $\Omega$ be a Lipschitz bounded domain of $\xR^3$. Assume that $\gamma \in (\frac 32,3]$. Let $(\rho\n,\bfu\n)_{n\in\xN}$ be the sequence defined in Theorem \ref{thm:stab}. Then, we have the following \emph{a priori} control on the velocity:
%\begin{equation}\label{eq:c-control-u}
%\norm{\bfu\n}_{\xbfH^1_0(\Omega)} \leq C(\Omega,\norm{\bff_n}_{\xbfL^2(\Omega)}) \leq C, \qquad \forall n\in\xN.
%\end{equation}
%\end{prop}
%
%\medskip
%\begin{proof}
%The result directly follows from the energy inequality \eqref{eq:c-energy}, the Poincar\'e inequality and Young's inequality.
%\end{proof}
%
%\medskip
%An additional estimate has to be derived to get a control on the pressure (and thus on the density). 
%To that end, we define for $q\in(1,+\infty)$
%\[
% \xL^q_0(\Omega) = \big \lbrace p\in\xL^q(\Omega), \ \text{s.t.} \ <p>=\frac{1}{|\Omega|}\int_{\Omega}{p \dx}=0\big \rbrace
%\]
%and we recall the following result.

\medskip
\begin{lem}{\label{lem:bogovskii}}
Let $\Omega$ be a bounded Lipschitz domain of $\xR^d$, $d \geq 1$.
Then, there exists a linear operator $\mcal{B}$ depending only on $\Omega$ with the following properties:
\begin{itemize}
 \item[(i)] For all $q\in(1,+\infty)$,
 \[
  \mcal{B}: \xL^q_0(\Omega) = \big \lbrace p\in\xL^q(\Omega), \ \text{s.t.} \ <p>=\frac{1}{|\Omega|}\int_{\Omega}{p \dx}=0\big \rbrace \rightarrow \xbfW^{1,q}_0(\Omega).
 \]
 \item[(ii)] For all $q\in(1,+\infty)$ and $p\in \xL^q_0(\Omega)$,
 \[
    \dive (\mcal{B} p ) = p, \ \text{a.e. in $\Omega$}.
 \]
 \item[(iii)] For all $q \in (1,+\infty)$, there exists $C=C(q,\Omega)$, such that for any $p \in \xL^q_0(\Omega)$:
 \[
  \snorm{\mcal{B} p}_{\xbfW^{1,q}(\Omega)} \leq C \, \norm{p}_{\xL^q(\Omega)}.
 \]
\end{itemize}
\end{lem}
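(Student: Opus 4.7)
The plan is to follow Bogovskii's classical construction: first I would build $\mcal{B}$ explicitly when $\Omega$ is star-shaped with respect to a ball, and then patch this star-shaped construction together through a partition of unity argument to handle general bounded Lipschitz domains.

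For the star-shaped case, fix an open ball $B$ with $\overline B\subset\Omega$ such that $\Omega$ is star-shaped with respect to $B$, and a nonnegative $\omega\in\mathcal{C}_c^\infty(B)$ with $\int_B \omega\dy=1$. For $p\in\xL^q_0(\Omega)$, extended by zero outside $\Omega$, I would define
\[
  \mcal{B}p(\bfx) = \int_{\Omega}\left[\frac{\bfx-\bfy}{|\bfx-\bfy|^d}\int_0^{+\infty}\!\omega\!\left(\bfy+r\,\tfrac{\bfx-\bfy}{|\bfx-\bfy|}\right)(r+|\bfx-\bfy|)^{d-1}\,dr\right]p(\bfy)\dy.
\]
Performing the change of variables $\bfz=\bfy+r(\bfx-\bfy)/|\bfx-\bfy|$, a direct calculation shows that $\dive\,\mcal{B}p(\bfx)=p(\bfx)-\omega(\bfx)\int_\Omega p\dy=p(\bfx)$, using that $p$ has zero mean, so (ii) holds. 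The star-shaped property ensures that the kernel vanishes whenever $\bfx\notin\overline\Omega$, so $\mcal{B}p$ is compactly supported in $\overline\Omega$ with zero trace on $\dv\Omega$, which together with the Poincaré inequality yields (i) as soon as the gradient bound in (iii) is established.

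The main obstacle is precisely that gradient bound $\norm{\gradi\mcal{B}p}_{\xL^q(\Omega)}\leq C\norm{p}_{\xL^q(\Omega)}$ for $1<q<+\infty$. Differentiating under the integral sign, one splits $\dv_i[\mcal{B}p]_j$ as the sum of a bounded multiplier and a singular integral operator with kernel of the form $K_{ij}(\bfx,\bfy)/|\bfx-\bfy|^d$, where $K_{ij}$ is bounded, smooth off the diagonal, and, thanks to the cancellation produced by the derivatives of $\omega$, satisfies a Hörmander-type regularity condition. These are exactly the hypotheses of the Calderón--Zygmund theory for non-convolution singular integrals (as developed in Stein's monograph), which delivers $\xL^q$-continuity for every $q\in(1,+\infty)$, with a constant $C$ depending on $q$ (and blowing up as $q\to 1$ or $q\to+\infty$, which is why the result is restricted to $q>1$). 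This Calderón--Zygmund estimate is the only genuinely hard step of the proof.

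To handle a general bounded Lipschitz domain $\Omega$, I would invoke the well-known covering result stating that $\Omega=\bigcup_{i=1}^N\Omega_i$ where each $\Omega_i\subset\Omega$ is star-shaped with respect to some ball $B_i\Subset\Omega_i$. Given $p\in\xL^q_0(\Omega)$, a partition of unity $(\chi_i)$ subordinate to $(\Omega_i)$ yields $p=\sum_i\chi_i p$, but the localized pieces $\chi_i p$ need not be of zero mean on $\Omega_i$. One then balances the means by successively adding compactly supported corrections carried by the overlaps $\Omega_i\cap\Omega_{i+1}$, obtaining a decomposition $p=\sum_{i=1}^N p_i$ with $p_i\in\xL^q_0(\Omega_i)$ and $\sum_i\norm{p_i}_{\xL^q(\Omega_i)}\leq C(\Omega)\norm{p}_{\xL^q(\Omega)}$. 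Applying the star-shaped construction to each $p_i$ on $\Omega_i$, extending the resulting vector fields by zero to $\Omega$, and summing yields the global operator $\mcal{B}$. Properties (i)--(iii) for $\mcal{B}$ follow from their local counterparts by linearity and the finiteness of the covering.
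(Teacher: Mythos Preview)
The paper does not actually prove this lemma; it simply refers the reader to \cite{novotny2004}, Chapter~3.3. Your sketch is precisely the classical Bogovskii construction (star-shaped case via an explicit integral kernel, Calder\'on--Zygmund bounds for the gradient, then localization by partition of unity) that one finds in that reference, so your approach is correct and coincides with the argument the paper is pointing to.
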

The interested reader is referred to~\cite{novotny2004} (Chapter 3.3) for a proof and additional properties on this operator. 
Applying Lemma~\ref{lem:bogovskii} to $P_n=a\rho\n^\gamma$ and using the resulting field $\bfv_n= \mcal{B} (P_n-<P_n>)$ as a test function in~\eqref{eq:c-weak-momentum}, one gets the desired control on the pressure and thus on the density.

\medskip
Thanks to the previous estimates we deduce that up to the extraction of a subsequence, the sequence $(\rho\n, \bfu\n, \rho\n^\gamma)$ weakly converges to a triple $(\rho,\bfu,\overline{\rho^\gamma})\in\xL^{3(\gamma-1)}(\Omega)\times\xbfH^1_0(\Omega)\times\xL^{ \frac{3(\gamma-1)}{\gamma}}(\Omega)$.
In particular, the limit $(\rho,\bfu, \overline{\rho^\gamma})$ satisfies the momentum equation in the following sense: 
\begin{multline}\label{eq:c-weak-momentum-lim}
 -\int_{\Omega}{\rho \bfu \otimes \bfu : \gradi \bfv \dx} 
- a\int_{\Omega}{\overline{\rho^\gamma} \, \dive\,\bfv \dx}
+ \mu \int_{\Omega}{\gradi \bfu : \gradi\bfv \dx} \\
 \quad + (\lambda + \mu)\int_{\Omega}{\dive\,\bfu \ \dive\,\bfv \dx}
= \int_\Omega{\bff \cdot \bfv \dx},
\qquad \forall \bfv \in \mcal{C}_c^{\infty}(\Omega)^3.
\end{multline}

\medskip
To complete the proof of Theorem \ref{thm:stab}, it remains to identify the limit pressure $\overline{\rho^\gamma}$ in~\eqref{eq:c-weak-momentum-lim}. 
Namely we have to pass to the limit in the equation of state and prove that
\begin{equation}
\overline{\rho^\gamma} = \rho^\gamma \quad \text{a.e. in $\Omega$}
\end{equation}
which is equivalent to proving the strong convergence of the density towards its weak limit.

%%%%%%%%%%%%%%%%%%%%%%%%%%%%%%%%%%%%%%%%
\subsection{Passing to the limit in the equation of state}

This is classically obtained in two steps: first by proving some weak compactness property satisfied by the so-called effective viscous flux defined as $(2\mu+\lambda)\dive \, \bfu - a \rho^\gamma$, and then, by using the monotonicity of the pressure to deduce the strong convergence of the sequence of densities $(\rho\n)_{n\in\xN}$.

\subsubsection{Weak compactness of the effective viscous flux}\label{sec:c-effectiveflux}

Let us first recall the definition of the $\rot$ operator, and a useful identity linked to this operator.
% For $\bfa=(a_1,..,a_d)^T$ and $\bfb=(b_1,..,b_d)^T$ in $\xR^d$ we denote $\bfa\wedge\bfb=a_1b_2-a_2b_1$ if $d=2$ and $\bfa\wedge\bfb = ( a_2 b_3-a_3 b_2, a_3 b_1-a_1 b_3, a_1 b_2-a_2 b_1  )^T$ if $d=3$. For a vector valued function $\bfv=(v_1,..,v_d)^T$, denote $\rot\, \bfv = \gradi \wedge \bfv$ where $\gradi = (\dv_1,..,\dv_d)^T$. With these notations, if $\bfu\in\xbfH^2(\Omega)$ and $\bfv\in\xbfH^1(\Omega)$, the following identity holds:
\begin{lem}[A differential identity]
Let $\Omega$ be a Lipschitz bounded domain of $\xR^3$.
For $\bfa=(a_1,a_2,a_3)^T$ and $\bfb=(b_1,b_2,b_3)^T$ in $\xR^3$ we denote $\bfa\wedge\bfb = ( a_2 b_3-a_3 b_2, a_3 b_1-a_1 b_3, a_1 b_2-a_2 b_1  )^T\in\xR^3$. For a vector valued function $\bfv=(v_1,v_2,v_3)^T$, denote $\rot\, \bfv = \gradi \wedge \bfv$ where $\gradi = (\dv_1,\dv_2,\dv_3)^T$. With these notations, if $\bfu\in\xbfH^1(\Omega)$ and $\bfv\in\xbfH^1(\Omega)$, the following identity holds:
\begin{multline}
\label{curlcurl1}
\int_\Omega \gradi \bfu : \gradi \bfv \dx =  \int_\Omega \dive \, \bfu \, \dive\, \bfv \,\dx + \int_\Omega \rot \, \bfu \cdot \rot \, \bfv \,\dx 
\\ + \int_{\dv\Omega} (\gradi \bfv . \bfn)\cdot\bfu\,\dedge(\bfx) + \int_{\dv\Omega} \rot\,\bfv \cdot (\bfu \wedge \bfn )\,\dedge(\bfx) - \int_{\dv\Omega}\dive\,\bfv \, (\bfu\cdot\bfn)\,\dedge(\bfx).
\end{multline}
If $\bfu\in\xbfH^1_0(\Omega)$ and $\bfv\in\xbfH^1(\Omega)$, this identity simplifies to:
\begin{equation}
\label{curlcurl2}
\int_\Omega \gradi \bfu : \gradi \bfv \dx=  \int_\Omega \dive \, \bfu \, \dive\, \bfv \,\dx + \int_\Omega \rot \, \bfu \cdot \rot \, \bfv \,\dx.
\end{equation}
\end{lem}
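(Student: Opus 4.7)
The plan is to establish the first identity \eqref{curlcurl1} for smooth fields by coordinate-wise integration by parts combined with the classical vector identity $-\lapi \bfv = \rot(\rot \bfv) - \gradi(\dive \bfv)$, and then to deduce \eqref{curlcurl2} by a density argument in which the boundary integrals disappear.

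First, for $\bfu, \bfv \in \mathcal{C}^\infty(\overline\Omega)^3$, I would write $\gradi \bfu : \gradi \bfv = \sum_{i,j} \partial_j u_i \, \partial_j v_i$ and integrate by parts in $x_j$ for each $(i,j)$, which yields
\[
\int_\Omega \gradi \bfu : \gradi \bfv \dx = -\int_\Omega \bfu \cdot \lapi \bfv \dx + \int_{\dv\Omega} (\gradi \bfv \cdot \bfn) \cdot \bfu \dedge(\bfx).
\]
Substituting $-\lapi \bfv = \rot(\rot \bfv) - \gradi(\dive \bfv)$ in the bulk integral splits the problem into two pieces. The divergence piece is handled by a standard integration by parts:
\[
-\int_\Omega \bfu \cdot \gradi(\dive \bfv) \dx = \int_\Omega \dive \bfu \, \dive \bfv \dx - \int_{\dv\Omega} \dive \bfv \, (\bfu \cdot \bfn) \dedge(\bfx).
\]
For the curl piece I would start from the pointwise identity $\dive(\bfa \wedge \bfb) = \bfb \cdot \rot \bfa - \bfa \cdot \rot \bfb$ with $\bfa = \bfu$ and $\bfb = \rot \bfv$, which after integration gives
\[
\int_\Omega \bfu \cdot \rot(\rot \bfv) \dx = \int_\Omega \rot \bfu \cdot \rot \bfv \dx - \int_{\dv\Omega} (\bfu \wedge \rot \bfv) \cdot \bfn \dedge(\bfx),
\]
and the cyclic scalar triple product $(\bfu \wedge \rot \bfv) \cdot \bfn = -(\bfu \wedge \bfn) \cdot \rot \bfv$ rewrites this boundary integral as $\int_{\dv\Omega} \rot \bfv \cdot (\bfu \wedge \bfn) \dedge(\bfx)$. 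Summing the three contributions reproduces \eqref{curlcurl1} for smooth fields.

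For the simplified identity \eqref{curlcurl2} with $\bfu \in \xbfH^1_0(\Omega)$ and $\bfv \in \xbfH^1(\Omega)$, I would approximate $\bfu$ by $\bfu_k \in \mathcal{C}_c^\infty(\Omega)^3$ and $\bfv$ by $\bfv_k \in \mathcal{C}^\infty(\overline\Omega)^3$, both converging in $\xbfH^1$. Applying \eqref{curlcurl1} to $(\bfu_k, \bfv_k)$, the three boundary integrals all vanish because $\bfu_k$ has compact support in $\Omega$. Since $\gradi$, $\dive$ and $\rot$ are continuous from $\xbfH^1(\Omega)$ into $\xL^2(\Omega)$, the bulk integrals pass to the limit, giving \eqref{curlcurl2}.

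The main obstacle is interpretational rather than technical: the boundary integrals in \eqref{curlcurl1} require that $\gradi \bfv \cdot \bfn$, $\rot \bfv$ and $\dive \bfv$ admit traces on $\dv\Omega$, which is not automatic for $\bfv \in \xbfH^1(\Omega)$ alone. One should therefore read \eqref{curlcurl1} under an implicit enhanced regularity on $\bfv$ (e.g.\ $\bfv \in \xbfH^2(\Omega)$) for which these traces are classical, whereas \eqref{curlcurl2}, which is actually the version invoked in the sequel, holds cleanly under the stated $\xbfH^1_0 \times \xbfH^1$ regularity.
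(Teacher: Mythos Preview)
The paper does not supply a proof of this lemma; it is stated as a standard identity and then used in the sequel. Your argument is correct and is precisely the classical route: integrate $\gradi\bfu:\gradi\bfv$ by parts, invoke the vector Laplacian decomposition $-\lapi\bfv=\rot(\rot\bfv)-\gradi(\dive\bfv)$, and then integrate each piece by parts once more to isolate the $\dive$-$\dive$ and $\rot$-$\rot$ bulk terms together with the three boundary contributions. The density argument for \eqref{curlcurl2} is also standard.

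Your closing remark about regularity is pertinent and worth keeping: as written, the boundary integrals in \eqref{curlcurl1} involve traces of $\gradi\bfv$, which are not defined for $\bfv\in\xbfH^1(\Omega)$ alone. In the paper, \eqref{curlcurl1} is only applied cell by cell to $\bfv=\phi\bfw_n$ with $\bfw_n\in\xbfW^{2,\frac{1+\eta}{\eta}}(\Omega)$ (see the proof of Proposition~\ref{prop:eff_flux1}), so the required $\xbfH^2$-type regularity is available there; and \eqref{curlcurl2}, which needs only $\xbfH^1_0\times\xbfH^1$, is exactly what is used elsewhere. So your reading of the implicit regularity assumption matches how the identity is actually deployed.
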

\medskip
We shall also need the next result.
\begin{lem}{\label{lem:inv-div}}
	Let $\Omega$ be a bounded open set of $\xR^d$.
	Then, there exists a linear operator $\mcal{A}$ with the following properties:
\begin{itemize}
 \item[(i)] For all $q\in(1,+\infty)$,
 \[
  \mcal{A}: \xL^q(\Omega) \rightarrow \xbfW^{1,q}(\Omega).
 \]
 \item[(ii)] For all $q\in(1,+\infty)$ and $p\in \xL^q(\Omega)$,
 \[
    \dive (\mcal{A} p ) = p, \ \text{and} \ \ \rot (\mcal{A} p ) = 0, \ \text{a.e. in $\Omega$}.
 \]
 \item[(iii)] For all $q \in (1,+\infty)$, there exists $C=C(q,\Omega)$, such that for any $p \in \xL^q(\Omega)$:
 \[
  \snorm{\mcal{A} p}_{\xbfW^{1,q}(\Omega)} \leq C \, \norm{p}_{\xL^q(\Omega)}.
 \]
%  \item[(iv)] Operator $\mcal{A}$ is independent of $q$ in the following sense. If $p_1\in\xL^{q_1}(\Omega)$ and $p_2\in\xL^{q_2}(\Omega)$ are such that $p_1=p_2$ \emph{a.e.} in $\Omega$ then:
% \[
%  \mcal{A} p_1=\mcal{A} p_2 \ \text{a.e. in $\Omega$}.
% \]
\end{itemize}
% 	
% 	
% 	For any $q \in (1,+\infty)$, there exists $C=C(q,\Omega)$, such that for any $p \in L^q(\Omega)$, there exists $\bfv \in \xbfW^{1,q}_0(\Omega)$, satisfying:
% 	\[
% 	\dive \ \bfv = p, \quad  \rot \ \bfv = 0 ~\text{a.e. in $\Omega$} \]
% 	and
% 	\[
% 	\norm{\gradi \bfv}_{\xbfL^{q}(\Omega)^3} \leq C\,\norm{p}_{\xL^q(\Omega)}.
% 	\]
\end{lem}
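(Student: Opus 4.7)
The plan is to construct $\mathcal{A}$ explicitly by inverting the Laplacian after a trivial extension. Choose a ball $B\subset\xR^d$ containing $\overline{\Omega}$, and for $p\in\xL^q(\Omega)$ let $\tilde p$ denote its extension by zero to $B$ (and then to $\xR^d$). Define
\[
 \mcal{A}p(\bfx) := \gradi \phi(\bfx), \qquad \phi := \mcal{N} \star \tilde p,
\]
where $\mcal{N}$ is the Newton kernel (\ie\ the fundamental solution of $-\Delta$ in $\xR^d$). Since $\phi$ is a scalar potential the identity $\rot(\gradi \phi)=0$ is automatic, while $\dive(\gradi\phi)=\Delta\phi = \tilde p$ in $\mcal{D}'(\xR^d)$; restricted to $\Omega$ this yields $\dive(\mcal{A}p)=p$ a.e., establishing properties (i) and (ii) modulo the $W^{1,q}$-regularity of $\gradi\phi$.

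For the continuity estimate (iii), the components of $\mcal{A}p$ are convolutions with the homogeneous kernel $\gradi\mcal{N}(\bfx)\sim c|\bfx|^{1-d}$. As $\tilde p$ is supported in the bounded set $B$, I would restrict the convolution to a ball of radius $2R$ with $R=\mathrm{diam}(\Omega)$; since $\bfx\mapsto |\bfx|^{1-d}$ belongs to $\xL^1_{\mathrm{loc}}(\xR^d)$ (its integral on $B_{2R}$ is $\sim R$), Young's convolution inequality gives
\[
 \norm{\mcal{A}p}_{\xL^q(\Omega)} \leq C(q,\Omega)\,\norm{p}_{\xL^q(\Omega)}, \qquad 1<q<\infty.
\]
For the derivatives, one differentiates once more under the convolution to obtain $\dv_i\dv_j\phi = K_{ij}\star \tilde p$ where each $K_{ij}$ is a homogeneous Calderón–Zygmund kernel of order $d$, with zero mean over spheres. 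The Calderón–Zygmund theorem then furnishes $\norm{\gradi^2\phi}_{\xL^q(\xR^d)}\leq C(q,d)\norm{\tilde p}_{\xL^q(\xR^d)}=C\norm{p}_{\xL^q(\Omega)}$ for every $q\in(1,+\infty)$, which combined with the previous bound yields (iii).

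The main obstacle, and the only non-routine ingredient, is precisely the Calderón–Zygmund estimate used in the second step: it fails at the endpoints $q=1$ and $q=+\infty$, which is why the statement of the lemma is restricted to $q\in(1,+\infty)$. Everything else (extension, explicit formula, verification of the differential identities, convolution bounds via Young's inequality) is standard manipulation. In practice I would invoke the operator already built in Novotn\'y--Stra\v{s}kraba \cite{novotny2004}, Chapter 3, where this construction is carried out in detail together with its mapping properties; the statement here is the part of that construction needed later to test the momentum equation with $\mcal{A}(b(\rho_n))$ in the analysis of the effective viscous flux.
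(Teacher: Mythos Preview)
Your construction is essentially identical to the paper's: both set $\mcal{A}p=\gradi\Delta^{-1}(\tilde p)$ with $\tilde p$ the zero extension of $p$ outside $\Omega$, and both point to \cite{novotny2004}, Section~4.4.1, for the mapping properties (the paper gives no further detail, whereas you spell out the Young and Calder\'on--Zygmund steps). One small slip to fix: with your convention that $\mcal{N}$ is the fundamental solution of $-\Delta$, one has $\Delta\phi=-\tilde p$, so a sign must be absorbed into the definition of $\mcal{A}$ to obtain $\dive(\mcal{A}p)=p$.
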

\medskip
\begin{proof}
A solution is given by $\mcal{A}p:=\gradi \Delta^{-1} (p)$, where $\Delta^{-1}$ is defined as the inverse of the Laplacian on $\xR^3$, here applied to $p$ extended by $0$ outside $\Omega$. The reader is referred to~\cite{novotny2004} Section 4.4.1 for properties of the operator $\mathcal{A}$.
In particular the operator $\mcal A$ does not depend on $q$.
\end{proof}

\medskip
\begin{prop}{\label{prop:c-eff-flux}} 
Let $\Omega$ be a Lipschitz bounded domain of $\xR^3$. Assume that $\gamma \in (\frac 32,3]$.
Let $(\rho\n,\bfu\n)_{n\in\xN}$ be the sequence defined in Theorem \ref{thm:stab}.
For $k\in\xN^*$, define
\begin{equation}\label{df:Tk}
T_k(t)= \begin{cases}
		\ t \quad & \text{if} ~ t \in [0,k), \\
		\ k \quad & \text{if} ~ t \in [k,+\infty). 
\end{cases}
\end{equation}
The sequence $(T_k(\rho\n))_{n\in\xN}$ is bounded in $\xL^{\infty}(\Omega)$ and up to extracting a subsequence, it converges for the weak-* topology in $\xL^\infty(\Omega)$ towards some function denoted $\overline{T_k(\rho)}$.
Then, up to extracting a subsequence, the following identity holds:
\begin{equation}\label{eq:c-eff-flux}
\lim_{n\rightarrow +\infty} \int_{\Omega}{\big((2\mu +\lambda)\ \dive\, \bfu\n - a\rho\n^\gamma \big) T_k(\rho\n) \phi \dx}
= \int_{\Omega}{\big((2\mu +\lambda)\, \dive \, \bfu - a\overline{\rho^\gamma}\big) \overline{T_k(\rho)} \phi\dx}, \quad \forall \phi \in \mcal{C}_c^{\infty}(\Omega). 
\end{equation}
\end{prop}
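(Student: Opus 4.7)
The plan is to test the weak momentum equation for $(\rho_n,\bfu_n)$ against $\bfphi_n := \phi\,\mcal{A}[T_k(\rho_n)]$ built via Lemma~\ref{lem:inv-div}, and to test the limit equation~\eqref{eq:c-weak-momentum-lim} against $\bfphi := \phi\,\mcal{A}[\overline{T_k(\rho)}]$, then subtract. Since $T_k$ is bounded by $k$, item (iii) of Lemma~\ref{lem:inv-div} gives that $\mcal{A}[T_k(\rho_n)]$ is uniformly bounded in $\xbfW^{1,q}(\Omega)$ for every $q\in(1,+\infty)$, and the compact embedding $\xbfW^{1,q}\hookrightarrow\hookrightarrow\xbfL^p$ yields, up to extraction, strong convergence $\mcal{A}[T_k(\rho_n)] \to \mcal{A}[\overline{T_k(\rho)}]$ in every $\xbfL^p(\Omega)$ with $p<+\infty$. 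The cut-off $\phi\in\mcal{C}_c^\infty(\Omega)$ ensures that $\bfphi_n\in\xbfW^{1,q}_0(\Omega)$ so that $\bfphi_n$ is an admissible test in~\eqref{eq:c-weak-momentum}.

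Using $\dive\bfphi_n = \phi\,T_k(\rho_n) + \mcal{A}[T_k(\rho_n)]\cdot\gradi\phi$ from item (ii) of Lemma~\ref{lem:inv-div}, the pressure term and the $(\lambda+\mu)\,\dive\bfu_n\,\dive\bfphi_n$ term immediately deliver $-a\int\phi\,\rho_n^\gamma T_k(\rho_n)\dx$ and $(\lambda+\mu)\int\phi\,\dive\bfu_n\,T_k(\rho_n)\dx$, respectively, modulo $\gradi\phi$-remainders. For the Laplacian term, the identity $\rot\mcal{A}[T_k(\rho_n)]=0$ combined with~\eqref{curlcurl2} applied after a localization by $\phi$ (or, equivalently, an integration by parts exploiting $\Delta\mcal{A}[T_k(\rho_n)]=\gradi T_k(\rho_n)$) converts $\mu\int\phi\,\gradi\bfu_n:\gradi\mcal{A}[T_k(\rho_n)]\dx$ into $\mu\int\phi\,\dive\bfu_n\,T_k(\rho_n)\dx$, again up to $\gradi\phi$-remainders. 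Summing, one isolates
\[
\int_\Omega \phi\bigl[(2\mu+\lambda)\dive\bfu_n - a\rho_n^\gamma\bigr]T_k(\rho_n)\dx = R_n,
\]
where $R_n$ gathers the external force contribution, the convective term $\int \rho_n\bfu_n\otimes\bfu_n:\gradi\bfphi_n\dx$, and the $\gradi\phi$-remainders. The strictly parallel manipulation on the limit equation~\eqref{eq:c-weak-momentum-lim} with $\bfphi$ yields an analogous expression $R_\infty$, so~\eqref{eq:c-eff-flux} reduces to showing $R_n\to R_\infty$.

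All terms in $R_n$ involving $\bfphi_n$ but not $\gradi\bfphi_n$ pass to the limit by weak-strong duality, using the strong convergence of $\bff_n$ in $\xbfL^2$, of $\bfu_n$ in every $\xbfL^q$ with $q<6$, and of $\mcal{A}[T_k(\rho_n)]$ in every $\xbfL^p$. The main obstacle is the convective contribution, for which only weak convergence of $\gradi\mcal{A}[T_k(\rho_n)]$ is available. I would resolve it through a compensated-compactness argument exploiting the div-curl structure: $\dive(\rho_n\bfu_n) = 0$ on one side and $\rot\mcal{A}[T_k(\rho_n)] = 0$ on the other. Concretely, writing $\dv_j\mcal{A}_i[T_k(\rho_n)] = \mathcal{R}_i\mathcal{R}_j T_k(\rho_n)$ with $\mathcal{R}$ the Riesz transforms (in the sense of Lemma~\ref{lem:inv-div}, where $\mcal{A}p = \gradi\Delta^{-1}p$), the convective term reads
\[
\int_\Omega \phi\,(\rho_n u_n^i)\,u_n^j\,\mathcal{R}_i\mathcal{R}_j T_k(\rho_n)\dx + \text{(terms with $\gradi\phi$)},
\]
and the classical commutator identity (\`a la Coifman-Meyer / div-curl) shows that the difference between this quantity and $\int_\Omega \phi\,u_n^j\,\mathcal{R}_i\mathcal{R}_j(\rho_n u_n^i T_k(\rho_n))\dx$ tends to zero. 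Combined with the strong convergence of $\bfu_n$ in $\xbfL^q$ for $q<6$ and the continuity of the Riesz transforms on the relevant $\xL^p$ spaces, this compactness permits passing to the limit in the convective term and matches it with its $(\rho,\bfu,\overline{\rho^\gamma})$-counterpart, giving $R_n\to R_\infty$ and hence~\eqref{eq:c-eff-flux}.
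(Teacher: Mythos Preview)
Your overall architecture coincides with the paper's: test \eqref{eq:c-weak-momentum} against $\phi\,\mcal{A}[T_k(\rho_n)]$, use $\rot\mcal{A}=0$ together with \eqref{curlcurl2} to reduce the diffusion contribution to $(2\mu+\lambda)\int\phi\,T_k(\rho_n)\,\dive\bfu_n$, and match all non-convective terms against the analogous manipulation of \eqref{eq:c-weak-momentum-lim}. The two proofs diverge only in the treatment of the convective term. You invoke the classical compensated-compactness route via Riesz-transform commutators (the Lions--Feireisl approach, cf.~\cite{novotny2004}, Section~4.4); the paper explicitly mentions that route but opts instead for a more elementary mollification argument adapted from \cite{gal-18-conv}: introduce $\bfu_{n,\delta}=\bfu_n*\bfomega_\delta$, use $\dive(\rho_n\bfu_n)=0$ to integrate by parts onto $\gradi\bfu_{n,\delta}$ (which, at fixed $\delta$, lies in $\xbfL^6_{\rm loc}$), pass $n\to\infty$ for each $\delta$, then let $\delta\to0$ via the uniform-in-$n$ convergence $\bfu_{n,\delta}\to\bfu_n$ in $\xbfL^q$, $q<6$. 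The mollification argument is self-contained and avoids pseudodifferential machinery, which pays off later when the same template is transposed verbatim to the discrete scheme; your route is the established one and equally valid.

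One caution on the details: the specific commutator you wrote, $[\rho_n u_n^i,\mathcal{R}_i\mathcal{R}_j]T_k(\rho_n)$, is not the one that carries the compactness---you have no BMO/VMO control on $\rho_n u_n^i$, and the residual product $\rho_n u_n^i T_k(\rho_n)$ does not have the weak limit $\rho u^i\,\overline{T_k(\rho)}$ you would need afterwards. The correct version (see \cite{novotny2004}, Section~4.4) commutes $\mathcal{R}_i\mathcal{R}_j$ with $T_k(\rho_n)$ (bounded in $\xL^\infty$) or with the strongly convergent factor $u_n^j$, combined with a div-curl pairing exploiting $\dive(\rho_n\bfu_n)=0$ and $\rot\mcal{A}[T_k(\rho_n)]=0$.
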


%\begin{rmrk}
%The definition of the function $b$ is motivated by Proposition \ref{prop:renorm-eff-flux} below. 
%%Note in particular that we have introduced a 
%The small parameter $l>0$ is used to ensure that $b \in C^1([0,+\infty))$, condition under which we can apply the renormalization property, {\it cf.} \eqref{eq:c-renormalized-eq}.
%\end{rmrk}

%\medskip
%We can now give the proof of Proposition \ref{prop:c-eff-flux}.

\medskip
\begin{proof}%[Proof of Proposition \ref{prop:c-eff-flux}]
Let $k\in\xN^*$. For $n\in\xN$, let $\bfw_n=\mcal{A}T_k(\rho\n)$ be the field associated with $T_k(\rho\n)$ through Lemma~\ref{lem:inv-div}. 
%Since $\alpha=\frac{\gamma-1}{2}$ and $(\rho\n)_{n\in \xN}$ is bounded in $\xL^{3(\gamma-1)}(\Omega)$, $(T_k(\rho\n))_{n\in\xN}$ is bounded in $\xL^{6}(\Omega)$ and therefore:
We have
\[
\dive \ \bfw_n = T_k(\rho\n), \quad \rot \ \bfw_n = 0, \quad (\bfw_n)_{n\in\xN} \text{ \ is bounded in \ }\xbfW^{1,q}(\Omega) \quad \forall q \in (1,+\infty).
\]
Moreover, $(\bfw_n)_{n\in\xN}$ is bounded in $\xbfL^{\infty}(\Omega)$ and 
up to extracting a subsequence, as $n\to+\infty$, it strongly converges in $\xbfL^q(\Omega)$ and weakly in $\xbfW^{1,q}(\Omega)$ for all $q \in(1,+\infty)$ towards some function $\bfw$ satisfying:
% \begin{equation}%\label{cvg:v}
% \bfw_n \rightarrow \bfw \quad \text{strongly in } \xbfL^q(\Omega) \text{ and weakly in } \xbfW^{1,q}(\Omega), \quad \text{as $n\to+\infty$} \quad \forall q \in(1,+\infty),
% \end{equation}
% where the limit $\bfw$ is such that
\begin{equation}
\dive \ \bfw = \overline{T_k(\rho)} \quad \text{and} \quad \rot \ \bfw = 0.
\end{equation}
Let $\phi \in \mathcal{C}^\infty_c(\Omega)$.
Considering in~\eqref{eq:c-weak-momentum} the test function $\bfv_n = \phi \bfw_n $ we get:
\begin{multline*}
	 -\int_{\Omega}{\rho\n \bfu\n \otimes \bfu\n : \gradi (\phi \bfw\n) \dx} 
	  -a\int_{\Omega}{\rho\n^\gamma \ \dive(\phi\bfw\n) \dx}
	  + \mu \int_{\Omega}{\gradi \bfu\n : \gradi(\phi\bfw\n) \dx} \\
	 + (\lambda + \mu)\int_{\Omega}{\dive\,\bfu\n \ \dive(\phi\bfw\n) \dx}
	 = \int_\Omega{\bff \cdot (\phi\bfw\n) \dx}.
\end{multline*}
Using the formula~\eqref{curlcurl2} and the fact that $\dive(\phi\bfw\n)=T_k(\rho\n)\phi+\bfw\n\cdot\gradi\phi$ and $\rot(\phi\bfw\n)=L(\phi)\bfw\n$ where $L(\phi)$ is a matrix involving first order derivatives of $\phi$,
we obtain:
\begin{align*}
& \int_{\Omega}{\big(a\rho\n^\gamma - (2\mu + \lambda) \dive \, \bfu_{n} \big) T_k(\rho\n) \phi \dx} \\
%& = -\int_{\Omega}{\big (a\rho\n^\gamma - (2\mu + \lambda) \dive\,\bfu_{n} \big) \bfw_n \cdot \gradi \phi \dx} 
%+ \mu \int_{\Omega}{\rot\,\bfu\n \cdot \rot(\phi \bfw_n) \dx} \\
%& \quad - \int_{\Omega}{\rho\n \bfu\n \otimes \bfu\n : \gradi(\phi \bfw_n)\dx}
%- \int_{\Omega}{\bff_n \cdot (\phi \bfw_n)\dx} \\
& = -\int_{\Omega}{\big(a\rho\n^\gamma - (2\mu + \lambda)\dive \, \bfu_{n} \big ) \bfw_n \cdot \gradi \phi \dx} 
+ \mu \int_{\Omega}{\rot \, \bfu\n \cdot L(\phi) \bfw_n \dx} \\
& \quad - \int_{\Omega}{\rho\n \bfu\n \otimes \bfu\n : \gradi(\phi \bfw_n) \dx}
- \int_{\Omega}{\bff_n \cdot (\phi \bfw_n)\dx}        
\end{align*}
Thanks to the previous estimates and convergences 
%(see Prop. \ref{prop:lim:mass:mom})
, we are allowed to pass to the limit as $n \to +\infty$
\begin{align*}
& \lim_{n \rightarrow + \infty}\int_{\Omega}{\big(a\rho\n^\gamma - (2\mu + \lambda)\dive \, \bfu_{n} \big)T_k(\rho\n) \phi \dx} \\
& = - \int_{\Omega}{\big(a\overline{\rho^\gamma} - (2\mu + \lambda)\dive\,\bfu \big) \bfw \cdot \gradi \phi \dx} 
+ \mu \int_{\Omega}{\rot\,\bfu \cdot L(\phi) \bfw \dx} \\
& \quad - \lim_{n \rightarrow + \infty} \int_{\Omega}{\rho\n \bfu\n \otimes \bfu\n : \gradi(\phi \bfw\n) \dx}
- \int_{\Omega}{\bff \cdot (\phi \bfw) \dx}  .     
\end{align*}

An analogous equation can be obtained from the limit momentum equation~\eqref{eq:c-weak-momentum-lim} with the test function $\phi \bfw$.
It reads:
\begin{align*}
& \int_{\Omega}{\big(a\overline{\rho^\gamma} - (2\mu + \lambda)\dive\, \bfu \big)\ \overline{T_k(\rho)} \phi \dx } \\
& = - \int_{\Omega}{\big(a\overline{\rho^\gamma} - (2\mu + \lambda)\dive\,\bfu \big) \bfw \cdot \nabla \phi \dx} 
+ \mu \int_{\Omega}{\rot(\bfu) \cdot L(\phi) \bfw \dx} \\
& \quad - \int_{\Omega}{\rho \bfu \otimes \bfu : \gradi(\phi \bfw)\dx}
- \int_{\Omega}{\bff \cdot (\phi \bfw)\dx} .      
\end{align*}
Comparing the two expressions, we get
\begin{align*}
& \lim_{n \rightarrow + \infty}\int_{\Omega}{ \big(a\rho\n^\gamma - (2\mu + \lambda)\dive\, \bfu_{n} \big) T_k(\rho\n)\phi  \dx} \\
& \quad  =  \int_{\Omega}{ \big(a\overline{\rho^\gamma} - (2\mu + \lambda)\dive \,\bfu  \big)  \overline{T_k(\rho)} \phi \dx} \\
& \qquad - \lim_{n \rightarrow + \infty} \int_{\Omega}{\rho\n \bfu\n \otimes \bfu\n : \gradi(\phi \bfw\n) \dx} 
		+ \int_{\Omega}{\rho \bfu \otimes \bfu : \gradi(\phi \bfw)\dx}.
\end{align*}
Hence it remains to show the two last integrals are equal, which is not direct
since we have only weak convergence on $(\rho\n \bfu_n)_{n\in\xN}$ and $(\gradi \bfw\n)_{n\in\xN}$.

\medskip
This convective term is usually treated with compensated compactness tools by means of Div-Curl and commutator lemmas (see \cite{novotny2004} Section 4.4).
In the case $\gamma > 3$, a simpler proof is presented in \cite{gal-18-conv} which enables to bypass the use of these tools.
We adapt this simple proof to our case $\gamma \in (\frac{3}{2},3]$ by using a regularization of the velocity $\bfu_n$. 
Let us first extend $\bfu\n$ and $\bfu$ by $0$ on $\xR^3\setminus \Omega$ and introduce the regularized velocities $\bfu_{n,\delta} = \bfu_n * \bfomega_\delta$ and $\bfu_\delta = \bfu * \bfomega_\delta$, where $(\bfomega_\delta)_{\delta>0}$ is a mollifying sequence. 
By standard properties of the convolution and our \emph{a priori} control of the velocity $\bfu\n$, the following convergences hold (see for instance \cite{feireisl2016} Lemma 5 p.75 where a regularization of the velocity is also used)
\begin{align}
\bfu_{n,\delta} &\underset{n \to +\infty}{\longrightarrow} \bfu_\delta \quad \text{strongly in} ~ \xbfL^q_{\rm loc}(\xR^3) ~\forall q\in[1,6) ~\text{uniformly in} ~\delta, \label{eq:cvg-reg-u-1}\\
\bfu_{n,\delta} &\underset{\delta \to 0}{\longrightarrow} \bfu\n \quad \text{strongly in} ~ \xbfL^q_{\rm loc}(\xR^3) ~\forall q\in[1,6) ~(\text{uniformly in} ~n)  \label{eq:cvg-reg-u-2}, \\
\bfu_\delta &\underset{\delta \to 0}{\longrightarrow} \bfu \quad \text{strongly in} ~\xbfL^6_{\rm loc}(\xR^3)  \label{eq:cvg-reg-u-3}.
\end{align}
Since $\dive(\rho\n\bfu\n)=0$, we then have, for any $\delta>0$:
\begin{align*}
\int_{\Omega}{\rho\n \bfu\n \otimes \bfu\n : \gradi(\phi \bfw_n)\dx} 
& = \int_{\mathbb{R}^3}{ \bfu_{n,\delta} \otimes (\rho\n \bfu\n)  : \gradi(\phi \bfw_n)\dx}+ R_1^{n,\delta} \\
& = -\int_{\mathbb{R}^3}{\divv(\bfu_{n,\delta} \otimes \rho\n \bfu\n)  \cdot (\phi \bfw_n)\dx} +  R_1^{n,\delta} \\
& = -\int_{\mathbb{R}^3}{(\rho\n \bfu\n \cdot \gradi) \bfu_{n,\delta} \cdot (\phi \bfw_n)\dx} +  R_1^{n,\delta}
\end{align*}
where
\[
 R_1^{n,\delta}= \int_{\mathbb{R}^3}{(\bfu\n-\bfu_{n,\delta}) \otimes (\rho\n \bfu\n)  : \gradi(\phi \bfw_n)}\dx.
\]
Since $(\rho_n \bfu_n)_{n\in\xN}$ is bounded in $\xbfL^p(\Omega)$ for some $p > \frac 65$, $(\gradi \bfw_n)_{n\in\xN}$ is bounded in $\xbfL^s(\Omega)^3$ for any $s \in (1,+\infty)$, then  the following inequality holds, for some triple $(p,q,s)$, such that $p>\frac 65$, $s>1$, $q<6$ and $\frac{1}{p}+ \frac{1}{q} + \frac{1}{s}=1$:
\begin{align*}
|R_1^{n,\delta}| 
&\leq C \norm{\rho\n \bfu\n}_{\xbfL^{p}(\Omega)}  \norm{\gradi (\phi \bfw_n)}_{\xbfL^s(\Omega)^{3}} \norm{\bfu\n -\bfu_{n,\delta}}_{\xbfL^q(\xR^3)} \\
&\leq C \norm{\bfu\n -\bfu_{n,\delta}}_{\xbfL^q(\mathbb{R}^3)}.
\end{align*}
As a consequence:
\begin{equation}
\label{control:R1}
 \limsup\limits_{n\to+\infty} |R_1^{n,\delta}| \leq C \limsup\limits_{n\to+\infty} \norm{\bfu\n -\bfu_{n,\delta}}_{\xbfL^q(\mathbb{R}^3)}.
\end{equation}
Thanks to the regularization of the velocity, we ensure that $(\gradi \bfu_{n,\delta})_{n\in\xN}$ is bounded in $\xbfL^6_{\rm loc}(\mathbb{R}^3)^3$.
The sequence $\big((\rho_n \bfu_n \cdot \gradi) \bfu_{n,\delta})_{n\in\xN} = (\bfQ_{n,\delta})_{n\in\xN}$ is then bounded in $\xbfL^r(\Omega)$, for some $r > 1$ and up to the extraction of a subsequence, it weakly converges in $\xbfL^r(\Omega)$ towards some function $\bfQ_\delta \in \xbfL^r(\Omega)$. We have
\begin{align}
\label{reg1}
\int_{\Omega}{\rho\n \bfu\n \otimes \bfu\n : \gradi(\phi \bfw_n)\dx} 
& = -\int_{\Omega}{(\rho\n \bfu\n \cdot \gradi) \bfu_{n,\delta} \cdot (\phi \bfw_n)\dx} +  R_1^{n,\delta} \nonumber\\
& = -\int_{\Omega}{\bfQ_\delta \cdot (\phi \bfw)\dx} +  R_1^{n,\delta} +  R_2^{n,\delta}
\end{align}
where 
\begin{align*}
R_2^{n,\delta} 
& = -\int_{\Omega}{\bfQ_{n,\delta} \cdot (\phi \bfw_n)\dx} + \int_{\Omega}{\bfQ_\delta \cdot (\phi \bfw)\dx} \\
& = \int_{\Omega}{\big(\bfQ_\delta -\bfQ_{n,\delta}\big) \cdot (\phi \bfw)\dx} + \int_{\Omega}{\phi \, \bfQ_{n,\delta}\cdot\big(\bfw-\bfw_n\big) \dx}.
\end{align*}
Since $(\bfw_n)_{n\in\xN}$ converges strongly to $\bfw$ in $\xbfL^q(\Omega)$, for all $q \in(1, +\infty)$, we have
\begin{equation}
\label{control:R2}
|R_{2}^{n,\delta}|\to 0 \quad \text{ as $n\to+\infty$ for any fixed $\delta>0$}.
\end{equation}
We now want to show that 
$\bfQ_\delta = (\rho \bfu \cdot \gradi)\bfu_\delta$.
To that end, let us consider a fixed test function $\bfv \in \xbfW^{1,\infty}_0(\Omega)$ and write (again thanks to the fact that $\dive(\rho\n\bfu\n)=0$)
\begin{align*}
\int_{\Omega}{(\rho_n \bfu_n \cdot \gradi)\bfu_{n,\delta} \cdot \bfv \dx}
& = -\int_{\Omega}{ \bfu_{n,\delta} \otimes (\rho_n \bfu_n) : \gradi \bfv \dx} \\
& = -\int_{\Omega}{ \bfu_{\delta} \otimes (\rho_n \bfu_n) : \gradi \bfv \dx} + \tilde{R}_2^{n,\delta}\\
%& = -\int_{\Omega}{\rho \bfu \otimes \bfu_\delta : \gradi \bfv \dx} + \tilde{R}_2^{n,\delta} \\
& = \int_{\Omega}{\divv(\bfu_{\delta} \otimes \rho_n \bfu_n) \cdot \bfv \dx} + \tilde{R}_2^{n,\delta}\\
& = \int_{\Omega}{(\rho \bfu \cdot \gradi)\bfu_\delta \cdot \bfv \dx} + \tilde{R}_2^{n,\delta}
\end{align*}
with 
\[
\tilde{R}_2^{n,\delta} = \int_{\Omega}{(\bfu_\delta - \bfu_{n,\delta}) \otimes (\rho_n \bfu_n ) : \gradi \bfv \dx} 
\]
which tends to $0$ (uniformly with respect to $\delta$) as $n \to +\infty$, since $(\rho_n \bfu_n)_{n\in\xN}$ converges weakly to $\rho \bfu$ in $\xbfL^{q_1}(\Omega)$ for some $q_1 > \frac 65$, and $(\bfu_{n,\delta})_{n\in\xN}$ converges strongly to $\bfu_\delta$ (uniformly with respect to $\delta$) in $\xbfL^{q_2}(\Omega)$ for any $q_2 < 6$.
As a consequence, we identify $\bfQ_\delta = (\rho \bfu \cdot \gradi) \bfu_\delta$.
Now, back to \eqref{reg1}, since at the limit $\dive(\rho\bfu)=0$, we have:
\begin{align*}
\int_{\Omega}{\rho\n \bfu\n \otimes \bfu\n : \gradi(\phi \bfw_n)\dx} 
& = -\int_{\Omega}{(\rho \bfu \cdot \gradi) \bfu_\delta \cdot (\phi \bfw)\dx} + R_1^{n,\delta} + R_2^{n,\delta} \\
& = \int_{\Omega}{\rho \bfu_\delta \otimes \bfu : \gradi(\phi \bfw)\dx} + R_1^{n,\delta} + R_2^{n,\delta} \\
& = \int_{\Omega}{\rho \bfu \otimes \bfu : \gradi(\phi \bfw)\dx} + R_1^{n,\delta} + R_2^{n,\delta} + R_3^\delta
\end{align*}
where
\[
 R_3^\delta = \int_{\Omega}{ (\bfu_\delta-\bfu) \otimes (\rho \bfu) : \gradi(\phi \bfw)\dx}.
\]
Combining \eqref{control:R1} and \eqref{control:R2}, we get that for any fixed $\delta>0$:
%\begin{multline*}
\[
\limsup_{n\rightarrow +\infty}  \Big | \int_{\Omega}{\rho\n \bfu\n \otimes \bfu\n : \gradi(\phi \bfw_n)\dx} - \int_{\Omega}{\rho \bfu \otimes \bfu : \gradi(\phi \bfw)\dx } \Big | 
 \leq C \limsup_{n\rightarrow +\infty} \norm{\bfu\n -\bfu_{n,\delta}}_{\xbfL^q(\xR^3)}  +|R_3^\delta|,
\]
 %\end{multline*}
for some $q<6$.
By \eqref{eq:cvg-reg-u-3}, we have $R_3^\delta \to 0$ as $\delta\to 0$. Hence, by the uniform in $n$ convergence of $(\bfu_{n,\delta})_{\delta >0}$ towards $\bfu\n$ as $\delta\to 0$ \eqref{eq:cvg-reg-u-2}, letting $\delta$ tend to $0$ yields:
\[
 \lim_{n\rightarrow +\infty} \int_{\Omega}{\rho\n \bfu\n \otimes \bfu\n : \gradi(\phi \bfw_n)\dx} =  \int_{\Omega}{\rho \bfu \otimes \bfu : \gradi(\phi \bfw)\dx}.
\]
We finally conclude that
\begin{equation*}
\lim_{n \rightarrow + \infty}\int_{\Omega}{ \big(a\rho\n^\gamma - (2\mu + \lambda)\dive\, \bfu_{n} \big) T_k(\rho_{n})\phi  \dx}
=  \int_{\Omega}{ \big(a\overline{\rho^\gamma} - (2\mu + \lambda)\dive \,\bfu  \big)  \overline{T_k(\rho)} \phi \dx} .
\end{equation*}
\end{proof}

\subsubsection{Strong convergence of the density}

Let us begin this subsection with a brief sketch of the general strategy that we will employ. 
Concentration phenomena being excluded, the only mechanism which can prevent the strong convergence is the presence of oscillations.
We need to prove that we control these oscillations.
For large values of $\gamma$, namely $\gamma \geq 2$, one can show an ``improved'' version of the weak compactness of the effective viscous flux \eqref{eq:c-eff-flux} where $T_k(\rho\n)$ (resp. $\overline{T_k(\rho)}$) is replaced by $\rho\n$ (resp $\rho$). Then, passing to the limit $n \rightarrow +\infty$ in the renormalized continuity equation
\[
\dive( (\rho\n \ln\rho\n) \, \bfu\n) = -\rho\n \dive\, \bfu\n \quad \text{in}~ \mathcal{D}'(\mathbb{R}^3),
\]
we get
\begin{equation*}
\dive(\overline{\rho \ln\rho} \ \bfu) = - \overline{\rho \dive\, \bfu} \quad \text{in}~ \mathcal{D}'(\mathbb{R}^3),
\end{equation*}
where all the functions have been extended by zero outside $\Omega$.
On the other hand, applying the renormalization theory of Di Perna-Lions (Lemma \ref{lem:diperna}) on the limit $\rho \in \xL^{3(\gamma-1)}(\Omega)$, $\bfu\in \xbfH^1_0(\Omega)$ we also have
\begin{equation}\label{eq:c-limit-renorm0}
\dive((\rho \ln\rho)\, \bfu) = - \rho \dive\, \bfu \quad \text{in}~ \mathcal{D}'(\mathbb{R}^3).
\end{equation}
Subtracting this equation from the previous one, we arrive at 
\begin{equation*}
\dive\big((\overline{\rho \ln\rho}-\rho \ln \rho) \ \bfu\big) = \rho \dive\, \bfu - \overline{\rho \dive\, \bfu}\quad \text{in}~ \mathcal{D}'(\mathbb{R}^3).
\end{equation*} 
The weak compactness property of the effective viscous flux yields
\begin{equation*}
\dive\big((\overline{\rho \ln\rho}-\rho \ln \rho) \ \bfu\big) 
= \dfrac{a}{2\mu+\lambda} \Big(\rho \overline{\rho^\gamma} - \overline{\rho^{\gamma+1}}\Big) \quad \text{in}~ \mathcal{D}'(\mathbb{R}^3).
\end{equation*} 
Integrating in space, we end up with the identity $\rho \overline{\rho^\gamma} = \overline{\rho^{\gamma+1}}$ a.e., from which the strong convergence of the density follows by invoking the monotonicity of the pressure (Minty's trick).

\medskip
In the arguments presented above, one of the key point is to write \eqref{eq:c-limit-renorm0} which requires from the theory of Di Perna and Lions that $\rho \in \xL^2(\Omega)$ (see Remark \ref{rmk:renorm}).
The previous proof can be adapted in the case $\gamma \in (\frac{5}{3},2)$ using a ``weaker'' version of the effective viscous flux identity where $\rho\n$ is essentially replaced by $\rho\n^\alpha$ for some $\alpha\in(0,1)$. 
This is the case initially demonstrated by Lions in \cite{lio-98-comp}. 
For smaller values of $\gamma$, i.e. $\frac{3}{2} <\gamma \leq \frac{5}{3}$, we do not ensure \emph{a priori} \eqref{eq:c-limit-renorm0} since $\rho$ does not belong to $\xL^2(\Omega)$. 
The idea of Feireisl \cite{feireisl2001} (adapted then by Novo and Novotn{\'y} in the stationary case) is to work on the truncated variable $T_k(\rho)$, defined in \eqref{df:Tk} which is bounded for fixed $k$ (and thus in $\xL^2(\Omega)$).
With similar arguments as before, one may then show the strong convergence of $(T_k(\rho\n))_{n\in \mathbb{N}}$ to $T_k(\rho)$ (uniformly with respect to $n$ in $\xL^{\gamma+1}(\Omega)$). 
Combining finally this result with the strong convergence of the truncated variables as $k \rightarrow +\infty$ (see Lemma \ref{lem:cvg-Tk} below), we will get the strong convergence of $(\rho\n)_{n\in\mathbb{N}}$.

\paragraph{Properties of the truncation operators $T_k$.}

\begin{lem}\label{lem:cvg-Tk}
	Under the assumptions of Proposition \ref{prop:c-eff-flux},
	there exists a constant $C$ such that the following inequality holds for all $1 \leq q < 3(\gamma-1)$, $n\in\xN$ and $k \in\xN^*$:
	\begin{align}
	\norm{\overline{T_k(\rho)} - \rho}_{\xL^q(\Omega)} + \norm{T_k(\rho) - \rho}_{\xL^q(\Omega)} + \norm{T_k(\rho\n) - \rho\n}_{\xL^q(\Omega)}
	\leq C k^{\frac{1}{3(\gamma-1)} - \frac{1}{q}}.
	\end{align}	
	Consequently, as $k \rightarrow +\infty$, the sequences $(\overline{T_k(\rho)})_{k \in \mathbb{N}^*}$ and $(T_k(\rho))_{k \in \mathbb{N}^*}$ both converge strongly to $\rho$ in $\xL^q(\Omega)$ for all $q \in [1,3(\gamma-1))$.
\end{lem}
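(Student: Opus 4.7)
The core observation is the pointwise identity
\[
|T_k(s)-s| = (s-k)^+ \leq s\,\mathbf{1}_{\{s>k\}}\quad\text{for all } s\geq 0,
\]
so the task reduces to estimating the $\xL^q$-norm of $\rho_n$ (resp.\ $\rho$, $\overline{T_k(\rho)}$) on the ``bad set'' $\{\rho_n>k\}$ (resp.\ the corresponding sets for the limits). The first step is to apply Markov's inequality together with the uniform $\xL^1$ bound on $\rho_n$ coming from the mass constraint: since $\|\rho_n\|_{\xL^1(\Omega)} = |\Omega|\,\rho^\star_n \leq C$, one has
\[
|\{\rho_n>k\}| \leq \frac{\|\rho_n\|_{\xL^1(\Omega)}}{k} \leq \frac{C}{k}.
\]

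The second step is an interpolation via Hölder: for $1\leq q<3(\gamma-1)$, applying Hölder with exponents $3(\gamma-1)/q$ and its conjugate on the set $A_n=\{\rho_n>k\}$ gives
\[
\int_{A_n}\rho_n^q \dx \leq \Big(\int_{\Omega}\rho_n^{3(\gamma-1)}\dx\Big)^{\frac{q}{3(\gamma-1)}} |A_n|^{1-\frac{q}{3(\gamma-1)}},
\]
which combined with the uniform $\xL^{3(\gamma-1)}$ bound of Proposition~\ref{unif:est:cont} and the previous measure estimate yields
\[
\|T_k(\rho_n)-\rho_n\|_{\xL^q(\Omega)} \leq C\,|A_n|^{\frac{1}{q}-\frac{1}{3(\gamma-1)}} \leq C\,k^{\frac{1}{3(\gamma-1)}-\frac{1}{q}}.
\]
The exact same argument, replacing $\rho_n$ by $\rho$ (which by Theorem~\ref{thm:stab} also lies in $\xL^{3(\gamma-1)}(\Omega)$ with the same bound), gives the estimate for $\|T_k(\rho)-\rho\|_{\xL^q(\Omega)}$.

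For the third term involving the weak-$*$ limit $\overline{T_k(\rho)}$, the plan is to invoke weak lower semicontinuity: since $\rho_n \rightharpoonup \rho$ weakly in $\xL^{3(\gamma-1)}(\Omega)$ and $T_k(\rho_n) \overset{*}{\rightharpoonup} \overline{T_k(\rho)}$ in $\xL^\infty(\Omega)$, the sequence $\rho_n - T_k(\rho_n)$ converges weakly in $\xL^q(\Omega)$ to $\rho - \overline{T_k(\rho)}$, hence
\[
\|\overline{T_k(\rho)}-\rho\|_{\xL^q(\Omega)} \leq \liminf_{n\to+\infty} \|T_k(\rho_n)-\rho_n\|_{\xL^q(\Omega)} \leq C\,k^{\frac{1}{3(\gamma-1)}-\frac{1}{q}}.
\]
Since the exponent $\frac{1}{3(\gamma-1)}-\frac{1}{q}$ is strictly negative for $q<3(\gamma-1)$, letting $k\to+\infty$ immediately gives the announced strong convergence in $\xL^q(\Omega)$.

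\textbf{Main obstacle.} There is no real technical difficulty here: the argument is a one-shot application of Markov and Hölder together with the uniform $\xL^{3(\gamma-1)}$ control. The only point requiring a little care is the handling of $\overline{T_k(\rho)}$, which must be treated through weak lower semicontinuity rather than by a direct pointwise estimate, since $\overline{T_k(\rho)}$ is only characterized as a weak-$*$ limit and is a priori different from $T_k(\rho)$.
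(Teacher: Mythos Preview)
Your proposal is correct and follows essentially the same approach as the paper: Markov's inequality on $\{\rho_n\ge k\}$ using the uniform $\xL^1$ bound, then H\"older with the uniform $\xL^{3(\gamma-1)}$ control to get the $k^{\frac{1}{3(\gamma-1)}-\frac 1q}$ decay, and finally weak lower semicontinuity of the $\xL^q$-norm applied to $\rho_n-T_k(\rho_n)\rightharpoonup \rho-\overline{T_k(\rho)}$ for the third term. The paper's proof is line-for-line the same argument.
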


\medskip
\begin{proof}
	As a consequence of the inequality
	\[
	|\{\rho\n \geq k\}| \leq \dfrac{1}{k} \int_{\Omega}{\rho\n \dx} = |\Omega|\dfrac{\rho^*_n}{k} \leq\dfrac{C}{k}
	\]
	we deduce by H\"older's inequality that for any $q \in[1, 3(\gamma-1))$
	\begin{align*}
	\norm{T_k(\rho\n) - \rho\n}_{\xL^q(\Omega)} 
	& = \norm{(T_k(\rho\n) - \rho\n) \mcal{X}_{\{\rho\n \geq k\}} }_{\xL^q(\Omega)} \\
	& \leq \norm{\rho\n  \mcal{X}_{\{\rho\n \geq k\}} }_{\xL^q(\Omega)} 
	 \leq C k^{\frac{1}{3(\gamma-1)} - \frac{1}{q}} \norm{ \rho\n}_{\xL^{3(\gamma-1)}(\Omega)}
	 \leq C k^{\frac{1}{3(\gamma-1)} - \frac{1}{q}},
	\end{align*}
	where $\mcal{X}_\omega$ is the characteristic function of a set $\omega$ and where the constant $C$ only depends on $q$ and the uniform bounds on the sequences $(\rho\n^\star)_{n\in\xN}$ and $(\norm{\rho\n}_{\xL^{3(\gamma-1)}})_{n\in\xN}$.
	Doing the same with the limit density $\rho$, we get
	\begin{align*}
	\norm{T_k(\rho) - \rho}_{\xL^q(\Omega)} 
	& \leq C k^{\frac{1}{3(\gamma-1)} - \frac{1}{q}}.% \norm{ \rho}_{\xL^{3(\gamma-1)}(\Omega)}.
	\end{align*}
	Finally, we have:
	\[
	\norm{\overline{T_k(\rho)} - \rho}_{\xL^q(\Omega)} 
	 \leq \liminf_{n\to+\infty} \norm{T_k(\rho\n) - \rho\n}_{\xL^q(\Omega)} 
	 \leq \limsup_{n\to+\infty} \norm{T_k(\rho\n) - \rho\n}_{\xL^q(\Omega)} 
	 \leq C k^{\frac{1}{3(\gamma-1)} - \frac{1}{q}}.
	\]
	which ends the proof.
\end{proof}

\medskip
\begin{lem}\label{lem:osc}
	Under the assumptions of Proposition \ref{prop:c-eff-flux},
	there exists a constant $C$ such that the following estimate holds:
	\begin{equation}
	\sup_{k > 1} ~\limsup_{n \rightarrow +\infty} \norm{T_k(\rho\n) - T_k(\rho)}_{\xL^{\gamma+1}(\Omega)} \leq C.
	\end{equation}	
\end{lem}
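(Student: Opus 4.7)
The plan is to combine a pointwise convexity inequality with the effective viscous flux identity of Proposition~\ref{prop:c-eff-flux} (extended to the test function $\phi \equiv 1$) in order to reduce the lemma to an algebraic inequality of the form $X_k^{\gamma+1} \leq C\, X_k$ for $X_k := \limsup_{n\to+\infty}\norm{T_k(\rho\n) - T_k(\rho)}_{\xL^{\gamma+1}(\Omega)}$, from which $X_k \leq C^{1/\gamma}$ uniformly in $k$ follows at once. The starting point is the pointwise inequality
\[
|T_k(a) - T_k(b)|^{\gamma+1} \leq (a^\gamma - b^\gamma)\bigl(T_k(a) - T_k(b)\bigr), \qquad a,b \geq 0,\ k > 0,
\]
which, assuming $a \geq b$ (so both factors on the right are non-negative), reduces to $\bigl(T_k(a) - T_k(b)\bigr)^\gamma \leq a^\gamma - b^\gamma$ and follows from the $1$-Lipschitz property of $T_k$ together with the super-additivity of $t \mapsto t^\gamma$ on $\xR_+$ (valid since $\gamma \geq 1$). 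Applying it pointwise with $a = \rho\n$, $b = \rho$ and integrating over $\Omega$ yields
\[
\norm{T_k(\rho\n) - T_k(\rho)}_{\xL^{\gamma+1}(\Omega)}^{\gamma+1} \leq \int_\Omega (\rho\n^\gamma - \rho^\gamma)\bigl(T_k(\rho\n) - T_k(\rho)\bigr)\dx.
\]

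Next I would pass to the limit in the right-hand side. For each fixed $k$, extract a subsequence along which $(\rho\n^\gamma T_k(\rho\n))_n$ converges weakly in $\xL^{1+\eta}(\Omega)$ to some $\overline{\rho^\gamma T_k(\rho)}$; combined with the weak convergences $\rho\n^\gamma \weak \overline{\rho^\gamma}$ in $\xL^{1+\eta}(\Omega)$ and $T_k(\rho\n) \overset{*}{\weak} \overline{T_k(\rho)}$ in $\xL^\infty(\Omega)$ already provided by Proposition~\ref{unif:est:cont} and Banach--Alaoglu, expanding the product gives
\[
\limsup_{n\to+\infty}\int_\Omega (\rho\n^\gamma - \rho^\gamma)\bigl(T_k(\rho\n) - T_k(\rho)\bigr)\dx = I_1 + I_2,
\]
with
\[
I_1 := \int_\Omega \bigl[\overline{\rho^\gamma T_k(\rho)} - \overline{\rho^\gamma}\,\overline{T_k(\rho)}\bigr]\dx,
\qquad
I_2 := \int_\Omega (\overline{\rho^\gamma} - \rho^\gamma)\bigl(\overline{T_k(\rho)} - T_k(\rho)\bigr)\dx.
\]
The crucial sign observation is $I_2 \leq 0$: the convexity of $t \mapsto t^\gamma$ yields $\overline{\rho^\gamma} \geq \rho^\gamma$ a.e., while the concavity of $T_k$ yields $\overline{T_k(\rho)} \leq T_k(\rho)$ a.e. (both via the Jensen-type inequality for weak limits of convex/concave functionals), so the integrand of $I_2$ is non-positive. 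This sign gain is what bypasses the H\"older-duality difficulties that would otherwise appear for $\gamma \in (\frac 32,2)$.

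For $I_1$, I would first extend the effective viscous flux identity of Proposition~\ref{prop:c-eff-flux} from $\phi \in \mcal{C}_c^\infty(\Omega)$ to $\phi \equiv 1$: taking a cutoff sequence $(\phi_m) \subset \mcal{C}_c^\infty(\Omega)$ with $0 \leq \phi_m \leq 1$ and $\phi_m \to 1$ a.e., the uniform $\xL^{1+\eta}(\Omega)$ bounds on both integrands allow the passage $m \to +\infty$ by dominated convergence, yielding
\[
I_1 = \frac{2\mu+\lambda}{a}\int_\Omega \bigl[\overline{T_k(\rho)\dive\,\bfu} - \overline{T_k(\rho)}\,\dive\,\bfu\bigr]\dx = \frac{2\mu+\lambda}{a}\lim_{n\to+\infty}\int_\Omega T_k(\rho\n)\bigl(\dive\,\bfu\n - \dive\,\bfu\bigr)\dx,
\]
the second equality using that $T_k(\rho)\in\xL^\infty(\Omega)$ paired against $\dive\,\bfu\n - \dive\,\bfu \weak 0$ weakly in $\xL^2(\Omega)$ gives a vanishing contribution. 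Splitting $T_k(\rho\n) = T_k(\rho) + (T_k(\rho\n) - T_k(\rho))$ inside the last integral, applying Cauchy--Schwarz together with the continuous embedding $\xL^{\gamma+1}(\Omega) \hookrightarrow \xL^2(\Omega)$ (valid since $\gamma + 1 \geq 2$ and $\Omega$ is bounded), I would obtain
\[
|I_1| \leq C\,\limsup_{n\to+\infty}\norm{T_k(\rho\n) - T_k(\rho)}_{\xL^{\gamma+1}(\Omega)} = C\, X_k,
\]
with $C$ depending only on $\mu,\lambda,a,|\Omega|$ and the uniform bound on $\norm{\dive\,\bfu\n}_{\xL^2(\Omega)}$, hence independent of $k$. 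Putting everything together, $X_k^{\gamma+1} \leq I_1 + I_2 \leq I_1 \leq C X_k$, i.e.\ $X_k \leq C^{1/\gamma}$ uniformly in $k$, which is the desired conclusion.

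The main obstacle I expect is the sign analysis of $I_2$: the inequality $\overline{T_k(\rho)} \leq T_k(\rho)$ (coming from the concavity of $T_k$ and the Jensen-type inequality for weak limits, in the opposite direction from the more familiar $\overline{\rho^\gamma} \geq \rho^\gamma$) is the subtle ingredient that makes the whole argument work uniformly in $k$ for the full range $\gamma > \frac 32$; without exploiting it one is forced into a H\"older pairing of $\overline{\rho^\gamma} - \rho^\gamma \in \xL^{(1+\eta)}(\Omega)$ and $\overline{T_k(\rho)} - T_k(\rho)$ that is available uniformly in $k$ only when $\gamma \geq 2$. Apart from this and the routine extension of the effective viscous flux identity to a constant test function, the remaining steps are standard manipulations of weak limits.
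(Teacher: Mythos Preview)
Your proof is correct and follows essentially the same approach as the paper: the same pointwise inequality $|T_k(a)-T_k(b)|^{\gamma+1}\le (a^\gamma-b^\gamma)(T_k(a)-T_k(b))$, the same decomposition into $I_1+I_2$, the same sign argument for $I_2$ via convexity of $t\mapsto t^\gamma$ and concavity of $T_k$, and the same use of the effective viscous flux identity to control $I_1$. The only cosmetic differences are that the paper splits $T_k(\rho\n)-\overline{T_k(\rho)}$ against $\dive\,\bfu\n$ (rather than $T_k(\rho\n)$ against $\dive\,\bfu\n-\dive\,\bfu$) and closes with a Young inequality instead of solving $X_k^{\gamma+1}\le C X_k$ directly; your explicit extension of Proposition~\ref{prop:c-eff-flux} to $\phi\equiv 1$ is a point the paper leaves implicit.
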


\medskip
\begin{proof}
	First of all, observe that for all $r_1,r_2 \geq 0$,
	$
	|T_k(r_1) - T_k(r_2)|^{\gamma+1} \leq (r_1^\gamma - r_2^\gamma)(T_k(r_1) - T_k(r_2)),
	$ 
	and thus
	\begin{align*}
	\limsup_{n\to+\infty} \int_{\Omega} |T_k(\rho\n) - T_k(\rho)|^{\gamma+1}  \dx
	& \leq \limsup_{n\to+\infty} \int_{\Omega} (\rho\n^\gamma - \rho^\gamma)(T_k(\rho\n) - T_k(\rho))\dx \\
	& \leq \int_{\Omega}\big(\overline{\rho^\gamma T_k(\rho)} - \overline{\rho^\gamma}\ \overline{T_k(\rho)} \big)\dx
	+ \int_{\Omega}\big(\overline{\rho^\gamma} - \rho^\gamma\big) \big(\overline{T_k(\rho)} - T_k(\rho) \big)\dx.
	\end{align*}
	Invoking the convexity of the functions $t \mapsto t^\gamma$ and $t \mapsto - T_k(t)$, we have $\overline{\rho^\gamma} \geq \rho^\gamma$ and $\overline{T_k(\rho)} \leq T_k(\rho)$
	so that
	\begin{align*}
	\limsup_{n\to+\infty} \int_{\Omega} |T_k(\rho\n) - T_k(\rho)|^{\gamma+1} \dx
	& \leq \int_{\Omega}\big(\overline{\rho^\gamma T_k(\rho)} - \overline{\rho^\gamma}\ \overline{T_k(\rho)} \big)\dx.
	\end{align*}
	We can now use the weak compactness property satisfied by the effective viscous flux \eqref{eq:c-eff-flux}:
	\begin{align}\label{eq:c-appl-eff-flux}
	& \limsup_{n\to+\infty} \int_{\Omega} |T_k(\rho\n) - T_k(\rho)|^{\gamma+1} \dx\nonumber\\
	& \leq \dfrac{2\mu+\lambda}{a} \limsup_{n\to+\infty} \int_{\Omega}\big(T_k(\rho\n) - \overline{T_k(\rho)} \big)\dive \, \bfu\n \dx\\
	& \leq \dfrac{2\mu+\lambda}{a} \limsup_{n\to+\infty} \int_{\Omega}\big(T_k(\rho\n) - T_k(\rho) \big) \dive \, \bfu\n\dx
	+ \dfrac{2\mu+\lambda}{a} \limsup_{n\to+\infty} \int_{\Omega}\big(T_k(\rho) - \overline{T_k(\rho)} \big)\dive \, \bfu\n \dx\nonumber \\
	%& \leq \dfrac{2\mu+\lambda}{a} \limsup_{n\to+\infty} \norm{T_k(\rho\n) - T_k(\rho)}_{\xL^2(\Omega)} \norm{\dive \, \bfu\n}_{\xL^2(\Omega)} \nonumber \\
	%& \quad	+ \dfrac{2\mu+\lambda}{a} \limsup_{n\to+\infty} \norm{T_k(\rho) - \overline{T_k(\rho)}}_{\xL^2(\Omega)} \norm{\dive \, \bfu\n}_{\xL^2(\Omega)} \nonumber \\
	& \leq C \limsup_{n\to+\infty} \norm{T_k(\rho\n) - T_k(\rho)}_{\xL^2(\Omega)}.\nonumber
	\end{align}
	Where $C$ depends on the uniform bound on the sequence $(\norm{\dive \, \bfu\n}_{\xL^2(\Omega)})_{n\in\xN}$.
	Since $\gamma+1 > 2$, we obtain thanks to H\"older and Young inequalities
	\begin{align*}
	\limsup_{n\to+\infty} \int_{\Omega} |T_k(\rho\n) - T_k(\rho)|^{\gamma+1} \dx
	& \leq C + \dfrac{1}{2} \limsup_{n\to+\infty}  \norm{T_k(\rho\n) - T_k(\rho)}_{\xL^{\gamma+1}(\Omega)}^{\gamma+1}
	\end{align*}
	which achieves the proof.
\end{proof}

\paragraph{Renormalization equation associated with $T_k$.}
For any $k\in\xN^*$, define
\begin{equation}
L_k(t) = \begin{cases} 
\ t (\ln t - \ln k - 1), & \quad \text{if} ~ t \in [0,k), \\
\ -k,  & \quad \text{if} ~ t \in [k,+\infty),
\end{cases}
\end{equation}
which belongs to $\mathcal{C}^0([0,+\infty)) \cap \mathcal{C}^1((0,+\infty))$ and which is such that
\[
t L_k'(t) - L_k(t) = T_k(t) \quad \forall t \in [0,+\infty).
\]

\medskip
\begin{rmrk}
Note that function $L_k$ can be seen as a truncated version of the function $L(t) = t\ln t$ used in \eqref{eq:c-limit-renorm0} for large $\gamma$, up to the addition of the linear function $t \mapsto -(\ln k+1)t$.
%This additional linear function ensures that $L_k$ satisfies the conditions given in Remark \ref{rmk:renorm}. 
\end{rmrk}
% 
% \medskip
% The renormalized continuity equation satisfied by the couple $(\rho\n, \bfu\n)$ and the function $L_k$ then reads
% \begin{equation}\label{eq:c-Lk-n}
% \dive(L_k(\rho\n) \bfu\n) + T_k(\rho\n)\dive\, \bfu\n = 0 \quad \text{in} ~\mathcal{D}'(\mathbb{R}^3).
% \end{equation}
% The next proposition claims that the same equation holds for the limit couple $(\rho, \bfu)$.

\medskip
\begin{prop}\label{prop:c-renorm-Lk}
Let $\Omega$ be a Lipschitz bounded domain of $\xR^3$. Assume that $\gamma \in (\frac 32,3]$.
Let $(\rho\n,\bfu\n)_{n\in\xN}$ be the sequence defined in Theorem \ref{thm:stab} and let
$(\rho,\bfu)\in\xL^{3(\gamma-1)}(\Omega)\times\xbfH^1_0(\Omega)$ be its limit.
% defined in Proposition \ref{prop:lim:mass:mom}.
Then, for all $k\in\xN^*$, the following equations hold:
\begin{align}
&\dive(L_k(\rho\n) \bfu\n) + T_k(\rho\n)\dive\, \bfu\n = 0,  \quad &\text{in} ~\mathcal{D}'(\mathbb{R}^3), &\quad \forall n\in\xN. \label{eq:c-Lk-n} \\
&\dive(L_k(\rho) \bfu) + T_k(\rho)\dive\, \bfu = 0 \quad &\text{in} ~\mathcal{D}'(\mathbb{R}^3). \label{eq:c-Lk}
\end{align}
\end{prop}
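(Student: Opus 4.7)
My plan is to derive \eqref{eq:c-Lk-n} directly from the renormalized solution hypothesis on each $(\rho_n,\bfu_n)$, and to derive \eqref{eq:c-Lk} via a Di--Perna--Lions-type renormalization argument on the limit pair $(\rho,\bfu)$.

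\textbf{Step 1: admissibility of $L_k$.} A direct computation gives $L_k'(t) = \ln(t/k)$ for $t \in (0,k)$ and $L_k'(t) = 0$ for $t \geq k$. For $t < 1$, $|L_k'(t)| \leq |\ln t| + \ln k$, which is dominated by $c_k\,t^{-\lambda_0}$ for any $\lambda_0 \in (0,1)$. For $t \geq 1$, $|L_k'(t)| \leq \ln k$ (and vanishes for $t \geq k$), hence is dominated by $c_k\,t^{\lambda_1}$ for any $\lambda_1 \in \xR$; in particular one may choose $\lambda_1$ so that $\lambda_1 + 1 \leq 3(\gamma-1)/2$. The multiplicative constants depend on the fixed $k$, which is acceptable since $k$ is frozen throughout. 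A direct computation also yields the key algebraic identity $t\,L_k'(t) - L_k(t) = T_k(t)$ for all $t \geq 0$. Hence $L_k$ satisfies \eqref{eq:cond-renorm}.

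\textbf{Step 2: equation \eqref{eq:c-Lk-n}.} Each $(\rho_n, \bfu_n)$ being a renormalized weak solution by hypothesis, relation \eqref{eq:c-renormalized-eq} applied with $b = L_k$ gives $\dive(L_k(\rho_n)\,\bfu_n) + (\rho_n L_k'(\rho_n) - L_k(\rho_n))\,\dive\,\bfu_n = 0$ in $\mathcal{D}'(\xR^3)$, and the identity from Step 1 rewrites this as \eqref{eq:c-Lk-n}.

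\textbf{Step 3: equation \eqref{eq:c-Lk}.} Here the strategy is to show that the limit pair $(\rho,\bfu)$ itself satisfies the renormalized equation with $b = L_k$. Since $(\rho,\bfu)$ satisfies \eqref{eq:c-weak-mass} and $\bfu \in \xbfH^1_0(\Omega)$, extending $\rho$ and $\bfu$ by $0$ outside $\Omega$ yields $\dive(\rho\,\bfu) = 0$ in $\mathcal{D}'(\xR^3)$. When $\gamma \geq 5/3$, we have $\rho \in L^{3(\gamma-1)}_{\rm loc}(\xR^3) \subset L^2_{\rm loc}(\xR^3)$, so Lemma \ref{lem:diperna} applies directly with $b = L_k$ to give \eqref{eq:c-Lk}. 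The main obstacle is the range $\gamma \in (3/2,5/3)$, where Lemma \ref{lem:diperna} as stated does not cover the case. However, the specific structure of $L_k$---bounded, with $L_k'$ bounded and compactly supported in $[0,k]$---allows a refined Di--Perna--Lions argument. Setting $\rho_\eta = \rho * \bfomega_\eta$ and using the chain rule $L_k'(\rho_\eta)\,\gradi \rho_\eta = \gradi L_k(\rho_\eta)$, one obtains
\[
\dive\bigl(L_k(\rho_\eta)\,\bfu\bigr) + T_k(\rho_\eta)\,\dive\,\bfu = L_k'(\rho_\eta)\,r_\eta,
\]
where $r_\eta = \dive(\rho_\eta\,\bfu) - \dive((\rho\,\bfu)*\bfomega_\eta)$ is the Friedrichs commutator. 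The left-hand side passes to the limit in $\mathcal{D}'(\xR^3)$ as $\eta \to 0$ by dominated convergence, since $L_k$ and $T_k$ are bounded continuous functions and $\rho_\eta \to \rho$ almost everywhere. The heart of the proof lies in showing that the right-hand side vanishes in $\mathcal{D}'(\xR^3)$: although the Friedrichs commutator tends to $0$ only in $L^r_{\rm loc}$ with $r$ possibly smaller than $1$ when $\gamma < 5/3$, the uniform $L^\infty$-bound on $L_k'$ together with its compact support in $[0,k]$ enable one to conclude by localizing the analysis to regions where the mollified density stays bounded.
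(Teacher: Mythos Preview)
Your Steps 1 and 2 are correct and coincide with the paper's argument for \eqref{eq:c-Lk-n}.

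Step 3 contains a genuine gap for $\gamma\in(\tfrac32,\tfrac53)$. First, a self-contradiction: in Step 1 you correctly observed that $L_k'(t)=\ln(t/k)$ blows up as $t\to 0^+$, yet in Step 3 you invoke ``the uniform $L^\infty$-bound on $L_k'$''. There is none. This is why the paper regularizes by $L_{k,\delta}(t)=L_k(t+\delta)$. More importantly, even after this fix your localization heuristic does not close. The Friedrichs commutator $r_\eta=\dive(\rho_\eta\bfu)-(\dive(\rho\bfu))*\bfomega_\eta$ is built from the \emph{unmollified} $\rho$, and it is precisely the product $\rho\,\gradi\bfu$ (only in $L^r$ with $r<1$ when $\rho\notin L^2_{\rm loc}$) that prevents $r_\eta\to 0$ in $L^1_{\rm loc}$. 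Restricting to $\{\rho_\eta\le k\}$ gives no pointwise control on $\rho$ there: mollification averages, so $\rho$ can be arbitrarily large on sets where $\rho_\eta$ is small. Thus $L_{k,\delta}'(\rho_\eta)\,r_\eta$ cannot be shown to vanish in $\mathcal{D}'$ by this route. The obstruction is exactly the one discussed in Remark~\ref{rmk:renorm}: direct renormalization of $(\rho,\bfu)$ requires $\rho\in L^2_{\rm loc}$.

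The paper circumvents this by never renormalizing the limit $\rho$ directly. Instead it truncates \emph{at the approximate level}: from the renormalized property of $(\rho_n,\bfu_n)$ with $b=T_M$ one gets $\dive(T_M(\rho_n)\bfu_n)+[\rho_n[T_M]'_+(\rho_n)-T_M(\rho_n)]\dive\,\bfu_n=0$, then passes $n\to\infty$ to obtain an equation for $\overline{T_M(\rho)}\in L^\infty(\Omega)$. Now Lemma~\ref{lem:diperna} applies legitimately to $(\overline{T_M(\rho)},\bfu)$ with $b=L_{k,\delta}$, yielding \eqref{eq:c-limit-renorm-Mk}. The limit $M\to\infty$ is then taken, and the crucial step---showing the right-hand side of \eqref{eq:c-limit-renorm-Mk} vanishes---relies on the oscillation estimate of Lemma~\ref{lem:osc}, which itself rests on the effective viscous flux identity (Proposition~\ref{prop:c-eff-flux}). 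Finally $\delta\to 0$ by dominated convergence. The dependence on Lemma~\ref{lem:osc} is essential and is what your sketch is missing.
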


\medskip

\begin{proof}
Since $L_k\in \mathcal{C}^0([0,+\infty)) \cap \mathcal{C}^1((0,+\infty))$ satisfies \eqref{eq:cond-renorm}, 
%the conditions given in Remark \ref{rmk:renorm} 
and since $(\rho\n,\bfu\n)$ is a renormalized solution of \eqref{eq:pb}-\eqref{eq:pb_CL}-\eqref{hyp:mass}, in the sense of Definition \ref{df:c-weak-sol}, equation \eqref{eq:c-Lk-n} holds true.

\medskip
Let us prove that a similar equation is also satisfied for the limit couple $(\rho, \bfu)$.
Using the renormalization property~\eqref{eq:c-renormalized-eq-trunc} satisfied by $(\rho\n,\bfu\n)$ for the truncated function $T_M$, $M\in\xN^*$, we obtain:
 \[
 \dive\big(T_M(\rho\n)\bfu\n\big) 
 = -\big[\rho\n [T_M]'_+(\rho\n) - T_M(\rho\n)\big]\dive \, \bfu\n 
 \quad \text{in}~ \mathcal{D}'(\xR^3).
 \]
which yields as $n\rightarrow +\infty$ 
\begin{equation}
\label{eq:c-limit-renorm}
\dive\big(\overline{T_M(\rho)}\bfu\big) 
= -\overline{\big[\rho [T_M]'_+(\rho) - T_M(\rho)\big]\dive \, \bfu}
\quad \text{in}~ \mathcal{D}'(\xR^3).
\end{equation}
For $k\in\xN^*$ and $\delta > 0$, we introduce the regularized function $L_{k,\delta}$ defined as
\begin{equation}\label{eq:c-reg-Lk}
L_{k,\delta}(t) = L_k(t+\delta),
\end{equation}
the derivative of which is bounded close to $0$ unlike $L_k$.
Applying Lemma \ref{lem:diperna} (and the second part of Remark \ref{rmk:renorm}) to the pair $(\overline{T_M(\rho)},\bfu)$ (justified since $\overline{T_M(\rho)} \in L^\infty(\Omega)$ for $M$ fixed) with the function
$L_{k,\delta}$ and the source term $g = -\overline{\big[\rho [T_M]'_+(\rho) - T_M(\rho)\big]\dive \, \bfu} \in \xL^1_{\rm loc}(\mathbb{R}^3)$, we get:
\begin{equation}
\label{eq:c-limit-renorm-Mk}
\dive\big(L_{k,\delta}\big(\overline{T_M(\rho)}\big)\bfu\big) + T_{k,\delta}\big(\overline{T_M(\rho)}\big) \dive\, \bfu  
= -L_{k,\delta}'\big(\overline{T_M(\rho)}\big) \overline{\big[\rho [T_M]'_+(\rho) - T_M(\rho)\big]\dive \, \bfu}
\quad \text{in}~ \mathcal{D}'(\xR^3)
\end{equation}
where
\[
T_{k,\delta}(t) = t L_{k,\delta}'(t)  - L_{k,\delta}(t). 
\]
We now have to pass to the limits $M \rightarrow +\infty$, $\delta \rightarrow 0^+$.
Lemma \ref{lem:cvg-Tk} yields the strong convergence of $\overline{T_M(\rho)}$ to $\rho$ as $M\rightarrow +\infty$.
As a consequence, the left-hand side of \eqref{eq:c-limit-renorm-Mk} converges in $\mathcal{D}'(\mathbb{R}^3)$ to
\[
\dive\big(L_{k,\delta}(\rho) \bfu\big) + T_{k,\delta}(\rho) \dive\, \bfu. 
\]
Regarding the right-hand side
\[
-L_{k,\delta}'\big(\overline{T_M(\rho)}\big) \overline{\big[\rho [T_M]'_+(\rho) - T_M(\rho)\big]\dive \, \bfu},
\]
since $L_{k,\delta}'(t) = 0$ for $t > k$, we estimate its $\xL^1$ norm as follows
\[
\left|\int_{\Omega}
L_{k,\delta}'\big(\overline{T_M(\rho)}\big) \overline{\big[\rho [T_M]'_+(\rho) - T_M(\rho)\big]\dive \, \bfu} \dx
\right| 
\leq \max_{t \in [0,k]}|L_{k,\delta}'(t)| \int_{\Omega}{\left|  \overline{\big[\rho [T_M]'_+(\rho) - T_M(\rho)\big]\dive \, \bfu}\right| \mcal{X}_{\Omega_{M,k}} \dx }
\]
where $\Omega_{M,k} = \left \lbrace \bfx\in\Omega,\, \overline{T_M(\rho)}(\bfx) \leq k \right \rbrace$.
% where $\Omega_{M,k} = \mcal{X}_{\{\overline{T_M(\rho)} \leq k\}}$.
We then have
\begin{align}
 \max_{t \in [0,k]}|L_{k,\delta}'(t)|  & \int_{\Omega}{\left|  \overline{\big[\rho [T_M]'_+(\rho) - T_M(\rho)\big]\dive \, \bfu}\right| \mcal{X}_{\Omega_{M,k}} \dx } \nonumber \\
& \leq \max_{t \in [0,k]}|L_{k,\delta}'(t)| \ \liminf_{n \rightarrow +\infty} \int_{\Omega}{\left|  \big[\rho\n [T_M]'_+(\rho\n) - T_M(\rho\n)\big]\dive \, \bfu\n \right| \mcal{X}_{\Omega_{M,k}}  \dx} \nonumber \\
& \leq \max_{t \in [0,k]}|L_{k,\delta}'(t)| \ \limsup_{n \rightarrow +\infty} \int_{\Omega}{\left|  \big[\rho\n [T_M]'_+(\rho\n) - T_M(\rho\n)\big]\dive \, \bfu\n \right| \mcal{X}_{\Omega_{M,k}}  \dx} \nonumber \\
& \leq C \max_{t \in [0,k]}|L_{k,\delta}'(t)| \ \limsup_{n \rightarrow +\infty} \norm{T_M(\rho\n)\mcal{X}_{\Omega_{M,k} \cap \{\rho\n \geq M\} } }_{\xL^2(\Omega)},
\label{eq:c-ineg-Tk}
\end{align}
since the sequence $(\norm{\dive \, \bfu\n}_{\xL^2(\Omega)} )_{n\in\xN}$ is bounded.
We already know that $T_M(\rho\n)\mcal{X}_{\Omega_{M,k} \cap \{\rho\n \geq M\}}$ is controlled in $\xL^1(\Omega)$ since
\[
\norm{T_M(\rho\n)\mcal{X}_{\Omega_{M,k} \cap \{\rho\n \geq M\}}}_{\xL^1(\Omega)} 
 \leq \norm{\rho\n \mcal{X}_{\{\rho\n \geq M\}} }_{\xL^1(\Omega)} 
 \leq C M^{\frac{1}{3(\gamma-1)} - 1} \norm{ \rho\n}_{\xL^{3(\gamma-1)}(\Omega)}
 \leq C M^{\frac{1}{3(\gamma-1)} - 1},
\]
where the constant $C$ only depends on the uniform bounds on the sequences $(\rho\n^\star)_{n\in\xN}$ and $(\norm{\rho\n}_{\xL^{3(\gamma-1)}})_{n\in\xN}$. Therefore, by an interpolation inequality, we obtain:
\begin{align*}
\limsup_{n\to+\infty} & \norm{T_M(\rho\n)\mcal{X}_{\Omega_{M,k} \cap \{\rho\n \geq M\} } }_{\xL^2(\Omega)} \\
& \leq C \limsup_{n\to+\infty} \norm{T_M(\rho\n)\mcal{X}_{\Omega_{M,k} \cap \{\rho\n \geq M\} } }_{\xL^1(\Omega)}^{\frac{\gamma-1}{2\gamma}} \norm{T_M(\rho\n)\mcal{X}_{\Omega_{M,k} \cap \{\rho\n \geq M\} } }_{\xL^{\gamma+1}(\Omega)}^{\frac{\gamma+1}{2\gamma}} \\
& \leq  C M^{\frac{\gamma-1}{2\gamma} \big (\frac{1}{3(\gamma-1)}-1 \big )} \limsup_{n\to+\infty} \Big(\norm{\big(T_M(\rho\n) - \overline{T_M(\rho)}\big)\mcal{X}_{\Omega_{M,k} } }_{\xL^{\gamma+1}(\Omega)} 
+\norm{\overline{T_M(\rho)}\mcal{X}_{\Omega_{M,k} } }_{\xL^{\gamma+1}(\Omega)}
 \Big)^{\frac{\gamma+1}{2\gamma}} \\
& \leq  C M^{\frac{\gamma-1}{2\gamma} \big (\frac{1}{3(\gamma-1)}-1 \big )} \limsup_{n\to+\infty}
\Big(\norm{\big(T_M(\rho\n) - \overline{T_M(\rho)}\big) }_{\xL^{\gamma+1}(\Omega)}
	+ k|\Omega|^{\frac{1}{\gamma+1}} \Big)^{\frac{\gamma+1}{2\gamma}}\\
% & \leq  C M^{\frac{1}{3(\gamma-1)} - 1} \limsup_{n\to+\infty}
% \Big(\norm{\big(T_M(\rho\n) - T_M(\rho)\big)  }_{\xL^{\gamma+1}(\Omega)}+\norm{\big(\overline{T_M(\rho)} - T_M(\rho)\big)  }_{\xL^{\gamma+1}(\Omega)}
% 	+ k|\Omega|^{\frac{1}{\gamma+1}} \Big)^{\frac{\gamma+1}{2\gamma}}.
\end{align*}
Thanks to Lemma \ref{lem:osc}, we deduce that
\[
 \limsup_{n\to+\infty}  \norm{T_M(\rho\n)\mcal{X}_{\Omega_{M,k} \cap \{\rho\n \geq M\} } }_{\xL^2(\Omega)} \leq  C(k,\Omega) M^{\frac{\gamma-1}{2\gamma} \big (\frac{1}{3(\gamma-1)}-1 \big )}  \rightarrow 0 \quad \text{as} ~ M \rightarrow +\infty.
\]
Injecting in \eqref{eq:c-ineg-Tk}, we get
\begin{align*}
\max_{t \in [0,k]}|L_{k,\delta}'(t)| \int_{\Omega}{\left|  \overline{\big[\rho [T_M]'_+(\rho) - T_M(\rho)\big]\dive \, \bfu} \right| \mcal{X}_{\Omega_{M,k}} \dx  } \ 
\rightarrow 0 \quad \text{as} ~ M \rightarrow +\infty.
\end{align*}
Note that to get this result, we have been forced to regularize the function $L_k$ (see \eqref{eq:c-reg-Lk}) in order to control its derivative close to $0$.
Hence, passing to the limit $M\rightarrow +\infty$ in \eqref{eq:c-limit-renorm-Mk} we get
\begin{equation*}
\dive\big(L_{k,\delta}(\rho)\bfu) + T_{k,\delta}(\rho)\dive \, \bfu = 0, \quad \text{in $\mathcal{D}'(\xR^3)$}, \quad \forall \, k\in\xN^*,\delta > 0.
\end{equation*}
Now, observe that for all $t \in [0,+\infty)$
\begin{align*}
L_{k,\delta}(t) & \underset{\delta \rightarrow 0^+}{\longrightarrow} L_k(t), \\
T_{k,\delta}(t)=t L_{k,\delta}'(t) - L_{k,\delta}(t) & \underset{\delta \rightarrow 0^+}{\longrightarrow} t L_{k}'(t) - L_{k}(t) = T_k(t).
\end{align*}
Moreover, since for all $t\in[0,+\infty)$ and $\delta\in(0,1)$, we have
$|L_{k,\delta}(t)| \leq  k $ and
\begin{align*}
|T_{k,\delta}(t)| 
&= |T_{k}(t+\delta)  -  \delta L_k'(t+\delta)| \\
& \leq k + \delta |\ln(t+\delta)-\ln\,k| \mcal{X}_{\lbrace t+\delta\leq k \rbrace}
 \leq k + \delta \Big( |\ln\, \delta| + \frac{t}{\delta} + \ln\,k \Big) \mcal{X}_{\lbrace t+\delta\leq k \rbrace} 
 \leq C(k),
%& \leq \max_{t\in[\delta, +\infty)}  |t L_{k}'(t) - L_{k}(t)| + \delta |\ln \delta - \ln k| \leq C(k),
\end{align*}
we can pass to the limit $\delta \rightarrow 0^+$ thanks to the Lebesgue Dominated Convergence Theorem to get
\[
\dive\big(L_{k}(\rho)\bfu) + T_k(\rho) \dive \, \bfu = 0, \quad \text{in $\mathcal{D}'(\xR^3)$},
\] 
for all $k\in\xN^*$ which concludes the proof.
\end{proof}

\medskip
\paragraph{Strong convergence of the density}
\begin{prop}{\label{prop:c-cvg-rho}}
Let $\Omega$ be a Lipschitz bounded domain of $\xR^3$. Assume that $\gamma \in (\frac 32,3]$.
Let $(\rho\n,\bfu\n)_{n\in\xN}$ be the sequence defined in Theorem \ref{thm:stab} and let
$(\rho,\bfu)\in\xL^{3(\gamma-1)}(\Omega)\times\xbfH^1_0(\Omega)$ be its limit.
% defined in Proposition \ref{prop:lim:mass:mom}.
Up to extraction, the sequence $(\rho\n)_{n\in\xN}$ strongly converges towards $\rho$ in $\xL^q(\Omega)$ for all $q\in[1,3(\gamma-1))$. 
\end{prop}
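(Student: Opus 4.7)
\medskip
\textbf{Proof plan.} The strategy combines the renormalized equations~\eqref{eq:c-Lk-n} and~\eqref{eq:c-Lk} from Proposition~\ref{prop:c-renorm-Lk} with the weak compactness of the effective viscous flux (Proposition~\ref{prop:c-eff-flux}) and with the monotonicity trick already used in the proof of Lemma~\ref{lem:osc}. First, extending $\rho\n,\bfu\n,\rho,\bfu$ by zero outside $\Omega$ and testing~\eqref{eq:c-Lk-n} and~\eqref{eq:c-Lk} against a cutoff $\phi_R \in \mathcal{C}^\infty_c(\xR^3)$ equal to $1$ on a large ball $B_R \supset \overline{\Omega}$, I would exploit the fact that $\bfu\n$ and $\bfu$ vanish outside $\Omega$ to eliminate the divergence terms, obtaining
\[
\int_\Omega T_k(\rho\n)\dive\bfu\n\dx = 0 \quad \text{and} \quad \int_\Omega T_k(\rho)\dive\bfu\dx = 0 \qquad \forall k\in\xN^*,\,n\in\xN.
\]
Then, taking a sequence $\phi_j \in \mathcal{C}^\infty_c(\Omega)$ with $\phi_j\to \mathbf{1}_\Omega$ in~\eqref{eq:c-eff-flux} and inserting the two zero-integral relations above, I would derive the central identity
\[
a\int_\Omega\big(\overline{\rho^\gamma T_k(\rho)} - \overline{\rho^\gamma}\,\overline{T_k(\rho)}\big)\dx = (2\mu+\lambda)\int_\Omega \dive\bfu\,\big(T_k(\rho) - \overline{T_k(\rho)}\big)\dx.
\]

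The main technical step, and in my opinion the principal obstacle, is to show that the right-hand side tends to $0$ as $k\to+\infty$. By Cauchy-Schwarz it is bounded by $C\|T_k(\rho) - \overline{T_k(\rho)}\|_{\xL^2(\Omega)}$, and I would interpolate $\xL^2$ between $\xL^1$---where Lemma~\ref{lem:cvg-Tk} yields the bound $C k^{\frac{1}{3(\gamma-1)} - 1}$ going to zero---and $\xL^{\gamma+1}$---where Lemma~\ref{lem:osc} provides a uniform bound. This gives
\[
\|T_k(\rho) - \overline{T_k(\rho)}\|_{\xL^2(\Omega)} \leq C k^{\frac{4-3\gamma}{6\gamma}},
\]
which vanishes thanks to $\gamma > 3/2 > 4/3$. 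Notice that the interplay between Lemma~\ref{lem:cvg-Tk} and Lemma~\ref{lem:osc} is exactly what makes this bound possible in the range $\gamma \in (\frac 32,3]$; without Lemma~\ref{lem:osc}, the naive $\xL^\infty$ bound $\|\overline{T_k(\rho)}-T_k(\rho)\|_{\xL^\infty}\leq 2k$ would be too crude to produce a vanishing quantity.

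Combining this with the pointwise inequality $|T_k(r_1) - T_k(r_2)|^{\gamma+1} \leq (r_1^\gamma - r_2^\gamma)(T_k(r_1) - T_k(r_2))$ used in Lemma~\ref{lem:osc}, together with the convexity facts $\overline{\rho^\gamma}\geq\rho^\gamma$ and $\overline{T_k(\rho)}\leq T_k(\rho)$, I would deduce
\[
\limsup_{n\to+\infty}\|T_k(\rho\n) - T_k(\rho)\|_{\xL^{\gamma+1}(\Omega)}^{\gamma+1} \leq \int_\Omega\big(\overline{\rho^\gamma T_k(\rho)} - \overline{\rho^\gamma}\,\overline{T_k(\rho)}\big)\dx \underset{k\to+\infty}{\longrightarrow} 0.
\]

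To conclude, by the triangle inequality and H\"older on the bounded domain $\Omega$,
\[
\|\rho\n - \rho\|_{\xL^1(\Omega)} \leq \|\rho\n - T_k(\rho\n)\|_{\xL^1(\Omega)} + C\|T_k(\rho\n) - T_k(\rho)\|_{\xL^{\gamma+1}(\Omega)} + \|T_k(\rho) - \rho\|_{\xL^1(\Omega)}.
\]
Lemma~\ref{lem:cvg-Tk} sends the first and third terms to $0$ uniformly in $n$ as $k\to+\infty$, while the previous step sends $\limsup_n$ of the middle term to $0$ as well. Hence $\rho\n \to \rho$ strongly in $\xL^1(\Omega)$. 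Combined with the uniform bound in $\xL^{3(\gamma-1)}(\Omega)$ given by Proposition~\ref{unif:est:cont}, a standard interpolation argument (or Vitali's theorem applied to a pointwise convergent subsequence) then upgrades this to strong convergence in $\xL^q(\Omega)$ for every $q \in [1, 3(\gamma-1))$.
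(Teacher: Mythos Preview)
Your proof is correct and follows essentially the same approach as the paper's own proof: both integrate the renormalized equations~\eqref{eq:c-Lk-n}--\eqref{eq:c-Lk} to obtain the vanishing of $\int_\Omega T_k(\rho\n)\dive\bfu\n$ and $\int_\Omega T_k(\rho)\dive\bfu$, combine this with the effective viscous flux identity, and then control $\|T_k(\rho)-\overline{T_k(\rho)}\|_{\xL^2}$ by interpolating between the $\xL^1$ decay of Lemma~\ref{lem:cvg-Tk} and the uniform $\xL^{\gamma+1}$ bound of Lemma~\ref{lem:osc}. The only cosmetic difference is that the paper appeals directly to inequality~\eqref{eq:c-appl-eff-flux} from the proof of Lemma~\ref{lem:osc}, whereas you re-derive the equivalent ``central identity'' explicitly; the resulting bound and the triangle-inequality conclusion are identical.
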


\begin{proof}
% The renormalized continuity equation applied to the couple $(\rho\n, \bfu\n)$ reads
% \[
% \dive\big(L_k(\rho\n) \bfu\n\big) + T_k(\rho\n) \dive\, \bfu\n = 0 \quad \text{in}~ \mathcal{D}'(\mathbb{R}^3).
% \]
% So by integration
% \begin{equation}
% \int_{\Omega}T_k(\rho\n) \dive\, \bfu\n \dx = 0.
% \end{equation}
% On the other hand, thanks to the previous Proposition \ref{prop:c-renorm-Lk} we have
% \begin{align*}
% & \dive\big(L_k(\rho) \bfu\big) + T_k(\rho) \dive\, \bfu = 0 \quad \text{in}~ \mathcal{D}'(\mathbb{R}^3)
% \end{align*}
% and thus
% \begin{equation}
% \int_{\Omega}T_k(\rho) \dive\, \bfu \dx = 0.
% \end{equation}
Integrating the renormalized continuity equations \eqref{eq:c-Lk-n} and \eqref{eq:c-Lk} and summing, one obtains:
\[
 \int_{\Omega}T_k(\rho\n) \dive\, \bfu\n \dx - \int_{\Omega}T_k(\rho) \dive\, \bfu \dx = 0, \qquad \forall n\in\xN.
\]
We then use this identity in inequality \eqref{eq:c-appl-eff-flux} to deduce that
\begin{align*}
\limsup_{n\to+\infty} & \int_{\Omega} |T_k(\rho\n) - T_k(\rho)|^{\gamma+1}\dx \\ 
& \leq \dfrac{2\mu+\lambda}{a} \limsup_{n\to+\infty} \int_{\Omega}\big(T_k(\rho\n) - \overline{T_k(\rho)} \big)\dive \, \bfu\n \dx \\
%&  = -\dfrac{2\mu+\lambda}{a}  \int_{\Omega}\overline{T_k(\rho)} \, \dive \, \bfu \dx \\
&  = \dfrac{2\mu+\lambda}{a}  \int_{\Omega}(T_k(\rho)-\overline{T_k(\rho)}) \, \dive \, \bfu \dx
+ \limsup_{n\to+\infty} \Big ( \int_{\Omega}T_k(\rho\n) \dive\, \bfu\n \dx - \int_{\Omega}T_k(\rho) \dive\, \bfu \dx \Big ) \\
&  = \dfrac{2\mu+\lambda}{a}  \int_{\Omega}(T_k(\rho)-\overline{T_k(\rho)}) \, \dive \, \bfu \dx.
\end{align*}
Using Lemma \ref{lem:cvg-Tk} we infer that
\begin{align*}
\left| \int_{\Omega}(T_k(\rho)-\overline{T_k(\rho)}) \, \dive \, \bfu \dx \right| 
& \leq \norm{\dive\, \bfu}_{\xL^2(\Omega)} \norm{T_k(\rho)-\overline{T_k(\rho)}}_{\xL^2(\Omega)} \\
& \leq C \norm{T_k(\rho)-\overline{T_k(\rho)}}_{\xL^1(\Omega)}^{\frac{\gamma-1}{2\gamma}} \norm{T_k(\rho)-\overline{T_k(\rho)}}_{\xL^{\gamma+1}(\Omega)}^{\frac{\gamma+1}{2\gamma}} \\
& \leq   C \Big(\norm{T_k(\rho)-\rho}_{\xL^1(\Omega)} +
\norm{\overline{T_k(\rho)}-\rho}_{\xL^1(\Omega)}\Big)^{\frac{\gamma-1}{2\gamma}}  \norm{T_k(\rho)-\overline{T_k(\rho)}}_{\xL^{\gamma+1}(\Omega)}^{\frac{\gamma+1}{2\gamma}} \\
& \leq  C k^{\frac{\gamma-1}{2\gamma} \big (\frac{1}{3(\gamma-1)}-1 \big )}  \norm{T_k(\rho)-\overline{T_k(\rho)}}_{\xL^{\gamma+1}(\Omega)}^{\frac{\gamma+1}{2\gamma}} \\
& \leq C k^{\frac{\gamma-1}{2\gamma} \big (\frac{1}{3(\gamma-1)}-1 \big )} \Big( \limsup_{n \rightarrow +\infty} \norm{T_k(\rho\n) - T_k(\rho)}_{\xL^{\gamma+1}(\Omega)}\Big )^{\frac{\gamma+1}{2\gamma}}.
\end{align*}
As a consequence of Lemma \ref{lem:osc}, we have
\begin{equation*}
\lim_{k \rightarrow +\infty} \limsup_{n\to+\infty} \int_{\Omega} |T_k(\rho\n) - T_k(\rho)|^{\gamma+1}\dx = 0,
\end{equation*}
and thus
\begin{equation}\label{eq:est-Tk}
\lim_{k \rightarrow +\infty} \limsup_{n\to+\infty}  \norm{T_k(\rho\n) - T_k(\rho)}_{\xL^1(\Omega)} = 0.
\end{equation}
We conclude to the strong convergence of the density by writing
\[
\norm{\rho - \rho\n}_{\xL^1(\Omega)} 
\leq \norm{\rho\n - T_k(\rho\n)}_{\xL^1(\Omega)} + \norm{T_k(\rho\n) - T_k(\rho)}_{\xL^1(\Omega)} + \norm{T_k(\rho) - \rho}_{\xL^1(\Omega)}.
\]
Passing to the limit superior $n \rightarrow +\infty$ in this inequality, using again Lemma \ref{lem:cvg-Tk} and the previous estimate \eqref{eq:est-Tk} to then pass to the limit $k \rightarrow +\infty$ yields the strong convergence of the density in $\xL^1(\Omega)$ and therefore in $\xL^q(\Omega)$ for all $q\in[1,3(\gamma-1))$. 
\end{proof}

\subsection{Elements for the construction of weak solutions}
\label{sec:elements:construct}
The previous subsections were concerned with the stability of weak solutions of Problem \eqref{eq:pb}-\eqref{eq:pb_CL}-\eqref{hyp:mass}.
An important step is the effective construction of such weak solutions by means of successive approximations.
This construction is sketched by Lions in \cite{lio-98-comp} for the case $\gamma > \frac{5}{3}$ and detailed by Novo and Novotn{\' y} in \cite{novo2002} for the case $\gamma > \frac{3}{2}$.
The approximation procedure is usually decomposed into three steps:
\begin{itemize}
	\item addition in the momentum equation of an \emph{artificial pressure} term $\delta \gradi \rho^{\Gamma}$ with $\Gamma$ sufficiently large, namely $\Gamma > 3$;
	
	\item addition of a \emph{relaxation term} $\alpha (\rho -\rho^\star)$ in the mass equation in order to ensure that the total mass constraint \eqref{hyp:mass} is satisfied at the approximate level;
	
	\item addition of a diffusion or \emph{regularization term} ({\it e.g.} $-\varepsilon \Delta \rho$) in the mass equation which regularizes the density.
\end{itemize}
As a consequence of the modification of the mass equation, the momentum equation can also involve additional perturbation terms depending on $\alpha$ and $\varepsilon$, in order to ensure the preservation of the energy inequality (see details in \cite{novo2002} or \cite{novotny2004}). 
The parameters $\delta, \alpha, \varepsilon$ being fixed, the existence of weak solutions is obtained by a fixed point argument. 
Then, the proof consists in passing to the limit successively with respect to $\varepsilon$, $\alpha$ and then finally with respect to $\delta$. 
%This gives the existence result for $\gamma > \frac{5}{3}$. 
%The improvement due to Novo and Novotn{\' y} \cite{novo2002} relies on the use of truncation operators introduced by Feireisl in \cite{feireisl2001}, but this case is beyond the scope of this paper and left for future work.  

\medskip
In the next section, we present our numerical scheme which essentially reproduces at the discrete level the previous three approximation terms. 
Nevertheless, the parameters $\varepsilon, \, \alpha,\, \delta$ are no more independent in the discrete case,
they shall all depend on the mesh size and converge to $0$ as the mesh size tends to $0$.
In that sense, the convergence result that we obtain in the next sections can be seen as an alternative proof of existence of weak solutions to the stationary problem \eqref{eq:pb}-\eqref{eq:pb_CL}-\eqref{hyp:mass}.

%%%%%%%%%%%%%%%%%%%%%%%%%%%%%%%%%%%%%%%
%%%%%%%%%%%%%%%%%%%%%%%%%%%%%%%%%%%%%%%

\section{The discrete setting, presentation of the numerical scheme}\label{sec:discrete}

\subsection{Meshes and discretization spaces}\label{sec:space_disc}

Let $\Omega$ be an open bounded connected subset of $\xR^d$, $d=2$ or $3$.
We assume that $\Omega$ is polygonal if $d=2$ and polyhedral if $d=3$.

\begin{defi}[Staggered mesh] \label{def:disc}
A staggered discretization of $\Omega$, denoted by $\disc$, is given by a pair $\disc=(\mesh,\edges)$, where:
\begin{itemize}
\item 
$\mesh$, the so-called primal mesh, is a finite family composed of non empty simplices.
The primal mesh $\mesh$ is assumed to form a partition of $\Omega$ : $\overline{\Omega}= \displaystyle{\cup_{K \in \mesh} \overline K}$.
For any simplex $K\in\mesh$, let $\dv K  = \overline K\setminus K$ be the boundary of $K$, which is the union of cell faces.
We denote by $\edges$ the set of faces of the mesh, and we suppose that two neighboring cells share a whole face: for all $\edge\in\edges$, either $\edge\subset \dv\Omega$ or there exists $(K,L)\in \mesh^2$ with $K \neq L$ such that $\overline K \cap \overline L  = \edge$; we denote in the latter case $\edge = K|L$.
We denote by $\edgesext$ and $\edgesint$ the set of external and internal faces: $\edgesext=\lbrace \edge \in \edges, \edge \subset \dv \Omega \rbrace$ and $\edgesint=\edges \setminus \edgesext$.
For $K \in \mesh$, $\edges(K)$ stands for the set of faces of $K$. The unit vector normal to $\edge \in \edges(K)$ outward $K$ is denoted by $\bfn_{K,\edge}$.
In the following, the notation $|K|$ or $|\edge|$ stands indifferently for the $d$-dimensional or the $(d-1)$-dimensional measure of the subset $K$ of $\xR^d$ or $\edge$ of $\xR^{d-1}$ respectively.
\medskip
\item We define a dual mesh associated with the faces $\edge\in\edges$ as follows.
For $K\in\mesh$ and $\edge \in \edges(K)$, we define $D_{K,\edge}$ as the cone with basis $\edge$ and with vertex the mass center of $K$ (see Figure \ref{fig:mesh}).
%For $K\in\mesh$ and $\edge\in\edges(K)$, $D_{K,\edge}$ is the cone with basis $\edge$ and with vertex the mass center of $K$ (see Figure \ref{fig:mesh}).
We thus obtain a partition of $K$ in $m$ sub-volumes, where $m$ is the number of faces of $K$, each sub-volume having the same measure $| D_{K,\edge}|= |K|/m$.
%We thus obtain a partition of $K$ in $d+1$ sub-volumes, each sub-volume having the same measure $| D_{K,\edge}|= |K|/(d+1)$.
The volume $D_{K,\edge}$ is referred to as the half-diamond cell associated with $K$ and $\edge$.
For $\edge \in \edgesint$, $\edge=K|L$, we now define the diamond cell $D_\edge$ associated with $\edge$ by $D_\edge=D_{K,\edge} \cup D_{L,\edge}$. For $\edge\in\edgesext\cap\edges(K)$, we define $D_\edge=D_{K,\edge}$.
We denote by $\edgesd(D_\edge)$ the set of faces of $D_\edge$, and by $\edged=D_\edge|D_{\edge'}$ the face separating two diamond cells $D_\edge$ and $D_{\edge'}$.
As for the primal mesh, we denote by $\edgesdint$ the set of dual faces included in the domain $\Omega$ and by $\edgesdext$ the set of dual faces lying on the boundary $\dv \Omega$.
In this latter case, there exists $\edge\in\edgesext$ such that $\edged=\edge$.
\end{itemize}
\end{defi}

\begin{figure}[tb]
\begin{center}
% \begin{tikzpicture}[scale=0.8]
% % Faces duales
% \draw[-, fill=gray] (0,0)--(1,3)--(-3,5)--(-8/3,2)--(0,0);
% \draw[-, fill=light-gray] (0,0)--(13/3,4/3)--(6,4)--(1,3)--(0,0);
% \draw[-] (-5,1)--(-8/3,2);
% \draw[-] (1,3)--(6,4);
% \draw[-] (7,0)--(13/3,4/3);
% % Aretes des triangles
% \draw[-, line width=0.7mm] (0,0)--(6,4)--(-3,5)--(-5,1)--(0,0);
% \draw[-, line width=0.7mm] (0,0)--(-3,5);
% %\draw[-, line width=0.7mm] (-5,1)--(-1,-4)--(0,0);
% \draw[-, line width=0.7mm] (6,4)--(7,0)--(0,0);
% %\draw[-, line width=0.7mm] (-1,-4)--(7,0);
% %Flux primaux
% \pgfmathsetmacro{\R}{sqrt(10*10+6*6)}
% \draw[<-, line width=0.7mm, blue] (-3/2-5/\R,5/2-3/\R)--(-3/2+5/\R,5/2+3/\R) node[left=4ex] {$\bfn_{K,\edge}$};
% % Texte
%  \path (1,4) node[] {$K$};
%  \path (-4.5,1.5) node[] {$L$};
%  \path (6.3,0.7) node[] {$M$}; 
%  \path (-2.2,4.3) node[] {$D_{\edge}$};
%  \path (4.1,1.5) node[] {$D_{\edge'}$};
%  \path (-.7,1.6) node[rotate=-58] {$\edge=K|L$};
%  \path (3.1,2.4) node[rotate=32] {$\edge'=K|M$};
%  \path (0.4,2) node[rotate=73] {$\edged=D_\edge|D_{\edge'}$};
% \end{tikzpicture} 
\includegraphics[scale=1]{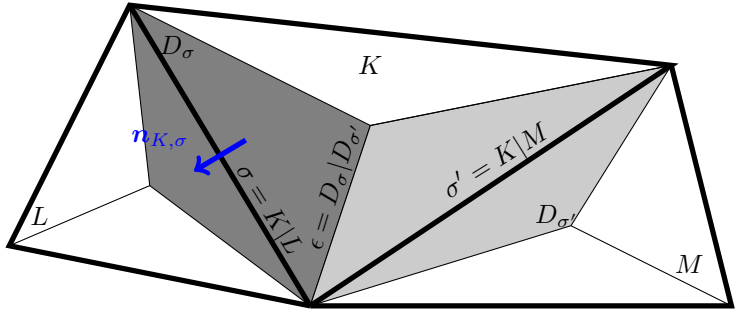}
\end{center}
\caption{\label{fig:mesh}Notations for primal and dual cells. Primal cells are delimited with bold lines, dual cells are in grey.}
\end{figure}

\medskip
\begin{defi}[Size of the discretization] 
\label{defi:pas}
Let $\disc=(\mesh,\edges)$ be a staggered discretization of $\Omega$.
For every $K\in\mesh$, we denote $h_K$ the diameter of $K$ (\emph{i.e.} the 1D measure of the largest line segment included in $K$).
%and for every $\edge\in\edges$, we denote $h_\edge$ the diameter of $\edge$. 
The size of the discretization is defined by:
\[
 h_\mesh = \max\limits_{K\in\mesh} \ h_K.
\]
\end{defi}

\medskip
\begin{defi}[Regularity of the discretization] 
Let $\disc=(\mesh,\edges)$ be a staggered discretization of $\Omega$.
For every $K\in\mesh$, denote $\varrho_K$ the radius of the largest ball included in $K$. The regularity parameter of the discretization is defined by:
\begin{equation}
\label{eq:reg}
 \theta_\mesh = \max\limits \Big \lbrace \frac{h_K}{\varrho_K}, \, K\in\mesh \Big \rbrace \cup \Big \lbrace \frac{h_K}{h_L},\, \frac{h_L}{h_K}, \, \edge=K|L\in\edgesint \Big \rbrace.
\end{equation}
\end{defi}

\medskip
Relying on Definition \ref{def:disc}, we now define a staggered space discretization.
The degrees of freedom for the density (\ie\ the discrete density unknowns) are associated with the cells of the mesh $\mesh$:
\[
\big\{ \rho_K,\ K \in \mesh \big\}.
\]
The discrete density unknowns are associated with piecewise constant functions on the cells of the primal mesh.
\begin{defi}
\label{def:space-p}
Let $\disc=(\mesh,\edges)$ be a staggered discretization of $\Omega$.
We denote $\xL_\mesh(\Omega)$ the space of scalar functions that are constant on each primal cell $K\in\mesh$. For $\rho\in \xL_\mesh(\Omega)$ and $K\in\mesh$, we denote $\rho_K$ the constant value of $\rho$ on $K$. We denote $\xL_{\mesh,0}(\Omega)$ the subspace of $\xL_\mesh(\Omega)$ composed of zero average functions over $\Omega$.
\end{defi}

\medskip
The degrees of freedom for the velocity are associated with the faces of the mesh $\mesh$ or equivalently with the cells of the dual mesh $D_\edge$, $\edge\in\edges$ so the set of discrete velocity unknowns reads:
\[
\lbrace \bfu_\edge \in \xR^d,\ \edge \in \edges\rbrace.
\]
The discrete velocity unknowns are associated with the \emph{Crouzeix-Raviart} finite element.
For all $K\in\mesh$, the restriction of the discrete velocity belongs to $P_1(K)$ the space of polynomials of degree less than one defined on $K$.

\medskip
The space of discrete velocities is given in the following definition.

\medskip
\begin{defi}
\label{defi:space-u:M}
Let $\disc=(\mesh,\edges)$ be a staggered discretization of $\Omega$ as defined in Definition \ref{def:disc}.
We denote $\xH_{\mesh}(\Omega)$ the space of functions $u$ such that $u_{|K} \in P_1(K)$ for all $K\in\mesh$ and such that:
\begin{equation}
\label{cont:bfu}
\frac{1}{|\edge|}\int_\edge [u]_\edge \dedge(\bfx)= 0, \qquad \forall \edge\in\edgesint, 
\end{equation}
where $[u]_\edge$ is the jump of $u$ through $\edge$ which is defined on $\edge=K|L$ by $[u]_\edge= u_{|L}- u_{|K}$.
We define $\xH_{\mesh,0}(\Omega)\subset\xH_{\mesh}(\Omega)$ the subspace of $\xH_{\mesh}(\Omega)$ composed of functions the degrees of freedom of which are zero over $\dv \Omega$, \emph{i.e.} the functions $u\in\xH_{\mesh}(\Omega)$ such that $\frac{1}{|\edge|}\int_\edge u \dedge(\bfx) = 0$ for all $\edge\in\edgesext$.
Finally, we denote $\xbfH_{\mesh}(\Omega):=\xH_{\mesh}(\Omega)^d$ and $\xbfH_{\mesh,0}(\Omega):=\xH_{\mesh,0}(\Omega)^d$.
\end{defi}

\medskip
For a discrete velocity field $\bfu\in\xbfH_{\mesh}(\Omega)$ and $\edge\in\edges$, the degree of freedom associated with $\edge$ is given by:
\begin{equation}
 \label{DOF:u}
\bfu_\edge=\frac{1}{|\edge|}\int_\edge \bfu \dedge(\bfx). 
\end{equation}
Although $\bfu\in\xbfH_\mesh(\Omega)$ is discontinuous across an internal face $\edge\in\edgesint$, the definition of $\bfu_\edge$ is unambiguous thanks to \eqref{cont:bfu}.

\subsection{The numerical scheme} \label{sec:scheme}
Let $\Omega$ be a polyhedral domain of $\xR^d$. Let $\disc=(\mesh,\edges)$ be a staggered discretization of $\Omega$ as defined in Definition \ref{def:disc}. The continuity equation is discretized on the primal mesh, while the momentum balance is discretized on the dual mesh.
The scheme reads as follows:

\medskip
\emph{Solve for $\rho\in\xL_\mesh(\Omega)$ and $\bfu\in\xbfH_{\mesh,0}(\Omega)$:}
\begin{subequations}\label{eq:sch}
\begin{align}
\label{eq:sch_mass}  & 
\dive_\mesh(\rho \bfu)  + h_\mesh^{\xi_1} \, (\rho-\rho^\star) 
-
h_\mesh^{\xi_2} \, \Delta_{\frac{1+\eta}{\eta},\mesh}(\rho)= 0, \\[2ex] 
\label{eq:sch_mom}  &
\divv_\edges(\rho \bfu\otimes \bfu) - \mu \lapi_\edges \bfu - (\mu+\lambda) (\gradi \circ \dive)_\edges \bfu + a\gradi_\edges (\rho^\gamma)   + h_\mesh^{\xi_3} \,  \gradi_\edges (\rho^\Gamma)   = \widetilde{\Pi}_\edges\bff,
\end{align}
\end{subequations}
where $\eta=\frac{2\gamma-3}{\gamma}$. 

\medskip
The discrete space differential operators involved in \eqref{eq:sch_mass} and \eqref{eq:sch_mom}, as well as their main properties, are described in the following lines. The positive constants $\Gamma$ and $(\xi_1,\xi_2,\xi_3)$ will be determined so as to ensure the convergence of the numerical solution towards a weak solution of \eqref{eq:pb}-\eqref{eq:pb_CL}-\eqref{hyp:mass}.

\medskip

\paragraph{Mass convection operator --}
Given discrete density and velocity fields $\rho\in\xL_\mesh(\Omega)$ and $\bfu\in\xbfH_{\mesh,0}(\Omega)$, the discretization of the mass convection term is given by:
\begin{equation}
\label{div_mass}
\dive_\mesh(\rho \bfu)(\bfx) = \sum_{K\in\mesh} \, \frac 1 {|K|} \Big (\sum_{\edge \in\edges(K)} F_{K,\edge}(\rho,\bfu) \Big )\, \Ind_K(\bfx),
\end{equation}
where $\Ind_K$ is the characteristic function of the subset $K$ of $\Omega$. 
The quantity $F_{K,\edge}(\rho,\bfu)$ stands for the mass flux across $\edge$ outward $K$.
By the impermeability boundary conditions, it vanishes on external faces and is given on internal faces by:
\begin{equation}
\label{eq:def_FKedge}
F_{K,\edge}(\rho,\bfu)= 
|\edge|\, \rho_\edge\, \bfu_\edge\cdot \bfn_{K,\edge} 
, \qquad \forall \edge\in\edgesint,\, \edge=K|L.
\end{equation}
The density at the face $\edge=K|L$ is approximated by the upwind technique, \ie
\begin{equation}\label{df:rho-upwind}
\rho_\edge = \begin{cases}
 \rho_K  \quad & \text{if} \quad  \bfu_\edge\cdot \bfn_{K,\edge} \geq 0, \\
 \rho_L  \quad & \text{otherwise}.
\end{cases}
\end{equation}

%----------------
%
\paragraph{Stabilization terms in the mass equation --}
The discrete mass equation involves two stabilization terms. The first stabilization term is there to ensure the total mass constraint at the discrete level \eqref{hyp:mass}:
\[
h_\mesh^{\xi_1} \, (\rho(\bfx)-\rho^\star) = h_\mesh^{\xi_1} \, \sum_{K\in\mesh} (\rho_K-\rho^\star) \Ind_K(\bfx).
\]

\medskip
The second stabilization term in the discrete mass equation \eqref{eq:sch_mass} is defined as follows:
\begin{equation}
\label{Tstab2}
- \Delta_{\frac{1+\eta}{\eta},\mesh}(\rho)(\bfx) =  \sum_{K\in\mesh} \, \frac 1 {|K|} \Big (\sum_{\substack{\edge \in\edges(K)\cap \edgesint \\ \edge= K|L}} |\edge| \,\Big(\dfrac{|\edge|}{|D_\edge|}\Big)^\frac{1}{\eta}\,|\rho_K-\rho_L|^{\frac{1}{\eta}-1}\,(\rho_K-\rho_L) \Big )\, \Ind_K(\bfx).
\end{equation}
Its aim is to provide a control on a discrete analogue of the $\xW^{1,\frac{1+\eta}{\eta}}(\Omega)$ semi-norm of $\rho$ by some (negative) power of the discretization parameter $h_\mesh$. 
This control appears to be necessary in the convergence analysis, when passing to the limit in the equation of state, see Remark \ref{rmrk:MAC}.

%\begin{rmrk}
%	\label{rmrk:pLap}
%	This additional term is a discretization of the $p$-Laplacian $\Delta_p v =\dive(|\gradi v|^{p-2}\gradi v)$, with here $p = \frac{1+\eta}{\eta} = \frac{3(\gamma-1)}{2\gamma -3}$ is the dual coefficient of $1+\eta=\frac{3(\gamma-1)}{\gamma}$.
%	Observe that for $\gamma \in (\frac 32,3]$, we have $ p \in [2,+\infty)$.
%	For $\gamma > 3$, we would get $p < 2$ and thus a singularity in the stabilization term.
%	Nevertheless, following the proof of Eymard et al. \cite{eym-10-conv-isen}, one can show the convergence of the numerical scheme for $d=3$, $\gamma > 3$ and $d=2$, $\gamma>2$ with rather the following stabilization term, which is a discrete analogue of the Laplacian of the density:
%	\begin{equation}
%	\label{Tstab2bis}
%	- h_\mesh^{\xi_2} \, \Delta_\mesh \rho (\bfx)  
%	= h_\mesh^{\xi_2} \, \sum_{K\in\mesh} \, \frac 1 {|K|} \Big (\sum_{\substack{\edge \in\edges(K)\cap \edgesint \\ \edge= K|L}} |\edge| \,\dfrac{|\edge|}{|D_\edge|}\,(\rho_K-\rho_L) \Big )\, \Ind_K(\bfx),
%	\end{equation}
%	For $d=2$ and $1<\gamma\leq 2$, a possible stabilization term would be a discrete analogue of $\dive(\rho^{2-\gamma}\gradi \rho)$ as proposed in \cite{eym-10-conv-isen}.
%\end{rmrk}

% ---------------
%
\paragraph{Velocity convection operator --} 
\label{sec:vel_conv_op}

Given discrete density and velocity fields $\rho\in\xL_\mesh(\Omega)$ and $\bfu\in\xbfH_{\mesh,0}(\Omega)$, the discretization of the mass convection term is given by:
\begin{equation} \label{eq:div_conv}
\divv_\edges(\rho \bfu\otimes \bfu)(\bfx) = \sum_{\edge\in\edgesint}\frac 1 {|D_\edge|} \Big ( \sum_{\edged\in\edgesd(D_\edge)} \fluxd(\rho,\bfu)\ \bfu_\edged \Big ) \, \Ind_{\Ds}(\bfx).
\end{equation}
$\fluxd(\rho,\bfu)$ is the mass flux across the edge $\edged$ of the dual cell $D_\edge$. Its value is zero if $\edged\in\edgesdext$. Otherwise, it is defined as a linear combination, with constant coefficients, of the primal mass fluxes at the neighboring faces. 
For $K \in \mesh$ and $\edge \in \edges(K)$, let $\xi_K^\edge$ be given by:
\begin{equation}
\label{eq:xiksigma}
\xi_K^\edge=\frac{|D_{K,\edge}|}{|K|}=\frac{1}{d+1},
\end{equation}
so that $\sum_{\edge \in \edges(K)} \xi_K^\edge=1$.
Then the mass fluxes through the inner dual faces are calculated from the primal mass fluxes $F_{K,\edge}(\rho,\bfu)$ as follows. 
We first incorporate the second stabilization term (see \eqref{Tstab2}) into the primal mass fluxes by 
defining $\overline{F}_{K,\edge}(\rho,\bfu)$ as follows:
\begin{equation}
\label{eq:def_FKedgebar}
\overline{F}_{K,\edge}(\rho,\bfu)=  
F_{K,\edge}(\rho,\bfu) + h_\mesh^{\xi_2} \, |\edge| \,\Big(\dfrac{|\edge|}{|D_\edge|}\Big)^\frac{1}{\eta}\,|\rho_K-\rho_L|^{\frac{1}{\eta}-1}\,(\rho_K-\rho_L)
, \quad \forall \edge\in\edgesint,\, \edge=K|L.
\end{equation}
The dual mass fluxes $\fluxd(\rho,\bfu)$ are then computed to as to satisfy the following three conditions:
\begin{itemize}
\item[(H1)] The discrete mass balance over the half-diamond cells is satisfied, in the following sense.
For all primal cell $K$ in $\mesh$, the set $(\fluxd(\rho,\bfu))_{\edged\subset K}$ of dual fluxes included in $K$ solves the following linear system
\begin{equation}\label{eq:F_syst}
\overline{F}_{K,\edge}(\rho,\bfu) + \sum_{\edged \in \edgesd(D_\sigma),\ \edged \subset K} F_{\edge,\edged}(\rho,\bfu)=
\xi_K^\edge \sum_{\edge' \in \edges(K)} \overline{F}_{K,\edge'}(\rho,\bfu), \quad \edge \in \edges(K).
\end{equation}
\item[(H2)] The dual fluxes are conservative, \ie\ for any dual face $\edged=D_\edge|D_{\edge'}$, we have $F_{\edge,\edged}(\rho,\bfu)=-F_{\edge',\edged}(\rho,\bfu)$.
\item[(H3)] The dual fluxes are bounded with respect to the primal fluxes $(\overline{F}_{K,\edge}(\rho,\bfu))_{\edge \in \edges(K)}$, in the sense that
\begin{equation}\label{eq:F_bounded}
|F_{\edge,\epsilon}(\rho,\bfu)| \leq \ \max \,\left \lbrace |\overline{F}_{K,\edge'}(\rho,\bfu)|,\ \edge' \in \edges(K) \right \rbrace,
\end{equation}
for $K \in \mesh$, $\edge \in \edges(K)$, $\epsilon \in \edgesd(D_\sigma)$ with $\edged\subset K$.
\end{itemize}
The system of equations \eqref{eq:F_syst} 
does not depend on the particular cell $K$ since it only depends on the coefficient $\xi_K^\edge=1/(d+1)$.
It has an infinite number of solutions, which makes necessary to impose in addition the constraint \eqref{eq:F_bounded}; however, assumptions (H1)-(H3) are sufficient for the subsequent developments, in the sense that any choice for the expression of the dual fluxes satisfying these assumptions yields stable and consistent schemes (see \cite{lat-18-conv, lat-18-disc}).

\medskip
This convection operator is built so that a discrete mass conservation equation similar to \eqref{eq:sch_mass} is also satisfied on the cells of the dual mesh. Indeed, let $(\rho,\bfu)\in\xL_\mesh(\Omega)\times\xbfH_\mesh(\Omega)$ and define a constant density on the dual cells $\rho_\Ds$ as follows: 
\[
|D_\edge| \rho_\Ds= |D_{K,\edge}| \rho_K + |D_{L,\edge}| \rho_L \quad \text{for} ~ \edge=K|L.
\]
Then if $(\rho,\bfu)$ satisfy \eqref{eq:sch_mass}, one has:
\begin{equation}
\label{eq:sch_mass:dual}
 \sum_{\edged\in\edgesd(D_\edge)} F_{\edge,\edged}(\rho,\bfu) + h_\mesh^{\xi_1} |D_\edge|\, (\rho_{\Ds}-\rho^\star) = 0, \qquad \forall\edge\in\edgesint,
\end{equation}
which is an analogue of \eqref{eq:sch_mass} where the stabilization diffusion term is hidden in the dual fluxes.

\medskip
To complete the definition of the momentum convective term, we must give the expression of the velocity $\bfu_\edged$ at the dual face. As already said, a dual face lying on the boundary is also a primal face, and the flux across that face is zero.
Therefore, the values $\bfu_\edged$ are only needed at the internal dual faces; we choose them to be centered:
\[
\bfu_\edged = \frac 1 2 ( \bfu_\edge + \bfu_{\edge'}), \qquad \mbox{for } \edged=D_\edge|D_\edge'.
\]

%
% ---------------
%
\paragraph{Diffusion operator -- }
%\subsection{Diffusion operator}

Let us define the shape functions associated with the Crouzeix-Raviart
%and Rannacher-Turek 
finite element. These are the functions $(\zeta_\edge)_{\edge\in\edges}$ where for all $\edge\in\edges$, $\zeta_\edge$ is the element of $\xH_\mesh(\Omega)$ which satisfies:
\[
\dfrac{1}{|\edge'|}\int_{\edge'} \zeta_\edge \dedge(\bfx) =
\left \lbrace
\begin{array}{ll}
 1, \quad \text{if $\edge'=\edge$,}\\
 0, \quad \text{if $\edge'\neq\edge$.}\\
\end{array}
\right.
\]
Given a discrete velocity field $\bfu\in\xbfH_{\mesh,0}(\Omega)$, the discretization of the diffusion terms is given by:
\begin{equation}\label{eq:def_diff}
\begin{aligned}
& \lapi_\edges \bfu (\bfx)= 
 \sum_{\edge \in \edgesint} \frac{1}{|D_\edge|} \Big (\sum_{K \in \mesh} \int_K \gradi\bfu \, . \gradi \zeta_\edge \dx \Big ) \, \Ind_{\Ds}(\bfx), \\[2ex]
& (\gradi\circ\dive)_\edges\bfu(\bfx)= 
 \sum_{\edge \in \edgesint} \frac{1}{|D_\edge|} \Big ( \sum_{K \in \mesh} \int_K \dive \, \bfu \gradi \zeta_\edge \dx \Big ) \, \Ind_{\Ds}(\bfx).
\end{aligned}
\end{equation}

\paragraph{Pressure gradient operator --}
Given a discrete density field $\rho\in\xL_\mesh(\Omega)$, 
the pressure gradient term is discretized as follows:
\begin{equation}
\label{eq:def_grad_p}
\gradi_\edges (\rho^\gamma) (\bfx)= \sum_{\substack{\edge\in\edgesint \\ \edge=K|L}} \Big ( \frac{|\edge|}{|D_\edge|} \ (\rho_L^\gamma-\rho_K^\gamma)\ \bfn_{K,\edge} \Big ) \, \Ind_{\Ds}(\bfx).
\end{equation}
The discrete momentum equation \eqref{eq:sch_mom} also involves a third stabilization term, an artificial pressure term, which reads: 
\[
h_\mesh^{\xi_3}\,\gradi_\edges (\rho^\Gamma) (\bfx)=  h_\mesh^{\xi_3}\,\sum_{\substack{\edge\in\edgesint \\ \edge=K|L}} \Big ( \frac{|\edge|}{|D_\edge|} \ (\rho_L^\Gamma-\rho_K^\Gamma)\ \bfn_{K,\edge} \Big ) \, \Ind_{\Ds}(\bfx),
\]
where $\Gamma>\gamma$ is chosen large enough to ensure a control on the discrete weak formulation of the convective term in the momentum equation when $\gamma\in(\frac 32,3]$. Note that, if $d=3$ and $\gamma>3$ or $d=2$ and $\gamma>2$, this term is not needed.

\paragraph{Source term --} 
%\subsection{Source term}

The source term $\bff \in \xbfL^2(\Omega)$ is discretized with the following projection operator:
\begin{equation}
\widetilde{\Pi}_\edges\bff(\bfx) = \dsp \sum_{\edge\in\edgesint} \Big(\dfrac{1}{|D_\edge|}\int_{D_\edge} \bff \dx \Big) \Ind_{\Ds}(\bfx).
\end{equation}

\subsection{Main result: convergence of the scheme}

\begin{defi}[Regular sequence of discretizations]\label{def:reg_disc}
A sequence $(\disc\n)_{n\in\xN}$ of staggered discretizations is said to be regular if:
\begin{itemize}
\item[$(i)$] there exists $\theta_0>0$ such that $\theta_{\mesh\n} \leq \theta_0$ for all $n\in\xN$,
\item[$(ii)$] the sequence of space steps $(h_{\mesh\n})_{n\in\xN}$ tends to zero as $n$ tends to $+ \infty$.
\end{itemize}
\end{defi}

\medskip
For the clarity of the presentation, we state our convergence result in the same setting as for the continuous problem, namely for $d=3$ and $\gamma \in (\frac 32,3]$.
We refer to the remark below for the ``simpler'' cases $d=2$, and $d=3$ with $\gamma > 3$.
 
\begin{thrm}[Convergence of the scheme]
\label{main_thrm}
Let $\Omega$ be a polyhedral connected open subset of $\xR^3$. 
Let $\bff\in\xbfL^2(\Omega)$ and $\rho^\star >0$. 
Assume that $\gamma \in (\frac 32, 3]$.
Denoting $\eta = \frac{2\gamma-3}{\gamma}\in(0,1]$, assume that $\Gamma$ and $(\xi_1,\xi_2,\xi_3)$ satisfy:
\begin{align}
\label{thm:cond_xi1}
(i) \qquad &\xi_1 > 1,\\[2ex]
\label{thm:cond_xi3}
(ii) \qquad & \frac{5}{4\Gamma}\left(\frac{3}{1+\eta}+\xi_3\right) < \frac{\eta}{1+\eta},\\[2ex]
\label{thm:cond_xi2}
(iii) \qquad & \frac 1\eta+\frac{5}{4\eta\Gamma}\left(\frac{3}{1+\eta}+\xi_3\right) 
< \xi_2 < \frac{1+\eta}{\eta} - \frac{5}{4\Gamma}\left(\frac{3}{1+\eta}+\xi_3\right).
\end{align}
Let $(\disc\n)_{n\in\xN}$ be a regular sequence of staggered discretizations of $\Omega$ as defined in Definition \ref{def:reg_disc}.
Then there exists $N\in\xN$ such that for all $n\geq N$, there exists a solution $(\rho\n,\bfu\n)\in\xL_{\mesh\n}(\Omega)\times\xbfH_{\mesh\n,0}(\Omega)$ to the numerical scheme \eqref{eq:sch} with the discretization $\disc\n$
and the obtained density $\rho\n$ is positive on $\Omega$.
Moreover, there exist $(\rho,\bfu)\in\xL^{3(\gamma-1)}(\Omega)\times \xbfH^{1}_0(\Omega)$ and a subsequence of $(\rho\n,\bfu\n)_{n\geq N}$, denoted $(\rho\n,\bfu\n)_{n\in\xN}$ such that:
\begin{itemize}
\item The sequence $(\bfu\n)_{n\in\xN}$ converges to $\bfu$ in $\xbfL^q(\Omega)$ for all $q\in[1,6)$,
\item The sequence $(\rho\n)_{n\in\xN}$ converges to $\rho$ in $\xL^q(\Omega)$ for all $q\in[1,3(\gamma-1))$ and weakly in $\xL^{3(\gamma-1)}(\Omega)$,
\item The sequence $(\rho\n^\gamma)_{n\in\xN}$ converges to $\rho^\gamma$ in $\xL^q(\Omega)$ for all $q\in[1,\frac{3(\gamma-1)}{\gamma})$ and weakly in $\xL^{\frac{3(\gamma-1)}{\gamma}}(\Omega)$,
\item The pair $(\rho,\bfu)$ is a weak solution of Problem \eqref{eq:pb}-\eqref{eq:pb_CL}-\eqref{hyp:mass} with finite energy.
\end{itemize}
\end{thrm}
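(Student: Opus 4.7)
The plan is to mimic at the discrete level the continuous stability analysis of Theorem \ref{thm:stab}, following the three-stage strategy: (a) derive mesh-independent \emph{a priori} estimates and use them both to get existence for the nonlinear scheme and to extract weakly convergent subsequences, (b) pass to the weak limit in the discrete mass and momentum equations, (c) prove the strong convergence of the density by proving a discrete version of the effective viscous flux identity and of the $L_k$-renormalized mass equation. The three stabilization terms are precisely tuned so that each of these three stages goes through; the conditions (i)--(iii) on $\xi_1,\xi_2,\xi_3,\Gamma$ will be used exactly at the places indicated below.

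First I would establish the \emph{a priori} estimates. Testing the discrete momentum equation \eqref{eq:sch_mom} against $\bfu_n$ itself and using the discrete mass balance on dual cells \eqref{eq:sch_mass:dual} (so that the convection term vanishes up to the usual kinetic-energy-like remainder) yields the discrete energy inequality and hence a uniform bound on $\bfu_n$ in a discrete $\xbfH^1_0$-norm; by discrete Sobolev embeddings this gives $\bfu_n$ bounded in $\xbfL^q(\Omega)$ for $q<6$. The mass equation \eqref{eq:sch_mass} together with the $T_{\rm stab}^1$ term (for which $\xi_1>1$ is used) gives the discrete mass constraint and a bound on $\rho_n^\star$. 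The $T_{\rm stab}^2$ term produces a discrete $\xW^{1,(1+\eta)/\eta}$-type semi-norm estimate on $\rho_n$ weighted by $h_{\mesh_n}^{\xi_2}$; this is the crucial substitute for the classical renormalization argument (see Remark \ref{rmk:renorm}), and the upper bound in (iii) is exactly what makes this weighted estimate weak enough to be produced by the pressure bound. To bound the pressure itself, I apply a \emph{discrete Bogovskii operator} (built face by face as in \cite{gal-18-conv,eym-10-conv-isen}): testing \eqref{eq:sch_mom} against $\mcal{B}_\edges(\rho_n^\gamma-\langle\rho_n^\gamma\rangle)$ and carefully handling the convective term (here using that $\Gamma>3$ gives the extra integrability which controls $\rho_n\bfu_n\otimes\bfu_n$ when $\gamma\in(3/2,3]$) yields a uniform bound on $\rho_n$ in $\xL^{3(\gamma-1)}(\Omega)$ together with an $h_{\mesh_n}^{\xi_3}$-weighted bound on $\rho_n$ in $\xL^\Gamma(\Omega)$; the condition (ii) is exactly what keeps the artificial pressure contribution under control. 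Existence of a discrete solution, with $\rho_n>0$, is then obtained by a topological-degree argument on the finite-dimensional nonlinear system defining the scheme, using these uniform estimates to bound the homotopy away from the boundary.

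From the uniform estimates I extract subsequences: $\bfu_n\to\bfu$ strongly in $\xbfL^q(\Omega)$ for $q<6$ (using discrete Rellich compactness, cf.\ \cite{gal-18-conv}), $\rho_n\weak\rho$ weakly in $\xL^{3(\gamma-1)}(\Omega)$, $\rho_n^\gamma\weak\overline{\rho^\gamma}$, and $h_{\mesh_n}^{\xi_3}\rho_n^\Gamma\to 0$ strongly. Consistency of the discrete operators (mass and momentum convection, diffusion, pressure gradient, source projection) as $h_{\mesh_n}\to 0$ — standard for the Crouzeix--Raviart space and for the dual-cell upwind finite volume operators used here — allows to pass to the limit in \eqref{eq:sch_mass}--\eqref{eq:sch_mom} against smooth test functions, producing \eqref{eq:c-weak-mass} with $\rho\bfu$ and the limit momentum equation \eqref{eq:c-weak-momentum-lim} with $\overline{\rho^\gamma}$ in place of $\rho^\gamma$; the three stabilization terms vanish at the limit thanks to the weighted estimates above. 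The energy inequality \eqref{eq:c-energy} is recovered by weak lower semicontinuity on the viscous terms.

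The main obstacle, as in the continuous case, is to identify $\overline{\rho^\gamma}=\rho^\gamma$. I would follow the strategy of Section \ref{sec:c-effectiveflux}, transposed to the staggered discrete setting. The key step is a \emph{discrete effective viscous flux identity}: test the discrete momentum equation against $\phi\,\mcal{A}_\edges T_k(\rho_n)$, where $\mcal{A}_\edges$ is a discrete analogue of the inverse-divergence operator of Lemma \ref{lem:inv-div}, and test the limit equation against $\phi\,\mcal{A}(\overline{T_k(\rho)})$; the delicate point is the treatment of the convective term $\divv_\edges(\rho_n\bfu_n\otimes\bfu_n)$, where I would adapt the regularization trick used in the proof of Proposition \ref{prop:c-eff-flux} (convolving $\bfu_n$ and exploiting the discrete dual mass balance \eqref{eq:sch_mass:dual}, the stabilization correction $h_{\mesh_n}^{\xi_2}$-term being absorbed in the fluxes $\overline{F}_{K,\edge}$ as in \eqref{eq:def_FKedgebar}). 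This yields \eqref{eq:c-eff-flux} for the discrete sequence. Next, I would derive a discrete renormalized mass equation for the functions $L_k$, using the fact that the dual mass balance holds with the fluxes $\overline{F}_{K,\edge}$ and that the upwind choice \eqref{df:rho-upwind} gives a discrete chain rule up to a nonnegative numerical dissipation; the $T_{\rm stab}^2$ artificial diffusion controls this dissipation and yields at the limit the continuous identity \eqref{eq:c-Lk} satisfied by $(\rho,\bfu)$, bypassing the need for $\rho\in\xL^2$ (which is unavailable for $\gamma\in(3/2,5/3)$) exactly because $T_k$ is bounded. Combining these two discrete ingredients with Lemmas \ref{lem:cvg-Tk} and \ref{lem:osc} and the monotonicity of the pressure reproduces the argument of Proposition \ref{prop:c-cvg-rho} and delivers the strong convergence $\rho_n\to\rho$ in $\xL^q(\Omega)$ for all $q<3(\gamma-1)$, whence $\overline{\rho^\gamma}=\rho^\gamma$ and $(\rho,\bfu)$ is a weak solution with finite energy. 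The hardest technical point of the whole proof will be the construction of the discrete operator $\mcal{A}_\edges$ and the careful bookkeeping of the commutator terms in the discrete effective viscous flux identity under the low-integrability regime $\gamma\in(3/2,3]$; this is precisely where the scalings (ii)--(iii) on $\xi_2,\xi_3,\Gamma$ are dictated.
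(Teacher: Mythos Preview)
Your three-stage plan (a)--(c) matches the paper's architecture precisely: discrete energy inequality for the $\xbfH^1$ bound on $\bfu_n$, discrete Bogovskii (Lemma \ref{lmm:inf-sup}) for the pressure/density estimate, topological degree for existence, discrete Rellich for velocity compactness, and the Feireisl-style $T_k$-truncation argument for the strong density convergence. That skeleton is correct.

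There is one substantive divergence in the effective viscous flux step. You anticipate constructing a \emph{discrete} inverse-divergence $\mcal{A}_\edges$; the paper does not. Instead (Remark \ref{rmrk:MAC} and Definition \ref{defi:iq}) it applies the \emph{continuous} operator $\mcal{A}$ of Lemma \ref{lem:inv-div} to a $P_1$ regularization $i_{\mesh_n}T_k(\rho_n)$ of the piecewise-constant density, and then projects back to $\xbfH_{\mesh_n,0}(\Omega)$ via the Fortin operator $I_{\mesh_n}$. The reason is that the Crouzeix--Raviart discrete gradient, divergence and curl do \emph{not} satisfy a global discrete analogue of \eqref{curlcurl2}; the paper applies \eqref{curlcurl1} cell by cell, which generates face-jump remainders (see \eqref{eq:remainder-curlcurl}) controlled via Lemma \ref{lmm:aedge} and requiring $\bfw_n\in\xbfW^{2,\frac{1+\eta}{\eta}}(\Omega)$. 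This $\xbfW^{2,\frac{1+\eta}{\eta}}$ bound is supplied by $i_{\mesh_n}$, with $\snorm{i_{\mesh_n}T_k(\rho_n)}_{\xW^{1,\frac{1+\eta}{\eta}}}\lesssim\snorm{\rho_n}_{\frac{1+\eta}{\eta},\mesh_n}$. Whether a genuinely discrete $\mcal{A}_\edges$ route could be made to work is unclear; the paper's hybrid continuous/discrete approach is what actually closes the argument.

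Relatedly, you misattribute the role of $T_{\rm stab}^2$. It is \emph{not} a substitute for renormalization or a device to bypass the missing $\xL^2$ bound on $\rho$; its purpose (Remark \ref{rmrk:MAC}) is precisely to furnish the weighted estimate on $\snorm{\rho_n}_{\frac{1+\eta}{\eta},\mesh_n}$ that, via $i_{\mesh_n}$, controls $\norm{\bfw_n}_{\xbfW^{2,\frac{1+\eta}{\eta}}}$ in the effective viscous flux proof. The $\xL^2$ obstruction is handled, exactly as in the continuous case, by working with $T_M(\rho_n)$ in Proposition \ref{prop:c-renorm-Lk:disc}. Finally, the two halves of condition \eqref{thm:cond_xi2} play distinct roles, opposite to what you suggest: the \emph{lower} bound is needed for the convective remainder $R_{\rm conv}$ in Lemma \ref{lem:d-weakmom2} (and hence for the pressure estimate), while the \emph{upper} bound enters in \eqref{eq:csq-hyp-xi2} when bounding $\norm{\bfw_n}_{\xbfW^{2,\frac{1+\eta}{\eta}}}$.
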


\medskip
\begin{rmrk}[Some remarks on Theorem \ref{main_thrm}] \ 

\begin{itemize}
\item 
Let us mention that the convergence result of Theorem \ref{main_thrm} can be extended to the cases $d=3$, $\gamma>3$ and $d=2$, $\gamma>2$ with the mass stabilization term defined as 
	\begin{equation*}
	- h_\mesh^{\xi_2} \, \Delta_\mesh \rho (\bfx)  
	= h_\mesh^{\xi_2} \, \sum_{K\in\mesh} \, \frac 1 {|K|} \Big (\sum_{\substack{\edge \in\edges(K)\cap \edgesint \\ \edge= K|L}} |\edge| \,\dfrac{|\edge|}{|D_\edge|}\,(\rho_K-\rho_L) \Big )\, \Ind_K(\bfx),
	\end{equation*}
 and without the artificial pressure term. The required constraints on $(\xi_1,\xi_2)$ are the following:
\[
\begin{aligned}
 & \xi_1 > 1 \text{\ if $d=3$} \qquad \text{and} \qquad \xi_1 > 0 \text{\ if $d=2$},\\ 
 &\frac 32 < \xi_2 <2.
\end{aligned}
\]
In the case $d=2$, $1<\gamma\leq 2$, we expect a convergence result with the stabilization term proposed in \cite{eym-10-conv-isen} combined with an artificial pressure term.
%(see Remark \ref{rmrk:pLap} above).

%\medskip
%\item
%A necessary condition on $\Gamma$ to ensure \eqref{thm:cond_xi3} is
%$
%\Gamma >  {\color{red}\frac{15}{4\eta}} %\frac{3}{\eta}.
%$
%Fixing such $\Gamma$, the exponent $\xi_3$ can then be taken small enough to ensure \eqref{thm:cond_xi3}. Of course, larger values of $\Gamma$ allow larger values of $\xi_3$ and therefore a larger convergence rate towards zero of the artificial pressure term. 
%However, taking larger values of $\Gamma$ amounts to penalizing smaller values of the density.
%{\color{red} A ENLEVER ***}Note that for $\gamma=3$, $\eta =1$, and the constraint reads $\Gamma>3$ 
%which explains why the artificial pressure term is not needed when $\gamma>3$. {\color{red}***}

\medskip
\item The upper bound on $\xi_2$ is required when passing to the limit in the effective viscous flux at the discrete level (see Subsection \ref{sec:cvf-eff-flux-d} and \eqref{eq:csq-hyp-xi2}). 
The lower bound on $\xi_2$ is required for the control on the momentum convective term when deriving the discrete estimate on the density (see \eqref{eq:convconv:3D}), which explains why this constraint was not introduced in \cite{eym-10-conv-isen} for the Stokes equations. 
\end{itemize}

\end{rmrk}

\medskip
The following sections are devoted to the proof of Theorem \ref{main_thrm}. In Section \ref{sec:norms-prop}, we introduce some notations and properties of the discretization. In Sections \ref{sec:renorm} to \ref{sec:sch:exist}, we derive \emph{a priori} estimates on the solution of the scheme and prove its existence provided a small enough space step $h_\mesh$. Finally, in Section \ref{sec:pass-limit}, we prove Theorem \ref{main_thrm} by successively passing to the limit in the discrete mass and momentum equations, and then in the equation of state.

%%%%%%%%%%%%%%%%%%%%%%%%%%%%
%%%%%%%%%%%%%%%%%%%%%%%%%%%%
\section{Mesh independent estimates and existence of a discrete solution}
\label{sec:estimates}

\subsection{Discrete norms and properties}
\label{sec:norms-prop}

We gather in this section some preliminary mathematical results which are useful for the analysis of the numerical scheme. 
Similar results have been previously used by Gallou\"et \emph{et al.} in their study \cite{gal-09-conv} which also relies on a mixed FV-FE discretization. 
The interested reader is also referred to the books \cite{ern2013}, \cite{eym-00-book}, \cite{dro-18-gra} and to the appendix of \cite{gallouet2015}.

\medskip

We start with defining the piecewise smooth first order differential operators associated with the Crouzeix-Raviart non-conforming finite element representation of velocities $\bfu\in\xbfH_{\mesh}(\Omega)$ :
\begin{align}
\label{disc:grad} &
\gradi_\mesh \bfu(\bfx) =  \sum_{K\in\mesh} \gradi \bfu(\bfx)\Ind_K(\bfx), \\
\label{disc:div}  &
\dive_\mesh\,\bfu(\bfx) =  \sum_{K\in\mesh} \dive\, \bfu(\bfx)\Ind_K(\bfx),\\
\label{disc:rot}  &
\rot_\mesh \bfu(\bfx) =  \sum_{K\in\mesh} \rot\, \bfu(\bfx)\Ind_K(\bfx).
 \end{align}
Note that on each element $K\in\mesh$, $\gradi \bfu_{|K}\in \xR^d$ is actually a constant and the  divergence defined in \eqref{disc:div} matches the finite volume divergence defined in \eqref{div_mass} for $\rho \equiv 1$.

\medskip

We then define for $q\in [1,\infty)$ the broken Sobolev $\xW^{1,q}$ semi-norm $\norm{.}_{1,q,\mesh}$ associated with the Crouzeix-Raviart finite element representation of the discrete velocities. For any $\bfu \in \xbfH_{\mesh}(\Omega)$ it is given by:
\[
\norm{\bfu}_{1,q,\mesh}^q= \int_\Omega |\gradi_\mesh \bfu|^q\dx.
\]

\medskip
%A first important property
%, proved in Appendix \ref{app:chap:ann-fonc}, 
%is the following discrete Sobolev embedding ({\color{red}REF}).

\medskip
\begin{lem}[Discrete Sobolev inequality] \label{lmm:injsobolev_br}
Let $\disc=(\mesh,\edges)$ be a staggered discretization of $\Omega$ such that $\theta_\mesh \leq \theta_0$ (where $\theta_\mesh$ is defined by \eqref{eq:reg}) for some positive constant $\theta_0$.
Then, for all $q\in[1,+\infty)$ if $d=2$ and for all $q\in [1,6]$ if $d=3$, there exists $C=C(q,d,\theta_0)>0$ such that:
\[
\norm{\bfu}_{\xbfL^q(\Omega)} \leq C\, \norm{\bfu}_{1,2,\mesh}, \quad \forall \bfu \in \xbfH_{\mesh,0}(\Omega).
\]
\end{lem}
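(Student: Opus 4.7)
The plan is to reduce to the critical Sobolev exponent $q^\star$ (equal to $6$ if $d=3$ and arbitrary finite value if $d=2$), then deduce the smaller $q$ by Hölder's inequality and the boundedness of $\Omega$. So the main task is to prove
\[
\norm{\bfu}_{\xbfL^{q^\star}(\Omega)} \leq C \norm{\bfu}_{1,2,\mesh}, \qquad \forall \bfu \in \xbfH_{\mesh,0}(\Omega).
\]

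The strategy is a classical ``Oswald-type'' conforming reconstruction. First, extend $\bfu$ by zero outside $\Omega$ into an extended triangulation that meets the boundary faces, which is licit since functions in $\xbfH_{\mesh,0}(\Omega)$ have vanishing mean on every boundary face. Then construct a conforming interpolant $\widetilde{\bfu}\in\xbfH^1_0(\widetilde \Omega)$ (for some slightly enlarged polyhedral domain $\widetilde\Omega \supset \Omega$), defined on the Lagrange nodes of each simplex by an average of the Crouzeix--Raviart values on neighbouring cells — \emph{e.g.}, at a vertex $V$ shared by a set of cells $\{K_i\}$, assign a convex combination of the $P_1$-values $\bfu_{|K_i}(V)$. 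Thanks to the mesh regularity $\theta_\mesh \leq \theta_0$, the number of cells meeting at each vertex is uniformly bounded.

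Next, I would establish the two fundamental estimates on the reconstruction (this is the bulk of the technical work):
\begin{equation*}
\norm{\nabla \widetilde{\bfu}}_{\xbfL^2(\widetilde\Omega)} \leq C\, \norm{\bfu}_{1,2,\mesh}, \qquad
\norm{\bfu - \widetilde{\bfu}}_{\xbfL^2(\Omega)} \leq C\, h_\mesh\, \norm{\bfu}_{1,2,\mesh}.
\end{equation*}
Both follow by local arguments on each simplex: writing $\widetilde{\bfu} - \bfu_{|K}$ as a linear combination, on cell $K$, of the jumps $[\bfu]_V = \bfu_{|K_i}(V) - \bfu_{|K}(V)$ across faces sharing $V$, one invokes the zero-mean-jump condition \eqref{cont:bfu} together with a trace/Poincaré inequality on each face to control these jumps by the local broken $H^1$ semi-norm. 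A scaling argument (using $\theta_0$ to pass to a reference element) produces the factors $h_K^{-1}$ on the gradient estimate and $h_K^0 = 1$ on the $L^2$ estimate, yielding the claims after summing over $K$.

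With these two ingredients in hand, I would then invoke the standard continuous Sobolev embedding $\xbfH^1_0(\widetilde\Omega) \hookrightarrow \xbfL^{q^\star}(\widetilde\Omega)$ applied to $\widetilde{\bfu}$, combined with a discrete Sobolev inequality on each $P_1$-piece to estimate $\bfu - \widetilde{\bfu}$ in $\xL^{q^\star}$ (using an inverse inequality: $\|v\|_{L^{q^\star}(K)} \leq C h_K^{d/q^\star - d/2}\|v\|_{L^2(K)}$, then absorbing the factor $h_K^{d/q^\star - d/2}$ by the gradient contribution). The conclusion follows by the triangle inequality.

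The main obstacle is the rigorous proof of the gradient bound on the reconstruction $\widetilde{\bfu}$, specifically showing that the face jumps can be controlled by the broken $H^1$ semi-norm. This relies essentially on the Crouzeix--Raviart mean-zero jump condition \eqref{cont:bfu}: without it, there is no way to bound $[\bfu]_\edge$ (which is only of mean zero on each face) by tangential derivatives and produce a clean $L^2$ estimate independent of $h_\mesh^{-1}$. The mesh regularity parameter $\theta_0$ enters both in the reference-element scaling and in bounding the number of simplices touching each vertex or face.
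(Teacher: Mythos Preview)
The paper does not actually prove this lemma: it is stated in Section~\ref{sec:norms-prop} as a preliminary tool, with the comment that ``similar results have been previously used by Gallou\"et \emph{et al.}'' and a pointer to the references \cite{gal-09-conv,ern2013,eym-00-book,dro-18-gra} and the appendix of \cite{gallouet2015}. So there is no in-paper argument to compare against.

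Your Oswald-type conforming reconstruction is one of the standard routes to this inequality and the outline is sound. One technical point worth making explicit in step~5: after combining the local approximation bound $\norm{\bfu-\widetilde{\bfu}}_{\xL^2(K)}\lesssim h_K\,\snorm{\bfu}_{H^1(\omega_K)}$ with the inverse inequality (the powers of $h_K$ cancel exactly at the critical exponent), you obtain $\norm{\bfu-\widetilde{\bfu}}_{\xL^{q^\star}(K)}\lesssim \snorm{\bfu}_{H^1(\omega_K)}$, and summing the $q^\star$-th powers yields $\sum_K \snorm{\bfu}_{H^1(\omega_K)}^{q^\star}$. Passing from this to $\norm{\bfu}_{1,2,\mesh}^{q^\star}$ requires the elementary embedding $\ell^2\hookrightarrow\ell^{q^\star}$ (so that $\sum_K a_K^{q^\star}\le (\sum_K a_K^2)^{q^\star/2}$) together with the finite-overlap property of the patches $\omega_K$, both guaranteed by $\theta_\mesh\le\theta_0$. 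This is routine but should be stated, since without it the local-to-global step does not close.
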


%\begin{rmrk}
%For d=2, one has $C(q,d,\theta_0)\to\infty$ as $q\to \infty$.
%\end{rmrk}

\medskip
A consequence of this Sobolev embedding is a discrete Poincar\'e inequality. Note that the semi-norm $\norm{\bfu}_{1,2,\mesh}$ is in fact a norm on the space $\xbfH_{\mesh,0}(\Omega)$.

\medskip
\begin{lem}[Discrete Poincar\'e inequality] \label{lmm:poincare_brok}
Let $\disc=(\mesh,\edges)$ be a staggered discretization of $\Omega$ such that $\theta_\mesh \leq \theta_0$ (where $\theta_\mesh$ is defined by \eqref{eq:reg}) for some positive constant $\theta_0$.
Then there exists $C=C(d,\theta_0)$ such that
\[
\norm{\bfu}_{\xbfL^2(\Omega)} \leq C\, \norm{\bfu}_{1,2,\mesh}, \quad \forall \bfu \in \xbfH_{\mesh,0}(\Omega).
\]
\end{lem}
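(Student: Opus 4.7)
The plan is to derive this inequality as a direct corollary of the preceding discrete Sobolev embedding (Lemma \ref{lmm:injsobolev_br}). That lemma is stated for $q\in[1,+\infty)$ when $d=2$ and for $q\in[1,6]$ when $d=3$, so in both admissible dimensions the exponent $q=2$ lies in the permitted range. I would simply specialize Lemma \ref{lmm:injsobolev_br} to $q=2$ and set $C(d,\theta_0):=C(2,d,\theta_0)$; the Poincar\'e inequality drops out with no additional work, since $\norm{\bfu}_{\xbfL^2(\Omega)}\leq C(2,d,\theta_0)\,\norm{\bfu}_{1,2,\mesh}$ is exactly the required estimate.

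Once this bound is established, I would record the small remark that follows automatically: the map $\bfu\mapsto \norm{\bfu}_{1,2,\mesh}$, which is \emph{a priori} only a seminorm, is in fact a norm on $\xbfH_{\mesh,0}(\Omega)$. The argument is one line: if $\norm{\bfu}_{1,2,\mesh}=0$, then by the Poincar\'e inequality just derived, $\norm{\bfu}_{\xbfL^2(\Omega)}=0$, so $\bfu$ vanishes a.e.\ on $\Omega$, hence $\bfu\equiv 0$.

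Since the argument is essentially immediate once Lemma \ref{lmm:injsobolev_br} is available, there is no real analytical obstacle at this step. All the genuine content has been absorbed into the discrete Sobolev inequality, whose proof relies crucially both on the Crouzeix--Raviart continuity condition \eqref{cont:bfu} at internal faces (which transmits information across the mesh despite the broken character of the gradient) and on the vanishing mean condition at external faces from Definition \ref{defi:space-u:M} (which rules out nonzero constants in the kernel of $\gradi_\mesh$). The regularity assumption $\theta_\mesh\leq \theta_0$ enters through Lemma \ref{lmm:injsobolev_br} as well, controlling the ratio $h_K/\varrho_K$ that appears in the usual reference-element argument.
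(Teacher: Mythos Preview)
Your proposal is correct and matches the paper's approach exactly: the paper introduces Lemma~\ref{lmm:poincare_brok} with the sentence ``A consequence of this Sobolev embedding is a discrete Poincar\'e inequality,'' and gives no further proof. Your observation that the seminorm is a norm is also precisely the remark the paper makes in the line preceding the lemma.
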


\medskip
It will be convenient in the analysis of the scheme to handle several representations of the discrete velocities. We define an interpolation operator $\xPi_\edges$ which associates a piecewise constant function over the cells of the dual mesh to any
function $\bfu\in\xbfH_{\mesh}(\Omega)$ as follows:
\begin{equation}
\label{def:piepsu}
\xPi_\edges \bfu (\bfx)= 
 \sum_{\edge \in \edgesint} \bfu_\edge \, \Ind_{\Ds}(\bfx). 
\end{equation}
The constant value of $\xPi_\edges \bfu$ over the cell $D_\edge$ is $\bfu_\edge$ defined in \eqref{DOF:u}.
The mapping $\bfu \mapsto \xPi_\edges \bfu $ is a one-to-one mapping which is continuous with respect to the $\xL^q$-norm, for all $q\in[1,+\infty]$.
Indeed, we have the following result.

\medskip

\begin{lem} \label{lmm:Lpeq}
Let $\disc=(\mesh,\edges)$ be a staggered discretization of $\Omega$ such that $\theta_\mesh \leq \theta_0$ (where $\theta_\mesh$ is defined by \eqref{eq:reg}) for some positive constant $\theta_0$.
Then, for all $q\in[1,+\infty]$, there exists a constant $C=C(q,d,\theta_0)$ such that:
\[
  \norm{\xPi_\edges \bfu}_{\xbfL^q(\Omega)} \leq C\, \norm{\bfu}_{\xbfL^q(\Omega)}.
\]
\end{lem}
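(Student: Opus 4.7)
The plan is to reduce the global bound to a purely local inequality on each primal cell and then settle the local inequality by a standard scaling argument on a reference simplex.

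First, I would unfold the left-hand side. For $q<+\infty$, using $D_\edge = D_{K,\edge}\cup D_{L,\edge}$ on an internal face $\edge = K|L$,
\[
\norm{\xPi_\edges\bfu}_{\xbfL^q(\Omega)}^q = \sum_{\edge\in\edgesint}|D_\edge|\,|\bfu_\edge|^q = \sum_{K\in\mesh}\sum_{\edge\in\edges(K),\,\edge\in\edgesint}|D_{K,\edge}|\,|\bfu_\edge|^q,
\]
so it suffices to prove the local bound
\[
|D_{K,\edge}|\,|\bfu_\edge|^q \leq C(q,d)\int_K |\bfu|^q\,\dx\qquad\text{for all }K\in\mesh,\ \edge\in\edges(K),
\]
because summing over the (at most $d+1$) faces of each $K$ and then over $K\in\mesh$ reconstructs $\norm{\bfu}_{\xbfL^q(\Omega)}^q$.

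Second, I would prove the local inequality by mapping to a fixed reference simplex $\hat K$. Let $T_K:\hat K\to K$ be an affine bijection and set $\hat\bfu = \bfu\circ T_K$, which is again affine since $\bfu_{|K}\in P_1(K)^d$. The face average $\bfu_\edge = |\edge|^{-1}\int_\edge \bfu\,\dedge$ is invariant under affine change of variables, so $\bfu_\edge = \hat\bfu_{\hat\edge}$ with $\hat\edge = T_K^{-1}(\edge)$. On the finite-dimensional space of $\xR^d$-valued affine functions on $\hat K$ all norms are equivalent, giving a universal constant $\hat C(q,d)$ with
\[
|\hat\bfu_{\hat\edge}|^q \leq \hat C(q,d)\int_{\hat K}|\hat\bfu|^q.
\]
Combining with $\int_K|\bfu|^q\dx = |\det DT_K|\int_{\hat K}|\hat\bfu|^q$ and $|D_{K,\edge}| = |K|/(d+1) = |\det DT_K|\,|\hat K|/(d+1)$ yields
\[
|D_{K,\edge}|\,|\bfu_\edge|^q \leq \frac{\hat C(q,d)\,|\hat K|}{d+1}\int_K|\bfu|^q\dx,
\]
which is the required local bound.

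Third, the case $q = +\infty$ follows directly from Jensen's inequality: $|\bfu_\edge| \leq \norm{\bfu}_{\xbfL^\infty(\edge)} \leq \norm{\bfu}_{\xbfL^\infty(K)}$, and taking the supremum over $\edge\in\edgesint$ (and the cells containing them) yields the conclusion. I do not expect any real obstacle: this is a routine scaling argument from finite element analysis, and the mesh regularity $\theta_\mesh\leq\theta_0$ is actually not essential here because all simplices in $\xR^d$ are affinely equivalent to a single reference $\hat K$, so the constant is uniform in $K$; $\theta_0$ is listed only for consistency with the other lemmas of this subsection.
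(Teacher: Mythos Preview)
Your argument is correct. The paper does not supply its own proof of this lemma; it is one of several preliminary results stated without proof in Section~\ref{sec:norms-prop}, with the reader referred to \cite{ern2013,eym-00-book,dro-18-gra} and the appendix of \cite{gallouet2015}. Your reference-element scaling argument is the standard one and goes through cleanly: the face average is invariant under the affine map to $\hat K$, the volume factor $|D_{K,\edge}|=|K|/(d+1)=|\det DT_K|\,|\hat K|/(d+1)$ cancels exactly against the Jacobian in $\int_K|\bfu|^q=|\det DT_K|\int_{\hat K}|\hat\bfu|^q$, and the remaining bound on $\hat K$ is an equivalence of norms on the finite-dimensional space $P_1(\hat K)^d$.

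Your closing remark is also correct and worth noting: because the half-diamond measure is a fixed fraction of $|K|$ and the face average is affine-invariant, the constant produced by your argument does not in fact depend on $\theta_0$; it appears in the statement only for uniformity with the surrounding lemmas.
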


%\medskip
%\begin{proof}
%This property is obtained through standard properties of the affine mapping and invoking a norm equivalence argument for the finite dimensional polynomial space on the reference unit simplex $P_1(\hat K)$.
%\end{proof}

\medskip
We also define a finite-volume type gradient for the velocities associated with the dual mesh. This gradient is somehow a vector version of the gradient $\gradi_\edges$ defined in \eqref{eq:def_grad_p} for scalar function in $\xL_\mesh(\Omega)$. For $\bfu\in\xbfH_{\mesh}(\Omega)$ and $K\in\mesh$, denote $\bfu_{K}=\sum_{\edge\in\edges(K)}\xi_K^\edge\,\bfu_\edge$ where $\xi_K^\edge$ is defined in \eqref{eq:xiksigma}. The finite-volume gradient of $\bfu$ is defined by:
\begin{equation}
\label{eq:def_grad_mesh_u}
\gradi_\edges\bfu (\bfx) = \sum_{\substack{\edge\in\edgesint \\ \edge=K|L}} \Big( \frac{|\edge|}{|D_\edge|} (\bfu_L-\bfu_K) \otimes \bfn_{K,\edge}\, \Big ) \Ind_{\Ds}(\bfx).
\end{equation}

\medskip
We also introduce the following other discrete $\xW^{1,q}$ semi-norm given for $\bfu \in \xbfH_{\mesh}(\Omega)$ by:
\[
\norm{\bfu}_{1,q,\edges}^q = \sum_{K \in \mesh} h_K ^{d-q} \sum_{\edge,\edge' \in \edges(K)}\ |\bfu_\edge-\bfu_{\edge'}|^q.
\]
This semi-norm may be shown to be equivalent, over a regular sequence of discretizations, to the usual finite volume $\xW^{1,q}$ semi-norm associated with the piecewise constant function $\xPi_\edges \bfu$. It is possible to prove that this semi-norm, as well as the semi-norm defined by the $\xL^q$ norm of $\gradi_\edges\bfu$ are controlled on a regular discretization by the finite-element $\xW^{1,q}$ semi-norm. Indeed, we have the following lemma.

\medskip

\begin{lem} \label{lmm:H1ns}
Let $\disc=(\mesh,\edges)$ be a staggered discretization of $\Omega$ such that $\theta_\mesh \leq \theta_0$ (where $\theta_\mesh$ is defined by \eqref{eq:reg}) for some positive constant $\theta_0$.
Then for all $q\in[1,+\infty)$ there exist two constants $C_1=C_1(q,d,\theta_0)$ and $C_2=C_2(q,d,\theta_0)$ such that:
\[
\norm{\gradi_\edges\bfu}_{\xbfL^q(\Omega)^d} \leq C_1\, \norm{\bfu}_{1,q,\edges}\leq C_2 \, \norm{\bfu}_{1,q,\mesh}, \quad \forall \bfu \in \xbfH_{\mesh}(\Omega).
\]
\end{lem}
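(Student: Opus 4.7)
The plan is to establish the two inequalities separately, relying throughout on the mesh regularity $\theta_\mesh \leq \theta_0$ to translate the relevant geometric quantities (face areas, dual volumes, and diameters of neighbouring cells) into comparable quantities differing only by a multiplicative constant depending on $\theta_0$, $q$ and $d$.

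For the left inequality $\norm{\gradi_\edges\bfu}_{\xbfL^q(\Omega)^d} \leq C_1 \norm{\bfu}_{1,q,\edges}$, I would start by unfolding the definition \eqref{eq:def_grad_mesh_u} on each diamond cell $D_\edge$ with $\edge=K|L$. Since the $D_\edge$ are disjoint and $|\bfn_{K,\edge}|=1$, this gives
\[
\norm{\gradi_\edges\bfu}_{\xbfL^q(\Omega)^d}^q
= \sum_{\edge=K|L\in\edgesint} |D_\edge|\,\Big(\frac{|\edge|}{|D_\edge|}\Big)^q |\bfu_L-\bfu_K|^q.
\]
Regularity yields $|\edge|/|D_\edge|\leq C\,h_K^{-1}$ and $|D_\edge|\leq C\,h_K^d$, so each summand is bounded by $C\,h_K^{d-q}|\bfu_L-\bfu_K|^q$. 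To re-express $\bfu_L-\bfu_K$ in terms of face-value differences internal to single primal cells, I insert the shared face $\edge$ as an intermediate value: writing $\bfu_K = \frac{1}{d+1}\sum_{\edge'\in\edges(K)}\bfu_{\edge'}$ and using convexity of $|\cdot|^q$,
\[
|\bfu_\edge-\bfu_K|^q \leq C\sum_{\edge'\in\edges(K)}|\bfu_\edge-\bfu_{\edge'}|^q,
\]
and similarly for $L$. The triangle inequality then bounds $|\bfu_L-\bfu_K|^q$ by a sum of such contributions, and rearranging the double sum by primal cell (using $h_L \leq \theta_0 h_K$ to absorb the contributions coming from $L$ into the sum indexed by $K$) produces the bound by $C_1^q \norm{\bfu}_{1,q,\edges}^q$.

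For the right inequality $\norm{\bfu}_{1,q,\edges} \leq C_2\norm{\bfu}_{1,q,\mesh}$, the key observation is that $\bfu_{|K}\in P_1(K)$ is affine. The Crouzeix--Raviart degree of freedom $\bfu_\edge$, defined in \eqref{DOF:u} as the mean value of $\bfu$ on $\edge$, therefore coincides with $\bfu(\bfx_\edge)$ where $\bfx_\edge$ is the barycenter of $\edge$. Consequently, for any two faces $\edge,\edge'\in\edges(K)$,
\[
\bfu_\edge-\bfu_{\edge'} = \gradi\bfu_{|K}\cdot(\bfx_\edge-\bfx_{\edge'}),\qquad |\bfu_\edge-\bfu_{\edge'}|\leq h_K\,|\gradi\bfu_{|K}|.
\]
The contribution of $K$ to $\norm{\bfu}_{1,q,\edges}^q$ is then bounded by $C\,h_K^{d-q}\cdot h_K^q\,|\gradi\bfu_{|K}|^q = C\,h_K^d\,|\gradi\bfu_{|K}|^q$, and mesh regularity bounds $h_K^d$ by $C|K|$. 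Since $\gradi\bfu$ is constant on $K$, this last quantity equals $C\int_K|\gradi\bfu|^q\dx$; summing over $K\in\mesh$ yields the claim.

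Neither step presents a conceptual obstacle: both reduce to careful accounting of regularity-dependent constants and to the fact that, on a regular mesh, $|D_\edge|$, $|K|$, $|\edge|\,h_K$ and $h_K^d$ are mutually comparable. If anything, the only slightly delicate point is the passage from $|\bfu_L-\bfu_K|^q$ to differences internal to a single primal cell, which is handled by inserting the shared face $\edge$ as an intermediate value and invoking the constancy of the weights $\xi_K^\edge=1/(d+1)$.
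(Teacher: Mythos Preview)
Your argument is correct. The paper does not actually prove this lemma: it is one of several preliminary discrete functional-analytic facts stated in Section~\ref{sec:norms-prop} with only a blanket reference to \cite{gal-09-conv}, \cite{ern2013}, \cite{eym-00-book}, \cite{dro-18-gra} and the appendix of \cite{gallouet2015}. Your proof follows precisely the standard route found in those references: for the first inequality, expand $\norm{\gradi_\edges\bfu}_{\xbfL^q}^q$ cell by cell, use regularity to replace $|D_\edge|\,(|\edge|/|D_\edge|)^q$ by $Ch_K^{d-q}$, and then split $|\bfu_L-\bfu_K|$ through the common face value $\bfu_\edge$ and the convex-combination formula for $\bfu_K$; for the second inequality, exploit affineness of $\bfu_{|K}$ to write $\bfu_\edge=\bfu_{|K}(\bfx_\edge)$ and bound face differences by $h_K|\gradi\bfu_{|K}|$. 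All the geometric comparabilities you invoke ($|\edge|\sim h_K^{d-1}$, $|D_\edge|\sim h_K^d$, $h_K\sim h_L$, $h_K^d\lesssim |K|$) are exactly what the regularity parameter \eqref{eq:reg} is designed to give.
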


%\medskip
%\begin{proof}
%The first inequality follows from the regularity of the discretization and from the fact that for $K\in\mesh$, $\bfu_K$ is a convex combination of $(\bfu_\edge)_{\edge\in\edges(K)}$. The second inequality is obtained through standard properties of the affine
%mapping and invoking a norm equivalence argument for the finite dimensional polynomial space on the reference unit simplex $P_1(\hat K)$.
%\end{proof}

\medskip
\begin{lem}[Inverse inequalities] \label{lmm:inv-ineq}
Let $\disc=(\mesh,\edges)$ be a staggered discretization of $\Omega$ such that $\theta_\mesh \leq \theta_0$ (where $\theta_\mesh$ is defined by \eqref{eq:reg}) for some positive constant $\theta_0$. Let $u$ be a function defined on $\Omega$ such that for all $K\in\mesh$, $u_{|K}$ belongs to a finite dimensional space of functions which is stable by affine transformation. Then, for all $q,p\in [1,+\infty]$, there exists $C=C(q,p,\theta_0,d)$ such that (with $1/\infty=0$):
\begin{equation}\label{eq:invineq1}
\norm{u}_{\xL^q(K)}\leq C\, h_K^{d\left(\frac 1q- \frac 1p \right)} \, \norm{u}_{\xL^p(K)}, \qquad \forall K\in\mesh.
\end{equation}
Hence, for all $p\in [1,+\infty)$, there exists $C=C(p,\theta_0)$ such that :
\begin{equation}\label{eq:invineq2}
\norm{u}_{\xL^\infty(\Omega)}\leq C\, h_\mesh^{- \frac dp} \, \norm{u}_{\xL^p(\Omega)}. 
\end{equation}
\end{lem}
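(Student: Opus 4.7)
The plan is to prove the local inequality \eqref{eq:invineq1} by a standard reference-element argument, and then deduce the global inequality \eqref{eq:invineq2} from it.

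First, I would fix a reference simplex $\hat K$ in $\xR^d$ (say the unit simplex) and, for each $K\in\mesh$, introduce an affine bijection $F_K:\hat K \to K$, $\hat\bfx \mapsto A_K\hat\bfx + \bfb_K$. Under the regularity assumption $\theta_\mesh \leq \theta_0$, the diameter $h_K$ and the insphere radius $\varrho_K$ are comparable, so classical estimates give $\|A_K\| \lesssim h_K$, $\|A_K^{-1}\| \lesssim \varrho_K^{-1} \lesssim h_K^{-1}$, and $|\det A_K| \sim h_K^d$, with implicit constants depending only on $\theta_0$ and $d$. Given $u$ with $u_{|K}$ in the finite-dimensional function space $V$ (which is stable under affine transformations by hypothesis), the pulled-back function $\hat u := u_{|K} \circ F_K$ belongs to the same finite-dimensional reference space $\hat V$ (or a space of uniformly bounded dimension), independent of $K$.

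Next, I would invoke the equivalence of norms on the finite-dimensional space $\hat V$: for any $p,q \in [1,+\infty]$, there exists $\hat C = \hat C(p,q,\hat V)$ such that $\|\hat u\|_{\xL^q(\hat K)} \leq \hat C\, \|\hat u\|_{\xL^p(\hat K)}$ for all $\hat u \in \hat V$. Combining this with the change-of-variables formulas
\begin{equation*}
\|u\|_{\xL^q(K)} = |\det A_K|^{1/q}\,\|\hat u\|_{\xL^q(\hat K)}, \qquad \|\hat u\|_{\xL^p(\hat K)} = |\det A_K|^{-1/p}\,\|u\|_{\xL^p(K)},
\end{equation*}
(with the usual conventions for $p=\infty$ or $q=\infty$) and the scaling $|\det A_K| \sim h_K^d$, I immediately obtain
\begin{equation*}
\|u\|_{\xL^q(K)} \leq C\, h_K^{d\left(\frac{1}{q}-\frac{1}{p}\right)}\,\|u\|_{\xL^p(K)},
\end{equation*}
with $C$ depending only on $p,q,d$ and $\theta_0$, which is exactly \eqref{eq:invineq1}.

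For \eqref{eq:invineq2}, I would apply \eqref{eq:invineq1} with $q=\infty$ on each cell $K\in\mesh$, yielding $\|u\|_{\xL^\infty(K)} \leq C\, h_K^{-d/p}\,\|u\|_{\xL^p(K)} \leq C\, h_\mesh^{-d/p}\,\|u\|_{\xL^p(K)}$, and then take the maximum over $K\in\mesh$; since $\|u\|_{\xL^\infty(\Omega)} = \max_K \|u\|_{\xL^\infty(K)}$ and trivially $\max_K \|u\|_{\xL^p(K)} \leq \|u\|_{\xL^p(\Omega)}$, the conclusion follows. The main (but mild) technical point in this proof is to make sure that the reference space $\hat V$ can indeed be chosen independently of $K$, so that the constant $\hat C$ in the norm equivalence is a genuine constant; this is precisely guaranteed by the assumption that the local space is stable under affine transformations, and in practice $V$ will be one of a few fixed spaces ($P_1$, constants, etc.) for which this is obvious.
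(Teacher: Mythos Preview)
The paper states this lemma without proof, treating it as a standard finite-element fact (the preamble of Section~\ref{sec:norms-prop} points to textbook references). Your argument for the local estimate \eqref{eq:invineq1} via pullback to a reference simplex and equivalence of norms on the finite-dimensional reference space is correct and is exactly the classical one.

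Your derivation of the global bound \eqref{eq:invineq2}, however, contains a genuine error. From $h_K \le h_\mesh$ one gets $h_K^{-d/p} \ge h_\mesh^{-d/p}$, not the reverse, so the chain
\[
\|u\|_{\xL^\infty(K)} \leq C\, h_K^{-d/p}\,\|u\|_{\xL^p(K)} \leq C\, h_\mesh^{-d/p}\,\|u\|_{\xL^p(K)}
\]
is false at the second inequality. What \eqref{eq:invineq1} with $q=\infty$ actually yields, after taking the maximum over $K$, is
\[
\|u\|_{\xL^\infty(\Omega)} \le C\,\big(\min_{K\in\mesh} h_K\big)^{-d/p}\,\|u\|_{\xL^p(\Omega)}.
\]
Replacing $\min_K h_K$ by $h_\mesh=\max_K h_K$ requires a \emph{global} quasi-uniformity hypothesis of the type $h_\mesh \le C_0\,\min_K h_K$. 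The regularity parameter $\theta_\mesh$ of \eqref{eq:reg} only controls $h_K/\varrho_K$ and the ratios $h_K/h_L$ for \emph{adjacent} cells, which does not by itself rule out strongly graded meshes. Hence either \eqref{eq:invineq2} tacitly assumes quasi-uniformity (which is how the paper effectively uses it, e.g.\ when writing $\norm{\bfu}_{\xbfL^\infty(\Omega)} \lesssim h_\mesh^{-1/2}\norm{\bfu}_{\xbfL^6(\Omega)}$), or the constant must be allowed to depend on $h_\mesh/\min_K h_K$; in either case your argument as written does not establish \eqref{eq:invineq2}.
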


%\medskip
%\begin{proof}
%Inequality \eqref{eq:invineq2} is a direct consequence of \eqref{eq:invineq1}. Let us prove the latter. Let $\hat K$ be the reference element. We have $K=\mcal{A}_K(\hat K)$ for some affine mapping $\mcal{A}_K$. 
%We denote $\hat u$ the function defined by $\hat u(\hat \bfx)=u(\bfx)$ for $ \bfx=\mcal{A}_K(\hat \bfx)$. By this change of variable we have:
%\[
% \left ( \frac{|\hat K|}{|K|}\right )^{-\frac 1q}\, \norm{u}_{\xL^q(K)} \leq  \norm{\hat u}_{\xL^q(\hat K)}, \qquad \norm{\hat u}_{\xL^p(\hat K)} \leq  \left ( \frac{|\hat K|}{|K|}\right )^{\frac 1p}\, \norm{u}_{\xL^p(K)}.
%\]
%Since $\hat u_{|\hat K}$ belongs to a finite dimensional space, the equivalence of all norms gives $\norm{\hat u}_{\xL^q(\hat K)} \leq  C\, \norm{\hat u}_{\xL^p(\hat K)}$ with $C=C(q,p,d)$. We conclude by invoking the regularity of the discretization: $ C(\theta_0)^{-1} \, h_K^d \leq |K| \leq C(\theta_0)\, h_K^d$.
%
%\end{proof}

\medskip
For $q \in [1,+\infty)$, we introduce a discrete semi-norm on $\xL_\mesh(\Omega)$ similar to the usual $\xW^{1,q}$ semi-norm used in the finite volume context:
\[
 \snorm{\rho}_{q,\mesh}^q := \norm{\gradi_\edges (\rho)}_{\xbfL^q(\Omega)}^q= \sum_{\substack{\edge\in\edgesint \\ \edge=K|L}}|D_\edge|\, \Big(\frac{|\edge|}{|D_\edge|}\Big )^q \, |\rho_K-\rho_L|^q.
\]

\medskip
It will be convenient in the analysis of the scheme to handle another representation of the discrete densities associated with the upwind discretization of the mass flux. For $\bfu\in\xbfH_{\mesh,0}(\Omega)$ we define an interpolation operator $\xP_\edges$ which associates a piecewise constant function over the cells of the dual mesh to any function $\rho\in\xL_\mesh(\Omega)$ as follows:
\begin{equation}
\label{def:pepsrho}
 \xP_{\edges} \rho (\bfx)= 
 \sum_{\edge \in \edgesint} \rho_\edge \, \Ind_{\Ds}(\bfx).
\end{equation}
The constant value of $\xP_\edges \rho$ over the cell $D_\edge$, $\edge=K|L\in\edgesint$ is $\rho_\edge$ the upwind value with respect to $\bfu_\edge$, \emph{i.e.} $\rho_\edge=\rho_K$ is $\bfu_\edge\cdot\bfn_{K,\edge} \geq 0$ and $\rho_\edge=\rho_L$ otherwise.

\medskip

\begin{lem} \label{lmm:Pedges}
Let $\disc=(\mesh,\edges)$ be a staggered discretization of $\Omega$ such that $\theta_\mesh \leq \theta_0$ (where $\theta_\mesh$ is defined by \eqref{eq:reg}) for some positive constant $\theta_0$. For all $q\in[1,+\infty]$, there exists a constant $C=C(q,\theta_0)$ such that:
\[
\norm{\xP_\edges\rho}_{\xL^q(\Omega)} \leq C \norm{\rho}_{\xL^q(\Omega)}, \quad \forall \rho \in \xL_{\mesh}(\Omega). 
\]
\end{lem}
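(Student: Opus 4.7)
The plan is to estimate the $\xL^q$-norm of $\xP_\edges\rho$ cell by cell on the dual mesh, use the fact that the upwind value $\rho_\edge$ coincides with either $\rho_K$ or $\rho_L$ on each diamond $D_\edge$, and then re-index the resulting sum over the primal cells, using the regularity of the discretization to absorb all the geometric factors in a constant depending only on $q$, $d$ and $\theta_0$.

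The case $q=+\infty$ is immediate from the definition~\eqref{def:pepsrho}: on every diamond $D_\edge$, with $\edge=K|L\in\edgesint$, the value of $\xP_\edges\rho$ equals either $\rho_K$ or $\rho_L$, so $\norm{\xP_\edges\rho}_{\xL^\infty(\Omega)}\leq \norm{\rho}_{\xL^\infty(\Omega)}$ and one can take $C=1$.

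For $q\in[1,+\infty)$, I would start from
\[
\norm{\xP_\edges\rho}_{\xL^q(\Omega)}^q
=\sum_{\edge\in\edgesint,\,\edge=K|L} |D_\edge|\,|\rho_\edge|^q
\leq \sum_{\edge\in\edgesint,\,\edge=K|L}\bigl(|D_{K,\edge}|+|D_{L,\edge}|\bigr)\bigl(|\rho_K|^q+|\rho_L|^q\bigr),
\]
using $|\rho_\edge|^q\leq |\rho_K|^q+|\rho_L|^q$ since $\rho_\edge\in\{\rho_K,\rho_L\}$. Recalling that $|D_{K,\edge}|=|K|/(d+1)$, the mixed cross-terms $|D_{K,\edge}||\rho_L|^q$ and $|D_{L,\edge}||\rho_K|^q$ have to be dominated by the diagonal ones; this is the only step where the regularity enters. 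Indeed, from $\theta_\mesh\leq\theta_0$ one has $h_K/h_L,h_L/h_K\leq\theta_0$ for every inner face $\edge=K|L$, together with $\varrho_K\geq h_K/\theta_0$. Combined with $|K|\leq c_d h_K^d$ and $|K|\geq c_d'\varrho_K^d$, this yields the double inequality $C(\theta_0,d)^{-1}|K|\leq |L|\leq C(\theta_0,d)|K|$. Hence
\[
\bigl(|D_{K,\edge}|+|D_{L,\edge}|\bigr)\bigl(|\rho_K|^q+|\rho_L|^q\bigr)\leq C(\theta_0,d)\bigl(|K|\,|\rho_K|^q+|L|\,|\rho_L|^q\bigr).
\]

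To finish I would re-index the resulting sum over internal faces as a sum over primal cells: each $K\in\mesh$ is a simplex with at most $d+1$ faces, so each term $|K|\,|\rho_K|^q$ appears at most $d+1$ times. This yields
\[
\norm{\xP_\edges\rho}_{\xL^q(\Omega)}^q\leq C(\theta_0,d,q)\sum_{K\in\mesh}|K|\,|\rho_K|^q = C(\theta_0,d,q)\,\norm{\rho}_{\xL^q(\Omega)}^q,
\]
which gives the claim after taking $q$-th roots. There is no real obstacle: the whole argument is pure bookkeeping on the geometry of the dual mesh, and the only analytical ingredient is the comparison $|K|\simeq|L|$ (up to $\theta_0$) between two neighbouring simplices, which is a direct consequence of the mesh regularity. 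This is the upwind analogue of the ``centered'' estimate of Lemma~\ref{lmm:Lpeq}, and the same proof strategy applies verbatim.
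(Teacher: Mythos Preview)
Your argument is correct. The paper actually states Lemma~\ref{lmm:Pedges} without proof, treating it as an elementary consequence of the mesh regularity; your write-up supplies precisely the natural bookkeeping argument one would expect, and the only nontrivial ingredient---the comparability $|K|\simeq|L|$ for neighbouring simplices under the bound $\theta_\mesh\leq\theta_0$---is handled correctly.
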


\subsection{Positivity of the density and discrete renormalization property}
\label{sec:renorm}

We begin this subsection with the next proposition which states the positivity of the discrete density $\rho\in\xL_\mesh(\Omega)$ if $(\rho,\bfu)$ is a solution of the discrete mass balance \eqref{eq:sch_mass}. 

\medskip

\begin{prop}[Positivity of the density]
\label{prop:rho-pos}
Let $\disc=(\mesh,\edges)$ be a staggered discretization of $\Omega$.
Let $(\rho,\bfu)\in\xL_\mesh(\Omega)\times\xbfH_\mesh(\Omega)$ be a solution of the discrete mass balance \eqref{eq:sch_mass}.
For $K\in\mesh$, denote $\dive(\bfu)_K$ the constant value of $\dive_\mesh\,\bfu$ over $K$.
Then
\begin{equation*}
\rho_K \geq \bar \rho := \frac{\rho^\star}{1+ h_\mesh^{-\xi_1} \, \max \Big( 0, \max\limits_{K\in\mesh}\dive(\bfu)_K  \Big )} > 0 \quad \forall K \in \mesh.
\end{equation*}
\end{prop}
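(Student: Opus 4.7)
The plan is to apply a discrete maximum principle at a cell $K^*\in\mesh$ realizing the minimum $\rho_{K^*}=\min_{K\in\mesh}\rho_K$ and to inspect each of the three terms of \eqref{eq:sch_mass} evaluated at $K^*$. Two monotonicity arguments will let me discard two of them with a favorable sign, leaving a scalar inequality on $\rho_{K^*}$ alone. First, the discrete $p$-Laplacian stabilization $-\Delta_{\frac{1+\eta}{\eta},\mesh}(\rho)$ at $K^*$ is a sum over the internal edges $\edge=K^*|L$ of quantities proportional to $|\rho_{K^*}-\rho_L|^{1/\eta-1}(\rho_{K^*}-\rho_L)$; since $\rho_{K^*}\leq\rho_L$ for every neighbor $L$, every summand is non-positive, so the whole stabilization contribution at $K^*$ is non-positive.

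Second, for the mass convection operator, the upwind formula \eqref{df:rho-upwind} together with the decomposition $\rho_\edge\,\bfu_\edge\cdot\bfn_{K^*,\edge}=\rho_{K^*}\,\bfu_\edge\cdot\bfn_{K^*,\edge}+(\rho_{K^*}-\rho_L)(\bfu_\edge\cdot\bfn_{K^*,\edge})^{-}$ and the Green-type identity $\sum_{\edge\in\edges(K^*)}|\edge|\,\bfu_\edge\cdot\bfn_{K^*,\edge}=|K^*|\dive(\bfu)_{K^*}$ yield
\[
\sum_{\edge\in\edges(K^*)}F_{K^*,\edge}(\rho,\bfu)=\rho_{K^*}|K^*|\,\dive(\bfu)_{K^*}+\sum_{\substack{\edge=K^*|L\\ \edge\in\edgesint}}|\edge|\,(\rho_{K^*}-\rho_L)(\bfu_\edge\cdot\bfn_{K^*,\edge})^{-},
\]
and the last sum is again non-positive since $\rho_{K^*}\leq\rho_L$ and $(\bfu_\edge\cdot\bfn_{K^*,\edge})^{-}\geq 0$. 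Injecting these two sign informations into \eqref{eq:sch_mass} at $K^*$ and rearranging yields the master inequality
\[
\rho_{K^*}\bigl(\dive(\bfu)_{K^*}+h_\mesh^{\xi_1}\bigr)\geq h_\mesh^{\xi_1}\rho^\star>0.
\]

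From this master inequality the conclusion $\rho_{K^*}\geq\bar\rho$ (and hence $\rho_K\geq\bar\rho$ for every $K\in\mesh$, because $K^*$ realizes the minimum) will follow by elementary algebra using $\dive(\bfu)_{K^*}\leq D^{+}:=\max\bigl(0,\max_{K\in\mesh}\dive(\bfu)_K\bigr)$. The cleanest route is to rewrite the master inequality as
\[
(\rho_{K^*}-\bar\rho)\bigl(\dive(\bfu)_{K^*}+h_\mesh^{\xi_1}\bigr)\geq \bar\rho\,\bigl(D^{+}-\dive(\bfu)_{K^*}\bigr)\geq 0,
\]
from which $\rho_{K^*}\geq\bar\rho$ can be read off as soon as the coefficient $\dive(\bfu)_{K^*}+h_\mesh^{\xi_1}$ is seen to be positive. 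The main obstacle is precisely this sign discussion: the case $\dive(\bfu)_{K^*}+h_\mesh^{\xi_1}\leq 0$ has to be ruled out because, combined with the strict positivity of the right-hand side $h_\mesh^{\xi_1}\rho^\star$ in the master inequality, a non-positive coefficient would force $\rho_{K^*}<0$, in contradiction with the positivity claim. Once this degenerate case is set aside, the two displays above close the argument and propagate the lower bound $\bar\rho$ to every cell.
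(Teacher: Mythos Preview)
Your minimum--principle strategy is exactly the standard route (and is what the paper has in mind when it defers to \cite{gal-18-conv}): evaluate the mass balance at a minimizing cell $K^*$, discard the $p$-Laplacian and the upwind remainder by sign, and arrive at the scalar inequality
\[
\rho_{K^*}\bigl(\dive(\bfu)_{K^*}+h_\mesh^{\xi_1}\bigr)\ \geq\ h_\mesh^{\xi_1}\rho^\star.
\]
The derivation of this ``master inequality'' is correct.

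The gap is in how you dispose of the case $\dive(\bfu)_{K^*}+h_\mesh^{\xi_1}\leq 0$. You say this case ``would force $\rho_{K^*}<0$, in contradiction with the positivity claim'', but the positivity claim is precisely what you are proving, so this is circular. Nothing you have written so far excludes a solution with $\rho_{K^*}<0$ and $\dive(\bfu)_{K^*}+h_\mesh^{\xi_1}<0$; the master inequality is perfectly compatible with both factors being negative. A clean way to close this is to rewrite the mass balance at a generic cell $K$ in the form
\[
\alpha_K\,\rho_K \;=\; h_\mesh^{\xi_1}\rho^\star \;+\; \sum_{L}\beta_{KL}\,\rho_L,
\qquad
\alpha_K>0,\ \beta_{KL}\geq 0,
\]
with $\alpha_K=\tfrac{1}{|K|}\sum_{\edge}|\edge|(\bfu_\edge\cdot\bfn_{K,\edge})^{+}+h_\mesh^{\xi_1}+\sum_\edge c_\edge^K$ and $\beta_{KL}=\tfrac{1}{|K|}|\edge|(\bfu_\edge\cdot\bfn_{K,\edge})^{-}+c_{\edge}^K$, where $c_\edge^K\geq 0$ collects the $p$-Laplacian coefficients. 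Multiplying by $|K|$ and summing the column $L$ of the resulting matrix gives $h_\mesh^{\xi_1}|L|>0$ (the convective and $p$-Laplacian contributions cancel by conservativity and symmetry). Hence the matrix is a nonsingular $M$-matrix and its inverse is entrywise nonnegative, which forces $\rho_K\geq 0$ for every $K$. With $\rho_{K^*}\geq 0$ in hand, your master inequality immediately rules out $\rho_{K^*}=0$ and then forces $\dive(\bfu)_{K^*}+h_\mesh^{\xi_1}>0$, after which your final algebraic step yields $\rho_{K^*}\geq\bar\rho$ exactly as you wrote.
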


We skip the proof. A similar proof can be found in \cite{gal-18-conv} (Appendix A).

\medskip

Next, we state a discrete analogue of the renormalization property \eqref{eq:c-renormalized-eq} satisfied at the continuous level. The proof is given in Appendix \ref{sec:prop:renorm}.

\medskip
\begin{prop}[Discrete renormalization property]
\label{prop:renorm}
Let $\disc=(\mesh,\edges)$ be a staggered discretization of $\Omega$.
Let $(\rho,\bfu)\in\xL_\mesh(\Omega)\times\xbfH_{\mesh,0}(\Omega)$ satisfy the discrete mass balance \eqref{eq:sch_mass}. We have $\rho>0$ \emph{a.e.} in $\Omega$ (\emph{i.e.} $\rho_K>0$, $\forall K\in\mesh$). 
Then, for any $b \in \mathcal{C}^1([0,+\infty))$:
\begin{align}\label{eq:discr-renorm-loc}
\dive \big(b(\rho) \bfu \big)_K + \big(b'(\rho_K)\rho_K - b(\rho_K) \big)\dive (\bfu)_K +R^1_K + R^2_K + R^3_K = 0  \quad \forall K \in \mesh,
\end{align}
where
\[
\dive \big(b(\rho) \bfu \big)_K 
= \dfrac{1}{|K|} \sum_{\edge \in \edges(K)}{|\edge| \ b(\rho_\edge) \bfu_\edge \cdot \bfn_{K,\edge}} ,
\]
and
\begin{align*}
R^1_K & = \frac{1}{|K|}\sum_{\edge \in \edges(K)}|\edge| r_{K,\edge} \bfu_\edge \cdot n_{K,\edge} \quad \text{and} \quad 
		r_{K,\edge} = b'(\rho_K)(\rho_\edge-\rho_K) + b(\rho_K) - b(\rho_\edge), \\
R^2_K & =  h_\mesh^{\xi_2} \  b'(\rho_K) \frac{1}{|K|}\sum_{\edge\in\edges(K)}|\edge| \,\Big(\dfrac{|\edge|}{|D_\edge|}\Big)^\frac{1}{\eta}\,|\rho_K-\rho_L|^{\frac{1}{\eta}-1}\,(\rho_K-\rho_L), \\
R^3_K & = h_\mesh^{\xi_1} b'(\rho_K)(\rho_K - \rho^\star). 
\end{align*}

\medskip
Multiplying by $|K|$ and summing over $K \in \mesh$, it holds
\begin{align}\label{eq:discr-renorm-glob}
\int_{\Omega}{\big(b'(\rho)\rho - b(\rho) \big)\dive_\mesh\,\bfu \dx} + R^1_\edges + R^2_\edges + R^3_\mesh = 0,
\end{align}
with
\begin{align*}
R^1_\edges & =\sum_{\substack{\edge\in\edgesint \\ \edge=K|L}} |\edge| (r_{K,\edge}-r_{L,\edge}) \bfu_\edge \cdot \bfn_{K,\edge}, \\
R^2_\edges & =  h_\mesh^{\xi_2} \sum_{\substack{\edge\in\edgesint \\ \edge=K|L}} |\edge| \,\Big(\dfrac{|\edge|}{|D_\edge|}\Big)^\frac{1}{\eta}\,|\rho_K-\rho_L|^{\frac{1}{\eta}-1}\,(\rho_K-\rho_L)(b'(\rho_K)-b'(\rho_L)), \\
R^3_\mesh & = h_\mesh^{\xi_1} \sum_{K \in \mesh} |K| b'(\rho_K)(\rho_K - \rho^\star), 
\end{align*}
and if $b$ is convex then $R^{1,2}_\edges \geq 0$ and $R^3_\mesh \geq 0$.
\end{prop}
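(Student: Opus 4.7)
The plan is to derive the local identity \eqref{eq:discr-renorm-loc} by multiplying the discrete mass balance \eqref{eq:sch_mass}, written on a cell $K$, by $b'(\rho_K)$, and to obtain the global identity \eqref{eq:discr-renorm-glob} by multiplying by $|K|$ and summing over $K$. The sign properties for convex $b$ will be proved separately, term by term.

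First, I would write the mass balance cellwise: for every $K\in\mesh$,
\[
\frac{1}{|K|}\sum_{\edge\in\edges(K)}|\edge|\rho_\edge\,\bfu_\edge\cdot\bfn_{K,\edge} + h_\mesh^{\xi_1}(\rho_K-\rho^\star) + h_\mesh^{\xi_2}\,\frac{1}{|K|}\sum_{\substack{\edge\in\edges(K)\cap\edgesint\\\edge=K|L}}|\edge|\Big(\tfrac{|\edge|}{|D_\edge|}\Big)^{\!1/\eta}|\rho_K-\rho_L|^{\frac{1}{\eta}-1}(\rho_K-\rho_L)=0.
\]
Multiplying by $b'(\rho_K)$ immediately produces the terms $R^3_K$ and $R^2_K$. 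For the flux contribution, I would use the key algebraic identity
\[
b'(\rho_K)\rho_\edge \;=\; b(\rho_\edge)\;+\;\bigl(b'(\rho_K)\rho_K-b(\rho_K)\bigr)\;+\;r_{K,\edge},
\]
with $r_{K,\edge}=b'(\rho_K)(\rho_\edge-\rho_K)+b(\rho_K)-b(\rho_\edge)$, which is a direct check. Summing $|\edge|\,\bfu_\edge\cdot\bfn_{K,\edge}$ against this decomposition reproduces $\dive(b(\rho)\bfu)_K$, $(b'(\rho_K)\rho_K-b(\rho_K))\dive(\bfu)_K$ (since $\dive(\bfu)_K=\frac{1}{|K|}\sum_{\edge\in\edges(K)}|\edge|\bfu_\edge\cdot\bfn_{K,\edge}$), and the remainder $R^1_K$, yielding \eqref{eq:discr-renorm-loc}.

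For \eqref{eq:discr-renorm-glob}, I would multiply by $|K|$ and sum. The divergence term $\sum_K\sum_{\edge\in\edges(K)}|\edge|b(\rho_\edge)\bfu_\edge\cdot\bfn_{K,\edge}$ vanishes: each internal face $\edge=K|L$ shares the same $b(\rho_\edge)\bfu_\edge$ while the two outward normals are opposite, and external faces contribute nothing since $\bfu\in\xbfH_{\mesh,0}(\Omega)$ implies $\bfu_\edge=0$ on $\dv\Omega$. Reorganizing $\sum_K|K|R^1_K$ face by face gives $\sum_{\edge=K|L}|\edge|(r_{K,\edge}-r_{L,\edge})\bfu_\edge\cdot\bfn_{K,\edge}$, and similarly for $R^2_\edges$, one gets the factor $b'(\rho_K)-b'(\rho_L)$ in front of each face contribution, while $R^3_\mesh$ is obtained directly.

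It remains to prove the sign statements for convex $b$. For $R^2_\edges$, convexity means $b'$ is non-decreasing, hence $(b'(\rho_K)-b'(\rho_L))(\rho_K-\rho_L)\geq 0$, which gives $R^2_\edges\geq 0$. For $R^3_\mesh$, I would first use global mass conservation (summing \eqref{eq:sch_mass} over $K$ makes the convective and symmetric diffusive sums vanish, leaving $\sum_K|K|(\rho_K-\rho^\star)=0$) to rewrite $R^3_\mesh=h_\mesh^{\xi_1}\sum_K|K|(b'(\rho_K)-b'(\rho^\star))(\rho_K-\rho^\star)\geq 0$ by monotonicity of $b'$. The main obstacle, and the only step requiring real care, is the positivity of $R^1_\edges$: on a face $\edge=K|L$, I would split by the sign of $\bfu_\edge\cdot\bfn_{K,\edge}$. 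If this quantity is non-negative, the upwind rule \eqref{df:rho-upwind} gives $\rho_\edge=\rho_K$, hence $r_{K,\edge}=0$, whereas $r_{L,\edge}=-\bigl[b(\rho_K)-b(\rho_L)-b'(\rho_L)(\rho_K-\rho_L)\bigr]\leq 0$ by convexity, so $r_{K,\edge}-r_{L,\edge}\geq 0$ and the product with $\bfu_\edge\cdot\bfn_{K,\edge}$ is non-negative. The opposite sign case is symmetric, yielding $R^1_\edges\geq 0$ in all situations and concluding the proof.
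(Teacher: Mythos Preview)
Your proof is correct and follows essentially the same route as the paper: multiply the cellwise mass balance by $b'(\rho_K)$, use the algebraic splitting $b'(\rho_K)\rho_\edge=b(\rho_\edge)+(b'(\rho_K)\rho_K-b(\rho_K))+r_{K,\edge}$ to obtain \eqref{eq:discr-renorm-loc}, then sum over $K$ and reorganize per face to get \eqref{eq:discr-renorm-glob}. Your sign arguments for $R^1_\edges$ and $R^2_\edges$ coincide with the paper's (the paper argues on the double sum $\sum_K\sum_{\edge\in\edges(K)}|\edge|r_{K,\edge}\,\bfu_\edge\cdot\bfn_{K,\edge}$ rather than the face-reorganized form, but the content is identical). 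The one genuine variation is $R^3_\mesh$: the paper bounds $b'(\rho_K)(\rho_K-\rho^\star)\geq b(\rho_K)-b(\rho^\star)$ by convexity and then invokes Jensen's inequality on $\frac{1}{|\Omega|}\int_\Omega b(\rho)\,\dx\geq b(\rho^\star)$, whereas you subtract the constant $b'(\rho^\star)$ using global mass conservation and conclude by monotonicity of $b'$. Both are valid; your version is slightly more elementary since it avoids Jensen.
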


\medskip
\begin{rmrk}
\label{rmrk:renorm}
For $b(\rho) = \dfrac{1}{\beta-1} \rho^\beta$ with $\beta > 1$, the previous result gives
	\begin{equation}
	\label{eq:renorm}
	\int_\Omega \rho^\beta \, \dive_\mesh\,\bfu \, \dx +  R_{\edges,1}(\rho,\bfu)  +  R_{\edges,2}(\rho,\bfu)  \leq 0,
	\end{equation}
	with
	\[
	\begin{aligned}
	&R^1_\edges(\rho,\bfu) = \frac{\beta}{2} \sum_{\substack{\edge\in\edgesint \\ \edge=K|L}} |\edge|\,\min(\rho_K^{\beta-2},\rho_L^{\beta-2})\, (\rho_L-\rho_K)^2 \,|\bfu_\edge\cdot\bfn_{K,\edge}|
	\geq 0 ,\\[2ex]
	&R^2_\edges(\rho,\bfu) =
	h_\mesh^{\xi_2}\,  \frac{\beta}{\beta-1} \sum_{\substack{\edge\in\edgesint \\ \edge=K|L}}  |\edge| \,\Big(\dfrac{|\edge|}{|D_\edge|}\Big)^\frac{1}{\eta}\,|\rho_K-\rho_L|^{\frac{1}{\eta}-1}\,(\rho_K-\rho_L)(\rho_K^{\beta-1}-\rho_L^{\beta-1})
	\geq 0.
	\end{aligned}
	\]
\end{rmrk}

\medskip
\begin{rmrk}{\label{rmrk:cond-b}}
	As explained in the continuous case, we can extend the renormalization result of Proposition \ref{prop:renorm} to functions $b \in \mathcal{C}^0([0,+\infty)) \cap \mathcal{C}^1((0,+\infty))$, under the additional assumption that for all $t \leq 1$:
	\[
	|b'(t)| \leq C t^{-\lambda_0} \quad \text{for some} ~ \lambda_0 < 1.
	\]
\end{rmrk}

%\medskip
%\begin{rmrk}
%The positivity of the density and the discrete renormalization property are actually independent of the space dimension. They are naturally valid for $d=2$.
%\end{rmrk}

\subsection{Estimate on the discrete velocity}
\label{sec:est:u}

In order to derive estimates on the discrete velocity and density, we
% proceed as in the continuous case and 
begin with writing a discrete counterpart of the weak formulation of the momentum balance. 
The following lemma states discrete counterparts to classical Stokes formulas. We refer to Sections \ref{sec:scheme} and \ref{sec:norms-prop} for the definitions of the operators.

\medskip
\begin{lem}
Let $\disc=(\mesh,\edges)$ be a staggered discretization of $\Omega$. The following discrete integration by parts formulas are satisfied for all $(p,\bfu)\in\xL_\mesh(\Omega)\times\xbfH_{\mesh,0}(\Omega)$. One has for all $\bfv \in \xbfH_{\mesh,0}(\Omega)$:
\begin{align}
\label{ipp:diff1}  - \int_{\Omega}\lapi_\edges \bfu \cdot \xPi_\edges \bfv \dx  & = \int_\Omega\gradi_\mesh \bfu:\gradi_\mesh \bfv \dx, \\
\label{ipp:diff2} - \int_{\Omega}(\gradi\circ\dive)_\edges\bfu \cdot \xPi_\edges \bfv \dx & = \int_\Omega \dive_\mesh\,\bfu \, \dive_\mesh\,\bfv \dx,
\\
\label{eq:dual_divgrad}
 \int_\Omega \gradi_\edges (p) \cdot \Pi_\edges \bfv \dx & = - \int_\Omega p \, \dive_\mesh\,\bfv \dx. 
\end{align}
\end{lem}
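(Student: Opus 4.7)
The plan is to prove each of the three identities by a direct unfolding of the defining formulas and a careful rearrangement of sums. All three computations rest on two structural facts. First, since $\xPi_\edges \bfv$ takes the constant value $\bfv_\edge$ on each dual cell $D_\edge$, integrating a piecewise constant (on the dual mesh) quantity against $\xPi_\edges \bfv$ over $D_\edge$ just multiplies it by $|D_\edge| \bfv_\edge$, which cancels the $1/|D_\edge|$ prefactor appearing in the definitions of $\lapi_\edges$ and $(\gradi \circ \dive)_\edges$. Second, on each primal simplex $K$ the Crouzeix--Raviart reconstruction $\bfv_{|K} = \sum_{\edge \in \edges(K)} \bfv_\edge \zeta_\edge$ holds, as a direct consequence of the defining property \eqref{DOF:u} of the CR basis. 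Everywhere, the assumption $\bfv \in \xbfH_{\mesh,0}(\Omega)$, i.e.\ $\bfv_\edge = 0$ on $\edgesext$, lets me freely trade sums over $\edgesint$ for sums over all faces of a cell without altering anything.

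For \eqref{ipp:diff1}, I expand the LHS as
\[
\int_\Omega \lapi_\edges \bfu \cdot \xPi_\edges \bfv \dx = \sum_{\edge \in \edgesint} \bfv_\edge \cdot \sum_{K \in \mesh} \int_K \gradi \bfu \cdot \gradi \zeta_\edge \dx.
\]
Swapping the two sums and noting that, on $K$, $\zeta_\edge$ vanishes for $\edge \notin \edges(K)$, I rewrite the inner contribution component-wise as $\gradi u_i \cdot \big( \sum_{\edge \in \edges(K)} v_{i,\edge} \gradi \zeta_\edge \big)_{|K}$. By the CR reconstruction applied component-wise and the fact that $v_{i,\edge} = 0$ on $\edgesext$, the parenthesised sum is exactly $\gradi v_i|_K$. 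Summing over $i$ and $K$ reassembles $\sum_K \int_K \gradi \bfu : \gradi \bfv \dx = \int_\Omega \gradi_\mesh \bfu : \gradi_\mesh \bfv \dx$. Identity \eqref{ipp:diff2} follows by the same pattern, replacing $\gradi \bfu \cdot \gradi \zeta_\edge$ by $\dive \bfu \, \gradi \zeta_\edge$: the new inner sum one encounters is $\sum_{\edge \in \edges(K)} \bfv_\edge \cdot \gradi \zeta_\edge|_K$, which by the same CR argument equals $\dive \bfv|_K$.

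For \eqref{eq:dual_divgrad}, substituting the definition of $\gradi_\edges p$ gives
\[
\int_\Omega \gradi_\edges p \cdot \xPi_\edges \bfv \dx = \sum_{\substack{\edge \in \edgesint \\ \edge = K|L}} |\edge| (p_L - p_K) \bfv_\edge \cdot \bfn_{K,\edge}.
\]
I then apply the classical face-to-cell summation by parts: each oriented face contribution $|\edge|(p_L - p_K) \bfv_\edge \cdot \bfn_{K,\edge}$ redistributes as $-|\edge| p_K \bfv_\edge \cdot \bfn_{K,\edge}$ (charged to $K$) plus $-|\edge| p_L \bfv_\edge \cdot \bfn_{L,\edge}$ (charged to $L$), using $\bfn_{L,\edge} = -\bfn_{K,\edge}$. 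External faces are added for free since $\bfv_\edge = 0$ there, yielding $-\sum_K p_K \sum_{\edge \in \edges(K)} |\edge| \bfv_\edge \cdot \bfn_{K,\edge}$. Green's formula applied to the affine field $\bfv|_K$ on the simplex $K$, combined with the fact that $\bfv_\edge = |\edge|^{-1}\int_\edge \bfv \dedge(\bfx)$, identifies the inner sum as $|K| (\dive \bfv)_K$, and I conclude with $-\int_\Omega p \, \dive_\mesh \bfv \dx$.

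There is no real obstacle here: the only thing one has to watch is the boundary bookkeeping. The hypothesis $\bfv \in \xbfH_{\mesh,0}(\Omega)$ is used in all three identities, both to extend sums over $\edgesint$ to sums over $\edges(K)$ and to close the Crouzeix--Raviart reconstruction on each primal cell. No estimate, Sobolev embedding or consistency argument is required; everything reduces to elementary linear algebra and the divergence theorem on a single simplex.
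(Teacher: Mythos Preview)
The paper states this lemma without proof, so there is no argument of its own to compare against. Your approach --- unfolding the definitions, using the Crouzeix--Raviart reconstruction $\bfv_{|K}=\sum_{\edge\in\edges(K)}\bfv_\edge\zeta_\edge$ for the two diffusion identities, and the cell-by-cell divergence theorem for the pressure--gradient duality --- is exactly the standard one and is correct.

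One minor point of exposition: in your treatment of \eqref{ipp:diff1} you announce that you ``expand the LHS'' but then display $\int_\Omega \lapi_\edges\bfu\cdot\xPi_\edges\bfv\,\dx$ without the minus sign, and your chain of equalities ends at $+\int_\Omega\gradi_\mesh\bfu:\gradi_\mesh\bfv\,\dx$. Taken literally with the paper's definition \eqref{eq:def_diff}, this yields the identity without the minus sign on the left. This is not a flaw in your argument but rather a sign convention inconsistency already present in the paper (the definition of $\lapi_\edges$ as written makes it the \emph{negative} Laplacian, while the lemma and the weak form \eqref{eq:sch_mom:weak} treat it as the true Laplacian). Your computation is internally consistent; just be explicit about which sign convention you are following, or note the discrepancy.
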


\medskip
Thanks to these formulas we easily show the next lemma which corresponds to a discrete counterpart of the weak formulation of the momentum equation.

\begin{lem}[Weak formulation of the momentum balance - first form]
Let $\disc=(\mesh,\edges)$ be a staggered discretization of $\Omega$. 
A pair $(\rho,\bfu)\in\xL_\mesh(\Omega)\times\xbfH_\mesh(\Omega)$ satisfies the discrete momentum balance \eqref{eq:sch_mom} if and only if:
\begin{multline}
\label{eq:sch_mom:weak}
\int_\Omega \divv_\edges(\rho \bfu\otimes \bfu)\cdot \xPi_\edges \bfv\dx +\mu \int_\Omega\gradi_\mesh \bfu:\gradi_\mesh \bfv \dx + (\mu+\lambda)\int_\Omega \dive_\mesh\, \bfu \, \dive_\mesh \,\bfv \dx \\[2ex]
- a \int_\Omega \rho^\gamma\, \dive_\mesh\,\bfv \dx - h_\mesh^{\xi_3}\int_\Omega \rho^\Gamma\, \dive_\mesh\,\bfv \dx = \int_\Omega \bff\cdot\xPi_\edges \bfv \dx, \qquad \forall \bfv\in\xbfH_{\mesh,0}(\Omega).
\end{multline}
\end{lem}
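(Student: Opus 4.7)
The plan is to reduce \eqref{eq:sch_mom} to \eqref{eq:sch_mom:weak} by recognizing that both are equivalent to an identity between piecewise constant functions on the internal dual cells $\{D_\edge\}_{\edge \in \edgesint}$, and then invoking the three discrete integration by parts formulas \eqref{ipp:diff1}, \eqref{ipp:diff2}, \eqref{eq:dual_divgrad}.

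First I would observe that by their very definitions in Section~\ref{sec:scheme}, each of the terms $\divv_\edges(\rho\bfu\otimes\bfu)$, $\lapi_\edges\bfu$, $(\gradi\circ\dive)_\edges\bfu$, $\gradi_\edges(\rho^\gamma)$, $\gradi_\edges(\rho^\Gamma)$, as well as the source $\widetilde{\Pi}_\edges\bff$, is a vector-valued function of the form $\sum_{\edge\in\edgesint} \bfa_\edge \, \Ind_{\Ds}$ for some family $(\bfa_\edge)_{\edge\in\edgesint}\subset\xR^d$. Hence \eqref{eq:sch_mom} is equivalent to a family of vector equalities, one per internal face. Since the map $\bfv\mapsto \xPi_\edges\bfv$ realizes a bijection from $\xbfH_{\mesh,0}(\Omega)$ onto the space of $\xR^d$-valued functions that are constant on each $D_\edge$, $\edge\in\edgesint$, and vanish on the boundary dual cells, testing against $\xPi_\edges\bfv$ for arbitrary $\bfv\in\xbfH_{\mesh,0}(\Omega)$ is an equivalent reformulation of \eqref{eq:sch_mom}.

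Next, for any $\bfv\in\xbfH_{\mesh,0}(\Omega)$, I multiply \eqref{eq:sch_mom} by $\xPi_\edges\bfv$, integrate over $\Omega$, and rewrite each term: the diffusion terms are handled by \eqref{ipp:diff1} and \eqref{ipp:diff2}, yielding respectively $\mu\int_\Omega \gradi_\mesh\bfu:\gradi_\mesh\bfv\dx$ and $(\mu+\lambda)\int_\Omega \dive_\mesh\bfu\,\dive_\mesh\bfv\dx$; the two pressure gradient terms are treated through \eqref{eq:dual_divgrad} applied to $p=a\rho^\gamma$ and $p=h_\mesh^{\xi_3}\rho^\Gamma$, producing $-a\int_\Omega \rho^\gamma\dive_\mesh\bfv\dx$ and $-h_\mesh^{\xi_3}\int_\Omega \rho^\Gamma\dive_\mesh\bfv\dx$. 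The convective contribution $\int_\Omega \divv_\edges(\rho\bfu\otimes\bfu)\cdot\xPi_\edges\bfv\dx$ appears directly, with no further manipulation. Finally, since $\widetilde{\Pi}_\edges\bff$ is the $\xL^2$-orthogonal projection of $\bff$ onto the dual-cell-constant functions, and $\xPi_\edges\bfv$ is itself dual-cell constant, we have
\[
\int_\Omega \widetilde{\Pi}_\edges\bff\cdot\xPi_\edges\bfv\dx = \sum_{\edge\in\edgesint} \bfv_\edge \cdot \int_{D_\edge}\bff\dx = \int_\Omega \bff\cdot\xPi_\edges\bfv\dx,
\]
which gives the right-hand side of \eqref{eq:sch_mom:weak}. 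Conversely, once \eqref{eq:sch_mom:weak} is known, the same manipulations read backward and the bijectivity of $\xPi_\edges$ recover the pointwise identity \eqref{eq:sch_mom} on every $D_\edge$.

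This is essentially a verification rather than a theorem with a hard core: all the work has already been done in the integration by parts formulas \eqref{ipp:diff1}--\eqref{eq:dual_divgrad} and in the compatibility between $\widetilde{\Pi}_\edges$ and $\xPi_\edges$. The only point deserving attention is the boundary behaviour: since $\bfv\in\xbfH_{\mesh,0}(\Omega)$ has zero degrees of freedom on external faces, $\xPi_\edges\bfv$ vanishes on $D_\edge$ for $\edge\in\edgesext$, so no boundary residual appears when one ``tests'' the scheme, in agreement with the fact that all discrete operators in \eqref{eq:sch_mom} are defined only on internal dual cells.
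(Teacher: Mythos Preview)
Your proof is correct and follows exactly the approach the paper indicates: the paper simply states that ``thanks to these formulas we easily show the next lemma'', referring to the integration by parts identities \eqref{ipp:diff1}--\eqref{eq:dual_divgrad}, and you have spelled out precisely this verification. Your handling of the source term via the $\xL^2$-projection property of $\widetilde{\Pi}_\edges$ and your use of the bijectivity of $\xPi_\edges$ for the converse are the right ingredients, and nothing is missing.
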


%\medskip
%\begin{proof}
%Multiplying \eqref{eq:sch_mom} by $|D_\edge|\Ind_\Ds$, summing over $\edge\in\edgesint$ and using the discrete Stokes identities \eqref{ipp:diff1}, \eqref{ipp:diff2} and \eqref{eq:dual_divgrad}, we obtain \eqref{eq:sch_mom:weak}. Conversely, multiplying \eqref{eq:sch_mom:weak} by $|D_\edge|^{-1}\Ind_\Ds$ for all $\edge\in\edgesint$ yields \eqref{eq:sch_mom}.
%\end{proof}

\medskip
From this point, we assume that $\Gamma$ and $(\xi_1,\xi_2,\xi_3)$ satisfy the conditions \eqref{thm:cond_xi1}-\eqref{thm:cond_xi3}-\eqref{thm:cond_xi2}.

% \medskip
% We may now prove the following result.

\medskip
\begin{prop}[Estimate on the discrete velocity]
\label{prop:estimate:u}
%Let $\disc=(\mesh,\edges)$ be a staggered discretization of $\Omega$ such that $\theta_\mesh \leq \theta_0$ (where $\theta_\mesh$ is defined by \eqref{eq:reg}) for some positive constant $\theta_0$. 
%Assume that $\xi_1 > 1$.
Let $(\rho,\bfu)\in\xL_\mesh(\Omega)\times\xbfH_{\mesh,0}(\Omega)$ be a solution of the numerical scheme \eqref{eq:sch}. Then, we have $\rho>0$ \emph{a.e.} in $\Omega$ (\emph{i.e.} $\rho_K>0$, $\forall K\in\mesh$),
and if $h_\mesh \leq h_0$ (with $h_0$ depending on $\mu,\rho^\star,\Omega,\theta_0$), there exists $C_1=C_1(\bff,\mu,\rho^\star,\Omega,\xi_1,\theta_0)$ such that:
\begin{equation}
\label{estimates:u}
\norm{\bfu}_{1,2,\mesh}^2\leq C_1.
\end{equation}
\end{prop}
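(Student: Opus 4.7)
The plan is to mimic the continuous energy estimate by testing the discrete momentum balance against the velocity itself. Concretely, I would substitute $\bfv = \bfu$ in the weak formulation \eqref{eq:sch_mom:weak}, which produces four contributions: the convective term $\int_\Omega \divv_\edges(\rho\bfu\otimes\bfu)\cdot \xPi_\edges\bfu \dx$, the viscous dissipation $\mu\|\bfu\|_{1,2,\mesh}^2 + (\mu+\lambda)\|\dive_\mesh \bfu\|_{\xL^2(\Omega)}^2$, the two pressure terms $-a\int_\Omega \rho^\gamma \dive_\mesh \bfu \dx - h_\mesh^{\xi_3} \int_\Omega \rho^\Gamma \dive_\mesh \bfu \dx$, and the source $\int_\Omega \bff \cdot \xPi_\edges \bfu \dx$. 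The positivity $\rho_K > 0$ (and hence $\rho_{\Ds} > 0$) is supplied directly by Proposition~\ref{prop:rho-pos}, which makes the renormalization identity of Remark~\ref{rmrk:renorm} applicable with $b(\rho) = \tfrac{1}{\beta-1}\rho^\beta$. Taking successively $\beta = \gamma$ and $\beta = \Gamma$ gives $\int_\Omega \rho^\beta \dive_\mesh \bfu \dx \leq 0$, so both pressure contributions appear on the left-hand side with the good sign and may either be discarded or kept as extra positive dissipation.

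The convective term will then be handled via a discrete kinetic energy identity. The conservativity (H2) of the dual mass fluxes combined with the centered choice $\bfu_\edged = \tfrac{1}{2}(\bfu_\edge + \bfu_{\edge'})$ lets one rearrange the sum by dual faces and telescope into
\[
\int_\Omega \divv_\edges(\rho\bfu\otimes\bfu)\cdot \xPi_\edges \bfu \dx \;=\; \sum_{\edge \in \edgesint} \frac{|\bfu_\edge|^2}{2}\sum_{\edged \in \edgesd(D_\edge)} F_{\edge,\edged}(\rho,\bfu).
\]
The discrete dual mass balance \eqref{eq:sch_mass:dual} identifies the inner sum with $-h_\mesh^{\xi_1}|D_\edge|(\rho_{\Ds} - \rho^\star)$, so this contribution rewrites as
\[
\tfrac{1}{2} h_\mesh^{\xi_1}\rho^\star \|\xPi_\edges \bfu\|_{\xbfL^2(\Omega)}^2 \;-\; \tfrac{1}{2}h_\mesh^{\xi_1}\int_\Omega \rho_{\Ds}\,|\xPi_\edges \bfu|^2 \dx,
\]
in which the first summand has the good sign (it can be dropped) and only the second needs to be absorbed.

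The crux of the proof is thus controlling $h_\mesh^{\xi_1}\int_\Omega \rho_{\Ds}|\xPi_\edges \bfu|^2 \dx$ without any a priori $\xL^p$ bound on the density---only the identity $\|\rho_{\Ds}\|_{\xL^1(\Omega)} = \rho^\star|\Omega|$, inherited by telescoping the discrete mass equation \eqref{eq:sch_mass}. I would apply Hölder with exponents $3/2$ and $3$,
\[
\int_\Omega \rho_{\Ds}|\xPi_\edges \bfu|^2 \dx \;\leq\; \|\rho_{\Ds}\|_{\xL^{3/2}(\Omega)}\,\|\xPi_\edges \bfu\|_{\xbfL^6(\Omega)}^2,
\]
then use the elementary interpolation $\|\rho_{\Ds}\|_{\xL^{3/2}} \leq \|\rho_{\Ds}\|_{\xL^\infty}^{1/3}\|\rho_{\Ds}\|_{\xL^1}^{2/3}$ together with the $\xL^1 \to \xL^\infty$ inverse inequality of Lemma~\ref{lmm:inv-ineq} (noting that $\|\rho_{\Ds}\|_{\xL^\infty} \leq \|\rho\|_{\xL^\infty}$ since $\rho_{\Ds}$ is a convex combination of primal values) to conclude $\|\rho_{\Ds}\|_{\xL^{3/2}(\Omega)} \leq C h_\mesh^{-1}$ in dimension $d=3$; and the discrete Sobolev embedding of Lemma~\ref{lmm:injsobolev_br} together with Lemma~\ref{lmm:Lpeq} to get $\|\xPi_\edges \bfu\|_{\xbfL^6(\Omega)}^2 \leq C\|\bfu\|_{1,2,\mesh}^2$. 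The residual is therefore bounded by $C h_\mesh^{\xi_1-1}\|\bfu\|_{1,2,\mesh}^2$, and this is precisely where the hypothesis $\xi_1 > 1$ is used: for $h_\mesh \leq h_0$ small enough (with $h_0$ depending on $\mu, \rho^\star, \Omega, \theta_0$), the prefactor drops below $\mu/2$ and can be absorbed into the viscous term. The source term is handled by Cauchy--Schwarz, the $\xL^2$-continuity of $\xPi_\edges$ (Lemma~\ref{lmm:Lpeq}), the discrete Poincaré inequality (Lemma~\ref{lmm:poincare_brok}), and Young's inequality, which yield $|\int_\Omega \bff \cdot \xPi_\edges \bfu \dx| \leq \tfrac{\mu}{4}\|\bfu\|_{1,2,\mesh}^2 + C\|\bff\|_{\xbfL^2(\Omega)}^2$. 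Collecting all pieces and absorbing produces $\tfrac{\mu}{4}\|\bfu\|_{1,2,\mesh}^2 \leq C\|\bff\|_{\xbfL^2(\Omega)}^2$, which is the desired bound.
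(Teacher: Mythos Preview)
Your proof is correct and follows essentially the same strategy as the paper's: test the discrete momentum balance against $\bfu$, discard the pressure terms via the discrete renormalization identity of Remark~\ref{rmrk:renorm}, rewrite the convective contribution through the dual mass balance~\eqref{eq:sch_mass:dual}, and absorb the resulting $h_\mesh^{\xi_1-1}\norm{\bfu}_{1,2,\mesh}^2$ residual using $\xi_1>1$. The only cosmetic difference lies in how the residual is bounded: the paper controls $\int_\Omega(\tilde\rho-\rho^\star)|\xPi_\edges\bfu|^2\dx$ via the $\xL^1$--$\xL^\infty$ pairing and applies the inverse inequality to $\bfu$ ($\norm{\bfu}_{\xbfL^\infty}\lesssim h_\mesh^{-1/2}\norm{\bfu}_{\xbfL^6}$), whereas you use the $\xL^{3/2}$--$\xL^3$ pairing and put the inverse inequality on $\rho$. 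Both routes land on the same $h_\mesh^{\xi_1-1}$ exponent, so there is no substantive difference.
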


\medskip

\begin{proof}
We take $\bfv=\bfu$ as a test function in \eqref{eq:sch_mom:weak}:
\begin{multline*}
%\label{estproof1}
\int_\Omega \divv_\edges(\rho \bfu\otimes \bfu)\cdot \xPi_\edges \bfu\dx + \mu \norm{\bfu}^2_{1,2,\mesh}+(\mu+\lambda) \norm{\dive_\mesh\,\bfu}_{\xL^2(\Omega)}^2 \\[2ex]
- a\int_\Omega \rho^\gamma\, \dive_\mesh\,\bfu \dx - h_\mesh^{\xi_3}\int_\Omega \rho^\Gamma\, \dive_\mesh\,\bfu \dx
= \int_\Omega \bff \cdot \xPi_\edges \bfu \dx.
\end{multline*}
Applying Remark \ref{rmrk:renorm} on discrete renormalization with $\beta=\gamma>1$ and $\beta=\Gamma>1$, the last two terms in the left hand side of this equality are seen to be non-negative. We thus obtain:
\begin{equation}
\label{estimates:u:proof1}
\int_\Omega \divv_\edges(\rho \bfu\otimes \bfu)\cdot \xPi_\edges \bfu\dx + \mu \norm{\bfu}^2_{1,2,\mesh}\leq \norm{\bff}_{\xbfL^2(\Omega)}\,\norm{\xPi_\edges \bfu}_{\xbfL^2(\Omega)}. 
\end{equation}
Recalling that in the definition of the convection term, $\bfu_\edged=\frac 12(\bfu_\edge+\bfu_{\edge'})$ for $\edged=D_\edge|D_{\edge'}$, we get:
\begin{multline*}
 \int_\Omega \divv_\edges(\rho \bfu\otimes \bfu)\cdot \xPi_\edges \bfu\dx = \\ \frac 12 \sum_{\edge\in\edgesint} \,\Big ( \sum_{\edged\in\edgesd(D_\edge)} \fluxd(\rho,\bfu) \Big) |\bfu_\edge|^2 
+ \frac 12 \sum_{\edge\in\edgesint} \, \Big ( \sum_{\substack{\edged\in\edgesd(D_\edge)\\ \edged=D_\edge|D_{\edge'}}} \fluxd(\rho,\bfu)\ \bfu_\edge\cdot\bfu_{\edge'} \Big ),
\end{multline*}
and the last term in the right hand side vanishes thanks to the conservativity of the dual fluxes (assumption (H2)). Using the mass conservation equation satisfied on the dual mesh \eqref{eq:sch_mass:dual} in the first term, we get (denoting $\tilde \rho$ the piecewise constant scalar function which is equal to $\rho_\Ds$ on every dual cell $D_\edge$, and which satisfies $\tilde \rho>0$ (because $\rho>0$) and $\int_\Omega \tilde \rho\dx=\int_\Omega \rho\dx = |\Omega|\rho^\star$):
\[
 \Big | \int_\Omega \divv_\edges(\rho \bfu\otimes \bfu)\cdot \xPi_\edges \bfu\dx  \Big | =  \frac12 \, h_\mesh^{\xi_1} \,  \Big |  \int_{\Omega}(\tilde \rho-\rho^\star)\, |\xPi_\edges \bfu|^2 \dx \Big | \leq h_\mesh^{\xi_1} \, |\Omega|\,\rho^\star \, \norm{\xPi_\edges \bfu}_{\xbfL^\infty(\Omega)}^2.
\]
Injecting in \eqref{estimates:u:proof1} yields:
\begin{equation*}
\mu \norm{\bfu}^2_{1,2,\mesh}
\leq \norm{\bff}_{\xbfL^2(\Omega)}\,\norm{\xPi_\edges \bfu}_{\xbfL^2(\Omega)} + h_\mesh^{\xi_1} |\Omega|\,\rho^\star \norm{\xPi_\edges \bfu}_{\xbfL^\infty(\Omega)}^2.
\end{equation*}
Thanks to the continuity of operator $\xPi_\edges$: $\norm{\xPi_\edges \bfu}_{\xbfL^q(\Omega)}\leq C \norm{\bfu}_{\xbfL^q(\Omega)}$, the inverse inequality $\norm{\bfu}_{\xbfL^\infty(\Omega)} \leq  h_\mesh^{-\frac 12}\norm{\bfu}_{\xbfL^6(\Omega)}$, and the discrete Sobolev inequality $\norm{\bfu}_{\xbfL^6(\Omega)}\leq C\norm{\bfu}_{1,2,\mesh}$ (with $C$ only depending on $\Omega$ and $\theta_0$), we obtain:
\begin{equation*}
%\label{estproof3}
\mu \norm{\bfu}^2_{1,2,\mesh}
\leq C \Big ( \norm{\bff}_{\xbfL^2(\Omega)}\,\norm{\bfu}_{1,2,\mesh} + h_\mesh^{\xi_1- 1} |\Omega|\,\rho^\star \norm{ \bfu}_{1,2,\mesh}^2 \Big ).
\end{equation*}
Applying Young's inequality, we get that for all $\kappa>0$:
\begin{equation*}
(\mu-C\kappa-C  h_\mesh^{\xi_1-1} |\Omega|\,\rho^\star)\,\norm{\bfu}^2_{1,2,\mesh}
\leq \frac{C}{4\kappa}\,\norm{\bff}_{\xbfL^2(\Omega)}^2.
\end{equation*}
Since $\xi_1>1$, taking $h_\mesh$ and $\kappa$ small enough yields:
\[
 \norm{\bfu}_{1,2,\mesh}^2\leq C_1,
\]
where $C_1$ only depends on $\bff$, $\mu$, $\rho^\star$, $\Omega$, $\xi_1$ and $\theta_0$.

\end{proof}

%\begin{rmrk}
%The control on the discrete velocity is also independent of the space dimension. It is valid for $d=2$ with the less restrictive assumption $\xi_1>0$.
%\end{rmrk}

\subsection{Estimates on the discrete density}
\label{sec:est:rho}

%We derive an estimate on the discrete pressure following the same lines as in the continuous setting.
One remarkable property of the staggered discretization is the existence of a discrete analogue to the Bogovskii operator, which is also equivalent to an \emph{$L^q$ inf-sup} property satisfied by discrete functions 
(see for instance \cite{gallouet2012} for a proof which concerns the MAC scheme).
%We refer to the appendix, Section \ref{sec:app:lmm:inf-sup}, for the proof.

\medskip
\begin{lem}[Discrete $L^q$ inf-sup property] 
\label{lmm:inf-sup}
Let $\disc=(\mesh,\edges)$ be a staggered discretization of $\Omega$ such that $\theta_\mesh \leq \theta_0$ (where $\theta_\mesh$ is defined by \eqref{eq:reg}) for some positive constant $\theta_0$.
Then, there exists a linear operator
\[
\mcal{B}_\mesh: \xL_{\mesh,0}(\Omega) \longrightarrow \xbfH_{\mesh,0}(\Omega)
\]
depending only on $\Omega$ and on the discretization such that the following properties hold:
\begin{itemize}
 \item[(i)] For all $p\in\xL_{\mesh,0}(\Omega)$, 
\[
\int_\Omega r \,\dive_\mesh(\mcal{B}_\mesh p) \dx= \int_{\Omega} r\,p \dx, \qquad \forall r\in\xL_\mesh(\Omega).
\]
\item[(ii)] For all $q \in (1,+\infty)$, there exists $C=C(q,d,\Omega,\theta_0)$, such that
\[
 \norm{\mcal{B}_\mesh p}_{1,q,\mesh} \leq C\norm{p}_{\xL^q(\Omega)}.
\]
\end{itemize}
\end{lem}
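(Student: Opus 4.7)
The plan is to define $\mcal{B}_\mesh p$ as the Crouzeix--Raviart interpolant of the continuous Bogovskii lift $\mcal{B}p$ supplied by Lemma \ref{lem:bogovskii}, and then to derive both (i) and (ii) from the face-mean preservation property of the Crouzeix--Raviart interpolation. Concretely, given $p \in \xL_{\mesh,0}(\Omega) \subset \xL^q_0(\Omega)$, Lemma \ref{lem:bogovskii} produces $\tilde{\bfv} := \mcal{B}p \in \xbfW^{1,q}_0(\Omega)$ with $\dive \tilde{\bfv} = p$ a.e.\ and $|\tilde{\bfv}|_{\xbfW^{1,q}(\Omega)} \leq C\,\norm{p}_{\xL^q(\Omega)}$. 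Since $q>1$, the trace of $\tilde{\bfv}$ belongs to $\xL^q(\edge)$ on every face, and I define $\mcal{B}_\mesh p \in \xbfH_\mesh(\Omega)$ componentwise by prescribing its Crouzeix--Raviart degrees of freedom
\[
(\mcal{B}_\mesh p)_\edge := \frac{1}{|\edge|} \int_\edge \tilde{\bfv}\, \dedge(\bfx), \qquad \edge \in \edges.
\]
The homogeneous trace of $\tilde{\bfv}$ forces these degrees of freedom to vanish on $\edgesext$, so $\mcal{B}_\mesh p \in \xbfH_{\mesh,0}(\Omega)$, and the map $p \mapsto \mcal{B}_\mesh p$ is clearly linear.

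For property (i), fix $K \in \mesh$. Each component of $\mcal{B}_\mesh p|_K$ is affine, so $\dive(\mcal{B}_\mesh p)$ is constant on $K$. The divergence theorem combined with the defining identity of the Crouzeix--Raviart interpolant (its mean on each face coincides with that of $\tilde{\bfv}$) then yields
\[
|K|\,\dive_\mesh(\mcal{B}_\mesh p)|_K = \sum_{\edge\in\edges(K)} \int_\edge (\mcal{B}_\mesh p)\cdot \bfn_{K,\edge}\,\dedge(\bfx) = \sum_{\edge \in \edges(K)} \int_\edge \tilde{\bfv}\cdot\bfn_{K,\edge}\,\dedge(\bfx) = \int_K \dive\,\tilde{\bfv}\,\dx = \int_K p\,\dx.
\]
Multiplying by the constant value $r_K$ of $r$ on $K$ and summing over $K$ gives exactly $\int_\Omega r\,\dive_\mesh(\mcal{B}_\mesh p)\dx = \int_\Omega r\,p\dx$, which is (i).

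For property (ii), I rely on the classical broken $\xW^{1,q}$-stability of the Crouzeix--Raviart interpolation: under $\theta_\mesh \leq \theta_0$, there exists $C = C(q,d,\theta_0)$ such that $\norm{\mcal{B}_\mesh p}_{1,q,\mesh} \leq C\, |\tilde{\bfv}|_{\xbfW^{1,q}(\Omega)}$ for every $\tilde{\bfv}\in\xbfW^{1,q}(\Omega)$. This estimate is obtained locally by mapping each simplex $K$ to a reference simplex, applying the Bramble--Hilbert lemma to the local interpolation operator, and scaling back; the sum over $K$ is controlled thanks to shape regularity. Chaining with the continuous Bogovskii bound $|\tilde{\bfv}|_{\xbfW^{1,q}(\Omega)} \leq C\,\norm{p}_{\xL^q(\Omega)}$ concludes the proof. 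The only non-trivial ingredient is the uniform broken $\xW^{1,q}$-stability of the Crouzeix--Raviart interpolation for all $q \in (1,+\infty)$, which is standard finite-element material that can be located in the references listed at the opening of Section \ref{sec:norms-prop}.
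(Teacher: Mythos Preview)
Your argument is correct and is in fact the standard construction: set $\mcal{B}_\mesh = I_\mesh \circ \mcal{B}$, where $\mcal{B}$ is the continuous Bogovskii operator of Lemma~\ref{lem:bogovskii} and $I_\mesh$ is the Crouzeix--Raviart Fortin operator. The paper itself does not prove this lemma---it only refers to \cite{gallouet2012} for the MAC case---but all the ingredients you use are precisely the properties of $I_\mesh$ that the paper states later as Lemma~\ref{lmm:RT}: your divergence computation is item (iii) there, and the broken $\xW^{1,q}$-stability you invoke is item (i). So your proof is complete and matches the intended (though unwritten) argument; you could shorten it by simply citing Lemma~\ref{lmm:RT} once it is available.
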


\medskip
Before deriving the control of the discrete pressure, we first present a second form of the weak formulation of the momentum equation which will be more convenient to handle. 

\begin{lem}[Weak formulation of the momentum balance - second form]{\label{lem:d-weakmom2}}
%Let $\disc=(\mesh,\edges)$ be a staggered discretization of $\Omega$ in the sense of Definition \ref{def:disc}.
The discrete weak formulation of the momentum balance \eqref{eq:sch_mom:weak} can be rewritten in the following form:
\begin{multline}
\label{eq:sch_mom:weak:2}
-\int_\Omega (\xP_\edges\rho)(\xPi_\edges\bfu)\otimes(\xPi_\edges\bfu): \gradi_\edges \bfv \dx \\
+ \mu \int_\Omega\gradi_\mesh \bfu:\gradi_\mesh \bfv \dx 
+ (\mu+\lambda)\int_\Omega \dive_\mesh\,\bfu \, \dive_\mesh\,\bfv \dx \\  
- a\int_\Omega \rho^\gamma\, \dive_\mesh\,\bfv \dx - h_\mesh^{\xi_3}\int_\Omega \rho^\Gamma\, \dive_\mesh\,\bfv \dx + R_{\rm conv}(\rho,\bfu,\bfv)= \int_\Omega \bff\cdot\xPi_\edges \bfv \dx,
\end{multline}
where
\[
 R_{\rm conv}(\rho,\bfu,\bfv)= \int_\Omega \divv_\edges(\rho \bfu\otimes \bfu)\cdot \xPi_\edges \bfv\dx+\int_\Omega (\xP_\edges\rho)(\xPi_\edges\bfu)\otimes(\xPi_\edges\bfu): \gradi_\edges \bfv \dx.
\]
%with the operators $\gradi_\edges$, $\xP_\edges$ and $\Pi_\edges$ defined in \eqref{eq:def_grad_mesh_u}, \eqref{def:pepsrho} and \eqref{def:piepsu}.
%Assuming that $\gamma \in (\frac 32,3]$, $\theta_\mesh \leq \theta_0$ and $h_\mesh\leq 1$, 
The remainder term satisfies the following estimate for some constant  $C=C(\Omega,\gamma,\Gamma,\theta_0)$:
\begin{align}\label{eq:convconv:3D}
\big | R_{\rm conv}(\rho,\bfu,\bfv) \big | 
& \leq C \, h_\mesh^{\frac 12 - \frac{1}{ \Gamma}(\frac{3}{1+\eta} + \xi_3)}
	 \norm{h_\mesh^{\xi_3} \rho^\Gamma}_{\xL^{1+\eta}(\Omega)}^{\frac{1}{\Gamma}}  \norm{\bfu}_{1,2,\mesh}^2\,  \norm{\bfv}_{1,2,\mesh} \\[2ex]
&\quad + C \,  h_\mesh^{\xi_2 -\frac 1\eta- \frac{1}{\eta \Gamma}(\frac{3}{1+\eta} + \xi_3)} \norm{h_\mesh^{\xi_3}\rho^\Gamma}_{\xL^{1+\eta}(\Omega)}^{\frac{1}{\eta \Gamma}}  \norm{\bfu}_{1,2,\mesh}\,  \norm{\bfv}_{1,2,\mesh}. \nonumber
\end{align}
\end{lem}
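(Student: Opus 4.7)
The identity \eqref{eq:sch_mom:weak:2} is obtained from \eqref{eq:sch_mom:weak} by adding and subtracting $\int_\Omega (\xP_\edges\rho)(\xPi_\edges\bfu)\otimes(\xPi_\edges\bfu):\gradi_\edges\bfv\dx$, and is therefore in essence a definition of $R_{\rm conv}(\rho,\bfu,\bfv)$. The nontrivial content of the lemma lies in the bound \eqref{eq:convconv:3D}. Conceptually, $R_{\rm conv}$ is a consistency error between two discretizations of the convective trilinear form $-\int_\Omega \rho\,\bfu\otimes\bfu:\gradi\bfv\,\dx$: one based on the dual-mesh fluxes $F_{\edge,\edged}$ built from the stabilized primal fluxes $\overline{F}_{K,\edge}$ through assumptions (H1)--(H3), the other based on the primal upwind density $\xP_\edges\rho$ paired with the dual gradient $\gradi_\edges\bfv$.

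\medskip
The plan is first to make this consistency structure explicit cell by cell. Using the conservativity (H2) of the dual fluxes, the dual-mesh convective contribution can be rewritten as $\sum_K\sum_{\edged\subset K,\ \edged=D_\edge|D_{\edge'}} F_{\edge,\edged}\,\bfu_\edged\cdot(\bfv_\edge-\bfv_{\edge'})$, while the ``natural'' contribution equals $\sum_{\edge=K|L}F_{K,\edge}\,\bfu_\edge\cdot(\bfv_L-\bfv_K)$, which, after the conservativity of the primal fluxes and the relation $\bfv_K=\sum_{\edge'\in\edges(K)}\xi_K^{\edge'}\bfv_{\edge'}$, similarly reassembles cell by cell. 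Substituting (H1) together with the splitting $\overline{F}_{K,\edge}=F_{K,\edge}+G_{K,\edge}$, where $G_{K,\edge}$ is the mass-diffusion flux of size $h_\mesh^{\xi_2}$, and using $\bfu_\edged=\tfrac{1}{2}(\bfu_\edge+\bfu_{\edge'})$, eventually decomposes $R_{\rm conv}=R_{\rm conv}^{\rm true}+R_{\rm conv}^{\rm stab}$, each summand collecting products of a primal (genuine or stabilization) flux with jumps of $\bfu$ and $\bfv$ among faces of a common primal cell.

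\medskip
For $R_{\rm conv}^{\rm true}$, the $P_1$ structure of $\bfu,\bfv$ bounds each local contribution by $|\edge|\rho_\edge|\bfu_\edge|\cdot h_K|\gradi\bfu|_K\cdot h_K|\gradi\bfv|_K$; I would then apply a four-factor H\"older inequality with density in $\xL^{\Gamma(1+\eta)}$, one velocity factor in $\xL^6$ via the discrete Sobolev embedding (Lemma~\ref{lmm:injsobolev_br}), one gradient in $\xL^2$, and the other gradient in an intermediate $\xL^r$ reached from $\xL^2$ through the inverse inequality~\eqref{eq:invineq1}; combined with $\norm{\rho}_{\xL^{\Gamma(1+\eta)}}=h_\mesh^{-\xi_3/\Gamma}\norm{h_\mesh^{\xi_3}\rho^\Gamma}_{\xL^{1+\eta}}^{1/\Gamma}$, this produces the first term of \eqref{eq:convconv:3D}. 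For $R_{\rm conv}^{\rm stab}$, each local contribution is bounded by $h_\mesh^{\xi_2}|\edge|(|\edge|/|D_\edge|)^{1/\eta}|\rho_K-\rho_L|^{1/\eta}|\bfu_\edged||\bfv_\edge-\bfv_{\edge'}|$; a three-factor H\"older --- using $|\rho_K-\rho_L|\leq|\rho_K|+|\rho_L|$ to reach $\norm{\rho}_{\xL^{\Gamma(1+\eta)}}^{1/\eta}$ after absorbing the $(|\edge|/|D_\edge|)^{1/\eta}$ scaling, promoting $\xPi_\edges\bfu$ to $\xL^\infty$ via inverse inequality (which in $d=3$ costs $h_\mesh^{-1/2}$), and handling $\gradi\bfv$ in $\xL^{1+\eta}$ --- yields the second term.

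\medskip
The main obstacle is the subcritical regime $\eta<1$ (\ie\ $\gamma<3$): a direct $\xL^2$--$\xL^2$ pairing on the velocity gradients is no longer admissible, as the H\"older exponents overshoot $1$. One is therefore forced to invoke inverse inequalities to shift either $\bfu$ or $\gradi\bfv$ away from $\xL^2$, paying negative powers of $h_\mesh$, and to balance them carefully against the stabilization exponent $\xi_2$ and the artificial-pressure exponent $\xi_3$. This bookkeeping is precisely what generates the fractional powers of $h_\mesh$ appearing in \eqref{eq:convconv:3D}, and is also what motivates the lower bound on $\xi_2$ in \eqref{thm:cond_xi2}: once a mesh-independent bound on $\norm{h_\mesh^{\xi_3}\rho^\Gamma}_{\xL^{1+\eta}}$ is available, those constraints ensure that the two powers of $h_\mesh$ in \eqref{eq:convconv:3D} are strictly positive, so that $R_{\rm conv}$ vanishes in the convergence limit.
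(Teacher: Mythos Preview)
Your decomposition via (H1)--(H3) is exactly the paper's: rewrite the ``natural'' convective term as $\sum_K\bfv_K\cdot\sum_{\edge\in\edges(K)}F_{K,\edge}\bfu_\edge$, replace $F_{K,\edge}$ by $\overline{F}_{K,\edge}=F_{K,\edge}+G_{K,\edge}$ (this creates the paper's $R_1$, your $R_{\rm conv}^{\rm stab}$), add the dual-flux zero via (H2), and write $\bfv_K=\bfv_\edge+(\bfv_K-\bfv_\edge)$ (this creates the paper's $R_2$, carrying both the ``true'' part and a further, subdominant, stabilization contribution through the bound (H3)). For the ``true'' part your four-factor H\"older is a legitimate alternative to what the paper does --- the paper instead pulls both $\rho$ and $\bfu$ to $L^\infty$ via inverse inequalities and then pairs the two gradient jumps by Cauchy--Schwarz --- and the two routes give the same exponent $\tfrac12-\tfrac{1}{\Gamma}(\tfrac{3}{1+\eta}+\xi_3)$.

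The gap is in your treatment of $R_{\rm conv}^{\rm stab}$. First, the H\"older exponents you propose do not close: with $\rho^{1/\eta}$ in $\xL^{\eta\Gamma(1+\eta)}$, $\bfu$ in $\xbfL^\infty$ and $\gradi_\edges\bfv$ in $\xbfL^{1+\eta}$ one has $\tfrac{1}{\eta\Gamma(1+\eta)}+\tfrac{1}{1+\eta}\neq 1$ in general. Second, and more importantly, even after fixing the dual exponent on $\gradi_\edges\bfv$, promoting $\bfu$ to $\xbfL^\infty$ costs $h_\mesh^{-1/2}$ and yields the exponent $\xi_2-\tfrac{1}{\eta}-\tfrac12-\tfrac{\xi_3}{\eta\Gamma}$, which differs from the stated one by $\tfrac12-\tfrac{3}{\eta\Gamma(1+\eta)}$; for $\eta\Gamma(1+\eta)>6$ (automatic, for instance, when $\gamma$ is close to $3$) your bound is strictly weaker and is no longer forced to be positive by \eqref{thm:cond_xi2}. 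The paper's balance is the opposite of yours: bound $|\rho_K-\rho_L|^{1/\eta}\le (2\norm{\rho}_{\xL^\infty(\Omega)})^{1/\eta}$, keep $\xPi_\edges\bfu$ in $\xbfL^6$, pair it with $\gradi_\edges\bfv$ in $\xbfL^{6/5}$, and only then apply the inverse inequality to $\rho$, which costs $h_\mesh^{-3/(\eta\Gamma(1+\eta))}$ and produces exactly the second line of \eqref{eq:convconv:3D}.
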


\medskip
\begin{proof}
This result is proved in Appendix \ref{sec:proof:lmm:estimates_Bbar}.
\end{proof}

\medskip
\begin{rmrk}{\label{rmrk:conrol_Rconv}}
Note that in the previous inequality \eqref{eq:convconv:3D}, under the conditions \eqref{thm:cond_xi3}-\eqref{thm:cond_xi2} (since $\frac{\eta}{1+\eta} \leq \frac{1}{2}$), we guarantee that:
\begin{align*}
& \frac 12 - \frac{1}{ \Gamma}\left(\frac{3}{1+\eta} + \xi_3\right) > \frac{\eta}{1+\eta} - \frac{5}{ 4\Gamma}\left(\frac{3}{1+\eta} + \xi_3\right) , \\
& \xi_2 -\frac 1\eta- \frac{1}{\eta \Gamma}\left(\frac{3}{1+\eta} + \xi_3\right) > \xi_2 -\frac 1\eta- \frac{5}{4\eta \Gamma}\left(\frac{3}{1+\eta} + \xi_3\right),
\end{align*}
so that the exponents of $h_{\mesh}$ appearing in \eqref{eq:convconv:3D} are positive under the assumptions \eqref{thm:cond_xi3}-\eqref{thm:cond_xi2}.
\end{rmrk}

\medskip
We may now prove the following result which states mesh independent estimates satisfied by the discrete density when $(\rho,\bfu)\in\xL_\mesh(\Omega)\times\xbfH_{\mesh,0}(\Omega)$ is a solution of the numerical scheme \eqref{eq:sch}.

\medskip
\begin{prop}
\label{prop:estimate:p}
%Assume that $\gamma \in (\frac 32, 3]$ and denote $\eta = \frac{2\gamma-3}{\gamma}\in(0,1]$. Let $\disc=(\mesh,\edges)$ be a staggered discretization of $\Omega$ such that $\theta_\mesh \leq \theta_0$ (where $\theta_\mesh$ is defined by \eqref{eq:reg}) for some positive constant $\theta_0$ and $h_\mesh\leq 1$. 
%Assume that $\Gamma$ and $(\xi_1,\xi_2,\xi_3)$ are such that:
%\begin{align}
%\label{cond_xi1:est}
%(i) \qquad &\xi_1 > 1,\\[2ex]
%\label{cond_xi3:est}
%(ii) \qquad &\frac{1}{\Gamma}\left(\frac{3}{1+\eta}+\xi_3\right)< \frac{\eta}{1+\eta}\\[2ex]
%\label{cond_xi2:est}
%(iii) \qquad & \frac 1\eta+ \frac{1}{\eta \Gamma}\left(\frac{3}{1+\eta}+\xi_3\right) < \xi_2.
%\end{align}

\medskip
Let $(\rho,\bfu)\in\xL_\mesh(\Omega)\times\xbfH_{\mesh,0}(\Omega)$ be a solution of the numerical scheme \eqref{eq:sch}. 
Then,
% we have $\rho>0$ \emph{a.e.} in $\Omega$ (\emph{i.e.} $\rho_K>0$, $\forall K\in\mesh$) and
we have the following estimates:
\begin{itemize}
\item 
There exists $C_2=C_2(\bff,\mu,\lambda,\rho^\star,\Omega,\gamma,\Gamma,\xi_1,\xi_2,\xi_3,\theta_0)$ such that:
\begin{equation}
\label{estimates:p}
\norm{\rho}_{\xL^{3(\gamma-1)}(\Omega)}
+ \norm{h_\mesh^{\xi_3}\rho^\Gamma}_{\xL^{1 + \eta}(\Omega)} 
  \leq  C_2.
\end{equation}
%For all $p$ with $1\leq p < 1+\eta$, there exist $r\in[0,1)$ depending on $p$ and $\gamma$ (with $r=0$ if $p=1$) and  $C_3=C_3(\bff,\mu,\lambda,\rho^\star,\Omega,\gamma,\Gamma,\xi_1,\xi_2,\xi_3,\theta_0,p)$
%such that:
%\begin{equation}
%\label{estimates:artifp}
% \norm{h_\mesh^{\xi_3} \rho^\Gamma}_{\xL^{p}(\Omega)} \leq C_3 \, h_\mesh^{\xi_3(1-r)}.
%\end{equation}
\item
There exists $C_3=C_3(\bff,\mu,\lambda,\rho^\star,\Omega,\gamma,\Gamma,\xi_1,\xi_2,\xi_3,\theta_0)$ 
such that:
\begin{equation}
%\label{bvfaible}
\label{estimates2}
\sum_{\substack{\edge\in\edgesint \\ \edge=K|L}} |\edge|\, (\rho_L-\rho_K)^2 \,|\bfu_\edge\cdot\bfn_{K,\edge}| + h_\mesh^{\xi_2}\,\snorm{\rho}_{\frac{1+\eta}{\eta},\mesh}^{\frac{1+\eta}{\eta}}
  \leq   C_3 \, h_\mesh^{- \frac{5}{4\Gamma}\left(\frac{3}{1+\eta}+\xi_3\right)}.
\end{equation}
\end{itemize}
\end{prop}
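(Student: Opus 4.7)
\medskip
\textbf{Proof plan.}
The plan is to mimic at the discrete level the Novo--Novotn\'y argument recalled after Proposition~\ref{unif:est:cont}: estimate~\eqref{estimates:p} is obtained by a discrete Bogovskii-type argument based on the discrete inf-sup Lemma~\ref{lmm:inf-sup}, and estimate~\eqref{estimates2} is then deduced from the discrete renormalization identity of Proposition~\ref{prop:renorm} with $b(\rho)=\rho^2/2$, combined with the integrability just obtained.

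\medskip
For~\eqref{estimates:p}, I would introduce the total discrete pressure $\mathcal{P}:=a\rho^\gamma+h_\mesh^{\xi_3}\rho^\Gamma\in\xL_\mesh(\Omega)$ and apply Lemma~\ref{lmm:inf-sup} to $\mathcal{P}^\eta-\langle\mathcal{P}^\eta\rangle\in\xL_{\mesh,0}(\Omega)$ to produce a test field $\bfv:=\mathcal{B}_\mesh(\mathcal{P}^\eta-\langle\mathcal{P}^\eta\rangle)\in\xbfH_{\mesh,0}(\Omega)$ satisfying $\norm{\bfv}_{1,q,\mesh}\leq C\norm{\mathcal{P}}_{\xL^{q\eta}(\Omega)}^\eta$ for every $q\in(1,+\infty)$. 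Plugging $\bfv$ into the second form~\eqref{eq:sch_mom:weak:2} of the weak momentum balance, the pressure contribution equals $\int_\Omega\mathcal{P}^{1+\eta}\dx - \langle\mathcal{P}^\eta\rangle\int_\Omega\mathcal{P}\dx$, which is non-negative by the Chebyshev sum inequality applied to the comonotone functions $\mathcal{P}$ and $\mathcal{P}^\eta$. The viscous and source terms are bounded by $C\norm{\mathcal{P}}_{\xL^{2\eta}(\Omega)}^\eta$ thanks to Proposition~\ref{prop:estimate:u}; since $2\eta\leq 1+\eta$ when $\gamma\leq 3$, H\"older followed by Young reduces them to $\epsilon\int_\Omega\mathcal{P}^{1+\eta}\dx+C_\epsilon$. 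The convective remainder is handled through inequality~\eqref{eq:convconv:3D} of Lemma~\ref{lem:d-weakmom2}: conditions~\eqref{thm:cond_xi3}--\eqref{thm:cond_xi2} and Remark~\ref{rmrk:conrol_Rconv} guarantee that the $h_\mesh$-factors are non-negative and that the powers $1/\Gamma$ and $1/(\eta\Gamma)$ of $\norm{h_\mesh^{\xi_3}\rho^\Gamma}_{\xL^{1+\eta}(\Omega)}$ that appear there are strictly smaller than $1+\eta$, so Young's inequality absorbs them as well. The subtractive mean term $\langle\mathcal{P}^\eta\rangle\int_\Omega\mathcal{P}\dx$ is handled by H\"older and the mass constraint $\langle\rho\rangle=\rho^\star$ (Jensen giving $\langle\rho^\theta\rangle\leq(\rho^\star)^\theta$ when $\theta\leq 1$, and bootstrapping from a preliminary lower-order bound obtained with the test function $\mathcal{B}_\mesh(\rho-\rho^\star)$ otherwise). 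Once everything is absorbed, $\int_\Omega\mathcal{P}^{1+\eta}\dx\leq C$, and the super-additivity $(a+b)^{1+\eta}\geq a^{1+\eta}+b^{1+\eta}$ for $a,b\geq 0$, together with $\gamma(1+\eta)=3(\gamma-1)$, delivers~\eqref{estimates:p}.

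\medskip
For~\eqref{estimates2}, I would apply Remark~\ref{rmrk:renorm} with $\beta=2$: inequality~\eqref{eq:renorm} gives $R^1_\edges(\rho,\bfu)+R^2_\edges(\rho,\bfu)\leq -\int_\Omega\rho^2\,\dive_\mesh\bfu\,\dx$, where $R^1_\edges$ and $R^2_\edges$ coincide, up to a factor $2$, with the two quantities on the left of~\eqref{estimates2}. By H\"older, $\left|\int_\Omega\rho^2\,\dive_\mesh\bfu\,\dx\right|\leq\norm{\rho}_{\xL^4(\Omega)}^2\norm{\dive_\mesh\bfu}_{\xL^2(\Omega)}$. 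Proposition~\ref{prop:estimate:u} bounds $\norm{\dive_\mesh\bfu}_{\xL^2(\Omega)}$ uniformly in $h_\mesh$, while $\norm{\rho}_{\xL^4(\Omega)}$ is obtained by interpolating the mesh-independent bound $\norm{\rho}_{\xL^{3(\gamma-1)}(\Omega)}\leq C$ of~\eqref{estimates:p} with the mesh-dependent bound $\norm{\rho}_{\xL^{\Gamma(1+\eta)}(\Omega)}\leq Ch_\mesh^{-\xi_3/\Gamma}$ that~\eqref{estimates:p} also entails; balancing the interpolation weights at the target exponent $4$ yields the polynomial blow-up $h_\mesh^{-\frac{5}{4\Gamma}(\frac{3}{1+\eta}+\xi_3)}$ claimed in~\eqref{estimates2}.

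\medskip
The main obstacle is closing the first estimate: one has to absorb \emph{simultaneously} the mean-centering term $\langle\mathcal{P}^\eta\rangle\int_\Omega\mathcal{P}\dx$, the viscous/source contributions, and the convective remainder of Lemma~\ref{lem:d-weakmom2}, all into the single coercive quantity $\int_\Omega\mathcal{P}^{1+\eta}\dx$. The exponent conditions~\eqref{thm:cond_xi1}--\eqref{thm:cond_xi2} on $\xi_1,\xi_2,\xi_3,\Gamma$ are calibrated exactly to make this absorption possible, and the chief technical hurdle is the bookkeeping of the $h_\mesh$-exponents produced at each application of Young's inequality, ensuring that none of them turns out to be negative under the stated assumptions.
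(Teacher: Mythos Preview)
Your plan for~\eqref{estimates:p} follows the paper's line almost verbatim: set $P(\rho)=a\rho^\gamma+h_\mesh^{\xi_3}\rho^\Gamma$, take $\bfv=\mcal{B}_\mesh(P(\rho)^\eta-\langle P(\rho)^\eta\rangle)$, plug into~\eqref{eq:sch_mom:weak:2} and absorb every term into $\int_\Omega P(\rho)^{1+\eta}$. Two remarks. First, the Chebyshev observation is a red herring: non-negativity of $\int P^{1+\eta}-\langle P^\eta\rangle\int P$ plays no role, since the mean term must be bounded separately anyway; the paper does this by interpolating $\int\rho^\gamma$ and $\int\rho^\Gamma$ between $\xL^1$ (controlled by the mass constraint) and $\xL^{1+\eta}$, which is cleaner than your Jensen/bootstrapping suggestion. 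Second, you omit the \emph{main} convective term $-\int_\Omega(\xP_\edges\rho)(\xPi_\edges\bfu)\otimes(\xPi_\edges\bfu):\gradi_\edges\bfv$ from~\eqref{eq:sch_mom:weak:2}; only the remainder $R_{\rm conv}$ is discussed. In the paper this term ($T_2$) is handled by H\"older with exponents $(\gamma(1+\eta),6,6,\tfrac{1+\eta}{\eta})$, giving $|T_2|\lesssim\|P(\rho)\|_{\xL^{1+\eta}}^{1/\gamma+\eta}$, and this is precisely why the $\xW^{1,\frac{1+\eta}{\eta}}$ bound on $\bfv$ (which you do record) is needed.

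For~\eqref{estimates2} your approach genuinely differs from the paper's. The paper writes $\rho^2=\rho^{5/4}\rho^{3/4}$ and uses the \emph{discrete inverse inequality} $\|\rho\|_{\xL^\infty}\lesssim h_\mesh^{-3/(\Gamma(1+\eta))}\|\rho\|_{\xL^{\Gamma(1+\eta)}}$ on the first factor, which produces exactly the exponent $-\tfrac{5}{4\Gamma}(\tfrac{3}{1+\eta}+\xi_3)$. Your route via $\|\rho\|_{\xL^4}^2$ and Lebesgue interpolation between $\xL^{3(\gamma-1)}$ and $\xL^{\Gamma(1+\eta)}$ is legitimate (condition~\eqref{thm:cond_xi3} forces $\Gamma(1+\eta)>\tfrac{15}{4}(1+\tfrac{1}{\eta})>4$), but it does \emph{not} yield the exponent you claim: a direct computation of the interpolation parameter shows the resulting $h_\mesh$-power depends on $\gamma$ and is in fact strictly smaller than the paper's (indeed zero once $3(\gamma-1)\geq 4$). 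So your method proves a stronger bound, hence~\eqref{estimates2} a fortiori; but your assertion that the interpolation ``yields the polynomial blow-up $h_\mesh^{-\frac{5}{4\Gamma}(\frac{3}{1+\eta}+\xi_3)}$'' is incorrect as stated.
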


%\medskip

\begin{rmrk}
Note that from \eqref{estimates:p} we can easily deduce by interpolation between Lebesgue spaces that for all $p$ with $1\leq p < 1+\eta$, there exists  $r\in[0,1)$ depending on $p$ and $\gamma$ (with $r=0$ if $p=1$) such that:
\begin{align}
\label{estimates:artifp}
 \norm{h_\mesh^{\xi_3} \rho^\Gamma}_{\xL^{p}(\Omega)} 
 & \leq h_\mesh^{\xi_3} \norm{\rho^\Gamma}_{\xL^1(\Omega)}^{1-r}\norm{\rho^\Gamma}_{\xL^{1+\eta}(\Omega)}^{r} 
 \leq h_\mesh^{\xi_3(1-r)} \norm{\rho^\Gamma}_{\xL^1(\Omega)}^{1-r} \norm{h_\mesh^{\xi_3}\rho^\Gamma}_{\xL^{1+\eta}(\Omega)}^{r}
 \leq C_4 \, h_\mesh^{\xi_3(1-r)},
\end{align}
with $C_4=C_4(\bff,\mu,\lambda,\rho^\star,\Omega,\gamma,\Gamma,\xi_1,\xi_2,\xi_3,\theta_0,p)$.
\end{rmrk}	

%\medskip
%\begin{rmrk}
%Estimate \eqref{estimates2} combines a so-called ``weak BV estimate'' and an estimate on the discrete $\xH^1$-semi norm of the density.
%Actually, as shown in the proof thereafter, if $\gamma\geq \frac 73$ we have:
%\[
%\sum_{\substack{\edge\in\edgesint \\ \edge=K|L}} |\edge|\, (\rho_L-\rho_K)^2 \,|\bfu_\edge\cdot\bfn_{K,\edge}| +  h_\mesh^{\xi_2}\,\snorm{\rho}_{\frac{1+\eta}{\eta},\mesh}^{\frac{1+\eta}{\eta}}
%  \leq \  C_4.
%\]
%In the rest of the paper, we will not distinguish the cases $\gamma \geq \frac 73$ and $\gamma < \frac 73$, and we will assume that we are in the worst case, \textit{i.e.} that we have \eqref{estimates2}.
%\end{rmrk}

\medskip

\begin{proof}
Let us set $P(\rho) = a \rho^\gamma + h_\mesh^{\xi_3} \rho^\Gamma$.
%Similarly to the continuous case, 
We apply Lemma \ref{lmm:inf-sup} to $P(\rho)^{\eta}-<P(\rho)^\eta>$ and we define 
$\bfv\in\xbfH_{\mesh,0}(\Omega)$ by $\bfv=\mcal{B}_\mesh (P(\rho)^{\eta}-<P(\rho)^\eta>)$. There exists $C = C(\Omega,\gamma,\theta_0)$ such that
\begin{align*}
\norm{\bfv}_{1,2,\mesh}
& \leq C \norm{P(\rho)^\eta - <P(\rho)^\eta>}_{\xL^2(\Omega)} \\
& \leq C \Big(\norm{P(\rho)}_{\xL^{2\eta}(\Omega)}^\eta + \norm{P(\rho)}_{\xL^\eta(\Omega)}^\eta \Big) \\
& \leq C \norm{P(\rho)}_{\xL^{1+\eta}(\Omega)}^\eta
\end{align*}
since $\eta = \frac{2\gamma-3}{\gamma} \leq 1$ for $\gamma \leq 3$.
With the same arguments, we have
\begin{align*}
\norm{\bfv}_{1,\frac{1+\eta}{\eta},\mesh}
& \leq C \norm{P(\rho)}_{\xL^{1+\eta}(\Omega)}^\eta.
\end{align*}
Taking $\bfv$ as a test function in \eqref{eq:sch_mom:weak:2}, we obtain:
\begin{align}\label{eq:est-discr-p-0}
\int_\Omega (P(\rho))^{1+\eta }\dx 
&=  <P(\rho)^\eta>  \left(\int_\Omega P(\rho)\dx\right) \nonumber \\
& -\int_\Omega (\xP_\edges\rho)(\xPi_\edges\bfu)\otimes(\xPi_\edges\bfu): \gradi_\edges \bfv \dx 
+ \mu \int_\Omega\gradi_\mesh \bfu:\gradi_\mesh \bfv \dx\nonumber \\
&+ (\mu+\lambda)\int_\Omega \dive_\mesh\,\bfu \, \dive_\mesh\,\bfv \dx 
- \int_\Omega \bff\cdot\xPi_\edges \bfv\dx + R_{\rm conv}(\rho,\bfu,\bfv) \nonumber\\
&= T_1 + \dots + T_6. 
\end{align}
We estimate the $T_i$ as follows. 
First for $T_1$ we have $C = C(a,\Omega,\gamma,\theta_0)$ such that:
\begin{align*}
|T_1| 
& \leq C \Big(\int_{\Omega} P(\rho) \Big) \norm{P(\rho)}_{\xL^{1+\eta}(\Omega)}^{\eta} \\
& \leq C \Big(
\norm{\rho}_{\xL^1(\Omega)}^{\gamma(1-r_1)} \norm{\rho}_{\xL^{\gamma(1+\eta)}(\Omega)}^{\gamma r_1} 
+ h\n^{\xi_3}\norm{\rho}_{\xL^1(\Omega)}^{\Gamma(1-r_2)} \norm{\rho}_{\xL^{\Gamma(1+\eta)}(\Omega)}^{\Gamma r_2} 
\Big)  \norm{P(\rho)}_{\xL^{1+\eta}(\Omega)}^{\eta} \\
& \leq C \Big(\norm{\rho^\gamma}_{\xL^{1+\eta}(\Omega)}^{r_1} 
+ h\n^{\xi_3(1-r_2)} \norm{h\n^{\xi_3}\rho^\Gamma}_{\xL^{1+\eta}(\Omega)}^{r_2} 
\Big)  \norm{P(\rho)}_{\xL^{1+\eta}(\Omega)}^{\eta}
\end{align*}
where we have used an interpolation inequality with
\[
r_1 = \frac{(\gamma-1)(1+\eta)}{\gamma(1+\eta) -1} < 1, \quad 
r_2 = \frac{(\Gamma-1)(1+\eta)}{\Gamma(1+\eta) -1} < 1.
\]
Hence, by a Young inequality, we have $C = C(\Omega,\gamma,\Gamma, \xi_3,\theta_0)$  such that:
\begin{align*}
|T_1|
& \leq C + \frac{1}{5} \norm{P(\rho)}_{\xL^{1+\eta}(\Omega)}^{1+\eta} .
\end{align*}
The second term is controlled as follows, with  $C = C(\Omega,\gamma,\theta_0)$:
\begin{align*}
|T_2| 
& = \Big| \int_\Omega (\xP_\edges\rho)(\xPi_\edges\bfu)\otimes(\xPi_\edges\bfu): \gradi_\edges \bfv \dx \Big | \\
&\leq C\, \norm{\xP_\edges\rho}_{\xL^{\gamma(1+\eta)}(\Omega)} \, \norm{\xPi_\edges\bfu}_{\xbfL^{6}(\Omega)}^2 \, \norm{\bfv}_{1,\frac{1+\eta}{\eta},\mesh}\\
&\leq  C\, \norm{P(\rho)}_{\xL^{1+\eta}(\Omega)}^{1/\gamma} \, \norm{\bfu}_{\xbfL^{6}(\Omega)}^2 \, \norm{P(\rho)}_{\xL^{1+\eta}(\Omega)}^\eta \\
& \leq C + \frac{1}{5} \norm{P(\rho)}_{\xL^{1+\eta}(\Omega)}^{1+\eta}.
\end{align*}
Next, observing that $\norm{\dive_\mesh\,\bfv}_{\xL^2(\Omega)}\leq \sqrt{3}\,\norm{\bfv}_{1,2,\mesh}$ for all $\bfv\in\xbfH_{\mesh}(\Omega)$, we have $C = C(\Omega,\gamma,\theta_0)$ such that:
\[
|T_3| + |T_4|
\leq (\mu+3(\mu+\lambda))\, \norm{\bfu}_{1,2,\mesh}\, \norm{\bfv}_{1,2,\mesh}
\leq C \, C_1\, (\mu+3(\mu+\lambda))\,\norm{P(\rho)}_{\xL^{1+\eta}(\Omega)}^\eta.
\]
By the discrete Poincar\'e inequality, the term $T_5$ satisfies with $C = C(\Omega,\gamma,\theta_0)$:
\[
|T_5| 
\leq \norm{\bff}_{\xbfL^2(\Omega)}\,\norm{\xPi_\edges \bfv}_{\xbfL^2(\Omega)} 
\leq C\,\norm{\bff}_{\xbfL^2(\Omega)}\,\norm{\bfv}_{\xbfL^2(\Omega)} 
\leq C \, \norm{\bff}_{\xbfL^2(\Omega)}\,\norm{\bfv}_{1,2,\mesh} 
\leq C \, \norm{\bff}_{\xbfL^2(\Omega)}\,\norm{P(\rho)}_{\xL^{1+\eta}(\Omega)}^\eta.
\]
Hence we get:
\[
 |T_3| + |T_4| + |T_5|\leq  C + \frac{1}{5} \norm{P(\rho)}_{\xL^{1+\eta}(\Omega)}^{1+\eta}.
\]
The last term $T_6$ is the remainder term $R_{\rm conv}(\rho,\bfu,\bfv)$ in the weak formulation of the momentum balance \eqref{eq:sch_mom:weak:2}. 
We have thanks to \eqref{eq:convconv:3D}, with  $C=(\Omega,\gamma,\Gamma,\theta_0)$:
\[
\begin{aligned}
|T_6| = |R_{\rm conv}(\rho,\bfu,\bfv)|
& \leq C \, h_\mesh^{\frac 12 -\frac{1}{ \Gamma}(\frac{3}{1+\eta} + \xi_3)}
\norm{h_\mesh^{\xi_3} \rho^\Gamma}_{\xL^{1+\eta}(\Omega)}^{\frac{1}{\Gamma}}  \norm{\bfu}_{1,2,\mesh}^2\,  \norm{\bfv}_{1,2,\mesh} \\[2ex]
& + C \,  h_\mesh^{\xi_2 -\frac 1\eta -\frac{1}{\eta \Gamma}(\frac{3}{1+\eta} + \xi_3)}
\norm{h_\mesh^{\xi_3}\rho^\Gamma}_{\xL^{1+\eta}(\Omega)}^{\frac{1}{\eta \Gamma}}  \norm{\bfu}_{1,2,\mesh}\,  \norm{\bfv}_{1,2,\mesh}.
\end{aligned}
\]
As a consequence of Remark \ref{rmrk:conrol_Rconv}, there exists $\nu > 0$ such that
\begin{align*}
|T_6|
& \leq C h_\mesh^\nu \, \norm{\bfu}_{1,2,\mesh}^2 \,  \norm{h_\mesh^{\xi_3}\rho^\Gamma}_{\xL^{1+\eta}(\Omega)}^{\frac{1}{\Gamma}} \norm{\bfv}_{1,2,\mesh} 
+ C h_\mesh^\nu \, \norm{\bfu}_{1,2,\mesh}  \norm{h_\mesh^{\xi_3}\rho^\Gamma}_{\xL^{1+\eta}(\Omega)}^{\frac{1}{\eta \Gamma}}\norm{\bfv}_{1,2,\mesh}\\
& \leq C h_\mesh^\nu \, \norm{\bfu}_{1,2,\mesh}^2 \,  \norm{h_\mesh^{\xi_3}\rho^\Gamma}_{\xL^{1+\eta}(\Omega)}^{\frac{1}{\Gamma}}  \norm{P(\rho)}_{\xL^{1+\eta}(\Omega)}^{\eta} 
+ C h_\mesh^\nu \, \norm{\bfu}_{1,2,\mesh}  \norm{h_\mesh^{\xi_3}\rho^\Gamma}_{\xL^{1+\eta}(\Omega)}^{\frac{1}{\eta \Gamma}} \norm{P(\rho)}_{\xL^{1+\eta}(\Omega)}^{\eta}\\
& \leq C h_\mesh^\nu \, \norm{\bfu}_{1,2,\mesh}^2 \,  \norm{P(\rho)}_{\xL^{1+\eta}(\Omega)}^{\eta+\frac{1}{\Gamma}} 
+ C h_\mesh^\nu \, \norm{\bfu}_{1,2,\mesh}\, \norm{P(\rho)}_{\xL^{1+\eta}(\Omega)}^{\eta+\frac{1}{\eta \Gamma}}\\
& \leq C + \frac{1}{5} \norm{P(\rho)}_{\xL^{1+\eta}(\Omega)}^{1+\eta},
\end{align*}
since $\frac 1\Gamma$ and $\frac{1}{\eta \Gamma}$ are both less than $1$ (consequence of \eqref{thm:cond_xi3}). Gathering the bounds on $T_1$,..,$T_6$ and coming back to \eqref{eq:est-discr-p-0} we get:
\[
\int_{\Omega}{(P(\rho))^{1+\eta} \dx}
\leq C + \dfrac{4}{5} \int_{\Omega}{(P(\rho))^{1+\eta}\dx}.
\] 
This achieves the proof of \eqref{estimates:p}. 
		
%\medskip
%Let us prove \eqref{estimates:artifp}. For $1\leq p < 1+\eta$, by interpolation of Lebesgue spaces, there exists $C$ and $r \in [0,1)$ depending on $\gamma,\Gamma$ and $p$  such that
%\begin{align*}
%\norm{\rho}_{\xL^{p\Gamma}(\Omega)} 
%& \leq C \norm{\rho}_{\xL^1(\Omega)}^{1-r} \norm{\rho}_{\xL^{\Gamma(1+\eta)}(\Omega)}^{r}. 
%\end{align*}
%Therefore
%\[
%\norm{\rho^\Gamma}_{\xL^p(\Omega)} 
%\leq C \norm{\rho^\Gamma}_{\xL^{1+\eta}(\Omega)}^{r}
%\]
%and 
%\[
%\norm{h_\mesh^{\xi_3}\rho^\Gamma}_{\xL^{p}(\Omega)} 
%\leq C h_\mesh^{\xi_3(1-r)}\norm{h^{\xi_3}\rho^\Gamma}_{\xL^{1+\eta}(\Omega)}^{r}\leq C' h_\mesh^{\xi_3(1-r)}.
%\]
	
\medskip
It remains to prove \eqref{estimates2}. 
%Since $\gamma > \frac{5}{3}$, we ensure with \eqref{estimates:p} that $\rho$ is bounded in $\xL^2(\Omega)$.
Taking $\beta=2$ in the discrete renormalization identity \eqref{eq:renorm}, we get:
\begin{align*}
\sum_{\substack{\edge\in\edgesint \\ \edge=K|L}} |\edge|\, (\rho_L-\rho_K)^2 \,|\bfu_\edge\cdot\bfn_{K,\edge}|
+ h_\mesh^{\xi_2}\snorm{\rho}_{\frac{1+\eta}{\eta},\mesh}^{\frac{1+\eta}{\eta}} 
& \leq -\int_\Omega \rho^2\,\dive_\mesh\,\bfu\dx
\end{align*}
with
% if $\gamma \geq \frac{7}{3}$ (and thus $3(\gamma-1) \geq 4$):
%\begin{align*}
%\left|\int_\Omega \rho^2\,\dive_\mesh\,\bfu\dx\right|
%& \leq \norm{\rho^2}_{\xL^{2}(\Omega)} \norm{\dive_\mesh\,\bfu}_{\xL^{2}(\Omega)} \\
%& \leq \norm{\rho}_{\xL^4(\Omega)}^2 \norm{\dive_\mesh\,\bfu}_{\xL^{2}(\Omega)}.
%\end{align*}
%and otherwise (\emph{i.e.} for $\gamma \in (\frac 32,\frac 73)$):

	\begin{align*}
	\left|\int_\Omega \rho^2\,\dive_\mesh\,\bfu\dx\right|
	& \leq \left|\int_{\Omega} \rho^{\frac{5}{4}} \, \rho^{\frac{3}{4}} \,\dive_\mesh \, \bfu \dx  \right| \\
	& \leq \norm{\rho}_{\xL^\infty(\Omega)}^{\frac{5}{4}} \norm{\rho}_{\xL^{\frac{3}{2}}(\Omega)}^{\frac{3}{4}} \norm{\dive_\mesh\,\bfu}_{\xL^{2}(\Omega)} \\
	& \leq C(\Omega,\gamma,\Gamma,\theta_0) h_\mesh^{-\frac{5}{4\Gamma}\big(\frac{3}{1+\eta}+\xi_3\big)} \norm{h_\mesh^{\xi_3}\rho^\Gamma}_{\xL^{1+\eta}(\Omega)}^{\frac{5}{4\Gamma}}
	\norm{\rho}_{\xL^{3(\gamma-1)}(\Omega)}^{\frac{3}{4}} \norm{\dive_\mesh\,\bfu}_{\xL^{2}(\Omega)}.
	\end{align*}

This achieves the proof of \eqref{estimates2}.
\end{proof}

\subsection{Existence of a solution to the numerical scheme}
\label{sec:sch:exist}

The existence of a solution to the scheme \eqref{eq:sch}, which consists in an algebraic non-linear system, is obtained by a topological degree argument.
Its proof is based on an abstract theorem stated for instance in \cite{gal-08-unc} (Theorem 2.5) %in Appendix \ref{thrm:degree},
which relies on linking by a homotopy the problem at hand to a linear system.

\medskip
Let $N={\rm card}(\mesh)$ and $M=d\ {\rm card}(\edgesint)$; we identify $\xL_\mesh(\Omega)$ with $\xR^{N}$ and $\xbfH_{\mesh,0}(\Omega)$ with $\xR^{M}$.
Let $V=\xR^{N} \times \xR^{M}$.
We consider the function $\mcal{F}:\ V\times [0,1] \rightarrow V$ given by:
\begin{equation}
\label{functionF}
\mcal{F}(\rho,\bfu,\delta)=
\left| \begin{array}{ll} \displaystyle
\delta  \frac 1 {|K|} \sum_{\edge \in\edges(K)} \overline{F}_{K,\edge}(\rho,\bfu) + h_\mesh^{\xi_1}\, (\rho_K-\rho^\star),
&
K \in \mesh
\\[4ex] \displaystyle
%\frac 1 {\delta t}(\rho_\Ds \bfu_\edge-\rho^{n-1}_\Ds \bfu_\edge^{n-1})
\delta  \frac 1 {|D_\edge|} \sum_{\edged \in\edgesd(D_\edge)} \fluxd(\rho,\bfu) \bfu_\edged + \delta \, a  (\gradi_\edges (\rho^\gamma))_{|\Ds}+ \delta \, h_\mesh^{\xi_3} \,(\gradi_\edges (\rho^\Gamma))_{|\Ds}
\\[4ex] \hspace{1cm} - \mu\,(\lapi_\edges\bfu)_{|\Ds} - (\mu+\lambda)\, \big((\gradi\circ\dive)_\edges\bfu \big)_{|\Ds}
-(\widetilde{\Pi}_\edges \bff)_{|\Ds},
 &
\edge \in \edgesint.
\end{array} \right. 
\end{equation}
Solving the problem $\mcal{F}(\rho,\bfu,\delta)=0$ is equivalent to solving the following system analogous to \eqref{eq:sch}:

\medskip
\emph{Solve for $\rho\in\xL_\mesh(\Omega)$ and $\bfu\in\xbfH_{\mesh,0}(\Omega)$:}
\begin{subequations}\label{eq:sch:delta}
\begin{align}
\label{eq:sch_mass:delta}  & 
\delta\, \dive_\mesh(\rho \bfu)  + h_\mesh^{\xi_1} \, (\rho-\rho^\star) - \delta\, h_\mesh^{\xi_2} \, \Delta_{\frac{1+\eta}{\eta},\mesh}(\rho)= 0, \\[2ex]  
\label{eq:sch_mom:delta}  &
\delta\, \divv_\edges(\rho \bfu\otimes \bfu) - \mu \lapi_\edges \bfu - (\mu+\lambda) (\gradi \circ \dive)_\edges \bfu + \delta\,a \gradi_\edges (\rho^\gamma)   + \delta\, h_\mesh^{\xi_3} \,  \gradi_\edges (\rho^\Gamma)   = \widetilde{\Pi}_\edges\bff.
\end{align}
\end{subequations}

Note that system \eqref{eq:sch} corresponds to $\mcal{F}(\rho,\bfu,1)=0$.
An easy verification shows that any solution $(\rho,\bfu)$ of the problem $\mcal{F}(\rho,\bfu,\delta)=0$ for $\delta$ in $[0,1]$, satisfies the same estimates as stated in Propositions \ref{prop:rho-pos} (positivity of $\rho$) and \ref{prop:estimate:u} (estimate on $\norm{\bfu}_{1,2,\mesh}$) uniformly in $\delta$. However, the positivity of the density is not sufficient to apply the topological degree 
theorem .%stated in Appendix \ref{thrm:degree}. 
We need to prove that there exists a positive lower bound on $\rho$, if $(\rho, \bfu)$ is a solution of \eqref{eq:sch:delta}, which is uniform with respect to $\delta\in[0,1]$. For the lower bound, we use Proposition \ref{prop:rho-pos} and the fact that $\norm{\bfu}_{1,2,\mesh}\leq C_1$ uniformly with respect to $\delta\in[0,1]$ which implies that the quantity $\max\limits_{K\in\mesh}\dive(\bfu)_K$ is also controlled uniformly with respect to $\delta\in[0,1]$ as follows:
\[
 \Big|\max\limits_{K\in\mesh}\dive(\bfu)_K\Big|\leq \sqrt{3\min\limits_{K\in\mesh} |K|^{-1}}\, C_1.
\]
Hence a positive lower bound on $\rho$, if $(\rho, \bfu)$ is a solution of \eqref{eq:sch:delta}, which is uniform with respect to $\delta\in[0,1]$, is given by:
\begin{equation}
 \bar \rho_{\rm min}= \frac{\rho^\star}{1+ h_\mesh^{-\xi_1} \, \sqrt{3\min\limits_{K\in\mesh} |K|^{-1}}\, C_1 }.
\end{equation}
We also obtain a uniform upper bound on $\rho$ by remarking that:
\[
 \norm{\rho}_{\xL^\infty(\Omega)} \leq \frac{1}{\min\limits_{K\in\mesh} |K|^{-1}} \,  \norm{\rho}_{\xL^1(\Omega)}=   \frac{1}{\min\limits_{K\in\mesh} |K|^{-1}} \,  |\Omega|\,\rho^\star =: \bar \rho_{\rm max}.
\]

We then have the next theorem (see details in \cite{gal-08-unc} previously cited).

%We may now prove the following result.

\begin{thrm}[Existence of a solution] \label{thrm:existence}
Let $\disc=(\mesh,\edges)$ be a staggered discretization of $\Omega$ in the sense of Definition \ref{def:disc}.
The non-linear system \eqref{eq:sch} admits at least one solution $(\rho, \bfu)$ in $\xL_\mesh(\Omega) \times \xbfH_{\mesh,0}(\Omega)$, and any possible solution satisfies the estimates of Propositions \ref{prop:rho-pos},  \ref{prop:estimate:u} and \ref{prop:estimate:p}.
\end{thrm}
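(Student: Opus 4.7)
The strategy is to apply the topological degree theorem of \cite{gal-08-unc} (Theorem 2.5) to the homotopy $\mcal{F}:V\times[0,1]\to V$ defined in \eqref{functionF}. At $\delta=0$, the equation $\mcal{F}(\rho,\bfu,0)=0$ decouples: the mass equation forces $\rho\equiv\rho^\star$, and the momentum equation reduces to the linear elliptic system $-\mu\lapi_\edges\bfu-(\mu+\lambda)(\gradi\circ\dive)_\edges\bfu=\widetilde{\Pi}_\edges\bff$, which has a unique solution $\bfu_0\in\xbfH_{\mesh,0}(\Omega)$ by coercivity of the discrete diffusion (its Jacobian is symmetric positive definite). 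Hence the topological degree on any bounded open neighborhood of $(\rho^\star,\bfu_0)$ is well-defined and nonzero.

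The main task is to exhibit a bounded open set $\mcal{O}\subset V$ in which all solutions of $\mcal{F}(\rho,\bfu,\delta)=0$ for $\delta\in[0,1]$ live, and whose boundary $\dv\mcal{O}$ contains no such solution. I would first verify that Propositions \ref{prop:rho-pos}, \ref{prop:estimate:u}, \ref{prop:estimate:p} all adapt to the $\delta$-dependent system \eqref{eq:sch:delta} with constants independent of $\delta\in[0,1]$. The positivity and density estimates use that the stabilizing terms $h_\mesh^{\xi_1}(\rho-\rho^\star)$ and the diffusion $-\mu\lapi_\edges\bfu$ carry no $\delta$ factor, while the $\delta$ in front of the convection and pressure-gradient terms can be factored out of the discrete renormalization formulas in such a way that the positivity of the quadratic remainders (for $\beta$-powers with $\beta>1$, in particular $\beta=\gamma$ and $\beta=\Gamma$) is preserved. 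These estimates yield, uniformly in $\delta$, positivity $\rho_K>0$, the bound $\norm{\bfu}_{1,2,\mesh}\leq C_1$, and $\norm{\rho}_{\xL^{3(\gamma-1)}(\Omega)}+\norm{h_\mesh^{\xi_3}\rho^\Gamma}_{\xL^{1+\eta}(\Omega)}\leq C_2$.

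To frame the problem in a bounded open set, I then use the uniform $\xL^\infty$ bounds already sketched in the excerpt: the lower bound $\rho\geq\bar\rho_{\rm min}>0$ follows from Proposition \ref{prop:rho-pos} combined with the inverse inequality $|\max_{K\in\mesh}\dive(\bfu)_K|\leq \sqrt{3\min_{K\in\mesh}|K|^{-1}}\,\norm{\bfu}_{1,2,\mesh}\leq \sqrt{3\min_{K\in\mesh}|K|^{-1}}\,C_1$; the upper bound $\norm{\rho}_{\xL^\infty(\Omega)}\leq\bar\rho_{\rm max}$ follows from the mass conservation identity $\int_\Omega\rho\dx=|\Omega|\rho^\star$ (obtained by summing the mass equation over $\mesh$, the convective and diffusive fluxes canceling by conservativity) together with the inverse inequality $\norm{\cdot}_{\xL^\infty(\Omega)}\leq (\min_{K\in\mesh}|K|)^{-1}\norm{\cdot}_{\xL^1(\Omega)}$ on the finite-dimensional space $\xL_\mesh(\Omega)$.

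Setting, for instance,
\[
\mcal{O}=\bigl\{(\rho,\bfu)\in V : \tfrac12\bar\rho_{\rm min}<\rho_K<2\bar\rho_{\rm max}\ \forall K\in\mesh,\ \norm{\bfu}_{1,2,\mesh}<2C_1\bigr\},
\]
the set $\mcal{O}$ is a bounded open subset of $V$ containing no solution of $\mcal{F}(\cdot,\cdot,\delta)=0$ on its boundary for any $\delta\in[0,1]$. Homotopy invariance of the topological degree then yields $\deg(\mcal{F}(\cdot,\cdot,1),\mcal{O},0)=\deg(\mcal{F}(\cdot,\cdot,0),\mcal{O},0)\neq 0$, producing a solution of $\mcal{F}(\rho,\bfu,1)=0$, that is, of \eqref{eq:sch}. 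The main technical obstacle is to verify carefully that the $\delta$-dependent versions of the a priori estimates remain valid with constants independent of $\delta$, in particular that the chain of Young inequalities underlying Proposition \ref{prop:estimate:p} --- whose success hinges on the exponents in \eqref{thm:cond_xi1}--\eqref{thm:cond_xi2} --- is not degraded by the extra $\delta$ factors multiplying the convective and pressure terms.
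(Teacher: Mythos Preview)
Your proposal is correct and follows essentially the same route as the paper: a topological degree argument on the homotopy \eqref{functionF}, with the key step being that Propositions \ref{prop:rho-pos} and \ref{prop:estimate:u} hold uniformly in $\delta\in[0,1]$, yielding the $\delta$-uniform lower bound $\bar\rho_{\rm min}$ and the $\xL^\infty$ upper bound $\bar\rho_{\rm max}$ on $\rho$ needed to confine all solutions to a bounded open set avoiding its boundary. The only minor remark is that Proposition \ref{prop:estimate:p} is not actually needed for the existence argument itself (the $\xL^\infty$ upper bound on $\rho$ comes directly from the mass constraint $\int_\Omega\rho\dx=|\Omega|\rho^\star$, as you correctly note), so you could streamline your first paragraph accordingly.
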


\section{Proof of the convergence result}
\label{sec:pass-limit}

%We recall the hypotheses of Theorem \ref{main_thrm}.
%Let $\Omega$ be a polyhedral connected open subset of $\xR^3$. 
%We assume that $\gamma \in (\frac 32,3]$, $\bff\in\xbfL^2(\Omega)$ and $\rho^\star >0$.
%Regarding our discretization, denoting $\eta = \frac{2\gamma-3}{\gamma}\in(0,1]$, we assume that $\Gamma$ and $(\xi_1,\xi_2,\xi_3)$ are such that:
%\begin{align}
%\label{cond_xi1}
%(i) \qquad &\xi_1 > 1,\\[2ex]
%\label{cond_xi3}
%(ii) \qquad &\frac{1}{\Gamma}\left(\frac{3}{1+\eta}+\xi_3\right) < \frac{\eta}{1+\eta}\\[2ex]
%\label{cond_xi2}
%(iii)  \qquad & \frac 1\eta+\frac{1}{\eta\Gamma}\left(\frac{3}{1+\eta}+\xi_3\right) 
%	< \xi_2 < \frac{1+\eta}{\eta} - \frac{1}{\Gamma}\left(\frac{3}{1+\eta}+\xi_3\right).
%\end{align}

\medskip
Let $(\disc\n)_{n\in\xN}$ be a regular sequence of staggered discretizations of $\Omega$ as defined in Definition \ref{def:reg_disc}.  We denote $h\n$ instead of $h_{\mesh\n}$ in order to ease the notations. Similar simplifications will be used thereafter.

\medskip
Theorem \ref{thrm:existence} applies and without loss of generality (assuming $h\n$ is small enough for all $n\in\xN$), we can assume that for all $n\in\xN$ there exists a solution $(\rho\n,\bfu\n)\in\xL_{\mesh\n}(\Omega)\times\xbfH_{\mesh\n,0}(\Omega)$ to the numerical scheme \eqref{eq:sch} with the discretization $\disc\n$. In addition, the obtained density $\rho\n$ is positive \emph{a.e.} in $\Omega$.
Since $\theta_{\mesh\n} \leq \theta_0$ for all $n\in\xN$, the sequence $(\rho\n,\bfu\n)_{n\in \xN}$ satisfies the following estimates. There exist $C_0>0$, $p\in(1,1+\eta)$ and $r\in(0,1)$ such that:
\begin{multline}
\label{unif:est}
 \norm{\bfu\n}_{1,2,\mesh\n} 
 + \norm{\rho\n}_{\xL^{3(\gamma-1)}(\Omega)} 
 +  \norm{h\n^{\xi_3}\rho\n^\Gamma}_{\xL^{1 + \eta}(\Omega)}
 + h\n^{\xi_2 + \frac{5}{4\Gamma}\left(\frac{3}{1+\eta}+\xi_3\right)} \,\snorm{\rho\n}_{\frac{1+\eta}{\eta},\mesh\n}^{\frac{1+\eta}{\eta}}\\[2ex]
 + h\n^{-\xi_3(1-r)} \norm{h\n^{\xi_3} \rho\n^\Gamma}_{\xL^{p}(\Omega)} 
 + h\n^{\frac{5}{4\Gamma}\left(\frac{3}{1+\eta}+\xi_3\right)}\sum_{\substack{\edge\in{\edgesintn} \\ \edge=K|L}} |\edge|\, (\rho_L-\rho_K)^2 \,|\bfu_\edge\cdot\bfn_{K,\edge}|
 \leq C_0, \quad \forall n\in\xN.
\end{multline}
In order to ease the notations, the subscript $n$ has been omitted in the above summation on the internal faces of $\edges\n$.

\medskip 
Thanks to these estimates, there is a subsequence of $(\disc\n)_{n\in\xN}$, still denoted $(\disc\n)_{n\in\xN}$ such that $(\rho\n)_{n\in\xN}$ weakly converges in $\xL^{3(\gamma-1)}(\Omega)$ to some $\rho\in\xL^{3(\gamma-1)}(\Omega)$, and $(\rho\n^\gamma)_{n\in\xN}$ weakly converges in $\xL^\frac{3(\gamma-1)}{\gamma}(\Omega)$ to some $\overline{\rho^\gamma}\in\xL^{\frac{3(\gamma-1)}{\gamma}}(\Omega)$.
The compactness of the sequence of velocities relies on the following theorem
% (proven in Appendix \ref{app:chap:ann-fonc}) 
which is a compactness result for the discrete $\xH^1_0$-norm similar to Rellich's theorem. We refer to \cite{gal-09-conv} (Theorem 3.3) for a proof. See also \cite{stum-80-bas}.

\begin{thrm}[Discrete Rellich theorem]
\label{thrm:stokes-lim-reg}
Let $(\disc\n)_{n\in\xN}$ be a sequence of staggered discretizations of $\Omega$ satisfying $\theta_{\mesh\n}\leq\theta_0$ for all $n\in\xN$. For all $n\in\xN$, let $\bfu\n\in \xbfH_{\mesh\n,0}(\Omega)$ and assume that there exists $C\in\xR$ such that $\norm{\bfu\n}_{1,2,\mesh\n}\leq C$, $\forall \,n\in\xN$. We suppose that $h\n\to 0$ as $n\to+\infty$. Then:
\begin{enumerate}
 \item There exists a subsequence of $(\bfu\n)_{n\in\xN}$, still denoted $(\bfu\n)_{n\in\xN}$, which converges in $\xbfL^2(\Omega)$ towards a function $\bfu\in\xbfL^2(\Omega)$.
 \item The limit function $\bfu$ belongs to $\xbfH^{1}_0(\Omega)$ with $\norm{\gradi \bfu}_{\xbfL^2(\Omega)^3} \leq C$.
 \item The sequence $(\gradi_{\mesh\n}\bfu\n)_{n\in\xN}$ weakly converges to $\gradi\bfu$ in $\xbfL^2(\Omega)^3$.
\end{enumerate}
\end{thrm}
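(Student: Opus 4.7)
The plan is to prove this discrete Rellich theorem in the classical Kolmogorov-Fréchet-Riesz style, treating the non-conformity of the Crouzeix-Raviart space as the only genuine difficulty. I would proceed in four steps.

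First, I would extend each $\bfu_n$ by zero on $\xR^d\setminus\Omega$ and establish a translation estimate of the form
\begin{equation*}
\int_{\xR^d} |\bfu_n(\bfx+\bfh)-\bfu_n(\bfx)|^2 \dx \leq C\,\snorm{\bfh}\,(\snorm{\bfh}+h_n)\,\norm{\bfu_n}_{1,2,\mesh_n}^2,
\end{equation*}
uniformly in $n$ and in $\bfh\in\xR^d$. The derivation uses that on each primal simplex $K$ the function $\bfu_n$ is affine, so that a path from $\bfx$ to $\bfx+\bfh$ contributes $\gradi_{\mesh_n}\bfu_n$ times the length inside each simplex it crosses, plus jumps $[\bfu_n]_\edge$ when crossing an internal face $\edge$. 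The non-conforming ``patch test'' \eqref{cont:bfu} says that $\int_\edge[\bfu_n]_\edge\,\dedge=0$, and a short local argument bounds $\int_\edge|[\bfu_n]_\edge|^2\,\dedge$ by $C\,h_K\int_{K\cup L}|\gradi_{\mesh_n}\bfu_n|^2\,\dx$ (this is the standard jump estimate for Crouzeix-Raviart on a regular mesh, using the regularity parameter $\theta_0$). Summing yields the claimed translation estimate, and the Dirichlet condition ensures the estimate also holds when the path leaves $\Omega$, since the trace degree of freedom on $\edgesextn$ vanishes.

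Second, since $(\bfu_n)_{n\in\xN}$ is bounded in $\xbfL^2(\Omega)$ (by Lemma \ref{lmm:poincare_brok}) and uniformly equicontinuous in the $\xL^2$ sense (by the translation estimate, which gives continuity at $\bfh=0$ uniformly in $n$ as $h_n\to 0$), the Kolmogorov-Fréchet-Riesz compactness theorem applied in $\xbfL^2(\xR^d)$ provides a subsequence, still denoted $(\bfu_n)_{n\in\xN}$, converging strongly in $\xbfL^2(\Omega)$ towards some $\bfu\in\xbfL^2(\Omega)$. By extracting once more, the bounded sequence $(\gradi_{\mesh_n}\bfu_n)_{n\in\xN}\subset\xbfL^2(\Omega)^d$ converges weakly to some tensor $G\in\xbfL^2(\Omega)^d$ with $\norm{G}_{\xbfL^2(\Omega)^d}\leq C$.

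Third, I would identify $G=\gradi\bfu$ and show $\bfu\in\xbfH^1_0(\Omega)$. For any test function $\bfvarphi\in\mcal{C}^\infty_c(\Omega)^d$, I would write, cell by cell,
\begin{equation*}
\int_\Omega \gradi_{\mesh_n}\bfu_n:\bfvarphi\,\dx
= -\int_\Omega \bfu_n\cdot\divv\bfvarphi\,\dx
+ \sum_{\edge\in\edgesintn,\,\edge=K|L}\int_\edge [\bfu_n]_\edge\otimes\bfn_{K,\edge}:\bfvarphi\,\dedge,
\end{equation*}
with analogous boundary contributions on $\edgesextn$ that are also jump terms against the zero trace. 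The non-conformity residual is controlled by subtracting on each face the mean value $\bfvarphi_\edge$ of $\bfvarphi$: by \eqref{cont:bfu} the jump has zero mean, so the residual reduces to
\begin{equation*}
\sum_{\edge\in\edgesn}\int_\edge [\bfu_n]_\edge\otimes\bfn_{K,\edge}:(\bfvarphi-\bfvarphi_\edge)\,\dedge,
\end{equation*}
which by Cauchy-Schwarz, $\norm{\bfvarphi-\bfvarphi_\edge}_{\xL^\infty(\edge)}\leq C\,h_n\norm{\gradi\bfvarphi}_\infty$, and the jump estimate above, is bounded by $C\,h_n\norm{\bfu_n}_{1,2,\mesh_n}\to 0$. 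Passing to the limit with the strong $\xbfL^2$ convergence of $\bfu_n$ and the weak $\xbfL^2$ convergence of $\gradi_{\mesh_n}\bfu_n$, I obtain $\int_\Omega G:\bfvarphi\,\dx=-\int_\Omega\bfu\cdot\divv\bfvarphi\,\dx$, whence $G=\gradi\bfu$ in the distributional sense and $\bfu\in\xbfH^1(\Omega)$ with $\norm{\gradi\bfu}_{\xbfL^2(\Omega)^d}\leq C$. The trace $\bfu|_{\dv\Omega}=0$ follows from the same argument applied with test functions $\bfvarphi\in\mcal{C}^\infty(\overline\Omega)^d$ not vanishing on the boundary, the boundary jumps being again controlled by the regularity of the mesh and the homogeneous Dirichlet degrees of freedom, so that $\bfu\in\xbfH^1_0(\Omega)$.

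The main obstacle is the first step: deriving the translation estimate while keeping track of the discontinuities across faces. Everything else is a fairly mechanical weak-strong pairing once that estimate is in hand. The key lemma that makes the non-conforming setting work is the zero-mean jump property \eqref{cont:bfu}, which is precisely the defining feature of the Crouzeix-Raviart space and which ensures that the broken gradient $\gradi_{\mesh_n}\bfu_n$ sees the true gradient of the weak limit in the distributional sense up to $O(h_n)$ boundary residuals.
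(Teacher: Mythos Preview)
The paper does not give its own proof of this theorem: it simply refers to \cite{gal-09-conv} (Theorem~3.3) and \cite{stum-80-bas}. Your proposal is precisely the classical argument found in those references---a Kolmogorov--Fr\'echet--Riesz translation estimate that accounts for the face jumps via the Crouzeix--Raviart zero-mean constraint \eqref{cont:bfu}, followed by identification of the weak limit of the broken gradient through an element-by-element integration by parts whose non-conforming residual is $O(h_n)$ (this residual control is in fact stated in the present paper as Lemma~\ref{lmm:aedge}). So your approach is correct and coincides with the one the paper cites.

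The only place where your writeup is slightly loose is the $\xbfH^1_0$ step: rather than switching at the end to test functions in $\mcal{C}^\infty(\overline\Omega)^d$, it is cleaner to run the whole argument on $\xR^d$ with the zero-extension from the start (your translation estimate already lives there), take test functions in $\mcal{C}^\infty_c(\xR^d)^d$, and conclude that the zero-extension of $\bfu$ lies in $\xbfH^1(\xR^d)$, which is equivalent to $\bfu\in\xbfH^1_0(\Omega)$. The external-face jump terms are handled exactly as the internal ones, since $\bfu_n\in\xbfH_{\mesh_n,0}(\Omega)$ only forces the \emph{mean} of $\bfu_n$ on each external face to vanish, not $\bfu_n$ itself.
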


Hence, upon extracting a new subsequence from $(\disc\n)_{n\in\xN}$, we may assume that there exists $\bfu\in\xbfH^{1}_0(\Omega)$ such that the sequence $(\bfu\n)_{n\in\xN}$ converges to $\bfu$ in $\xbfL^2(\Omega)$. By the discrete Sobolev inequality of Lemma \ref{lmm:injsobolev_br}, we can actually assume that $(\bfu\n)_{n\in\xN}$ converges to $\bfu$ in $\xbfL^q(\Omega)$ for all $q\in[1,6)$ and weakly in $\xbfL^6(\Omega)$.

\medskip
Following the same steps as 
%in the proof of the stability property (Theorem \ref{thm:stab}) 
in the continuous setting, we first pass to the limit $n\to+\infty$ in the mass and momentum equations in Sections \ref{sec:limit:mass} and \ref{sec:limit:mom} and then pass to the limit in the equation of state in Section \ref{sec:limit:eos}, by proving the strong convergence of the density.

\subsection{Passing to the limit in the mass conservation equation}
\label{sec:limit:mass}

\medskip
\begin{prop} \label{prop:convergence_rho}
Under the assumptions of Theorem \ref{main_thrm}, the limit pair $(\rho,\bfu)\in \xL^{3(\gamma-1)}(\Omega)\times\xbfH^1_0(\Omega)$ of the sequence $(\rho\n,\bfu\n)_{n\in\xN}$ satisfies the mass equation in the weak sense:
\begin{equation}
\label{prop:eq:convergence_rho}
-\int_\Omega \rho\, \bfu\cdot  \gradi\phi\dx = 0, \qquad \forall\phi\in \mcal{C}^{\infty}_c(\Omega). 
\end{equation}
\end{prop}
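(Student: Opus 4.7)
My plan is to pass to the limit in the discrete mass equation \eqref{eq:sch_mass} by testing it against a smooth function. Specifically, fix $\phi\in\mcal{C}^\infty_c(\Omega)$ and build a piecewise constant interpolant $\phi\n \in \xL_{\mesh\n}(\Omega)$ with values $\phi_{n,K} = \frac{1}{|K|}\int_K \phi \dx$ on each primal cell (so $\phi\n \to \phi$ uniformly, with $|\phi_{n,K}-\phi_{n,L}| \leq C h\n \|\gradi\phi\|_{\xL^\infty}$). Multiplying \eqref{eq:sch_mass} by $|K|\phi_{n,K}$ and summing over $K\in\mesh\n$, then using the conservativity $F_{K,\edge}=-F_{L,\edge}$ of the primal fluxes, yields the discrete weak form
\begin{equation*}
\sum_{\substack{\edge\in\edgesintn \\ \edge=K|L}} |\edge|\,\rho_{n,\edge}\,\bfu_{n,\edge}\cdot\bfn_{K,\edge}\,(\phi_{n,K}-\phi_{n,L}) + T_n^{(1)} + T_n^{(2)} = 0,
\end{equation*}
where $T_n^{(1)}$ and $T_n^{(2)}$ are the contributions of the two stabilization terms, each rewritten as a sum over internal faces by the same antisymmetry trick.

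The main work is to identify the limit of the convective sum. I would first replace the upwind density $\rho_{n,\edge}$ by the centered value $\rho_{n,\edge}^c = \frac12(\rho_{n,K}+\rho_{n,L})$. The resulting upwinding error is bounded by $\frac12\sum |\edge|\,|\rho_{n,K}-\rho_{n,L}|\,|\bfu_{n,\edge}\cdot\bfn_{K,\edge}|\,|\phi_{n,K}-\phi_{n,L}|$, which by Cauchy--Schwarz is controlled by $C\,h\n\,\|\gradi\phi\|_{\xL^\infty}\,\bigl(\sum|\edge|(\rho_{n,K}-\rho_{n,L})^2|\bfu_{n,\edge}\cdot\bfn_{K,\edge}|\bigr)^{1/2}\bigl(\sum|\edge|\,|\bfu_{n,\edge}\cdot\bfn_{K,\edge}|\bigr)^{1/2}$; using the a priori bound in \eqref{unif:est} and the $\xbfL^2$ control of $\bfu\n$, this is an $O(h\n^{1/2-\alpha})$ remainder for some $\alpha < 1/2$ under the assumptions on the exponents, hence vanishes. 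Rewriting the remaining centered sum as an integral against the piecewise constant reconstruction $\xP_\edges(\rho\n)$ (or rather the cell-wise mean) contracted with $\xPi_\edges\bfu\n$, and recognizing $(\phi_{n,L}-\phi_{n,K})\bfn_{K,\edge}/d_{K,L}$ as a consistent approximation of $\gradi\phi$, the limit reduces to a standard weak--strong pairing: $\rho\n\rightharpoonup\rho$ in $\xL^{3(\gamma-1)}(\Omega)$ and $\bfu\n\to\bfu$ strongly in $\xbfL^q$ for $q<6$ imply $\rho\n\bfu\n \rightharpoonup \rho\bfu$ in $\xL^r(\Omega)$ for some $r>1$, which suffices to pass to the limit against $\gradi\phi$.

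The two stabilization contributions vanish as $n\to\infty$. For $T_n^{(1)} = h\n^{\xi_1}\int_\Omega(\rho\n-\rho^\star)\phi\n\dx$, the uniform bound of $\rho\n$ in $\xL^{3(\gamma-1)}(\Omega)$ and $\xi_1>1$ (actually $\xi_1>0$ is enough here) give $|T_n^{(1)}|\leq C h\n^{\xi_1}$. For $T_n^{(2)}$, the antisymmetrized expression reads $h\n^{\xi_2}\sum|\edge|(|\edge|/|D_\edge|)^{1/\eta}|\rho_{n,K}-\rho_{n,L}|^{1/\eta-1}(\rho_{n,K}-\rho_{n,L})(\phi_{n,K}-\phi_{n,L})$; a Hölder estimate with exponents $(1+\eta)/\eta$ and $1+\eta$ bounds this by $C\,\|\gradi\phi\|_{\xL^\infty}\,h\n\cdot h\n^{\xi_2}\,\snorm{\rho\n}_{\frac{1+\eta}{\eta},\mesh\n}^{1/\eta}\cdot h\n^{\text{(geometric factor)}}$. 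Using the estimate on $h\n^{\xi_2}\snorm{\rho\n}_{\frac{1+\eta}{\eta},\mesh\n}^{(1+\eta)/\eta}$ from \eqref{unif:est}, one checks that the total power of $h\n$ is strictly positive under the upper bound $\xi_2<\frac{1+\eta}{\eta}-\frac{5}{4\Gamma}(\frac{3}{1+\eta}+\xi_3)$ from \eqref{thm:cond_xi2}.

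The main obstacle is the convective term, specifically controlling the upwinding discrepancy and identifying the weak limit of the centered bilinear sum with $\int_\Omega\rho\bfu\cdot\gradi\phi\dx$. The upwinding relies crucially on the square-velocity-weighted estimate in \eqref{unif:est}, which is the reason the second stabilization term $T_{\rm stab}^2$ is built into the primal flux via $\overline{F}_{K,\edge}$ in \eqref{eq:def_FKedgebar}; once this remainder is shown to be $o(1)$, the rest follows from the product of a weakly and a strongly converging sequence.
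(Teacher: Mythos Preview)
Your overall plan---test the discrete mass equation against the piecewise constant interpolant $\phi\n$, split off the two stabilization contributions, replace the upwind density by the centered one, and then pass to the limit---matches the paper's. Your treatments of $T_n^{(1)}$, $T_n^{(2)}$ and of the upwinding error (via Cauchy--Schwarz and the weak BV estimate in \eqref{unif:est}) are essentially correct; a minor slip is that the vanishing of $T_n^{(2)}$ relies on the \emph{lower} bound on $\xi_2$ in \eqref{thm:cond_xi2}, not the upper one.

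The genuine gap is in the final limit of the centered convective sum. You propose to recognize $(\phi_{n,L}-\phi_{n,K})\bfn_{K,\edge}/d_{K,L}$ as a consistent approximation of $\gradi\phi$ and then use a ``standard weak--strong pairing'' with $\rho\n\bfu\n\rightharpoonup\rho\bfu$. But this face-based discrete gradient is exactly $\gradi_{\edges\n}\phi\n$ (cf.~\eqref{eq:def_grad_p}), which is supported in a \emph{single} direction $\bfn_{K,\edge}$ on each dual cell and is known to converge only \emph{weakly} to $\gradi\phi$. Consequently the product $(\text{centered }\rho\n)\,(\xPi_{\edges\n}\bfu\n)\cdot\gradi_{\edges\n}\phi\n$ is a weak--weak pairing and you cannot pass to the limit as written. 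The paper states this obstruction explicitly and circumvents it by introducing a cell-based gradient
\[
\overline{\gradi}_{\mesh\n}\phi\n\big|_K \;=\; \frac{1}{|K|}\sum_{\edge\in\edges(K)}|\edge|\,\phi_\edge\,\bfn_{K,\edge},
\]
which \emph{does} converge strongly to $\gradi\phi$ in every $\xbfL^q$ (Lemma~\ref{lmm:test_mass}). Getting to this form requires an extra step that your proposal omits: after centering the density, one replaces the face velocity $\bfu_\edge$ by the cell mean $\bfu_K$ (creating a further remainder $R_4^n$ controlled via $|\bfu_K-\bfu_\edge|\lesssim h_K|K|^{-1/2}\|\gradi\bfu\n\|_{\xbfL^2(K)}$ and an inverse inequality on $\rho\n$), and only then can the sum be recast as $-\int_\Omega \rho\n\,\bfu\n\cdot\overline{\gradi}_{\mesh\n}\phi\n\,\dx$ and passed to the limit by weak--strong pairing. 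Without this device, your argument does not close.
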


\medskip
Let us first state the following lemma which will be useful in the proof of Proposition \eqref{prop:convergence_rho}.

\medskip

\begin{lem} \label{lmm:test_mass}
%Let $(\disc\n)_{n\in\xN}$ be a regular sequence of staggered discretizations as defined in Definition \ref{def:reg_disc}. 
Let $\phi\in \mcal{C}^{\infty}_c(\Omega)$. For $n \in \xN$ define $\phi\n\in\xL_{\mesh\n}(\Omega)$ by ${\phi\n}_{|K}=\phi_K$ the mean value of $\phi$ over $K$, for $K \in \mesh\n$. Denote $\phi_\edge=|\edge|^{-1}\int_\edge \phi(\bfx) \dedge(\bfx)$ for all $\edge \in \edges\n$ and define  
a discrete gradient of $\phi\n$ by:
\[
\overline{\gradi}_{\mesh\n} \phi\n(\bfx) = \sum_{K \in \mesh\n} (\gradi \phi)_K \, \mathcal{X}_K(\bfx), 
\quad
\mbox{with }
\quad
(\gradi \phi)_K = \frac 1 {|K|} \sum_{\edge\in\edges(K)} |\edge|\,  \phi_\edge\, \bfn_{K,\edge}.
\]
Then for all $q$ in $[1,\infty]$, there exists $C=C(\Omega,q,\phi,\theta_0)$ such that:
\begin{equation} 
\label{eq:interpolate_t} 
\norm{\overline{\gradi}_{\mesh\n} \phi\n-\gradi \phi}_{\xbfL^q( \Omega)} \leq C \, h\n.
\end{equation}
\end{lem}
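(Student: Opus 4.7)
The key observation I would exploit is that the quantity $(\gradi\phi)_K$ coincides exactly with the mean value of the true gradient over $K$. Indeed, applying the divergence theorem componentwise to $\phi$ on the simplex $K$ gives
\begin{equation*}
\int_K \gradi \phi \dx = \sum_{\edge\in\edges(K)} \int_\edge \phi\, \bfn_{K,\edge} \dedge(\bfx) = \sum_{\edge\in\edges(K)} |\edge|\, \phi_\edge\, \bfn_{K,\edge},
\end{equation*}
where the second equality is just the definition of $\phi_\edge$. Therefore $(\gradi\phi)_K = \frac{1}{|K|}\int_K \gradi\phi \dx$, \emph{i.e.} the piecewise constant field $\overline{\gradi}_{\mesh\n}\phi\n$ is nothing but the $\xL^2$ projection of $\gradi\phi$ on the space of functions constant on each primal cell.

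The proof then reduces to a standard interpolation estimate. Since $\phi\in\mcal{C}^\infty_c(\Omega)$, the Hessian $\gradi^2\phi$ is bounded on $\Omega$. For any $\bfx\in K$, writing
\begin{equation*}
\overline{\gradi}_{\mesh\n}\phi\n(\bfx) - \gradi\phi(\bfx) = \frac{1}{|K|}\int_K \bigl(\gradi\phi(\bfy)-\gradi\phi(\bfx)\bigr)\dy,
\end{equation*}
the mean-value inequality yields $|\overline{\gradi}_{\mesh\n}\phi\n(\bfx) - \gradi\phi(\bfx)| \leq \norm{\gradi^2\phi}_{\xbfL^\infty(\Omega)} h_K \leq \norm{\gradi^2\phi}_{\xbfL^\infty(\Omega)} h\n$ uniformly in $\bfx$ and $K$.

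Integrating this pointwise bound over $\Omega$ then gives, for $q\in[1,+\infty)$,
\begin{equation*}
\norm{\overline{\gradi}_{\mesh\n}\phi\n - \gradi\phi}_{\xbfL^q(\Omega)}^q \leq \norm{\gradi^2\phi}_{\xbfL^\infty(\Omega)}^q\, |\Omega|\, h\n^q,
\end{equation*}
and the case $q=+\infty$ follows directly from the pointwise bound. In either case the right-hand side has the form $C(\Omega,q,\phi)\,h\n$, with no dependence on the regularity parameter $\theta_0$ (which is not used in this particular estimate, but harmlessly allowed in the statement). There is no substantial obstacle here; the only ingredients are the divergence theorem identification of $(\gradi\phi)_K$ with the cell average of $\gradi\phi$ and the smoothness of the test function $\phi$.
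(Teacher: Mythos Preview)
Your proof is correct and follows essentially the same route as the paper: identify $(\gradi\phi)_K$ with the cell average of $\gradi\phi$ via the divergence theorem, then bound the oscillation of $\gradi\phi$ on $K$ using the $\xW^{2,\infty}$ regularity of $\phi$. Your presentation is in fact slightly cleaner, since you state the identification $(\gradi\phi)_K=\frac{1}{|K|}\int_K\gradi\phi\dx$ explicitly at the outset, and you are right that the mesh regularity parameter $\theta_0$ plays no role here.
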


\medskip
\begin{proof}
Let $q\in[1,+\infty)$. We have  $\norm{\overline{\gradi}_{\mesh\n} \phi\n-\gradi \phi}_{\xbfL^q( \Omega)}^q=\sum_{K\in\mesh\n}\norm{\overline{\gradi}_{\mesh\n} \phi\n-\gradi \phi}_{\xbfL^q(K)}^q$ with for $K\in\mesh\n$:
\[
 \begin{aligned}
  \norm{\overline{\gradi}_{\mesh\n} \phi\n-\gradi \phi}_{\xbfL^q(K)}^q
  &= \int_K \Big| \frac 1 {|K|} \sum_{\edge\in\edges(K)} |\edge|\,  \phi_\edge\, \bfn_{K,\edge} -\gradi\phi(\bfx)\Big  |^q\dx\\
  &= \int_K \Big| \frac 1 {|K|} \sum_{\edge\in\edges(K)} \int_\edge \phi(\bfy) \dedge(\bfy)\, \bfn_{K,\edge} -\gradi\phi(\bfx)\Big  |^q\dx\\
  &= \int_K \Big| \frac 1 {|K|} \int_{\dv K} \phi(\bfy) \dedge(\bfy)\, \bfn_{K} -\gradi\phi(\bfx)\Big  |^q\dx\\
%  &= \int_K \Big| \frac 1 {|K|} \int_{K} (\gradi\phi(\bfy) -\gradi\phi(\bfx))\dy \Big  |^q\dx\\
  &\leq \int_K \Big( \frac 1 {|K|} \int_{K} |\gradi\phi(\bfy) -\gradi\phi(\bfx)|\dy \Big  )^q\dx.\\
 \end{aligned}
\]
By a Taylor expansion, we have for all $\bfy$, $\bfx\in K$, $|\gradi\phi(\bfy) -\gradi\phi(\bfx)|\leq h\n\,\snorm{\phi}_{\xW^{2,\infty}(\Omega)}$. Thus we have: $\norm{\overline{\gradi}_{\mesh\n} \phi\n-\gradi \phi}_{\xbfL^q(K)}^q\leq h\n^q\,\snorm{\phi}_{\xW^{2,\infty}(\Omega)}^q\,|K|$ which concludes the proof for $q\in[1,+\infty)$. The proof is similar for $q=+\infty$.
\end{proof}

\medskip
We can now give the proof of Proposition \eqref{prop:convergence_rho}.

\medskip
\begin{proof}[Proof of Proposition \eqref{prop:convergence_rho}.]
To prove this result we pass to the limit $n\to+\infty$ in the weak formulation of the discrete mass balance. Let $\phi\in \mcal{C}^{\infty}_c(\Omega)$ and for $n \in \xN$ define $\phi\n\in\xL_{\mesh\n}(\Omega)$ by ${\phi\n}_{|K}=\phi_K$ the mean value of $\phi$ over $K$, for $K \in \mesh\n$. Multiplying the discrete mass balance \eqref{eq:sch_mass} by $|K|\,\phi_K\,\Ind_K$, summing over $K \in \mesh\n$ and performing a discrete integration by parts (\emph{i.e.} reordering the sum) yields:
\begin{equation}
%- \sum_{\substack{\edge\in{\edgesintn} \\ \edge=K|L}} |\edge|\,\rho_\edge \,(\phi_L-\phi_K) \, \bfu_\edge\cdot\bfn_{K,\edge} + R_1^n+R_2^n=0 \\
\label{mass00}
-\int_\Omega (\xP_{\edges\n}\rho\n)\,(\xPi_{\edges\n}\bfu\n)\cdot\gradi_{\edges\n}\phi\n \,\dx +  R_1^n+R_2^n=0,
\end{equation}
with
\[
\begin{aligned}
 &R_1^n = h\n^{\xi_1} \sum_{K\in\mesh\n} |K| (\rho_K-\rho^\star)\,\phi_K, \\
 &R_2^n = h\n^{\xi_2} \sum_{K\in\mesh\n} \Big (\sum_{\edge \in\edges(K)} |\edge| \,\Big(\dfrac{|\edge|}{|D_\edge|}\Big)^\frac{1}{\eta}\,|\rho_K-\rho_L|^{\frac{1}{\eta}-1}\,(\rho_K-\rho_L)\Big )\,\phi_K,
\end{aligned}
\]
where $\gradi_{\edges}$ is the discrete gradient defined in \eqref{eq:def_grad_p} and $\xP_\edges$ is defined in \eqref{def:pepsrho}.

\medskip
In order to prove Proposition \ref{prop:convergence_rho}, we want to pass to the limit in the first term of \eqref{mass00}.
It is possible to prove that $\xPi_{\edges\n}\bfu\n\to\bfu$ strongly in $\xbfL^2(\Omega)$. However,
the discrete gradient $\gradi_{\edges\n}\phi\n$ is known to converge only weakly towards $\gradi\phi$ because locally on a dual cell $D_\edge$ it is supported by only one direction, that of the normal vector $\bfn_{K,\edge}$
%, and since, at this stage, $\rho\n$ converges only weakly towards $\rho$ (in $\xL^{3(\gamma-1)}(\Omega)$), we cannot expect more than a weak convergence for $\xP_{\edges\n}\rho\n$.
Thus, it is not possible to pass to the limit in
%the present form of 
\eqref{mass00}. Instead, we use the discrete gradient $\overline{\gradi}_{\mesh\n} \phi\n$ introduced in Lemma \ref{lmm:test_mass}, which is known to converge strongly towards $\gradi \phi$.

\medskip
We have:
\begin{align}
\nonumber -\int_\Omega (\xP_{\edges\n}\rho\n)\,(\xPi_{\edges\n}\bfu\n)\cdot\gradi_{\edges\n}\phi\n \,\dx &= - \sum_{\substack{\edge\in{\edgesintn} \\ \edge=K|L}} |\edge|\,\rho_\edge \,(\phi_L-\phi_K) \, \bfu_\edge\cdot\bfn_{K,\edge} \\
\label{mass1}     &= - \sum_{\substack{\edge\in{\edgesintn} \\ \edge=K|L}} |\edge|\,\tfrac{1}{2}(\rho_K+\rho_L) \, (\phi_L-\phi_K) \, \bfu_\edge\cdot\bfn_{K,\edge} + R_3^n,
\end{align}
%\]
with 
\[
 R_3^n = \sum_{\substack{\edge\in{\edgesintn} \\ \edge=K|L}} |\edge|\,(\tfrac{1}{2}(\rho_K+\rho_L)-\rho_\edge)\, (\phi_L-\phi_K) \, \bfu_\edge\cdot\bfn_{K,\edge}.
\]
Reordering the sum in the first term of \eqref{mass1} we get:
\begin{align}
\nonumber -\int_\Omega (\xP_{\edges\n}\rho\n)\,(\xPi_{\edges\n}\bfu\n)\cdot\gradi_{\edges\n}\phi\n \,\dx    
% \\
%\nonumber
&
= - \frac 12\, \sum_{K\in\mesh\n}  \rho_K  \Big ( \sum_{\substack{\edge \in\edges(K) \\ \edge=K|L}} |\edge|\,(\phi_L-\phi_K) \, \bfu_\edge\cdot\bfn_{K,\edge} \Big )+  R_3^n,\\
\label{mass2}
&= -  \frac 12\, \sum_{K\in\mesh\n}  \rho_K \, \bfu_K \cdot \Big (\sum_{\substack{\edge \in\edges(K) \\ \edge=K|L}} |\edge|\,(\phi_L-\phi_K) \,\bfn_{K,\edge}\Big ) +  R_3^n + R_4^n,
\end{align}
where here, $\bfu_K$ is the mean value of $\bfu\n$ over $K$ and
\[
 R_4^n = \frac 12\, \sum_{K\in\mesh\n}  \rho_K \Big (\sum_{\substack{\edge \in\edges(K) \\ \edge=K|L}} |\edge|\,(\phi_L-\phi_K) \, (\bfu_K-\bfu_\edge) \cdot \bfn_{K,\edge}\Big ).
\]
Back to \eqref{mass2}, we have:
\begin{align}
-\int_\Omega (\xP_{\edges\n}\rho\n)\,(\xPi_{\edges\n}\bfu\n)&\cdot\gradi_{\edges\n}\phi\n \,\dx \nonumber \\
&= -  \sum_{K\in\mesh\n}  \rho_K \, \bfu_K \cdot \Big (\sum_{\substack{\edge \in\edges(K) \\ \edge=K|L}} |\edge|\,\tfrac 12(\phi_L+\phi_K) \,\bfn_{K,\edge}\Big ) +  R_3^n + R_4^n+R_5^n \nonumber \\
&= -  \sum_{K\in\mesh\n}  \rho_K \, \bfu_K \cdot \Big (\sum_{\edge \in\edges(K)} |\edge|\,\phi_\edge \,\bfn_{K,\edge}\Big ) +  R_3^n + R_4^n+R_5^n+R_6^n \nonumber\\
\label{mass00bis}
&= -  \int_\Omega \rho\n\,\bfu\n\cdot\overline{\gradi}_{\mesh\n} \phi \dx+  R_3^n + R_4^n+R_5^n+R_6^n.
\end{align}
where :
% \[
%  \overline{\gradi}_{\mesh\n} \phi(\bfx) = \sum_{K\in\mesh_n} \Big( \frac 1 {|K|} \sum_{\edge\in\edges(K)} |\edge|\,  \phi_\edge\, \bfn_{K,\edge}\Big ) \mcal{X}_K(\bfx),
% \]
% and
\[
\begin{aligned}
 R_5^n&= \sum_{K\in\mesh\n}  \rho_K \, \bfu_K \, \phi_K \cdot \Big (\sum_{\edge \in\edges(K)} |\edge| \,\bfn_{K,\edge}\Big ),\\
 R_6^n&= \sum_{K\in\mesh\n}  \rho_K \, \bfu_K \cdot \Big (\sum_{\substack{\edge \in\edges(K) \\ \edge=K|L}} |\edge|\,(\phi_\edge-\tfrac 12(\phi_L+\phi_K)) \,\bfn_{K,\edge}\Big ).
\end{aligned} 
\]
Replacing \eqref{mass00bis} in \eqref{mass00} we get:
\begin{equation}
\label{mass00ter}
 -\int_\Omega \rho\n\,\bfu\n\cdot\overline{\gradi}_{\mesh\n} \phi \dx + R_1^n+R_2^n+R_3^n + R_4^n+R_5^n+R_6^n=0.
\end{equation}
Since $\bfu\n\to\bfu$ strongly in $\xbfL^q(\Omega)$ for all $q \in [1,6)$, and $\overline{\gradi}_{\mesh\n}\phi_n\to\gradi\phi$ (by \eqref{eq:interpolate_t}) in $\xbfL^6(\Omega)^3$ as $n\to+\infty$, we have $\bfu\n\cdot\overline{\gradi}_{\mesh\n}\phi_n\to\bfu\cdot\gradi\phi$ strongly in $\xL^{3-\delta}(\Omega)$ for all $\delta\in(0,2]$. 
Furthermore, we have $\rho\n\rightharpoonup\rho$ weakly in $\xL^{3(\gamma-1)}(\Omega)$ with $3(\gamma-1)>\frac 32$ (since $\gamma > \frac 32$), which yields:
%So combining \eqref{mass000} with \eqref{mass00} yields:
\[
 \lim\limits_{n\to+\infty} \,  \int_\Omega \rho\n\,\bfu\n\cdot\overline{\gradi}_{\mesh\n}\phi\n \,\dx  =\int_\Omega \rho\, \bfu\cdot  \gradi\phi\dx.
\]
It remains to prove that $ \sum_{i=1}^6 R_i^n \to 0$ as $n\to+\infty$. In the following, in order to ease the notations, we denote $A_n\lesssim B_n$ when there is a constant $C$, independent of $n$, such that $A_n\leq C\,B_n$.
We easily prove that $R_1^n\to0$ and $R_2^n\to0$ as $n\to+\infty$. Indeed, one has:
\[
 |R_1^n| \leq 2\, h\n^{\xi_1} \, |\Omega|\rho^\star\,\norm{\phi}_{\xL^\infty(\Omega)},
\]
which proves that $R_1^n\to0$ since $\xi_1>0$. For $R_2^n$, reordering the sum, we get:
\[
 \begin{aligned}
 |R_2^n| 
& \leq h\n^{\xi_2} \sum_{\substack{\edge\in{\edgesintn} \\ \edge=K|L}} |\edge| \,\Big(\dfrac{|\edge|}{|D_\edge|}\Big)^\frac{1}{\eta}\,|\rho_K-\rho_L|^{\frac{1}{\eta}}\,|\phi_K-\phi_L| \\
&\lesssim \norm{\gradi \phi}_{\xbfL^\infty(\Omega)}\, h\n^{\xi_2} \sum_{\substack{\edge\in{\edgesintn} \\ \edge=K|L}} |D_\edge| \,\Big(\dfrac{|\edge|}{|D_\edge|}\Big)^\frac{1}{\eta}\,|\rho_K-\rho_L|^{\frac{1}{\eta}}.
 \end{aligned}
\]
Applying H\"older's inequality (with coefficients $1+\eta$ and $(1+\eta)/\eta$) to the sum, we get:
\begin{align*}
|R_2^n| 
& \lesssim \norm{\gradi \phi}_{\xbfL^\infty(\Omega)}\, |\Omega|^{\frac{\eta}{\eta+1}}\, h\n^{\xi_2} \snorm{\rho}_{\frac{1+\eta}{\eta},\mesh\n}^{\frac{1}{\eta}} \\
& \lesssim h\n^{\frac{\eta}{1+\eta}(\xi_2 -  \frac{5}{4\eta\Gamma}(\frac{3}{1+\eta} + \xi_3))} \Big(h\n^{\frac{\eta}{1+\eta}(\xi_2 +  \frac{5}{4\Gamma}(\frac{3}{1+\eta} + \xi_3))} \snorm{\rho}_{\frac{1+\eta}{\eta},\mesh\n}\Big )^{\frac{1}{\eta}}.
\end{align*}
Thanks to \eqref{unif:est} and to assumption \eqref{thm:cond_xi2}, we have $R_2^n\to0$ as $n\to+\infty$.
Let us now turn to $R_3^n$. Recalling the upwind definition of $\rho_\edge$, we get:
\[
 |R_3^n| \leq \frac 12 \, \sum_{\substack{\edge\in{\edgesintn} \\ \edge=K|L}} |\edge|\,|\rho_K-\rho_L|\, |\phi_L-\phi_K| \, |\bfu_\edge\cdot\bfn_{K,\edge}|.
\]
Applying the Cauchy-Schwarz inequality, we infer that:
\[
\begin{aligned}
|R_3^n| 
&\leq \frac 12 \, \Big ( \sum_{\substack{\edge\in{\edgesintn} \\ \edge=K|L}} |\edge|\,|\rho_K-\rho_L|^2\, |\bfu_\edge\cdot\bfn_{K,\edge}| \Big)^{\frac12} \, \Big ( \sum_{\substack{\edge\in{\edgesintn} \\ \edge=K|L}} |\edge|\, |\phi_L-\phi_K|^2 \, |\bfu_\edge\cdot\bfn_{K,\edge}| \Big)^{\frac12}\\
&\lesssim  h_n^{- \frac{5}{8\Gamma} \big( \frac{3}{1+\eta}+\xi_3\big )}  
\, \Big ( \sum_{\substack{\edge\in{\edgesintn} \\ \edge=K|L}} |\edge|\, |\phi_L-\phi_K|^2 \, |\bfu_\edge\cdot\bfn_{K,\edge}| \Big)^{\frac12}
\end{aligned}
\]
by estimate \eqref{unif:est}. By Taylor's inequality applied to the smooth function $\phi$ and the regularity of the discretization, we have $|\phi_L-\phi_K|^2\lesssim h\n\,|D_\edge|/|\edge|\,\norm{\gradi\phi}_{\xbfL^\infty(\Omega)}^2$. Hence:
\begin{align*}
|R_3^n| 
& \lesssim   h_n^{\frac{5}{8\Gamma} \big( \frac{4}{5}\Gamma - \frac{3}{1+\eta} -\xi_3\big )}
\,\Big ( \sum_{\substack{\edge\in{\edgesintn} \\ \edge=K|L}} |D_\edge|\,|\bfu_\edge|\Big)^{\frac12}
=   h_n^{ \frac{5}{8\Gamma} \big( \frac{4}{5} \Gamma - \frac{3}{1+\eta} -\xi_3\big )}
\, \norm{\xPi_{\edges\n}\bfu\n}_{\xbfL^1(\Omega)}^{\frac12} 
\\
& \lesssim   h_n^{ \frac{5}{8\Gamma} \big( \frac{4}{5}\Gamma - \frac{3}{1+\eta} -\xi_3\big )}
\,\norm{\bfu\n}_{\xbfL^1(\Omega)}^{\frac12} \\
& \lesssim   h_n^{ \frac{5}{8\Gamma} \big( \frac{4}{5}\Gamma - \frac{3}{1+\eta} -\xi_3\big )}
\end{align*}
since $\norm{\bfu\n}_{\xbfL^6(\Omega)}$, and thus $\norm{\bfu\n}_{\xbfL^1(\Omega)}$, is controlled by $\norm{\bfu\n}_{1,2,\mesh\n}$ which is bounded by $C_0$. 
Since $(\Gamma,\xi_3)$ satisfy \eqref{thm:cond_xi3} we get $R_3^n\to0$ as $n\to+\infty$. \\
We now turn to $R_4^n$. 
By a Taylor inequality on the smooth function $\phi$ and the regularity of the discretization, we have: $|\phi_L-\phi_K| \lesssim h\n \, \norm{\gradi \phi}_{\xbfL^\infty(\Omega)}$. Hence:
	\begin{equation}
	\label{mass4}
	|R_4^n| \lesssim  h\n \sum_{K\in\mesh\n} \rho_K  \sum_{\edge \in\edges(K)} |\edge|\,|\bfu_K-\bfu_\edge|.
	\end{equation}
	The vectors $\bfu_K$ and $\bfu_\edge$ are the mean values of $\bfu$ over $K$ and $\edge\in\edges(K)$ respectively. By the Cauchy-Schwarz inequality, we can prove that:
	\[
	|\bfu_K-\bfu_\edge|^2 \leq \frac{1}{|\edge|}\,\frac{1}{|K|}\,\int_\edge \int_K |\bfu\n(\bfy)-\bfu\n(\bfx)|^2\,\dx\dedge(\bfy).
	\]
	Since $\bfu\n$ is smooth over $K$ we have for $\bfx,\,\bfy\in K$:
	\[
	|\bfu\n(\bfy)-\bfu\n(\bfx)|^2 \leq |\bfy-\bfx|^2\int_0^1 |\gradi\bfu\n(t\bfy+(1-t)\bfx)|^2\dt.
	\]
	Bounding $|\bfy-\bfx|$ by $h_K$ we obtain, using Fubini's theorem that $|\bfu_K-\bfu_\edge|^2 \leq \frac{h_K^2}{|K|} \norm{\gradi \bfu\n}^2_{\xbfL^2(K)^3}$. Injecting in \eqref{mass4}, and invoking the regularity of the discretization we get:
	\[
	\begin{aligned}
	|R_4^n| 
	& \lesssim  h\n \sum_{K\in\mesh\n} |K|^{\frac 12} \rho_K  \norm{\gradi \bfu\n}_{\xbfL^2(K)^3} \\
	& \lesssim h\n \, \norm{\rho\n}_{\xL^{\infty}(\Omega)}^{1-\frac{3(\gamma-1)}{2}}\sum_{K\in\mesh\n} |K|^{\frac 12} \rho_K^{\frac{3(\gamma-1)}{2}}  \norm{\gradi \bfu\n}_{\xbfL^2(K)^3}\\
	& \lesssim h\n \, \norm{\rho\n}_{\xL^{\infty}(\Omega)}^{\frac{5-3\gamma}{2}} \norm{\rho\n}_{\xL^{3(\gamma-1)}(\Omega)}^{\frac{3(\gamma-1)}{2}}  \norm{\bfu\n}_{1,2,\mesh\n}.
	\end{aligned}
	\]
	%We have the inverse inequality $\norm{\rho\n}_{\xL^{\infty}(\Omega)}\lesssim h\n^{-\frac{1}{\gamma-1}}\norm{\rho\n}_{\xL^{3(\gamma-1)}(\Omega)}$. 
	Thus, by the inverse inequality $\norm{\rho\n}_{\xL^{\infty}(\Omega)}\lesssim h\n^{-\frac{1}{\gamma-1}}\norm{\rho\n}_{\xL^{3(\gamma-1)}(\Omega)}$ and since the sequence $(\rho\n)_{n\in\xN}$ is bounded in $\xL^{3(\gamma-1)}(\Omega)$ and the sequence $(\norm{\bfu\n}_{1,2,\mesh\n})_{n\in\xN}$ is bounded we get:
	\[
	|R_4^n| \lesssim h\n^{1-\frac{1}{\gamma-1}\frac{5-3\gamma}{2}} = h\n^{\frac{5\gamma-7}{2(\gamma-1)}}.
	\]
	Since, $\gamma>\frac32>\frac 75$, we deduce that $R_4^n\to0$ as $n\to+\infty$.\\
The fifth remainder term satisfies $R_5^n=0$ since $\sum_{\edge \in\edges(K)} |\edge| \,\bfn_{K,\edge}=0$ for all $K\in\mesh\n$. Let us conclude with the control of $R_6^n$. Denoting $\hat\phi_\edge=\frac12(\phi_L+\phi_K)$ for $\edge=K|L$, we may write $R_6^n=R_{6,1}^n+R_{6,2}^n$ with:
\[
\begin{aligned}
R_{6,1}^n& =\sum_{K\in\mesh\n}  \rho_K  \Big (\sum_{\edge \in\edges(K)} |\edge|\,(\phi_\edge-\hat\phi_\edge)  \, (\bfu_K-\bfu_\edge) \cdot\bfn_{K,\edge}\Big ),\\
R_{6,2}^n& =\sum_{K\in\mesh\n}  \rho_K  \Big (\sum_{\edge \in\edges(K)} |\edge|\,(\phi_\edge-\hat\phi_\edge)  \, \bfu_\edge \cdot\bfn_{K,\edge}\Big ).
\end{aligned}
\]
The term $R_{6,1}^n$ can be controlled the same way as $R_4^n$ and we obtain $R_{6,1}^n\to0$ as $n\to+\infty$. Reordering the sum in $R_{6,2}^n$ we get:
\[
R_{6,2}^n = \sum_{\substack{\edge\in{\edgesintn} \\ \edge=K|L}} |\edge|\,(\rho_K-\rho_L)\, (\phi_\edge-\hat\phi_\edge) \, \bfu_\edge\cdot\bfn_{K,\edge}.
\]
Hence $R_{6,2}^n$ can be controlled the same way as $R_3^n$ and we obtain $R_{6,2}^n\to0$ as $n\to+\infty$ and this concludes the proof of \eqref{prop:eq:convergence_rho}.

\end{proof}

\subsection{Passing to the limit in the momentum equation}
\label{sec:limit:mom}

\medskip
\begin{prop} \label{prop:convergence_u}
Under the assumptions of Theorem \ref{main_thrm}, the limit triple $(\rho,\bfu,\overline{\rho^\gamma})\in \xL^{3(\gamma-1)}(\Omega)\times\xbfH^1_0(\Omega)\times\xL^{\frac{3(\gamma-1)}{\gamma}}(\Omega)$ of the sequence $(\rho\n,\bfu\n,\rho\n^\gamma)_{n\in\xN}$ satisfies the momentum equation in the weak sense:
\begin{multline}
\label{prop:eq:convergence_u}
-\int_\Omega \rho\, \bfu \otimes \bfu : \gradi \bfv \,\dx
+ \mu\, \int_\Omega \gradi \bfu : \gradi \bfv  \dx 
+ (\mu+\lambda)\,\int_\Omega \dive\, \bfu \,  \dive\, \bfv\, \dx \\[2ex]
- a \int_\Omega \overline{\rho^\gamma}\,  \dive\, \bfv\, \dx
= \int_\Omega \bff \cdot \bfv \,\dx, \qquad \forall \bfv\in\mcal{C}^{\infty}_c(\Omega)^3.
\end{multline}
Moreover, we have the following energy inequality satisfied at the limit:
\begin{equation}\label{eq:c-energy:disc}
\mu \int_\Omega{|\gradi \bfu|^2 \dx} + (\lambda+\mu)\int_{\Omega}{(\dive\,\bfu)^2 \dx} 	\leq \int_\Omega \bff \cdot \bfu \dx.	
\end{equation}
\end{prop}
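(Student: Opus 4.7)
The plan is to pass to the limit $n \to +\infty$ in the discrete weak momentum balance \eqref{eq:sch_mom:weak:2} applied to a Crouzeix-Raviart interpolate of a smooth test function, and then derive the energy inequality by testing \eqref{eq:sch_mom:weak} with $\bfv = \bfu_n$ and using weak lower semicontinuity.

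First I would fix $\bfv \in \mcal{C}_c^{\infty}(\Omega)^3$ and let $\bfv_n \in \xbfH_{\mesh_n,0}(\Omega)$ be its Crouzeix-Raviart interpolate. Classical interpolation estimates on a regular family of meshes would give, in $\xbfL^q(\Omega)$ for every $q \in [1,+\infty)$, the strong convergences $\xPi_{\edges_n}\bfv_n \to \bfv$, $\gradi_{\mesh_n}\bfv_n \to \gradi\bfv$, $\dive_{\mesh_n}\bfv_n \to \dive\bfv$, and $\gradi_{\edges_n}\bfv_n \to \gradi\bfv$. Plugging $\bfv_n$ into \eqref{eq:sch_mom:weak:2}, the two viscous terms converge to $\mu\int \gradi\bfu : \gradi\bfv + (\mu+\lambda)\int \dive\bfu\,\dive\bfv$ by weak-strong pairing, using the weak $\xL^2$ convergences given by Theorem \ref{thrm:stokes-lim-reg}. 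The pressure term pairs the weak $\xL^{3(\gamma-1)/\gamma}$ convergence of $\rho_n^\gamma$ with the strong convergence of $\dive_{\mesh_n}\bfv_n$ to give $-a\int \overline{\rho^\gamma}\,\dive\bfv$, and the source integral converges from the strong $\xbfL^2$ convergence of $\xPi_{\edges_n}\bfv_n$. The artificial pressure stabilization vanishes thanks to \eqref{estimates:artifp}: for some $p > 1$ and $r \in (0,1)$, $\norm{h_n^{\xi_3}\rho_n^\Gamma}_{\xL^p(\Omega)} \leq C\,h_n^{\xi_3(1-r)}$, so H\"older's inequality against the uniformly bounded $\dive_{\mesh_n}\bfv_n$ produces an $O(h_n^{\xi_3(1-r)})$ error.

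The hard part will be the convective contribution. The remainder $R_{\rm conv}(\rho_n,\bfu_n,\bfv_n)$ vanishes by \eqref{eq:convconv:3D}, Remark \ref{rmrk:conrol_Rconv} and the uniform bounds \eqref{unif:est} (the positivity of the exponents of $h_n$ there being exactly why the conditions \eqref{thm:cond_xi1}--\eqref{thm:cond_xi2} are imposed). For the main piece $-\int (\xP_{\edges_n}\rho_n)(\xPi_{\edges_n}\bfu_n) \otimes (\xPi_{\edges_n}\bfu_n) : \gradi_{\edges_n}\bfv_n\,\dx$, I would combine Lemma \ref{lmm:Lpeq} with the strong convergence $\bfu_n \to \bfu$ in $\xbfL^q(\Omega)$ for $q < 6$ to obtain $(\xPi_{\edges_n}\bfu_n) \otimes (\xPi_{\edges_n}\bfu_n) \to \bfu \otimes \bfu$ strongly in $\xbfL^q(\Omega)^{3\times 3}$ for every $q < 3$; combined with the weak $\xL^{3(\gamma-1)}$ convergence of $\xP_{\edges_n}\rho_n$ to $\rho$ (which follows from Lemma \ref{lmm:Pedges} and standard arguments controlling $\xP_{\edges_n}\rho_n - \rho_n$ through the face jumps bounded in \eqref{unif:est}), the product converges weakly in $\xbfL^{1+\delta}(\Omega)^{3\times 3}$ for some $\delta > 0$ (this is where the assumption $\gamma > \tfrac{3}{2}$ enters) to $\rho\,\bfu \otimes \bfu$. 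Pairing with the strongly convergent $\gradi_{\edges_n}\bfv_n$ then yields the limit $-\int \rho\,\bfu \otimes \bfu : \gradi\bfv\,\dx$, completing \eqref{prop:eq:convergence_u}.

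For the energy inequality \eqref{eq:c-energy:disc}, I would test \eqref{eq:sch_mom:weak} with $\bfv = \bfu_n$. The renormalization identity \eqref{eq:renorm}, applied with $\beta = \gamma$ and $\beta = \Gamma$, shows that $-a\int \rho_n^\gamma\,\dive_{\mesh_n}\bfu_n\,\dx \geq 0$ and $-h_n^{\xi_3}\int \rho_n^\Gamma\,\dive_{\mesh_n}\bfu_n\,\dx \geq 0$, while the convective contribution is bounded by $C\,h_n^{\xi_1-1}$ exactly as in the proof of Proposition \ref{prop:estimate:u} and tends to zero since $\xi_1 > 1$. Discarding the non-negative pressure work and absorbing the convective error gives
\[
\mu\,\norm{\bfu_n}_{1,2,\mesh_n}^2 + (\mu+\lambda)\,\norm{\dive_{\mesh_n}\bfu_n}_{\xL^2(\Omega)}^2 \leq \int_\Omega \bff \cdot \xPi_{\edges_n}\bfu_n\,\dx + o(1).
\]
The right-hand side converges to $\int_\Omega \bff \cdot \bfu\,\dx$ by the strong $\xbfL^2$ convergence of $\xPi_{\edges_n}\bfu_n$, while Theorem \ref{thrm:stokes-lim-reg} provides $\gradi_{\mesh_n}\bfu_n \rightharpoonup \gradi\bfu$ and $\dive_{\mesh_n}\bfu_n \rightharpoonup \dive\bfu$ weakly in $\xL^2$, so the lower semicontinuity of the $\xL^2$-norm under weak convergence yields \eqref{eq:c-energy:disc}.
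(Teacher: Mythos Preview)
Your treatment of the viscous, pressure, source and stabilization terms, as well as of the remainder $R_{\rm conv}$, is correct and matches the paper. The energy inequality argument is also fine and is exactly what the paper does.

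The genuine gap is in the convective term. You claim that $\gradi_{\edges_n}\bfv_n \to \gradi\bfv$ strongly in $\xbfL^q(\Omega)^{3\times 3}$, but this is false: by definition \eqref{eq:def_grad_mesh_u}, on each dual cell $D_\edge$ the matrix $(\gradi_{\edges_n}\bfv_n)|_{D_\edge}$ is rank one, of the form $\bfa_\edge\otimes\bfn_{K,\edge}$. As the mesh is refined these rank-one matrices oscillate with the face normals and can only converge \emph{weakly} to the full-rank tensor $\gradi\bfv$. The paper says this explicitly in the proof of Proposition~\ref{prop:convergence_rho}. Consequently, pairing the (at best weakly convergent) product $(\xP_{\edges_n}\rho_n)(\xPi_{\edges_n}\bfu_n)\otimes(\xPi_{\edges_n}\bfu_n)$ against $\gradi_{\edges_n}\bfv_n$ does not identify the limit, and your argument for the convective term breaks down.

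The paper circumvents this by rewriting the discrete convective integral so that $\gradi_{\edges_n}\bfv_n$ is replaced by the cellwise gradient $\overline{\gradi}_{\mesh_n}\bfv_n$ of Lemma~\ref{lmm:test_mom}, which \emph{does} converge strongly. This rewriting produces four remainder terms: one from symmetrizing the upwind density $\rho_\edge\mapsto\tfrac12(\rho_K+\rho_L)$, controlled via the weak BV estimate in \eqref{unif:est}; one from replacing $\bfu_\edge\otimes\bfu_\edge$ by $\bfu_K\otimes\bfu_K$, controlled by the discrete $\xH^1$ bound on $\bfu_n$ together with an inverse inequality on $\rho_n$; one that vanishes identically by $\sum_{\edge\in\edges(K)}|\edge|\bfn_{K,\edge}=0$; and a last one combining the previous mechanisms. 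These remainders are where the hypotheses $\gamma>\tfrac32$ and \eqref{thm:cond_xi3} are used again. Without this reduction to a strongly convergent gradient, the passage to the limit in the convective term cannot be completed.
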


%\medskip
%Let us first state the following lemmas which will be useful in the proof of Proposition \eqref{prop:convergence_u}.

\medskip

For $\disc=(\mesh,\edges)$ a staggered discretization of $\Omega$, we define $I_\mesh$ the following Fortin operator associated with the Crouzeix-Raviart finite element:
\begin{equation}\label{eq:rh}
I_\mesh :
\left\lbrace
\begin{array}{ccl}
\xbfW^{1,q}(\Omega)  & \longrightarrow & \xbfH_{\mesh}(\Omega) \\ 
\bfv & \longmapsto & \displaystyle I_\mesh \bfv= \sum_{\edge \in \edges}  \bfv_\edge\,\zeta_\edge, \quad  \text{ with $\bfv_\edge=|\edge|^{-1} \int_\edge  \bfv \dedge(\bfx)$ for $\edge\in\edges$.}
\end{array}
\right.
\end{equation}
The following lemma states the main properties of operator $I_\mesh$.
We refer to the appendix of \cite{gallouet2015} for a proof.
%the appendix, Section \ref{app:sec:prop:CR-RT}, for the proof.

\medskip
\begin{lem}[Properties of the operator $I_\mesh$]
\label{lmm:RT}
Let $\disc=(\mesh,\edges)$ be a staggered discretization of $\Omega$ such that $\theta_\mesh \leq \theta_0$ (where $\theta_\mesh$ is defined by \eqref{eq:reg}) for some positive constant $\theta_0$.
For any $q \in [1,+\infty)$, there exists $C=C(\theta_0,q)$ such that:
\begin{enumerate}
\item[(i)] Stability: 
\[
\norm{I_\mesh \bfu}_{1,q,\mesh} \leq C\ \snorm{\bfu}_{\xbfW^{1,q}(\Omega)}, \qquad \forall \bfu\in\xbfW^{1,q}_0(\Omega).
\]
\item[(ii)] Approximation:
For all $K\in\mesh$:
\[
\norm{\bfu-I_\mesh \bfu}_{\xbfL^q(K)} + h_K\, \norm{\gradi (\bfu-I_\mesh \bfu)}_{\xbfL^q(K)^3} 
\leq C\, h_K^2\, |\bfu|_{\xbfW^{2,q}(K)}, \qquad \forall \bfu \in \xbfW^{2,q}(\Omega)\cap \xbfW^{1,q}_0(\Omega). 
\]

\item[(iii)] Preservation of the divergence:
\[
 \int_{\Omega} p\, \dive_\mesh(I_\mesh \bfu) \dx= \int_\Omega p\, \dive\, \bfu \dx, \qquad \forall p\in\xL_\mesh(\Omega), \ \bfu\in\xbfW_{0}^{1,q}(\Omega).
\]
\end{enumerate}
\end{lem}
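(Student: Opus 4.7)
\medskip
\begin{proof}[Proof proposal]
The plan is to prove the three properties in the order (iii), (i), (ii), since (iii) is essentially an algebraic identity while (i) and (ii) rely on standard scaling arguments from finite element analysis.

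First I would establish (iii), the divergence-preservation property, which is the defining feature of the Crouzeix-Raviart interpolant. Fix $p\in\xL_\mesh(\Omega)$ and $\bfu\in\xbfW^{1,q}_0(\Omega)$. Since $p$ is piecewise constant on $\mesh$, one has
\[
\int_\Omega p\,\dive_\mesh(I_\mesh\bfu)\dx = \sum_{K\in\mesh} p_K \int_K \dive(I_\mesh\bfu)\dx = \sum_{K\in\mesh} p_K \sum_{\edge\in\edges(K)} \int_\edge I_\mesh\bfu\cdot\bfn_{K,\edge}\dedge(\bfx).
\]
The point is that $I_\mesh\bfu$, restricted to $K$, is affine and by the very definition \eqref{eq:rh} of $I_\mesh$, its face-average on $\edge\in\edges(K)$ equals $\bfv_\edge$; hence $\int_\edge I_\mesh\bfu\cdot\bfn_{K,\edge}\dedge = |\edge|\,\bfv_\edge\cdot\bfn_{K,\edge} = \int_\edge \bfu\cdot\bfn_{K,\edge}\dedge$. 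Applying the divergence theorem cell by cell on the right-hand side yields $\int_K \dive\,\bfu\dx$ and concludes the proof of (iii).

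For (i) and (ii), I would resort to a reference-element argument. Let $\widehat K$ be the reference simplex and $\widehat I$ the Crouzeix-Raviart interpolant on $\widehat K$; by finite-dimensionality of $P_1(\widehat K)$ together with the fact that the face averages form a unisolvent set of degrees of freedom, the operator $\widehat I:\xbfW^{1,q}(\widehat K)\to P_1(\widehat K)^d$ is continuous, and Bramble--Hilbert applied to $\xbfW^{2,q}(\widehat K)$ (modulo $P_1$, which $\widehat I$ preserves) gives $\snorm{\widehat \bfu - \widehat I \widehat\bfu}_{\xbfW^{s,q}(\widehat K)}\leq C|\widehat\bfu|_{\xbfW^{2,q}(\widehat K)}$ for $s=0,1$. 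Pulling back by the affine bijection $F_K:\widehat K\to K$, the standard Jacobian estimates together with the regularity bound $h_K/\varrho_K\leq \theta_\mesh\leq \theta_0$ transfer these inequalities to $K$ with constants depending only on $q$, $d$ and $\theta_0$. Summing over cells raised to the $q$-th power yields both
\[
\norm{I_\mesh\bfu}_{1,q,\mesh}^q = \sum_{K\in\mesh}\norm{\gradi (I_\mesh\bfu)}_{\xbfL^q(K)}^q \leq C \sum_{K\in\mesh}\snorm{\bfu}_{\xbfW^{1,q}(K)}^q = C\snorm{\bfu}_{\xbfW^{1,q}(\Omega)}^q,
\]
which is (i), and the local bound (ii) directly on each $K$.

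I expect the genuine technical nuisance to lie in (i) rather than in (iii) or the Bramble--Hilbert part of (ii). Indeed, (i) is asserted for $\bfu\in\xbfW^{1,q}_0(\Omega)$ only (not $\xbfW^{2,q}$), so the passage through the reference cell cannot rely on Bramble--Hilbert and instead requires the direct continuity of $\widehat I:\xbfW^{1,q}(\widehat K)\to P_1(\widehat K)^d$, i.e.\ that the face averages make sense as continuous functionals on $\xbfW^{1,q}(\widehat K)$. This follows from the trace inequality on $\widehat K$ (valid for $q\geq 1$) combined with the equivalence of norms on the finite-dimensional space $P_1(\widehat K)$, but one has to check that the mapping that recovers $I_\mesh\bfu$ from the face averages uses only gradient data (since $\bfu_\edge$ itself is not scale-invariant). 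This is handled by noting that $I_\mesh\bfu$ preserves constants, so after subtracting the cell-average of $\bfu$ one controls $\snorm{I_\mesh\bfu-\Pi_K\bfu}_{\xbfW^{1,q}(K)}$ by $\snorm{\bfu}_{\xbfW^{1,q}(K)}$ via a Poincaré-type inequality on $\widehat K$, which combined with the affine scaling yields (i) with a constant depending solely on $q$, $d$ and $\theta_0$.
\end{proof}
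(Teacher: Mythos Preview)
The paper does not give its own proof of this lemma: it simply states ``We refer to the appendix of \cite{gallouet2015} for a proof.'' Your sketch is the standard argument one finds in that reference (and in most finite-element texts for the Crouzeix--Raviart interpolant): (iii) is the algebraic identity coming from the face-average definition and the divergence theorem, while (i) and (ii) follow from a reference-element scaling combined with Bramble--Hilbert and trace continuity. Your identification of the only delicate point --- that (i) is stated for $\xbfW^{1,q}_0$ and therefore requires continuity of the face-average functionals on $\xbfW^{1,q}(\widehat K)$ via the trace inequality, together with the $P_1$-preservation to remove the zeroth-order term --- is exactly right, and your handling of it (subtract the cell average, use Poincar\'e on the reference simplex, then scale) is the standard route. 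There is nothing to correct.
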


\medskip
\begin{lem} \label{lmm:test_mom}
Let $\bfv\in \mcal{C}^{\infty}_c(\Omega)^3$. Let $(\disc\n)_{n\in\xN}$ be a regular sequence of staggered discretizations as defined in Definition \ref{def:reg_disc}. For $n \in \xN$ define $\bfv\n\in\xbfH_{\mesh\n,0}(\Omega)$ by $\bfv\n=I_{\mesh\n}\bfv$. Then, for any $q \in [1,+\infty)$, there exists $C=C(\Omega,q,\bfv,\theta_0)$ such that:
\begin{align} 
\label{eq:interpolate1} 
& \norm{\bfv\n-\bfv}_{\xbfL^q( \Omega)} \leq C \, h\n^2, \\
\label{eq:interpolate2}
& \norm{\gradi_{\mesh\n}\bfv\n-\gradi\bfv}_{\xbfL^q( \Omega)^3} \leq C \, h\n,\\
\label{eq:interpolate1bis} 
&\norm{\xPi_{\edges\n} \bfv\n-\bfv}_{\xbfL^q(\Omega)} \leq C \, h\n.
\end{align}
In addition, denoting $\bfv_\edge=|\edge|^{-1}\int_\edge \bfv\dedge(\bfx)$ for all $\edge \in \edges\n$ we define a discrete gradient of $\bfv\n$ by:
\[
\overline{\gradi}_{\mesh\n} \bfv\n(\bfx) = \sum_{K \in \mesh\n} (\gradi \bfv)_K \, \mathcal{X}_K(\bfx), 
\quad
\mbox{with }
\quad
(\gradi \bfv)_K = \frac 1 {|K|} \sum_{\edge\in\edges(K)} |\edge|\,  \bfv_\edge\otimes \bfn_{K,\edge}.
\]
Then for all $q$ in $[1,\infty]$, there exists $C=C(\Omega,q,\bfv,\theta_0)$ such that:
\begin{equation} 
\label{eq:interpolate3}
\norm{\overline{\gradi}_{\mesh\n} \bfv\n-\gradi \bfv}_{\xbfL^q( \Omega)^3} \leq C \, h\n.
\end{equation}
\end{lem}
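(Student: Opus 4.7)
The plan is to derive the four estimates from the local interpolation properties of Lemma \ref{lmm:RT}, together with the Lipschitz/$C^2$ smoothness of $\bfv \in \mcal{C}_c^\infty(\Omega)^3$ (and the fact that compact support in $\Omega$ puts $\bfv$ in all $\xbfW^{2,q}_0(\Omega)$ for $q\in[1,+\infty)$).

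For \eqref{eq:interpolate1} and \eqref{eq:interpolate2}, I would apply statement (ii) of Lemma \ref{lmm:RT} on each primal cell $K\in\mesh\n$ with $\bfv$, which directly gives
\[
\norm{\bfv-I_{\mesh\n}\bfv}_{\xbfL^q(K)} \leq C\, h_K^2\, \snorm{\bfv}_{\xbfW^{2,q}(K)},\qquad
\norm{\gradi(\bfv-I_{\mesh\n}\bfv)}_{\xbfL^q(K)^3} \leq C\, h_K\, \snorm{\bfv}_{\xbfW^{2,q}(K)}.
\]
Raising to the $q$-th power, summing over $K\in\mesh\n$, using $h_K\leq h\n$, the fact that $\gradi_{\mesh\n}(I_{\mesh\n}\bfv)$ coincides cellwise with $\gradi(I_{\mesh\n}\bfv)$ (by definition \eqref{disc:grad}), and bounding $\snorm{\bfv}_{\xbfW^{2,q}(K)} \leq |K|^{1/q}\norm{\bfv}_{\xbfW^{2,\infty}(\Omega)}$, yields the two inequalities with constant depending on $\Omega$, $q$, $\bfv$ (through $\norm{\bfv}_{\xbfW^{2,\infty}(\Omega)}$), and $\theta_0$.

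For \eqref{eq:interpolate1bis}, the key observation is that by definition of $I_{\mesh\n}$ and of $\xPi_{\edges\n}$, on each dual cell $D_\edge$ with $\edge\in\edgesintn$, the piecewise constant function $\xPi_{\edges\n}\bfv\n$ takes the value $\bfv_\edge=|\edge|^{-1}\int_\edge \bfv\dedge(\bfy)$. For $\bfx\in D_\edge$ and $\bfy\in\edge$ one has $|\bfx-\bfy|\leq 2 h\n$ by the regularity assumption $\theta_{\mesh\n}\leq\theta_0$, so a Taylor expansion gives $|\bfv_\edge-\bfv(\bfx)|\leq C h\n \norm{\gradi\bfv}_{\xbfL^\infty(\Omega)^3}$. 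Raising to the $q$-th power and summing over $\edge\in\edgesintn$ (together with the fact that $\bfv$ vanishes near $\dv\Omega$, so the boundary strip $\Omega\setminus\cup_\edge D_\edge$ contributes $0$) gives the desired bound.

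Finally, \eqref{eq:interpolate3} is the analogue of Lemma \ref{lmm:test_mass}: on each $K\in\mesh\n$, the divergence theorem applied componentwise yields
\[
(\gradi\bfv)_K = \frac{1}{|K|}\sum_{\edge\in\edges(K)} \int_\edge \bfv \otimes \bfn_{K,\edge}\dedge(\bfy) = \frac{1}{|K|}\int_K \gradi\bfv(\bfy)\dy,
\]
so $(\gradi\bfv)_K$ is simply the mean value of $\gradi\bfv$ over $K$. By a Taylor inequality on the $\mcal{C}^2$ function $\bfv$, $|(\gradi\bfv)_K-\gradi\bfv(\bfx)|\leq h_K\norm{\bfv}_{\xbfW^{2,\infty}(\Omega)}$ for any $\bfx\in K$; raising to the $q$-th power, summing over $K$ (or taking the essential supremum for $q=\infty$), and using $h_K\leq h\n$ concludes. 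There is no real obstacle here—each estimate is a direct consequence of either Lemma \ref{lmm:RT} or a first/second-order Taylor expansion combined with the uniform regularity of the mesh; the arguments follow exactly the same pattern as the proof of Lemma \ref{lmm:test_mass}, only carried out on the vector-valued Fortin interpolant rather than the scalar mean-value interpolant.
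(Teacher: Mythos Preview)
Your proof is correct and for \eqref{eq:interpolate1}, \eqref{eq:interpolate2}, and \eqref{eq:interpolate3} it matches the paper's argument exactly (invoke Lemma~\ref{lmm:RT}(ii) for the first two, repeat the proof of Lemma~\ref{lmm:test_mass} componentwise for the last).

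For \eqref{eq:interpolate1bis} you take a slightly different route. The paper splits
\[
\xPi_{\edges\n}\bfv\n-\bfv=(\xPi_{\edges\n}\bfv\n-\bfv\n)+(\bfv\n-\bfv),
\]
bounds the second piece by \eqref{eq:interpolate1}, and for the first piece expands $\bfv\n$ in the Crouzeix--Raviart basis on each half-diamond to obtain $\norm{\xPi_{\edges\n}\bfv\n-\bfv\n}_{\xbfL^q(\Omega)}\lesssim h\n\,\norm{\bfv\n}_{1,q,\edges\n}\lesssim h\n\,\snorm{\bfv}_{\xbfW^{1,q}(\Omega)}$ via Lemma~\ref{lmm:H1ns} and Lemma~\ref{lmm:RT}(i). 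You instead go straight from $\xPi_{\edges\n}\bfv\n$ to $\bfv$ by a first-order Taylor expansion of the smooth function $\bfv$ around the face mean value. Your argument is more elementary and avoids the discrete norm comparisons; the paper's decomposition has the mild advantage that the intermediate estimate $\norm{\xPi_{\edges\n}\bfw-\bfw}_{\xbfL^q}\lesssim h\n\,\norm{\bfw}_{1,q,\edges\n}$ holds for an arbitrary $\bfw\in\xbfH_{\mesh\n}(\Omega)$, not only for interpolants of smooth functions.
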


\medskip
\begin{proof}
The estimates \eqref{eq:interpolate1} and \eqref{eq:interpolate2} are direct consequences of the approximation properties of the interpolation operator $I_{\mesh\n}$. The proof of \eqref{eq:interpolate3} is similar to that of Lemma \ref{lmm:test_mass}. To prove \eqref{eq:interpolate1bis} we write:
\[
 \begin{aligned}
  \norm{\xPi_{\edges\n} \bfv\n-\bfv\n}_{\xbfL^q(\Omega)}^q 
  &= \sum_{K\in\mesh\n} \sum_{\edge\in\edges(K)}\int_{D_{K,\edge}} |\bfv_\edge-\bfv\n(\bfx)|^q\dx \\
  &= \sum_{K\in\mesh\n} \sum_{\edge\in\edges(K)}\int_{D_{K,\edge}} \Big |\bfv_\edge-\sum_{\edge'\in\edges(K)}\bfv_{\edge'}\,\zeta_{\edge'}(\bfx) \Big |^q\dx\\
  &= \sum_{K\in\mesh\n} \sum_{\edge\in\edges(K)}\int_{D_{K,\edge}} \Big |\sum_{\edge'\in\edges(K)}(\bfv_\edge-\bfv_{\edge'})\,\zeta_{\edge'}(\bfx) \Big |^q\dx \\
  &\lesssim  h\n^q\,\sum_{K\in\mesh\n} h_K^{3-q} \sum_{\edge,\edge'\in\edges(K)}|\bfv_\edge-\bfv_{\edge'}|^2. 
 \end{aligned}
\]
Hence we have $\norm{\xPi_{\edges\n} \bfv\n-\bfv\n}_{\xbfL^q(\Omega)}\lesssim h\n \,\norm{\bfv\n}_{1,q,\edges\n}\lesssim h\n\,\norm{\bfv\n}_{1,q,\mesh\n} \lesssim h\n \snorm{\bfv}_{\xbfW^{1,q}(\Omega)^3}$. 
Combining this with \eqref{eq:interpolate1} yields the result.
\end{proof}

\medskip
We can now give the proof of Proposition \eqref{prop:convergence_u}.

\medskip
\begin{proof}[Proof of Proposition \ref{prop:convergence_u}]
To prove this result, we pass to the limit $n\to+\infty$ in the weak formulation of the discrete momentum balance. Let $\bfv\in\mcal{C}^{\infty}_c(\Omega)^3$ and for $n\in\xN$, define $\bfv\n=I_{\mesh\n}\bfv\in\xbfH_{\mesh\n,0}(\Omega)$.
We have $\norm{\bfv\n}_{1,q,\mesh\n}\leq C\,\norm{\bfv}_{\xbfW^{1,q}_0(\Omega)}$ for all $q\in[1,+\infty)$ by Lemma \ref{lmm:RT}.
Taking the test function $\bfv\n$ in the weak formulation of the discrete momentum balance \eqref{eq:sch_mom:weak:2}, we get for all $n\in\xN$:
\begin{multline}
\label{mom2}
- \int_\Omega(\xP_{\edges\n}\rho\n) \,(\xPi_{\edges\n}\bfu\n)\otimes (\xPi_{\edges\n}\bfu\n): \gradi_{\edges\n} \bfv\n\dx \\[2ex] 
+\mu \int_\Omega\gradi_{\mesh\n} \bfu\n:\gradi_{\mesh\n} \bfv\n \dx 
+ (\mu+\lambda)\int_\Omega \dive_{\mesh\n}\,\bfu\n \, \dive_{\mesh\n}\,\bfv\n \dx
\\[2ex]
- a \int_\Omega \rho\n^\gamma\, \dive_{\mesh\n}\,\bfv\n \dx
- \int_{\Omega}{h\n^{\xi_3}\rho\n^\Gamma \, \dive_{\mesh\n} \bfv\n} + R_{\rm conv}(\rho\n,\bfu\n,\bfv\n) 
= \int_\Omega \bff\cdot\xPi_{\edges\n} \bfv\n\dx.
\end{multline}

The term involving the artificial pressure tends to zero as $n\to+\infty$ since $(h\n^{\xi_3} \rho\n^\Gamma)_{n\in\xN}$ converges strongly to $0$ in $\xL^{p}(\Omega)$ for some $1<p<1+\eta$ (see \eqref{unif:est}) and $(\dive_{\mesh\n} \bfv\n)_{n\in\xN}$ is bounded in $\xL^q(\Omega)$ for all $q\in(1,+\infty)$.
On the other hand, Lemma \ref{lem:d-weakmom2} gives
\[
\begin{aligned}
\big | R_{\rm conv}(\rho\n,\bfu\n,\bfv\n) \big | 
& \leq C \, h\n^{\frac 12 - \frac{1}{\Gamma} \big( \frac{3}{1+\eta}+\xi_3\big)}
	 \norm{h\n^{\xi_3} \rho^\Gamma}_{\xL^{1+\eta}(\Omega)}^{\frac{1}{\Gamma}}  \norm{\bfu\n}_{1,2,\mesh\n}^2\,  \norm{\bfv\n}_{1,2,\mesh\n} \\[2ex]
&+ C \,  h\n^{\xi_2 -\frac 1\eta- \frac{1}{\eta \Gamma}\big(\frac{3}{1+\eta} + \xi_3 \big)}  \norm{h\n^{\xi_3}\rho\n^\Gamma}_{\xL^{1+\eta}(\Omega)}^{\frac{1}{\eta \Gamma}}  \norm{\bfu\n}_{1,2,\mesh\n}\,  \norm{\bfv\n}_{1,2,\mesh\n},
\end{aligned}
\]
with $C$ independent of $n$,
so $R_{\rm conv}(\rho\n,\bfu\n,\bfv\n) \rightarrow 0$ as $n \rightarrow +\infty$ using Remark \ref{rmrk:conrol_Rconv}.
% by \eqref{unif:est} since $(\Gamma,\xi_2,\xi_3)$ satisfy \eqref{thm:cond_xi3} (recall that $\eta/(1+\eta)\leq \frac 12$) and \eqref{thm:cond_xi2}.
We also easily obtain the convergence of the diffusion and pressure terms.
Since $(\gradi_{\mesh\n}\bfu\n)_{n\in\xN}$ (\emph{resp}. $(\dive_{\mesh\n}\,\bfu\n)_{n\in\xN}$) weakly converges to $\gradi\bfu$ (\emph{resp}. $\dive\, \bfu$) in $\xbfL^2(\Omega)^3$, $(\rho\n^\gamma)_{n\in\xN}$ weakly converges to $\overline{\rho^\gamma}$ in $\xL^{1+\eta}(\Omega)$ and $(\gradi_{\mesh\n} \bfv\n)_{n\in\xN}$ (\emph{resp.} $(\dive_{\mesh\n} \bfv\n)_{n\in\xN})$) strongly converges to $\gradi \bfv$ (\emph{resp.} $\dive\,\bfv$) in $\xbfL^q(\Omega)^3$ for all $q\in(1,+\infty)$ we obtain:
\begin{multline*}
\lim\limits_{n\to+\infty} \Big(
 \mu \int_\Omega\gradi_{\mesh\n} \bfu\n:\gradi_{\mesh\n} \bfv\n \dx 
+ (\mu+\lambda)\int_\Omega \dive_{\mesh\n}\,\bfu\n \, \dive_{\mesh\n}\,\bfv\n \dx 
\\[2ex]
- a \int_\Omega \rho\n^\gamma\, \dive_{\mesh\n}\,\bfv\n  \dx
- \int_\Omega \bff \cdot \xPi_{\edges\n} \bfv\n \dx 
\Big )
\\[2ex]
=
\mu\, \int_\Omega \gradi \bfu : \gradi \bfv  \dx 
+ (\mu+\lambda)\,\int_\Omega \dive\, \bfu \,  \dive\, \bfv\, \dx 
- a \int_\Omega \overline{\rho^\gamma}\,  \dive\, \bfv\, \dx  - \int_\Omega \bff \cdot \bfv \dx
\end{multline*}
where the convergence of the source term is given by \eqref{eq:interpolate1bis}.

\medskip
Let us now prove the convergence of the convective term. We have:
	\begin{align}
	\nonumber - \int_\Omega(\xP_{\edges\n}\rho\n) \,(\xPi_{\edges\n}\bfu\n)&\otimes (\xPi_{\edges\n}\bfu\n): \gradi_{\edges\n} \bfv\n\,\dx \\
	\nonumber          &= - \sum_{\substack{\edge\in\edgesintn \\ \edge=K|L } } |\edge|\,\rho_\edge\, \bfu_\edge\otimes\bfu_\edge:(\bfv_L-\bfv_K)\otimes\bfn_{K,\edge} \\
	\label{mom3} &= - \sum_{\substack{\edge\in\edgesintn \\ \edge=K|L } } |\edge|\, \tfrac{1}{2} (\rho_K+\rho_L)\, \bfu_\edge\otimes\bfu_\edge:(\bfv_L-\bfv_K)\otimes\bfn_{K,\edge} + R_1^n,
	\end{align}
	with 
	\[
	R_1^n = \sum_{\substack{\edge\in\edgesintn \\ \edge=K|L } } |\edge|\, (\tfrac{1}{2} (\rho_K+\rho_L)-\rho_\edge)\, \bfu_\edge\otimes\bfu_\edge:(\bfv_L-\bfv_K)\otimes\bfn_{K,\edge}.
	\]
	Reordering the sum in the first term of \eqref{mom3} we get:
	\begin{align}
	\nonumber - \int_\Omega(\xP_{\edges\n}\rho\n)& \,(\xPi_{\edges\n}\bfu\n)\otimes (\xPi_{\edges\n}\bfu\n): \gradi_{\edges\n} \bfv\n\,\dx 
	\\
	\nonumber           &= - \frac 12\, \sum_{K\in\mesh\n} \rho_K \sum_{\substack{\edge\in\edges(K) \\ \edge=K|L}}|\edge|\, \bfu_\edge\otimes\bfu_\edge:(\bfv_L-\bfv_K)\otimes\bfn_{K,\edge} + R_1^n \\
	\label{mom4} &= - \frac 12\, \sum_{K\in\mesh\n} \rho_K \, \bfu_K\otimes\bfu_K:\sum_{\substack{\edge\in\edges(K) \\ \edge=K|L}}|\edge|\, (\bfv_L-\bfv_K)\otimes\bfn_{K,\edge} + R_1^n + R_2^n,
	\end{align}
	where $\bfu_K$ is the mean value of $\bfu\n$ over $K$ and
	\[
	R_2^n = \frac 12\, \sum_{K\in\mesh\n} \rho_K \sum_{\substack{\edge\in\edges(K) \\ \edge=K|L}}|\edge|\, (\bfu_K\otimes\bfu_K-\bfu_\edge\otimes\bfu_\edge):(\bfv_L-\bfv_K)\otimes\bfn_{K,\edge}.
	\]
	Back to \eqref{mom4} we get:
	\[
	\begin{aligned}
	- \int_\Omega(\xP_{\edges\n}\rho\n) & \,(\xPi_{\edges\n}\bfu\n)\otimes (\xPi_{\edges\n}\bfu\n): \gradi_{\edges\n} \bfv\n\,\dx \\
	&= - \sum_{K\in\mesh\n} \rho_K \, \bfu_K\otimes\bfu_K:\sum_{\substack{\edge\in\edges(K) \\ \edge=K|L}}|\edge|\, \tfrac{1}{2}(\bfv_L+\bfv_K)\otimes\bfn_{K,\edge} + R_1^n + R_2^n + R_3^n \\
	&=-\sum_{K\in\mesh\n} \rho_K \, \bfu_K\otimes\bfu_K:\sum_{\edge\in\edges(K)}|\edge|\, \bfv_\edge\otimes\bfn_{K,\edge}+ R_1^n + R_2^n + R_3^n+ R_4^n\\
	&= - \int_\Omega \rho\n \,\bfu\n\otimes \bfu\n: \overline{\gradi}_{\mesh\n} \bfv\n\,\dx  + R_1^n + R_2^n + R_3^n + R_4^n,
	\end{aligned}
	\]
	with
	\[
	\begin{aligned}
	R_3^n &= \sum_{K\in\mesh\n} \rho_K \, \bfu_K\otimes\bfu_K:\bfv_K\otimes \Big ( \sum_{\edge\in\edges(K)}|\edge|\,\bfn_{K,\edge} \Big ),\\
	R_4^n &=\sum_{K\in\mesh\n} \rho_K \, \bfu_K\otimes\bfu_K:\sum_{\substack{\edge\in\edges(K) \\ \edge=K|L}}|\edge|\, (\bfv_\edge-\tfrac12(\bfv_L+\bfv_K))\otimes\bfn_{K,\edge}.
	\end{aligned}
	\]
	Since $\bfu\n\to\bfu$ in $\xbfL^q(\Omega)$ for all $q\in[1,6)$, and $\overline{\gradi}_{\mesh\n} \bfv\n\to\gradi\bfv$ in $\xbfL^{r}(\Omega)^{3}$ for all $r\in(1,+\infty)$, we have $\bfu\n\otimes \bfu\n: \overline{\gradi}_{\mesh\n} \bfv\n\to\bfu \otimes \bfu : \gradi \bfv$ in $\xL^{3-\delta}(\Omega)$ for all $\delta\in(0,2]$. 
	Furthermore, we have $\rho\n\rightharpoonup\rho$ weakly in $\xL^{3(\gamma-1)}(\Omega)$ with $3(\gamma-1)>\frac 32$ (since $\gamma > \frac 32$), which yields:
	\[
	\lim\limits_{n\to+\infty} \ - \int_\Omega \rho\n \,\bfu\n\otimes \bfu\n: \overline{\gradi}_{\mesh\n} \bfv\n\,\dx = -\int_\Omega \rho\, \bfu \otimes \bfu : \gradi \bfv \,\dx.
	\]
	Let us now prove that $ \sum_{i=1}^4 R_i^n \to 0$ as $n\to+\infty$. 
	%In the following, in order to ease the notations, we denote $A_n\lesssim B_n$ when there is a constant $C$, independent of $n$, such that $A_n\leq C\,B_n$. 
	We begin with $R_1^n$. Recalling the upwind definition of $\rho_\edge$ and the fact that $\bfa\otimes\bfb:\bfc\otimes\bfd= (\bfa\cdot\bfc)\,(\bfb\cdot\bfd)$ for $\bfa,\bfb,\bfc,\bfd\in\xR^3$ we get:
	\[
	|R_1^n| \leq  \frac 12\, \sum_{\substack{\edge\in{\edgesintn} \\ \edge=K|L}} |\edge|\,|\rho_K-\rho_L|\, |\bfu_\edge\cdot\bfn_{K,\edge}|\,|\bfv_L-\bfv_K| \, |\bfu_\edge|.
	\]
	As a consequence we have
	\[
	\begin{aligned}
	|R_1^n| 
	&\leq \frac 12\, \Big ( \sum_{\substack{\edge\in{\edgesintn} \\ \edge=K|L}} |\edge|\,|\rho_K-\rho_L|^2\, |\bfu_\edge\cdot\bfn_{K,\edge}| \Big)^{\frac12} \, \Big ( \sum_{\substack{\edge\in{\edgesintn} \\ \edge=K|L}} |\edge|\, |\bfv_L-\bfv_K|^2 \, |\bfu_\edge|^3 \Big)^{\frac12}\\
	&\lesssim h\n^{- \frac{5}{8\Gamma}\big (\frac{3}{1+\eta} + \xi_3\big )}
	% h\n^{\frac d2 \left ( \frac{\gamma-2}{2\gamma}\right)} 
	\, \Big ( \sum_{\substack{\edge\in{\edgesintn} \\ \edge=K|L}} |\edge|\, |\bfv_L-\bfv_K|^2 \, |\bfu_\edge|^3 \Big)^{\frac12}
	\end{aligned}
	\]
	by estimate \eqref{unif:est}. By Taylor's inequality applied to the smooth function $\bfv$ and the regularity of the discretization, we have $|\bfv_L-\bfv_K|^2\lesssim h\n\,|D_\edge|/|\edge|\,\norm{\gradi\bfv}_{\xbfL^\infty(\Omega)^3}^2$.
	Hence:
	\[
	|R_1^n| 
	\lesssim h\n^{\frac{5}{8\Gamma} \big (\frac{4}{5}\Gamma - \frac{3}{1+\eta} - \xi_3\big )}\, \norm{\xPi_{\edges\n}\bfu\n}_{\xbfL^3(\Omega)}^{\frac32} 
	\\
	\lesssim h\n^{ \frac{5}{8\Gamma}\big ( \frac{4}{5}\Gamma - \frac{3}{1+\eta} - \xi_3\big )} \,\norm{\bfu\n}_{\xbfL^6(\Omega)}^{\frac32} \\
	\lesssim h\n^{\frac{5}{8\Gamma} \big ( \frac{4}{5}\Gamma - \frac{3}{1+\eta} - \xi_3\big )}
	\]
	since $\norm{\bfu\n}_{\xbfL^6(\Omega)}$ is controlled by $\norm{\bfu\n}_{1,2,\mesh\n}$ which is bounded by $C_0$. 
	Since $(\Gamma,\xi_3)$ satisfy \eqref{thm:cond_xi3}, we get $R_1^n\to0$ as $n\to+\infty$.
	We now turn to $R_2^n$. We write
	\[
	\bfu_K\otimes\bfu_K-\bfu_\edge\otimes\bfu_\edge = (\bfu_K-\bfu_\edge)\otimes\bfu_K+ \bfu_\edge\otimes(\bfu_K-\bfu_\edge).
	\]
	Hence, $|R_2^n|\leq |R_{2,1}^n|+|R_{2,2}^n|$ with:
	\[
	\begin{aligned}
	|R_{2,1}^n| &= \frac12\,\sum_{K\in\mesh\n} \rho_K \sum_{\substack{\edge\in\edges(K) \\ \edge=K|L}}|\edge|\, |\bfu_K-\bfu_\edge|\,|\bfu_K|\,|\bfv_L-\bfv_K|,\\
	|R_{2,2}^n| &= \frac12\,\sum_{K\in\mesh\n} \rho_K \sum_{\substack{\edge\in\edges(K) \\ \edge=K|L}}|\edge|\, |\bfu_K-\bfu_\edge|\,|\bfu_\edge|\,|\bfv_L-\bfv_K|.
	\end{aligned}
	\]
	We only treat $|R_{2,1}^n|$, since the treatment of $|R_{2,2}^n|$ is similar.
By a Taylor inequality on the smooth function $\bfv$ and the regularity of the discretization, we have: $|\bfv_L-\bfv_K| \lesssim h\n \, \norm{\gradi \bfv}_{\xbfL^\infty(\Omega)^3}$. Hence:
	\begin{equation}
	|R_{2,1}^n| \lesssim  h\n \sum_{K\in\mesh\n} \rho_K \, |\bfu_K| \sum_{\edge \in\edges(K)} |\edge|\,|\bfu_K-\bfu_\edge|.
	\end{equation}
	Proceeding as in the proof of Proposition \ref{prop:convergence_rho} (see the computation after \eqref{mass4}) we get:
	\[
	\begin{aligned}
	|R_{2,1}^n| 
	& \lesssim  h\n \sum_{K\in\mesh\n} |K|^{\frac 12} \rho_K  \, |\bfu_K| \,\norm{\gradi \bfu\n}_{\xbfL^2(K)^3} \\
	& \lesssim h\n \, \norm{\rho\n}_{\xL^{\infty}(\Omega)}^{1-\frac{3(\gamma-1)}{2}}  \, \norm{\bfu\n}_{\xbfL^\infty(\Omega)} \sum_{K\in\mesh\n} |K|^{\frac 12} \rho_K^{\frac{3(\gamma-1)}{2}}  \norm{\gradi \bfu\n}_{\xbfL^2(K)^3}\\
	& \lesssim h\n \, \norm{\rho\n}_{\xL^{\infty}(\Omega)}^{\frac{5-3\gamma}{2}}\, \norm{\bfu\n}_{\xbfL^\infty(\Omega)}.
	\end{aligned}
	\]
	We have the inverse inequalities $\norm{\rho\n}_{\xL^{\infty}(\Omega)}\lesssim h\n^{-\frac{1}{\gamma-1}}\norm{\rho\n}_{\xL^{3(\gamma-1)}(\Omega)}$ and $\norm{\bfu\n}_{\xbfL^\infty(\Omega)}\lesssim h\n^{-\frac 12} \norm{\bfu\n}_{\xbfL^6(\Omega)}$. Thus, since $(\rho\n)_{n\in\xN}$ is bounded in $\xL^{3(\gamma-1)}(\Omega)$ and since the sequence $(\norm{\bfu\n}_{\xbfL^6(\Omega)})_{n\in\xN}$ is bounded we get:
	\[
	|R_{2,1}^n| \lesssim h\n^{1-\frac{1}{\gamma-1}\frac{5-3\gamma}{2}-\frac 12} = h\n^{\frac{2\gamma-3}{\gamma-1}}.
	\]
	Since, $\gamma>\frac32$, we get $R_{2,1}^n\to0$ as $n\to+\infty$.
	As said previously, the same holds for $R_{2,2}^n$.
	The third remainder term satisfies $R_3^n=0$ since $\sum_{\edge \in\edges(K)} |\edge| \,\bfn_{K,\edge}=0$ for all $K\in\mesh\n$.  
	Let us conclude with the control of $R_4^n$.
	Denoting $\hat\bfv_\edge=\frac12(\bfv_L+\bfv_K)$ for $\edge=K|L$, we may write $R_4^n=R_{4,1}^n+R_{4,2}^n$ with:
	\[
	\begin{aligned}
	R_{4,1}^n& =\sum_{K\in\mesh\n}  \rho_K \, \Big (\sum_{\edge \in\edges(K)} |\edge|\,(\bfu_K\otimes\bfu_K-\bfu_\edge\otimes\bfu_\edge):(\bfv_\edge-\hat\bfv_\edge)\otimes\bfn_{K,\edge}\Big ),\\
	R_{4,2}^n& =\sum_{K\in\mesh\n}  \rho_K \,\Big (\sum_{\edge \in\edges(K)} |\edge|\,\bfu_\edge\otimes\bfu_\edge:(\bfv_\edge-\hat\bfv_\edge)\otimes\bfn_{K,\edge}\Big ).
	\end{aligned}
	\]
	The term $R_{4,1}^n$ can be controlled in the same way as $R_2^n$ and we obtain $R_{4,1}^n\to0$ as $n\to+\infty$. Reordering the sum in $R_{4,2}^n$ we get:
	\[
	R_{4,2}^n = \sum_{\substack{\edge\in{\edgesintn} \\ \edge=K|L}} |\edge|\,(\rho_K-\rho_L)\, \,\bfu_\edge\otimes\bfu_\edge:(\bfv_\edge-\hat\bfv_\edge)\otimes\bfn_{K,\edge}.
	\]
	Hence $R_{4,2}^n$ can be controlled in the same way as $R_1^n$ and we obtain $R_{4,2}^n\to0$ as $n\to+\infty$. 
	This concludes the proof of \eqref{prop:eq:convergence_u}.
	
	\medskip 
	It remains to prove \eqref{eq:c-energy:disc}. We proceed as in the proof of Proposition \ref{prop:estimate:u}. Taking $\bfu\n$ as a test function in the first form of the discrete weak formulation of the momentum equation and using \eqref{eq:renorm} with $\beta=\gamma$ and $\beta=\Gamma$ we get:
	\begin{multline*}
	 \frac12 \, h\n^{\xi_1} \,   \int_{\Omega}(\tilde \rho\n-\rho^\star)\, |\xPi_{\edges\n} \bfu\n|^2 \dx  +\mu \int_\Omega |\gradi_{\mesh\n} \bfu\n|^2\dx \\
	+ (\mu+\lambda)\int_\Omega (\dive_{\mesh\n}\, \bfu\n)^2\dx 
	\leq \int_\Omega \bff\cdot\xPi_{\edges\n} \bfu\n \dx, \qquad \forall n\in\xN, 
	\end{multline*}
	 where $\tilde \rho\n$ is the piecewise constant scalar function which is equal to $\rho_\Ds$ on every dual cell $D_\edge$, and which satisfies $\tilde \rho\n>0$ (because $\rho\n>0$) and $\int_\Omega \tilde \rho\n\dx=\int_\Omega \rho\n\dx = |\Omega|\rho^\star$. Since $(\rho\n)_{n\in\xN}$ is bounded in $\xL^{\frac 32}(\Omega)$ and $(\xPi_{\edges\n} \bfu\n)_{n\in\xN}$ in $\xbfL^6(\Omega)$, the first term tends to zero as $n\to+\infty$. Thus, passing to the limit $n\to+\infty$ in the above inequality and recalling that $\gradi_{\mesh\n} \bfu\n \rightharpoonup \gradi \bfu$ weakly in $\xbfL^2(\Omega)^3$ and $\xPi_{\edges\n} \bfu\n\to\bfu$ strongly in (say) $\xbfL^2(\Omega)$ yields \eqref{eq:c-energy:disc}.

\end{proof}

\subsection{Passing to the limit in the equation of state}
\label{sec:limit:eos}

\subsubsection{Weak compactness of the effective viscous flux}\label{sec:cvf-eff-flux-d}

As in the continuous case, the equation of state is satisfied at the limit as a consequence of the compactness of the so-called effective viscous flux. Indeed, we have the following result.

\begin{prop}
\label{prop:eff_flux1}
Under the assumptions of Theorem \ref{main_thrm},
let $(\rho,\bfu,\overline{\rho^\gamma})\in \xL^{3(\gamma-1)}(\Omega)\times\xbfH^1_0(\Omega)\times\xL^{\frac{3(\gamma-1)}{\gamma}}(\Omega)$ be the limit triple of the sequence $(\rho\n,\bfu\n,\rho\n^\gamma)_{n\in\xN}$.
For $k\in\xN^*$, define
\begin{equation*}
T_k(t)= \begin{cases}
		\ t \quad & \text{if} ~ t \in [0,k), \\
		\ k \quad & \text{if} ~ t \in [k,+\infty). 
\end{cases}
\end{equation*}
The sequence $(T_k(\rho\n))_{n\in\xN}$ is bounded in $\xL^{\infty}(\Omega)$ and, up to extracting a subsequence, it converges for the weak-* topology in $\xL^\infty(\Omega)$ towards some function denoted $\overline{T_k(\rho)}$.
Then (up to extracting a subsequence) the following identity holds:
\[
\lim_{n\rightarrow +\infty} \int_{\Omega}{\big((2\mu +\lambda)\ \dive_{\mesh\n} \bfu\n - a\rho\n^\gamma \big) T_k(\rho\n) \phi \dx} = \int_{\Omega}{\big((2\mu +\lambda)\, \dive \, \bfu - a\overline{\rho^\gamma}\big) \overline{T_k(\rho)} \phi\dx}, \quad \forall \phi \in \mcal{C}_c^{\infty}(\Omega). 
\]
\end{prop}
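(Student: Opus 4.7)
The plan is to mimic, at the discrete level, the proof of the continuous Proposition~\ref{prop:c-eff-flux}. For $k\in\xN^*$ fixed, I introduce $\bfw_n = \mathcal{A}(T_k(\rho\n))$ via the inverse divergence operator of Lemma~\ref{lem:inv-div}, so that $\dive \bfw_n = T_k(\rho\n)$, $\rot \bfw_n = 0$, and $(\bfw_n)_{n\in\xN}$ is bounded in $\xbfL^\infty(\Omega)\cap\xbfW^{1,q}(\Omega)$ for every $q\in(1,\infty)$; up to extraction it converges strongly in $\xbfL^q(\Omega)$ and weakly in $\xbfW^{1,q}(\Omega)$ toward $\bfw = \mathcal{A}(\overline{T_k(\rho)})$, which satisfies $\dive\bfw = \overline{T_k(\rho)}$ and $\rot\bfw = 0$. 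For $\phi\in\mathcal{C}^\infty_c(\Omega)$ I plan to test the discrete momentum balance in the form \eqref{eq:sch_mom:weak:2} with $\bfv_n = I_{\mesh\n}(\phi\bfw_n)\in\xbfH_{\mesh\n,0}(\Omega)$, whose interpolation properties are given by Lemma~\ref{lmm:RT} and \eqref{eq:interpolate1}--\eqref{eq:interpolate3}.

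The divergence-preservation property (iii) of Lemma~\ref{lmm:RT} is the crucial identity: it yields
\[
\int_\Omega \rho\n^\gamma\,\dive_{\mesh\n}\bfv_n\dx = \int_\Omega \rho\n^\gamma\,[\,T_k(\rho\n)\phi + \bfw_n\cdot\gradi\phi\,]\dx,
\]
and similarly for $(\mu+\lambda)\int_\Omega \dive_{\mesh\n}\bfu\n\,\dive_{\mesh\n}\bfv_n\dx$. For the vector Laplacian term I apply the cellwise curl-curl identity \eqref{curlcurl1} on each simplex (boundary contributions cancel between adjacent cells thanks to the Crouzeix--Raviart face-average continuity \eqref{cont:bfu} and vanish on $\dv\Omega$ since $\bfv_n\in\xbfH_{\mesh\n,0}(\Omega)$), which splits the diffusion term into $\mu\int \dive_{\mesh\n}\bfu\n\,\dive_{\mesh\n}\bfv_n\dx$ plus a curl-curl remainder converging to $\mu\int \rot\bfu\cdot L(\phi)\bfw\dx$ (since $\rot\bfw = 0$). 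Repeating the argument on the limit equation \eqref{eq:c-weak-momentum-lim} with the continuous test function $\phi\bfw$ and subtracting, the $\bfw\cdot\gradi\phi$ contributions and the source and curl-curl terms cancel, leaving exactly the effective-viscous-flux identity up to convective remainders. The artificial pressure contribution $h\n^{\xi_3}\int\rho\n^\Gamma\,\dive_{\mesh\n}\bfv_n\dx$ vanishes using \eqref{estimates:artifp}, which gives $\|h\n^{\xi_3}\rho\n^\Gamma\|_{\xL^p}\lesssim h\n^{\xi_3(1-r)}\to 0$, and the remainder $R_{\rm conv}(\rho\n,\bfu\n,\bfv_n)$ goes to zero by \eqref{eq:convconv:3D} combined with Remark~\ref{rmrk:conrol_Rconv}.

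The principal obstacle, as in the continuous setting, will be the convective integral
\[
\int_\Omega (\xP_{\edges\n}\rho\n)\,(\xPi_{\edges\n}\bfu\n)\otimes(\xPi_{\edges\n}\bfu\n):\gradi_{\edges\n}\bfv_n\dx,
\]
for which I have only weak convergence of $\rho\n\bfu\n$ and $\gradi\bfw_n$. I plan to adapt the mollification argument of Proposition~\ref{prop:c-eff-flux}: extend $\bfu\n$ and $\bfu$ by zero, introduce a mollified velocity $\bfu_{n,\delta}=\bfu\n\ast\bfomega_\delta$ enjoying the convergences \eqref{eq:cvg-reg-u-1}--\eqref{eq:cvg-reg-u-3}, and use the dual mass balance \eqref{eq:sch_mass:dual} to integrate by parts. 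The additional terms generated by $\dive_{\mesh\n}$ not being exactly zero on dual cells (namely the $h\n^{\xi_1}(\tilde\rho\n-\rho^\star)$ contribution hidden in \eqref{eq:sch_mass:dual}) are controlled by a positive power of $h\n$ thanks to $\xi_1>1$ and to \eqref{unif:est}. Letting $n\to\infty$ then $\delta\to 0$ identifies the limit of the convective term with $\int_\Omega \rho\,\bfu\otimes\bfu:\gradi(\phi\bfw)\dx$, matching the contribution of the continuous equation and concluding the proof.
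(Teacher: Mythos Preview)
Your overall strategy is correct, but there is a genuine gap in the diffusion step. You claim that when applying the cellwise curl--curl identity \eqref{curlcurl1}, ``boundary contributions cancel between adjacent cells thanks to the Crouzeix--Raviart face-average continuity \eqref{cont:bfu}''. This is false: the condition \eqref{cont:bfu} only says that the \emph{mean} of the jump $[\bfu\n]_\edge$ over each face vanishes, not the jump itself. The boundary terms in \eqref{curlcurl1} are integrals of the form $\int_\edge [\bfu\n]_\edge\cdot(\ldots\gradi(\phi\bfw_n)\ldots)\,\dedge$, and since $\gradi(\phi\bfw_n)$ is not constant along $\edge$, these do \emph{not} cancel. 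They accumulate into a remainder (the paper's $R_3^n$, see \eqref{eq:remainder-curlcurl}) that is controlled through Lemma~\ref{lmm:aedge}, and this control requires a bound on $\snorm{\phi\bfw_n}_{\xbfW^{2,q}(\Omega)}$. The same $\xbfW^{2,q}$ bound is needed to handle the interpolation error $\bfdelta_n=I_{\mesh\n}(\phi\bfw_n)-\phi\bfw_n$ in the gradient term $\mu\int\gradi_{\mesh\n}\bfu\n:\gradi_{\mesh\n}\bfv_n\dx$ (Lemma~\ref{lmm:RT}(ii) gives $\norm{\bfdelta_n}_{1,q,\mesh\n}\lesssim h\n\snorm{\phi\bfw_n}_{\xbfW^{2,q}}$), which you have not addressed.

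The problem is that with your choice $\bfw_n=\mathcal{A}(T_k(\rho\n))$, the function $T_k(\rho\n)$ is piecewise constant, hence not in $\xW^{1,q}(\Omega)$, and $\bfw_n$ is therefore not in $\xbfW^{2,q}(\Omega)$. This is exactly the point of Remark~\ref{rmrk:MAC}: the paper replaces $T_k(\rho\n)$ by its continuous piecewise-affine regularization $i_{\mesh\n}T_k(\rho\n)$ (Definition~\ref{defi:iq}, Lemma~\ref{lmm:iq}) and sets $\bfw_n=\mathcal{A}(i_{\mesh\n}T_k(\rho\n))$. Then $\norm{\bfw_n}_{\xbfW^{2,\frac{1+\eta}{\eta}}(\Omega)}\lesssim\snorm{\rho\n}_{\frac{1+\eta}{\eta},\mesh\n}\lesssim h\n^{-\xi}$ with $\xi<1$ by \eqref{unif:est} and \eqref{thm:cond_xi2}; the factors $h\n$ coming from Lemma~\ref{lmm:aedge} and Lemma~\ref{lmm:RT}(ii) beat this blow-up, and an extra remainder $R_4^n$ for $i_{\mesh\n}T_k(\rho\n)-T_k(\rho\n)$ is handled by \eqref{eq:eff_flux1}. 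This is precisely where the stabilization term $T_{\rm stab}^2$ of the scheme enters the proof; your argument, as written, never uses it and cannot close.
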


\medskip
\begin{rmrk}{\label{rmrk:MAC}}
As in the continuous case, this result is obtained by taking the test function $\bfv=\phi \widetilde{\bfw}\n$ in the discrete momentum equation \eqref{eq:sch_mom:weak:2}, where $\widetilde{\bfw}\n$ is computed from $T_k(\rho\n)$ by applying Lemma~\ref{lem:inv-div}, \emph{i.e.} $\widetilde{\bfw}\n=\mcal{A}T_k(\rho\n)$, and satisfies $\dive\,\widetilde{\bfw}\n=T_k(\rho\n)$, $\rot\,\widetilde{\bfw}\n=0$.
Unfortunately, the discrete gradient, divergence and rotational operators associated with the Crouzeix-Raviart approximation do not satisfy a discrete equivalent of the global identity \eqref{curlcurl2}, namely
\[
\int_\Omega \gradi \bfu : \gradi \bfv \dx=  \int_\Omega \dive \, \bfu \, \dive\, \bfv \,\dx + \int_\Omega \rot \, \bfu \, \rot \, \bfv \,\dx.
\]
Instead, one needs to apply \eqref{curlcurl1} locally on each control volume $K\in\mesh\n$. 
The accumulating boundary terms must then be controlled through an estimate of $\widetilde{\bfw}\n$ in $\xbfW^{2,2}(\Omega)$. 
Moreover, it also appears in the analysis that the control of some remainder terms involving the pressure (which is controlled in $\xL^{1+\eta}(\Omega)$) requires an estimate of $\widetilde{\bfw}\n$ in $\xbfW^{2,\frac{1+\eta}{\eta}}(\Omega)$. Since $\frac{1+\eta}{\eta}\geq 2$, this latter control is more restrictive.
Such control is what motivates the introduction of the stabilization term 
\[
T_{\rm stab}^2 = - h_\mesh^{\xi_2} \Delta_{\frac{1+\eta}{\eta},\mesh}(\rho)
\] 
in the numerical scheme.  
For the MAC scheme, studied for instance in \cite{gal-18-conv}, we directly have an equivalent of \eqref{curlcurl2} and $T_{\rm stab}^2$ is useless.
\end{rmrk}

\medskip
The function $\widetilde{\bfw}\n$ defined in Remark \ref{rmrk:MAC} is not in $\xbfW^{2,\frac{1+\eta}{\eta}}(\Omega)$ because $T_k(\rho\n)$ is not in $\xW^{1,\frac{1+\eta}{\eta}}(\Omega)$. 
We rather define $\bfw\n=\mcal{A} (i_{\mesh\n}T_k(\rho\n))$ so that $\dive\,\bfw\n=i_{\mesh\n}T_k(\rho\n)$ and $\rot\,\bfw\n=0$ where $i_{\mesh}\,T_k(\rho)$ is a regularization of $T_k(\rho)$, the $\xW^{1,\frac{1+\eta}{\eta}}$ semi-norm of which is controlled by $\snorm{\rho}_{\frac{1+\eta}{\eta},\mesh}$. 
% The control of $\snorm{\rho}_{\frac{1+\eta}{\eta},\mesh}$ is what motivates the introduction of the stabilization term 
% \[
% % T_{\rm stab}^2 = - h_\mesh^{\xi_2} \dive_\mesh(|\gradi_\edges(\rho^\alpha)|^{\frac{1}{\eta}-1} \gradi_\edges(\rho^\alpha))
% T_{\rm stab}^2 = - h_\mesh^{\xi_2} \Delta_{\frac{1+\eta}{\eta},\mesh}(\rho)
% \] 
%in the numerical scheme. 
The operator $i_\mesh$ is specified in the following definition and its properties in Lemma \ref{lmm:iq}.

\medskip

\begin{defi}\label{defi:iq}
Let $\disc=(\mesh,\edges)$ be a staggered discretization of $\Omega$
%such that $\theta_\mesh \leq \theta_0$, with $\theta_\mesh$ defined by \eqref{eq:reg}.
and $\mcal{S}$ be the set of vertices of the primal mesh $\mesh$. For $s\in\mcal{S}$, we denote by $\mcal{N}_s\subset\mesh$ the set of the elements $K\in\mesh$ of which $s$ is a vertex. Let $p\in\xL_\mesh(\Omega)$. We denote $i_\mesh\,p$ the function defined as follows:
\begin{itemize}
 \item $i_\mesh\,p\in\mcal{C}^0(\Omega)$,
 \item for all $K\in\mesh$, the restriction of $i_\mesh\,p$ to $K$ is affine,
 \item for all $s\in\mcal{S}$, $\dsp (i_\mesh\,p)(s)=\frac{1}{{\rm card}(\mcal{N}_s)}\sum_{K\in\mcal{N}_s}p_K$.
\end{itemize}
\end{defi}

\medskip

\begin{lem}\label{lmm:iq}
Let $\disc=(\mesh,\edges)$ be a staggered discretization of $\Omega$ such that $\theta_\mesh \leq \theta_0$, with $\theta_\mesh$ defined by \eqref{eq:reg}.
For all $r\in[1,+\infty]$ there exists $C=C(r,\theta_0)$ such that:
\begin{equation}
\label{eq:iq0}
\norm{i_\mesh\,p}_{\xL^r(\Omega)} \leq C \norm{p}_{\xL^r(\Omega)}, \qquad \forall p\in\xL_\mesh(\Omega).
\end{equation}
Moreover, for all $p\in\xL_\mesh(\Omega)$ we have
$i_\mesh\,p \in \xW^{1,q}(\Omega)$ for all $q\in [1,+\infty)$ and 
there exists a constant $C=C(q,\theta_0)$ such that :
\begin{equation}
\label{eq:iq}
\norm{i_\mesh\,p-p}_{\xL^q(\Omega)} + h_\mesh\,\snorm{i_\mesh\,p}_{\xW^{1,q}(\Omega)} 
\leq C\,h_\mesh\,\snorm{p}_{q,\mesh} \qquad \forall p\in\xL_\mesh(\Omega).
\end{equation}
\end{lem}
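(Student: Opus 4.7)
The plan is to exploit the locality of $i_\mesh$: on each simplex $K \in \mesh$, $(i_\mesh p)_{|K}$ is an affine function whose vertex values are explicit convex combinations of the cell constants $p_L$ for $L$ in the vertex patch $\widehat{\mcal{N}}_K := \bigcup_{s \in \overline K \cap \mcal{S}} \mcal{N}_s$. The assumption $\theta_\mesh \leq \theta_0$, which controls both shape regularity and the ratio $h_K/h_L$ across face-neighbors, implies that $\mathrm{card}(\widehat{\mcal{N}}_K)$ as well as all local volume and diameter ratios between simplices in $\widehat{\mcal{N}}_K$ are uniformly bounded by constants depending only on $\theta_0$. A preparatory combinatorial fact I would establish first is that any two cells $L, L' \in \widehat{\mcal{N}}_K$ are connected by a chain $L = L_0 | L_1 | \cdots | L_m = L'$ of face-neighbors entirely contained in the patch, with length $m \leq m(\theta_0)$. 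This chain is the mechanism for converting pointwise differences of vertex values of $i_\mesh p$ into face jumps $p_K - p_L$ across internal faces $\edge = K|L$.

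The $\xL^r$ stability \eqref{eq:iq0} is then immediate: by equivalence of norms on affine functions on the reference simplex and the convex-combination structure, one gets $\norm{i_\mesh p}_{\xL^r(K)}^r \leq C(r,\theta_0)\,|K| \max_{L \in \widehat{\mcal{N}}_K}|p_L|^r$; bounding the max by a sum, using $|K| \leq C(\theta_0)|L|$, and summing over $K$ (each $L$ appearing in a bounded number of patches) yields \eqref{eq:iq0}.

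For the gradient estimate in \eqref{eq:iq}, I would use that $\nabla(i_\mesh p)$ is constant on $K$ with magnitude bounded by $C(\theta_0)\,h_K^{-1} \max_{s,s'} |(i_\mesh p)(s) - (i_\mesh p)(s')|$, each vertex difference being a bounded linear combination of finitely many $p_L - p_{L'}$ with $L, L' \in \widehat{\mcal{N}}_K$. Decomposing these telescopically along the face-neighbor chains and applying Jensen's inequality yields
\[
\norm{\nabla(i_\mesh p)}_{\xbfL^\infty(K)}^q \leq C(q,\theta_0)\,h_K^{-q} \sum_{\edge \in E_K}|p_K-p_L|^q
\]
for a finite set $E_K$ of internal faces near $K$. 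Integrating over $K$ introduces a factor $|K|$; summing over $K \in \mesh$ and reordering by face, each $\edge$ appearing in $\bigcup_K E_K$ with multiplicity bounded by $\theta_0$, the shape-regular identification $|K|\,h_K^{-q} \sim |D_\edge|(|\edge|/|D_\edge|)^q$ (which follows from $|D_\edge| \sim |K| \sim h_K^d$ and $|\edge| \sim h_K^{d-1}$) delivers $\snorm{i_\mesh p}_{\xW^{1,q}(\Omega)}^q \leq C(q,\theta_0)\,\snorm{p}_{q,\mesh}^q$. The approximation estimate is obtained by the same chain argument applied to $|(i_\mesh p - p)(\bfx)| \leq \max_s |(i_\mesh p)(s) - p_K|$, which gives $\norm{i_\mesh p - p}_{\xL^q(K)}^q \leq C(q,\theta_0)\,|K| \sum_{\edge \in E_K}|p_K-p_L|^q$; the extra factor $h_\mesh^q$ comes from the decomposition $|K| \leq h_\mesh^q \cdot h_K^{d-q} \leq h_\mesh^q \cdot C(\theta_0)\,|D_\edge|(|\edge|/|D_\edge|)^q$.

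The main obstacle I expect is the combinatorial bookkeeping in the telescoping chain argument: one must verify that the face-neighbor chains within a vertex patch have length bounded uniformly by $m(\theta_0)$ and then control the multiplicity with which each internal face $\edge$ appears in $\bigcup_K E_K$ when the local estimates are summed over $K \in \mesh$. Once these combinatorial facts are in place, the rest reduces to standard scaling arguments on shape-regular simplicial meshes.
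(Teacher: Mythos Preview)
Your approach is correct and is the standard route for this type of result. The paper does not actually prove the lemma: it merely states that the proof is similar to \cite[Lemma~5.8]{eym-10-conv-isen} and skips the details. Your sketch --- locality of $i_\mesh$ on vertex patches, the face-neighbor chain decomposition to convert vertex-value differences into face jumps, and the scaling identity $|K|\,h_K^{-q}\sim |D_\edge|(|\edge|/|D_\edge|)^q$ on shape-regular meshes --- is exactly the kind of argument one finds in that reference, so there is no genuine divergence to report. The combinatorial points you isolate as the main obstacle (face-connectedness and bounded cardinality of vertex stars, bounded multiplicity of each face in $\bigcup_K E_K$) are standard consequences of the aspect-ratio bound $h_K/\varrho_K\leq\theta_0$ for conforming simplicial meshes; the second ingredient in $\theta_\mesh$ then propagates $h_K\sim h_L$ from face-neighbors to the whole patch along your bounded-length chains, which justifies the scaling comparisons uniformly over $E_K$.
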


\medskip
\begin{proof}
 The proof is similar to that of \cite[Lemma 5.8]{eym-10-conv-isen}. We skip the details.
\end{proof}

\medskip
We also have the following technical result which will be useful hereinafter. The proof can be found in \cite[Lemma 2.4]{gal-09-conv}.

\medskip
\begin{lem}\label{lmm:aedge}
Let $\disc=(\mesh,\edges)$ be a staggered discretization of $\Omega$ such that $\theta_\mesh \leq \theta_0$ (where $\theta_\mesh$ is defined by \eqref{eq:reg}) for some positive constant $\theta_0$.
Let $(n_\edge)_{\edge\in\edgesint}$ be a family of real numbers such that for all $\edge\in\edgesint$, $|n_\edge|\leq 1$, and let $\bfu\in\xbfH_\mesh(\Omega)$. 
Then, for any $q \in (1,\infty)$ there exists $C=C(q,\theta_0)$ such that:
\[
 \sum_{\edge\in\edgesint} \left| \int_\edge n_\edge\,[\bfu]_\edge\,\bff \dedge(\bfx) \right| \leq C\,h_\mesh\,\norm{\bfu}_{1,q',\mesh}\,\snorm{\bff}_{\xW^{1, q}(\Omega)}, \qquad \forall \bff\in\xW^{1,q}_0(\Omega),
\]
where $q' = \frac{q-1}{q}$,  $\norm{\bfu}_{1,q',\mesh}^2 = \sum_{K\in\mesh}\int_K|\gradi\,\bfu|^{q'}\,\dx$.
\end{lem}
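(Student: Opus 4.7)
The plan is to exploit the zero-mean jump property \eqref{cont:bfu} of Crouzeix--Raviart functions in order to subtract, face by face, a suitable constant from $\bff$, and then bound each resulting face integral by a trace--Poincar\'e argument followed by H\"older summation. Fix $\edge \in \edgesint$, $\edge = K|L$, and let $\bar \bff_K = |K|^{-1}\int_K \bff\dx$. Since \eqref{cont:bfu} gives $\int_\edge [\bfu]_\edge\dedge = 0$, any constant can be added to or subtracted from $\bff$ without changing the integral, so
\[
\left|\int_\edge n_\edge [\bfu]_\edge \bff \dedge\right|
= \left|\int_\edge n_\edge [\bfu]_\edge (\bff - \bar \bff_K) \dedge\right|
\leq \norm{[\bfu]_\edge}_{\xL^{q'}(\edge)}\,\norm{\bff - \bar \bff_K}_{\xL^q(\edge)},
\]
where $q'$ denotes the H\"older conjugate of $q$.

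Next I would estimate each factor separately by scaling to a reference simplex and using the shape regularity $\theta_\mesh\leq\theta_0$ to convert $|\edge|$, $|K|$ into explicit powers of $h_K$. For the $\bff$-factor, the trace inequality followed by the Poincar\'e--Wirtinger inequality on $K$ yields $\norm{\bff-\bar\bff_K}_{\xL^q(\edge)} \leq C h_K^{1-1/q}\,\snorm{\bff}_{\xW^{1,q}(K)}$. For the jump factor, the key observation is that $[\bfu]_\edge$ is the restriction to $\edge$ of the difference of two affine functions and hence is itself affine on $\edge$; moreover, it has zero mean on $\edge$ by \eqref{cont:bfu}. The Poincar\'e--Wirtinger inequality on $\edge$ together with the fact that the tangential gradient of $\bfu|_K$ is constant on $K$ (so its $\xL^{q'}(\edge)$ norm is comparable to $h_K^{-1/q'}\norm{\gradi\bfu}_{\xL^{q'}(K)}$) then gives
\[
\norm{[\bfu]_\edge}_{\xL^{q'}(\edge)} \leq C\,h_K^{1/q}\,\Bigl(\norm{\gradi \bfu}_{\xL^{q'}(K)^d}^{q'} + \norm{\gradi \bfu}_{\xL^{q'}(L)^d}^{q'}\Bigr)^{1/q'}.
\]
Multiplying the two bounds collects a full power of $h_K \leq h_\mesh$ in front.

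Summing over $\edge \in \edgesint$ and applying the discrete H\"older inequality with exponents $q'$ and $q$ finishes the argument: each primal cell $K$ contributes to at most $d+1$ terms (one per face), so the sum in $(\gradi\bfu)^{q'}$ reassembles into $\norm{\bfu}_{1,q',\mesh}^{q'}$ and the sum in $(\gradi\bff)^q$ into $\snorm{\bff}_{\xW^{1,q}(\Omega)}^q$, both with mesh-independent multiplicity. The only delicate point in the execution is the careful tracking of the powers of $h_K$ through the successive trace and Poincar\'e inequalities; this is conceptually routine once one passes to a reference element but is the sole place where the shape-regularity constant $\theta_0$ enters the final constant $C$. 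No further novelty is needed, the estimate follows in the spirit of classical Crouzeix--Raviart consistency bounds.
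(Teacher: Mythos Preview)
Your argument is correct and is exactly the classical Crouzeix--Raviart consistency estimate: subtract the cell mean of $\bff$ using the zero-mean jump property \eqref{cont:bfu}, apply trace plus Poincar\'e--Wirtinger on each side, and sum with H\"older. The paper does not supply its own proof but refers to \cite[Lemma 2.4]{gal-09-conv}, where precisely this argument is carried out; your sketch matches that standard route, so there is nothing to add beyond noting that the statement's $q'=\frac{q-1}{q}$ and the exponent $2$ in the broken-norm definition are evident typos for the conjugate $q'=\frac{q}{q-1}$ and the exponent $q'$, which you have silently (and correctly) interpreted.
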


\medskip

We may now give the proof Proposition \ref{prop:eff_flux1} which is similar to that of \cite[Prop. 5.9 and 5.10]{eym-10-conv-isen}. The main difference is that we here have to handle the additional convective term in the momentum balance. 

\medskip
\begin{proof}[Proof of Proposition \ref{prop:eff_flux1}]
Let $k\in\xN^*$. Since $(T_k(\rho\n))_{n\in\xN}$ is bounded in $\xL^\infty(\Omega)$ (by $k$) we have by \eqref{eq:iq0}:
\begin{equation}
\label{eq:eff_flux0}
 \norm{i_{\mesh\n}\,T_k(\rho\n)}_{\xL^{\infty}(\Omega)} \lesssim 1.
\end{equation}
Furthermore, by \eqref{eq:iq} and \eqref{unif:est} (observing that $|T_k(r_1)-T_k(r_2)|\leq |r_1-r_2|$ for all $r_1,r_2 \geq 0$) we have for all $n\in\xN$:
\begin{equation}
 \label{eq:eff_flux1}
 \norm{i_{\mesh\n}\,T_k(\rho\n)}_{\xW^{1,\frac{1+\eta}{\eta}}(\Omega)} \lesssim  h\n^{-\xi}, \quad \norm{i_{\mesh\n}\,T_k(\rho\n)-T_k(\rho\n)}_{\xL^{\frac{1+\eta}{\eta}}(\Omega)} \lesssim  h\n^{1-\xi},
\end{equation}
where, by assumption \eqref{thm:cond_xi2} on $\xi_2$,
\begin{equation}\label{eq:csq-hyp-xi2}
\xi = \frac{\eta }{1+\eta} \left[ \xi_2 +  \frac{5}{4\Gamma}\left(\frac{3}{1+\eta} + \xi_3\right) \right] < 1.
\end{equation} 
Let $(\bfw\n)_{n\in\xN}$ be the sequence of functions defined from $(i_{\mesh\n}\,T_k(\rho\n))_{n\in\xN}$ by Lemma \ref{lem:inv-div}.
We have
\[
\dive \ \bfw_n = i_{\mesh\n}\,T_k(\rho\n), \quad \rot \ \bfw_n = 0, \quad \norm{\bfw\n}_{\xbfW^{1,q}(\Omega)} \lesssim 1,
\quad \forall q \in (1,+\infty).
\]
Moreover, by the Sobolev injection $\xW^{1,q}(\Omega) \subset \xL^\infty(\Omega)$ for $q>3$, the sequence $(\bfw_n)_{n\in\xN}$ is bounded in $\xbfL^{\infty}(\Omega)$ and 
up to extracting a subsequence, as $n\to+\infty$, it strongly converges in $\xbfL^q(\Omega)$ and weakly in $\xbfW^{1,q}(\Omega)$ for all $q \in(1,+\infty)$ towards some function $\bfw$ satisfying:
\begin{equation*}
\dive \ \bfw = \overline{T_k(\rho)} \quad \text{and} \quad \rot \ \bfw = 0.
\end{equation*}
Inequality \eqref{eq:eff_flux1} and the properties of operator $\mathcal{A}$ yield
\begin{equation}
\norm{\bfw\n}_{\xbfW^{2,\frac{1+\eta}{\eta}}(\Omega)} 
\lesssim \norm{i_{\mesh\n}\,T_k(\rho\n)}_{\xW^{1,\frac{1+\eta}{\eta}}(\Omega)}
\lesssim h\n^{-\xi}.
\end{equation}

\medskip
Let $\phi\in\mcal{C}^\infty_c(\Omega)$ and take $\bfv\n=I_{\mesh\n}(\phi\bfw\n)\in \xbfH_{\mesh\n,0}(\Omega)$ as a test function in the discrete weak formulation of the momentum balance \eqref{eq:sch_mom:weak:2}. 
We get for all $n\in\xN$:
\begin{multline}
\label{eq:eff_flux2}
- \int_\Omega(\xP_{\edges\n}\rho\n) \,(\xPi_{\edges\n}\bfu\n)\otimes (\xPi_{\edges\n}\bfu\n): \gradi_{\edges\n} \bfv\n \,\dx \\[2ex]
+\mu \int_\Omega\gradi_{\mesh\n} \bfu\n:\gradi_{\mesh\n} \bfv\n \,\dx 
%\\[2ex]
+ (\mu+\lambda)\int_\Omega \dive_{\mesh\n}\,\bfu\n \, \dive_{\mesh\n}\,\bfv\n \, \dx 
\\[2ex]
- a \int_\Omega \rho\n^\gamma\, \dive_{\mesh\n}\,\bfv\n \, \dx + R_1^n
 = \int_\Omega \bff\cdot\xPi_{\edges\n} \bfv\n\,\dx,
\end{multline}
where
\[
 R_1^n = - \int_\Omega h\n^{\xi_3}\rho\n^\Gamma \, \dive_{\mesh\n}\,\bfv\n \, \dx
	+ R_{\rm conv}(\rho\n,\bfu\n,\bfv\n).
\]
By Lemma \ref{lem:d-weakmom2}, we have:
\[
\begin{aligned}
\big | R_{\rm conv}(\rho\n,\bfu\n,\bfv\n) \big | 
& \leq C \, h\n^{\frac 12 - \frac{1}{\Gamma} \big( \frac{3}{1+\eta}+\xi_3\big)}
	 \norm{h\n^{\xi_3} \rho^\Gamma}_{\xL^{1+\eta}(\Omega)}^{\frac{1}{\Gamma}}  \norm{\bfu\n}_{1,2,\mesh\n}^2\,  \norm{\bfv\n}_{1,2,\mesh\n} \\[2ex]
& + C \,  h\n^{\xi_2 -\frac 1\eta- \frac{1}{\eta \Gamma}\big(\frac{3}{1+\eta} + \xi_3\big)}
	\norm{h\n^{\xi_3}\rho\n^\Gamma}_{\xL^{1+\eta}(\Omega)}^{\frac{1}{\eta \Gamma}}  \norm{\bfu\n}_{1,2,\mesh\n}\,  \norm{\bfv\n}_{1,2,\mesh\n}.
\end{aligned}
\]
Since $\norm{\bfv\n}_{1,2,\mesh\n}\lesssim \norm{\phi\bfw\n}_{\xbfH^1(\Omega)}$, we can apply Remark \ref{rmrk:conrol_Rconv} and we get that $| R_{\rm conv}(\rho\n,\bfu\n,\bfv\n) \big | \to 0$ as $n\to+\infty$.
Moreover, by \eqref{unif:est}, we have $h\n^{\xi_3}\rho\n^\Gamma\to0$ in $\xL^{p}(\Omega)$ with $1<p<1+\eta$ as $n\to+\infty$. Since $(\dive_{\mesh\n}\,\bfv\n)_{n\in\xN}$ is bounded in $\xL^{p'}(\Omega)$, we obtain that $R_1^n\to0$ as $n\to+\infty$. 
Hence, denoting $\bfdelta\n=\bfv\n-\phi\bfw\n=I_{\mesh\n}(\phi\bfw\n)-\phi\bfw\n$, we have

	\begin{multline}
	\label{eq:eff_flux3}
	- \int_\Omega(\xP_{\edges\n}\rho\n) \,(\xPi_{\edges\n}\bfu\n)\otimes (\xPi_{\edges\n}\bfu\n): \gradi_{\edges\n} \bfv\n\,\dx \\[2ex]
	+\mu \int_\Omega\gradi_{\mesh\n} \bfu\n:\gradi (\phi\bfw\n) \, \dx 
	%\\[2ex]
	+ (\mu+\lambda)\int_\Omega \dive_{\mesh\n}\,\bfu\n \, \dive(\phi\bfw\n) \, \dx 
	\\[2ex]
	- a\int_\Omega \rho\n^\gamma \, \dive(\phi\bfw\n) \, \dx
	%- \int_\Omega h\n^{\xi_3}\rho\n^\Gamma \, \dive_{\mesh\n}(\phi \bfw\n) \, \dx \nonumber \\
	+ R_2^n + \underset{n\to+\infty}{o}(1) = \int_\Omega \bff\cdot(\phi\bfw\n)\,\dx,%, \qquad \forall n\in\xN,
	\end{multline}
	where: 
	\begin{multline*}
	R_2^n = \mu \int_\Omega\gradi_{\mesh\n} \bfu\n:\gradi_{\mesh\n} \bfdelta\n  \dx 
	+ (\mu+\lambda)\int_\Omega \dive_{\mesh\n}\,\bfu\n \, \dive_{\mesh\n}\bfdelta\n \dx 
	\\[2ex]
	- a \int_\Omega \rho\n^\gamma \, \dive_{\mesh\n}\bfdelta\n \dx
	%- \int_\Omega h\n^{\xi_3}\rho\n^\Gamma \, \dive_{\mesh\n}\bfdelta\n \, \dx
	- \int_\Omega \bff\cdot\bfdelta\n \dx- \int_\Omega \bff\cdot(\xPi_{\edges\n} \bfv\n-\bfv\n)\dx.
	\end{multline*}
	By the properties of the Fortin operator $I_{\mesh\n}$, we have $\norm{\bfdelta\n}_{\xL^2(\Omega)}\lesssim h_n^2\snorm{\phi\bfw\n}_{\xbfW^{2,\frac{1+\eta}{\eta}}(\Omega)}\lesssim h\n^{2-\xi}$ and $\norm{\bfdelta\n}_{1,\frac{1+\eta}{\eta},\mesh\n}\lesssim h_n\snorm{\phi\bfw\n}_{\xbfW^{2,\frac{1+\eta}{\eta}}(\Omega)}\lesssim h\n^{1-\xi}$ with $\frac{1+\eta}{\eta} \geq 2 $. Since $( \norm{\bfu\n}_{1,2,\mesh\n})_{n\in\xN}$ is bounded, $(\rho\n^\gamma)_{n\in\xN}$ is bounded in $\xL^{1+\eta}(\Omega)$ (recall that $1+\eta=\frac{3(\gamma-1)}{\gamma}$), $\xPi_{\edges\n} \bfv\n-\bfv\n\to0$ in $\xbfL^2(\Omega)$ as $n\to+\infty$, and $\xi<1$, we get that $R_2^n\to0$ as $n\to+\infty$.
	
	\medskip
	Applying the identity \eqref{curlcurl1} over each control volume, we get:
	\begin{equation}
	\label{eq:eff_flux4}
	\mu \int_\Omega\gradi_{\mesh\n} \bfu\n:\gradi (\phi\bfw\n) \, \dx  
	 =  \mu \int_\Omega\dive_{\mesh\n} \bfu\n \, \dive (\phi\bfw\n) \, \dx 
	+ \mu \int_\Omega\rot_{\mesh\n} \bfu\n \cdot \rot (\phi\bfw\n) \, \dx
	+  R_3^n
	\end{equation}
	with $R_3^n$ which has the following structure:
	\begin{equation}\label{eq:remainder-curlcurl}
	R_3^n = \mu\,\sum_{\edge\in\edgesintn} \int_\edge \sum_{1\leq i,j,k \leq 3} n_{\edge,i,j,k}\, [(\bfu\n)_i]_\edge\, (\gradi(\phi\bfw\n))_{j,k}\, \dedge(\bfx),
	\end{equation}
	where the family of real numbers $(n_{\edge,i,j,k})_{\edge\in\edges,1\leq i,j,k\leq3}$ is uniformly bounded.
	Injecting \eqref{eq:eff_flux4} in \eqref{eq:eff_flux3} we get:
	\begin{multline}
	\label{eq:eff_flux5}
	- \int_\Omega(\xP_{\edges\n}\rho\n) \,(\xPi_{\edges\n}\bfu\n)\otimes (\xPi_{\edges\n}\bfu\n): \gradi_{\edges\n} \bfv\n\,\dx \\[2ex]
	%\\[2ex]
	+ (2\mu+\lambda)\int_\Omega \dive_{\mesh\n}\,\bfu\n \, \dive(\phi\bfw\n) \, \dx 
	+ \mu\, \int_\Omega\rot_{\mesh\n} \bfu\n \cdot \rot (\phi\bfw\n) \, \dx
	\\[2ex]
	- a \int_\Omega \rho\n^\gamma\, \dive(\phi\bfw\n) \, \dx 
	%-\int_\Omega h\n^{\xi_3}\rho\n^\Gamma \, \dive_{\mesh\n}(\phi \bfw\n) \, \dx \\
	+ R_3^n + \underset{n\to+\infty}{o}(1)= \int_\Omega \bff\cdot(\phi\bfw\n) \, \dx.%, \qquad \forall n\in\xN,
	\end{multline}
	By Lemma \ref{lmm:aedge} with $q=2$, we have:
	\[
	 |R_3^n| \lesssim h\n \, \norm{\bfu\n}_{1,2,\mesh\n}\, \snorm{\gradi(\phi\bfw\n)}_{\xbfH^{1}(\Omega)}  \lesssim h\n \,  \norm{\bfw\n}_{\xbfW^{2,\frac{1+\eta}{\eta}}(\Omega)}\lesssim h\n^{1-\xi}.
	\]

	The choice of $\bfw\n$ gives $\dive(\phi\bfw\n)=i_{\mesh\n}T_k(\rho\n)\, \phi+\bfw\n\cdot\gradi\phi$ and $\rot(\phi\bfw\n)=L(\phi)\,\bfw\n$, where $L(\phi)$ is a matrix with entries involving first order derivatives of $\phi$. Hence, reordering \eqref{eq:eff_flux5} we have:
	\begin{multline}
	\label{eq:eff_flux6}
	 \int_\Omega \big( (2\mu+\lambda) \dive_{\mesh\n}\,\bfu\n - a \rho\n^\gamma \big)\,T_k(\rho\n)\,\phi\,\dx + R_4^n + \underset{n\to+\infty}{o}(1)
	 \\[2ex] 
	 =\int_\Omega(\xP_{\edges\n}\rho\n) \,(\xPi_{\edges\n}\bfu\n)\otimes (\xPi_{\edges\n}\bfu\n): \gradi_{\edges\n} \bfv\n\,\dx 
	- (2\mu+\lambda)\int_\Omega\dive_{\mesh\n}\,\bfu\n\,(\bfw\n\cdot\gradi\phi)\, \dx 
	\\[2ex]
	- \mu\, \int_\Omega\rot_{\mesh\n} \bfu\n \cdot (L(\phi)\,\bfw\n) \,  \dx
	+ a \int_\Omega \rho\n^\gamma \,\bfw\n\cdot\gradi\phi \, \dx  
	+\int_\Omega \bff\cdot(\phi\bfw\n)\,\dx, %, \qquad \forall n\in\xN,
	\end{multline}
	with
	\[
	R_4^n= \int_\Omega \big((2\mu+\lambda)\,\dive_{\mesh\n}\,\bfu\n- a\rho\n^\gamma \big)\big(i_{\mesh\n}T_k(\rho\n)-T_k(\rho\n)\big)\phi\,\dx.
	\]
	Since $(\dive_{\mesh\n}\,\bfu\n)_{n\in\xN}$ is bounded in $\xL^2(\Omega)$ and $(\rho\n^\gamma)_{n\in\xN}$ in $\xL^{1+\eta}(\Omega)$, estimate \eqref{eq:eff_flux1} (with $\frac{1+\eta}{\eta}\geq 2$) yields $R_4^n\to0$ as $n\to+\infty$. Moreover, we know that $\dive_{\mesh\n}\,\bfu\n$, $\rot_{\mesh\n} \bfu\n$ (\emph{resp.} $\rho\n^\gamma$) weakly converge in $\xL^{2}(\Omega)$ (\emph{resp.} in $\xL^{1+\eta}(\Omega)$) respectively towards $\dive\,\bfu$, $\rot\,\bfu$ and $\overline{\rho^\gamma}$. 
	Since $\bfw\n$ strongly converges in $\xbfL^q(\Omega)$ towards $\bfw$ for all $q\in(1,+\infty)$, we get, passing to the limit $n\to+\infty$ in \eqref{eq:eff_flux6}:
	\begin{align}
	\lim\limits_{n\to+\infty} \ \int_\Omega \big( (2\mu+\lambda) & \dive_{\mesh\n}\,\bfu\n  - a\rho\n^\gamma \big)\,T_k(\rho\n)\,\phi\,\dx   \nonumber \\ 
	& = \lim\limits_{n\to+\infty}\int_\Omega(\xP_{\edges\n}\rho\n) \,(\xPi_{\edges\n}\bfu\n)\otimes (\xPi_{\edges\n}\bfu\n): \gradi_{\edges\n} \bfv\n\,\dx \nonumber \\[2ex]
	& - (2\mu+\lambda)\int_\Omega\dive\,\bfu\,(\bfw\cdot\gradi\phi)\, \dx 
	- \mu\, \int_\Omega\rot\, \bfu \cdot (L(\phi)\,\bfw) \,  \dx \nonumber\\[2ex]
	\label{eq:eff_flux7}
	& + a \int_\Omega \overline{\rho^\gamma}\,\bfw\cdot\gradi\phi \, \dx +\int_\Omega \bff\cdot(\phi\bfw) \,\dx.  
	\end{align}
	
	Let us now determine the limit of the convective term in the right hand side of \eqref{eq:eff_flux7}.
	As in the continuous case, we introduce a mollifying sequence $(\bfomega_\delta)_{\delta >0}$ and the regularized velocities $\bfu_{n,\delta} = \bfu\n * \bfomega_\delta$ and $\bfu_\delta = \bfu * \bfomega_\delta$ where $\bfu\n$ and $\bfu$ have been extended by $0$ outside $\Omega$. We have $\bfu_{n,\delta}\in\xbfL^6(\Omega)$ with $\norm{\bfu_{n,\delta}}_{\xbfL^6(\Omega)}\leq C\norm{\bfu_{n}}_{\xbfL^6(\Omega)}$ and for $q\in(6,+\infty]$, $\bfu_{n,\delta}\in\xbfL^q(\Omega)$ with $\norm{\bfu_{n,\delta}}_{\xbfL^q(\Omega)}\leq C_\delta \norm{\bfu_{n}}_{\xbfL^6(\Omega)}$. Moreover, for all $m\in\xN$ and $q\in[1,+\infty]$, $\bfu_{n,\delta}\in\xbfW^{m,q}(\Omega)$ with $\snorm{\bfu_{n,\delta}}_{\xbfW^{m,q}}\leq C_\delta\norm{\bfu_{n}}_{\xbfL^6(\Omega)}$.
	Furthermore, we recall that
	\begin{align}
	\bfu_{n,\delta} &\underset{n \to +\infty}{\longrightarrow} \bfu_\delta  \quad \text{strongly in} ~ \xbfL^q_{\rm loc}(\xR^3) ~\forall q\in[1,6) ~\text{uniformly in} ~\delta, \label{eq:d-cvg-reg-u-1}\\
	\bfu_{n,\delta} &\underset{\delta \to 0}{\longrightarrow} \bfu\n \quad \text{strongly in} ~ \xbfL^q_{\rm loc}(\xR^3) ~\forall q\in[1,6) ~(\text{uniformly in} ~n)  \label{eq:d-cvg-reg-u-2}, \\
	\bfu_\delta &\underset{\delta \to 0}{\longrightarrow} \bfu \quad \text{strongly in} ~\xbfL^6_{\rm loc}(\xR^3)  \label{eq:d-cvg-reg-u-3}.
	\end{align}
	Denoting $\tilde{\bfu}_{n,\delta} = I_{\mesh\n} \bfu_{n,\delta}$, we have:
	\begin{multline}
	 \label{eq:d-conv-reg-0}
	 - \int_\Omega(\xP_{\edges\n}\rho\n) \,(\xPi_{\edges\n}\bfu\n)\otimes (\xPi_{\edges\n}\bfu\n): \gradi_{\edges\n} \bfv\n\,\dx  \\
	 = - \int_\Omega(\xP_{\edges\n}\rho\n) \, (\xPi_{\edges\n}\tilde{\bfu}_{n,\delta})\otimes (\xPi_{\edges\n}\bfu\n) : \gradi_{\edges\n} \bfv\n\,\dx + R_5^{n,\delta}
	\end{multline}
	with
	\[
	R_5^{n,\delta} = - \int_\Omega(\xP_{\edges\n}\rho\n) \,\big(\xPi_{\edges\n}\bfu\n -(\xPi_{\edges\n}\tilde{\bfu}_{n,\delta})\big)\otimes (\xPi_{\edges\n}\bfu\n) : \gradi_{\edges\n} \bfv\n\,\dx.
	\]
	Since $(\rho_n \bfu_n)_{n\in\xN}$ is bounded in $\xbfL^p(\Omega)$ for some $p > \frac 65$, $(\gradi \bfw_n)_{n\in\xN}$ is bounded in $\xbfL^s(\Omega)^3$ for any $s \in (1,+\infty)$, then  the following inequality holds, for some triple $(p,q,s)$, such that $p>\frac 65$, $s>1$, $q<6$ and $\frac{1}{p}+ \frac{1}{q} + \frac{1}{s}=1$:
	\begin{align*}
	|R_5^{n,\delta}| 
	& \lesssim \norm{\rho\n \bfu\n}_{\xL^{p}(\Omega)}  \norm{\gradi_{\edges\n} \bfv\n}_{\xbfL^s(\Omega)^{3}} \norm{\xPi_{\edges\n}\bfu\n -\xPi_{\edges\n}\tilde{\bfu}_{n,\delta}}_{\xbfL^q(\Omega)} \\
	& \lesssim \norm{\rho\n \bfu\n}_{\xL^{p}(\Omega)}  \norm{\gradi \bfw\n}_{\xbfL^s(\Omega)^{3}} \norm{\xPi_{\edges\n}\bfu\n -\xPi_{\edges\n}\tilde{\bfu}_{n,\delta}}_{\xbfL^q(\Omega)} \\
	& \lesssim  \norm{\xPi_{\edges\n}\bfu\n -\xPi_{\edges\n}\tilde{\bfu}_{n,\delta}}_{\xbfL^q(\Omega)} \\
	& \lesssim \norm{\bfu\n -\tilde{\bfu}_{n,\delta}}_{\xbfL^q(\Omega)} \\
	& \lesssim \big(\norm{\bfu\n - \bfu_{n,\delta}}_{\xbfL^q(\Omega)} + \norm{\tilde{\bfu}_{n,\delta} - \bfu_{n,\delta}}_{\xbfL^q(\Omega)} \big)
	\end{align*}
	where the constants involved in these inequalities are independent of $n$ and $\delta$. From Lemma \ref{lmm:RT} we have:
	\[
	\norm{\tilde{\bfu}_{n,\delta} - \bfu_{n,\delta}}_{\xbfL^q(\Omega)} 
	\lesssim h\n^2|\bfu_{n,\delta}|_{\xbfW^{2,q}(\Omega)}
	\lesssim C_\delta h\n^2\norm{\bfu_{n}}_{\xbfL^6(\Omega)}
	\lesssim C_\delta h\n^2 .
	\]
	%{\color{blue}est-ce qu'on peut avoir une estimation uniforme par rapport a $k$ ?}
	Therefore:
	\begin{equation}\label{eq:control-R5}
	\limsup_{n\rightarrow +\infty} |R_5^{n,\delta}|
	\lesssim \limsup_{n\rightarrow +\infty}\norm{\bfu\n - \bfu_{n,\delta}}_{\xbfL^q(\Omega)},
	\end{equation}
	where the involved constant is independent of $n$ and $\delta$.
	%and thanks to \eqref{eq:d-cvg-reg-u-0}, we deduce that $R_1^{n,\delta}$ converges to $0$ as $n\rightarrow +\infty$ uniformly with respect to $\delta$.
	Let us now deal with the integral in the right hand side of \eqref{eq:d-conv-reg-0}. Performing a discrete integration by parts we get: 
	\[
	\begin{aligned}
	- \int_\Omega(\xP_{\edges\n}\rho\n) \,(\xPi_{\edges\n}\tilde{\bfu}_{n,\delta})&\otimes (\xPi_{\edges\n}\bfu\n) : \gradi_{\edges\n} \bfv\n\,\dx \\
	&= - \sum_{\substack{\edge\in\edgesintn \\ \edge=K|L } } |\edge|\,\rho_\edge\, (\tilde{\bfu}_{n,\delta})_\edge\otimes \bfu_\edge:(\bfv_L-\bfv_K)\otimes\bfn_{K,\edge} \\
	&= - \sum_{\substack{\edge\in\edgesintn \\ \edge=K|L } } |\edge|\,\rho_\edge\,(\bfu_\edge\cdot\bfn_{K,\edge})\, ((\tilde{\bfu}_{n,\delta})_\edge \cdot (\bfv_L-\bfv_K)) \\
	&=  \sum_{K\in\mesh\n} \Big( \sum_{\edge\in\edges(K)  } |\edge|\,\rho_\edge\, (\bfu_\edge\cdot\bfn_{K,\edge})\, (\tilde{\bfu}_{n,\delta})_\edge \Big ) \cdot \bfv_K.
	\end{aligned} 
	\]
	Injecting $(\tilde{\bfu}_{n,\delta})_\edge= (\tilde{\bfu}_{n,\delta})_\edge-(\tilde{\bfu}_{n,\delta})_K+(\tilde{\bfu}_{n,\delta})_K$, where $ (\tilde{\bfu}_{n,\delta})_K$ is the mean value of the function $\tilde{\bfu}_{n,\delta}$ over $K$, we get:
	\begin{multline}
	 \label{eq:d-conv-reg-1}
	- \int_\Omega(\xP_{\edges\n}\rho\n) \,(\xPi_{\edges\n}\tilde{\bfu}_{n,\delta})\otimes (\xPi_{\edges\n}\bfu\n) : \gradi_{\edges\n} \bfv\n\,\dx \\
	=  \sum_{K\in\mesh\n} \Big( \sum_{\edge\in\edges(K)  } |\edge|\,\rho_\edge\, (\bfu_\edge\cdot\bfn_{K,\edge})\, \big((\tilde{\bfu}_{n,\delta})_\edge -(\tilde{\bfu}_{n,\delta})_K\big)\Big ) \cdot \bfv_K
	+ R_6^{n,\delta}+R_7^{n,\delta}
	\end{multline}
	where, using the discrete mass conservation equation \eqref{eq:sch_mass}, we have:
	\[
	\begin{aligned}
	 &R_{6}^{n,\delta} =- h\n^{\xi_1}\sum_{K\in\mesh\n} |K|(\rho_K-\rho^\star)\,(\tilde{\bfu}_{n,\delta})_K\cdot \bfv_K,\\
	 &R_{7}^{n,\delta} = h\n^{\xi_2}\sum_{K\in\mesh\n} \Big ( \sum_{\substack{\edge\in\edges(K)\cap\edgesintn \\ \substack \edge=K|L}} |\edge|\, \Big(\dfrac{|\edge|}{|D_\edge|}\Big)^{\frac 1\eta}\,|\rho_K-\rho_L|^{\frac 1\eta -1}\,(\rho_K-\rho_L) \Big )\,(\tilde{\bfu}_{n,\delta})_K\cdot \bfv_K.
	\end{aligned}
	\]
Since $(\rho\n)_{n\in\xN}$ is bounded in $\xL^{\frac 32}(\Omega)$, $(\bfv\n)_{n\in\xN}$ is bounded in $\xbfL^{\infty}(\Omega)$ and 
\[
\norm{\tilde{\bfu}_{n,\delta}}_{\xbfL^6(\Omega)}\lesssim \norm{\bfu_{n,\delta}}_{\xbfL^6(\Omega)}\lesssim 1
\]
where the involved constants are independent of $n$ and $\delta$, we obtain that:
\begin{equation}
\label{eq:control-R6}
|R_{6}^{n,\delta}|\to 0 \quad \text{ as $n\to+\infty$ uniformly with respect to $\delta>0$}.
\end{equation}
Reordering the sum in $R_{7}^{n,\delta}$ we get:
\[
 \begin{aligned}
  R_{7}^{n,\delta} 
  &= - h\n^{\xi_2} \sum_{\substack{\edge\in\edgesintn \\ \substack \edge=K|L}} |\edge|\, \Big(\dfrac{|\edge|}{|D_\edge|}\Big)^{\frac 1\eta}\,|\rho_K-\rho_L|^{\frac 1\eta -1}\,(\rho_K-\rho_L) \,\big( (\tilde{\bfu}_{n,\delta})_L\cdot \bfv_L- (\tilde{\bfu}_{n,\delta})_K\cdot \bfv_K\big ) \\
  &= R_{7,1}^{n,\delta}  + R_{7,2}^{n,\delta},
 \end{aligned}
\]
where 
\[
 \begin{aligned}
  &R_{7,1}^{n,\delta} 
  = - h\n^{\xi_2} \sum_{\substack{\edge\in\edgesintn \\ \substack \edge=K|L}} |\edge|\, \Big(\dfrac{|\edge|}{|D_\edge|}\Big)^{\frac 1\eta}\,|\rho_K-\rho_L|^{\frac 1\eta -1}\,(\rho_K-\rho_L) \,\bfv_L \cdot \big( (\tilde{\bfu}_{n,\delta})_L- (\tilde{\bfu}_{n,\delta})_K\big ), \\
  &R_{7,2}^{n,\delta} 
  = - h\n^{\xi_2} \sum_{\substack{\edge\in\edgesintn \\ \substack \edge=K|L}} |\edge|\, \Big(\dfrac{|\edge|}{|D_\edge|}\Big)^{\frac 1\eta}\,|\rho_K-\rho_L|^{\frac 1\eta -1}\,(\rho_K-\rho_L) \,(\tilde{\bfu}_{n,\delta})_K\cdot ( \bfv_L- \bfv_K ).
 \end{aligned}
\]
The first term is controlled as follows:
\[
\begin{aligned}
  |R_{7,1}^{n,\delta}| 
  & \leq h\n^{\xi_2} \norm{\bfv\n}_{\xbfL^\infty(\Omega)} \, \sum_{\substack{\edge\in\edgesintn \\ \substack \edge=K|L}} |D_\edge|\, \Big(\dfrac{|\edge|}{|D_\edge|}\,|\rho_K-\rho_L|\Big)^{\frac 1\eta}\, \frac{|\edge|}{|D_\edge|}\big| (\tilde{\bfu}_{n,\delta})_L- (\tilde{\bfu}_{n,\delta})_K\big | \\
  & \lesssim  h\n^{\xi_2}  \, \norm{|\gradi_{\edges\n}(\rho\n)|^{\frac 1\eta}}_{\xbfL^{1+\eta}(\Omega)} \,
   \Big( \sum_{\substack{\edge\in\edgesintn \\ \substack \edge=K|L}}|D_\edge|\, \Big(\frac{|\edge|}{|D_\edge|} \Big )^{\frac{1+\eta}{\eta}} \big| (\tilde{\bfu}_{n,\delta})_L- (\tilde{\bfu}_{n,\delta})_K\big |^\frac{1+\eta}{\eta} \Big )^{\frac{\eta}{1+\eta}}
\end{aligned}
\]
where, following similar steps as in the proof of Proposition \ref{prop:convergence_rho} (see the calculation after eq. \eqref{mass4}), we have:
\[
\begin{aligned}
  \big| (\tilde{\bfu}_{n,\delta})_L- (\tilde{\bfu}_{n,\delta})_K\big |^\frac{1+\eta}{\eta} 
  & \lesssim  \big| (\tilde{\bfu}_{n,\delta})_L- (\tilde{\bfu}_{n,\delta})_\edge\big |^\frac{1+\eta}{\eta} + \big| (\tilde{\bfu}_{n,\delta})_K- (\tilde{\bfu}_{n,\delta})_\edge \big |^\frac{1+\eta}{\eta} \\[2ex]
  & \lesssim  \frac{h_L^\frac{1+\eta}{\eta}}{|L|} \,  \norm{\gradi \tilde{\bfu}_{n,\delta}}_{\xbfL^\frac{1+\eta}{\eta}(L)^3}^\frac{1+\eta}{\eta}+\frac{h_K^\frac{1+\eta}{\eta}}{|K|} \,  \norm{\gradi \tilde{\bfu}_{n,\delta}}_{\xbfL^\frac{1+\eta}{\eta}(K)^3}^\frac{1+\eta}{\eta}.
\end{aligned}
\]
By the regularity of the sequence of discretizations, we get:
\[
 \begin{aligned}
  |R_{7,1}^{n,\delta}| 
  & \lesssim  h\n^{\xi_2}  \, \norm{|\gradi_{\edges\n}(\rho\n)|^{\frac 1\eta}}_{\xbfL^{1+\eta}(\Omega)} \, \norm{\tilde{\bfu}_{n,\delta}}_{1,\frac{1+\eta}{\eta},\mesh\n}\\
  & \lesssim  h\n^{\xi_2}  \, \norm{|\gradi_{\edges\n}(\rho\n)|^{\frac 1\eta}}_{\xbfL^{1+\eta}(\Omega)} \, \snorm{\bfu_{n,\delta}}_{\xbfW^{1,\frac{1+\eta}{\eta}}(\Omega)}\\
  &\lesssim C_\delta h\n^{\xi_2}  \, \norm{|\gradi_{\edges\n}(\rho\n)|^{\frac 1\eta}}_{\xbfL^{1+\eta}(\Omega)} \, \norm{\bfu_{n}}_{\xbfL^{6}(\Omega)} \\
  &\lesssim C_\delta h\n^{\xi_2}  \, \norm{|\gradi_{\edges\n}(\rho\n)|^{\frac 1\eta}}_{\xbfL^{1+\eta}(\Omega)},
 \end{aligned}
\]
where the constants involved in $\lesssim$ are independent of $n$ (and $\delta$). By the uniform estimate \eqref{unif:est} we have
\[
\norm{|\gradi_{\edges\n}(\rho\n)|^{\frac 1\eta}}_{\xbfL^{1+\eta}(\Omega)}
= \norm{\gradi_{\edges\n}(\rho\n)}_{\xbfL^{\frac{1+\eta}{\eta}}(\Omega)}^{\frac 1\eta}
\lesssim h\n^{-\frac{1}{1+\eta}(\xi_2 + \frac{5}{4\Gamma}(\frac{3}{1+\eta} + \xi_3))}
\]
Therefore
\[
|R_{7,1}^{n,\delta}| 
\lesssim C_\delta h\n^{\frac{\eta}{1+\eta}(\xi_2 - \frac{5}{4\eta\Gamma}(\frac{3}{1+\eta} + \xi_3))}
\]
which yields, with condition \eqref{thm:cond_xi2}:
\begin{equation}
\label{eq:control-R71}
|R_{7,1}^{n,\delta}|\to 0 \quad \text{ as $n\to+\infty$ for any fixed $\delta>0$}.
\end{equation}
The second term is controlled in a similar way:
\begin{align*}
|R_{7,2}^{n,\delta}| 
& \leq h\n^{\xi_2} \norm{\tilde{\bfu}_{n,\delta}}_{\xbfL^\infty(\Omega)} \, \sum_{\substack{\edge\in\edgesintn \\ \substack \edge=K|L}} |D_\edge|\, \Big(\dfrac{|\edge|}{D_\edge}\,|\rho_K-\rho_L|\Big)^{\frac 1\eta}\, \frac{|\edge|}{|D_\edge|}( \bfv_L- \bfv_K ) \\
& \lesssim  h\n^{\xi_2} \norm{\tilde{\bfu}_{n,\delta}}_{\xbfL^\infty(\Omega)} \, \norm{|\gradi_{\edges\n}(\rho\n)|^{\frac 1\eta}}_{\xbfL^{1+\eta}(\Omega)} \, \norm{\gradi_{\edges\n} \bfv\n}_{\xbfL^{\frac{\eta}{1+\eta}}(\Omega)} \\
& \lesssim  h\n^{\xi_2} \norm{\tilde{\bfu}_{n,\delta}}_{\xbfL^\infty(\Omega)} \, \norm{|\gradi_{\edges\n}(\rho\n)|^{\frac 1\eta}}_{\xbfL^{1+\eta}(\Omega)} \, \norm{\bfv\n}_{1,\frac{\eta}{1+\eta},\mesh\n} \\
& \lesssim  h\n^{\xi_2} \norm{\tilde{\bfu}_{n,\delta}}_{\xbfL^\infty(\Omega)} \, \norm{\gradi_{\edges\n}(\rho\n)}_{\xbfL^{\frac{1+\eta}{\eta}}(\Omega)}^{\frac 1\eta} \,\snorm{\bfw\n}_{\xbfW^{1,\frac{\eta}{1+\eta}}(\Omega)}\\
& \lesssim h\n^{\frac{\eta}{1+\eta}(\xi_2 -  \frac{5}{4\eta\Gamma}(\frac{3}{1+\eta} + \xi_3))} \norm{\tilde{\bfu}_{n,\delta}}_{\xbfL^\infty(\Omega)},\\
& \lesssim C_\delta h\n^{\frac{\eta}{1+\eta}(\xi_2 - \frac{5}{4\eta\Gamma}(\frac{3}{1+\eta} + \xi_3))},
\end{align*}
where the involved constants are independent of $n$ (and $\delta$).
Using again \eqref{thm:cond_xi2}, this implies that 
\begin{equation}
\label{eq:control-R72}
|R_{7,2}^{n,\delta}|\to 0 \quad \text{ as $n\to+\infty$ for any fixed $\delta>0$}.
\end{equation}

\medskip
Let $\bfQ_{n,\delta}$ and $\Pi_{\mesh\n} \bfv\n$ be the functions defined by:
\[
\begin{aligned}
\bfQ_{n,\delta}(\bfx) &= \sum_{K\in\mesh\n} \frac{1}{|K|}\Big(\sum_{\edge\in\edges(K)} |\edge|\,\rho_\edge \, (\bfu_\edge\cdot\bfn_{K,\edge})\, \big((\tilde{\bfu}_{n,\delta})_\edge-(\tilde{\bfu}_{n,\delta})_K\big) \Big ) \Ind_K(\bfx), \\[2ex]
\Pi_{\mesh\n} \bfv\n(\bfx) &= \sum_{K\in\mesh\n} \bfv_K \, \Ind_K(\bfx),
\end{aligned}
\]
so that, back to \eqref{eq:d-conv-reg-1}, we have :
\begin{equation}
\label{eq:d-conv-reg-2}
- \int_\Omega(\xP_{\edges\n}\rho\n) \,(\xPi_{\edges\n}\tilde{\bfu}_{n,\delta})\otimes (\xPi_{\edges\n}\bfu\n) : \gradi_{\edges\n} \bfv\n\,\dx 
 = \int_\Omega \bfQ_{n,\delta}\cdot \Pi_{\mesh\n} \bfv\n \, \dx + R_6^{n,\delta}+ R_{7,1}^{n,\delta} + R_{7,2}^{n,\delta}. 
\end{equation}
Let us prove that, for a fixed $\delta >0$, $\bfQ_{n,\delta}$ weakly converges (up to a subsequence) in $\xbfL^{r}(\Omega)$ for some $r>1$ towards $\rho(\bfu\cdot\gradi)\bfu_\delta$ as $n\to+\infty$.
The sum in $\bfQ_{n,\delta}(\bfx)$ can be rearranged as follows:
\begin{multline*}
\bfQ_{n,\delta}(\bfx)
= \sum_{\substack{\edge\in\edgesintn \\ \edge=K|L } } \rho_\edge \, \Bigg ( \frac{|\edge|}{|K|}\, \big((\tilde{\bfu}_{n,\delta})_\edge-(\tilde{\bfu}_{n,\delta})_K\big)\,(\bfu_\edge \cdot\bfn_{K,\edge})\Ind_K(\bfx) \\
+  \frac{|\edge|}{|L|}\,\big((\tilde{\bfu}_{n,\delta})_\edge-(\tilde{\bfu}_{n,\delta})_L\big)\,(\bfu_\edge \cdot\bfn_{L,\edge})\Ind_L(\bfx)\Bigg ).
\end{multline*}
Proceeding as above for the control of $\big| (\tilde{\bfu}_{n,\delta})_K- (\tilde{\bfu}_{n,\delta})_\edge \big |^6$,
and invoking once again the following estimates 
\[
\norm{\tilde{\bfu}_{n,\delta}}_{1,6,\mesh\n}
\lesssim
\snorm{\bfu_{n,\delta}}_{\xbfW^{1,6}(\Omega)}
\lesssim
C_\delta \norm{\bfu\n}_{\xbfL^6(\Omega)}
\leq C_\delta \norm{\bfu\n}_{1,2,\mesh\n},
\]
combined with the estimates on $\xP_{\edges\n}\rho\n$ in $\xL^{3(\gamma-1)}(\Omega)$, $\Pi_{\edges\n}\bfu\n$ in $\xbfL^6(\Omega)$, we can prove that $(\bfQ_{n,\delta})_{n\in\xN}$ is bounded in $\xbfL^r(\Omega)$ with $r > 1$ (because $3(\gamma-1)>\frac 32$). 
Then, up to a subsequence, $\bfQ_{n,\delta}$ weakly converges towards some $\bfQ_\delta$ in $\xbfL^r(\Omega)$ as $n \rightarrow +\infty$.

\medskip
Let us now identify $\bfQ_\delta$. Let $\bfpsi\in\mcal{C}_c^\infty(\Omega)^3$ and denote $\bfpsi\n=I_{\mesh\n} \bfpsi$. Since $\bfpsi$ is smooth, we have $\Pi_{\mesh\n}\bfpsi\n\to\bfpsi$ in $\xbfL^{r'}(\Omega)$ (with $\frac{1}{r}+\frac{1}{r'}=1$). Hence we have (observing that $R_6^{n,\delta}\to0$ and $R_7^{n,\delta}\to0$ as $n\to+\infty$ with $\bfpsi\n$ instead of $\bfv\n$ ):
\[
  \int_\Omega \bfQ_\delta\cdot \bfpsi \, \dx  
  %= \lim\limits_{n\to+\infty}\, \int_\Omega \bfQ_{n,\delta}\cdot \bfpsi \, \dx
   = \lim\limits_{n\to+\infty}\, \int_\Omega \bfQ_{n,\delta} \cdot \Pi_{\mesh\n} \bfpsi\n \, \dx 
   = \lim\limits_{n\to+\infty}\,- \int_\Omega(\xP_{\edges\n}\rho\n) \,(\xPi_{\edges\n}\tilde{\bfu}_{n,\delta})\otimes (\xPi_{\edges\n}\bfu\n) : \gradi_{\edges\n} \bfpsi\n\,\dx.
\]
Since $\tilde \bfu_{n,\delta}$ converges strongly as $n \rightarrow +\infty$ to $\bfu_\delta$ in $\xbfL^q(\Omega)$  for all $q<6$ (uniformly with respect to $\delta$) and $\norm{\tilde \bfu_{n,\delta}}_{1,2,\mesh\n}\leq C_\delta \norm{\bfu\n}_{1,2,\mesh\n}$, we can reproduce the same arguments as those used in the previous Subsection \ref{sec:limit:mom} (passing to the limit in the momentum equation) and obtain:
\[
 \int_\Omega \bfQ_\delta \cdot \bfpsi \, \dx = -\int_\Omega \rho\bfu_\delta \otimes \bfu : \gradi \bfpsi \,\dx=-\int_\Omega \bfu_\delta \otimes (\rho \bfu) : \gradi \bfpsi \,\dx.
\]
Since the limit functions satisfy $(\rho,\bfu)\in\xL^{3(\gamma-1)}(\Omega)\times\xbfH^1_0(\Omega)$ and since we have already proved that $\dive(\rho\bfu)=0$ in Section \ref{sec:limit:mass}, we infer that: 
\[
\bfQ_\delta = \rho(\bfu\cdot\gradi)\bfu_\delta .
\] 
Back to \eqref{eq:d-conv-reg-0} and \eqref{eq:d-conv-reg-2} we get :
\begin{multline}
\label{eq:d-conv-reg-3}
- \int_\Omega(\xP_{\edges\n}\rho\n) \,(\xPi_{\edges\n}\bfu\n)\otimes (\xPi_{\edges\n}\bfu\n): \gradi_{\edges\n} \bfv\n\,\dx +\int_\Omega \rho \bfu \otimes \bfu :\gradi(\phi \bfw)\, \dx \\[2ex]
=  R_5^{n,\delta} + R_6^{n,\delta}+ R_{7,1}^{n,\delta} + R_{7,2}^{n,\delta} +  R_8^{n,\delta}+ R_9^{\delta}. 
\end{multline}
where
\[
 \begin{aligned}
  R_8^{n,\delta} &= \int_\Omega \bfQ_{n,\delta} \cdot \Pi_{\mesh\n} \bfv\n \dx - \int_\Omega \bfQ_{k} \cdot (\phi\bfw) \, \dx, \\[2ex]
  R_9^{\delta} &= \int_\Omega \rho (\bfu-\bfu_\delta) \otimes \bfu :\gradi(\phi \bfw)\, \dx .
 \end{aligned}
\]
The function $\Pi_{\mesh\n} \bfv\n$ converges to $\phi \bfw$ strongly in $\xbfL^{r'}(\Omega)$ as $n\to+\infty$. 
Indeed, we know that in $\xbfL^{r'}(\Omega)$, $\phi\bfw_n \to \phi \bfw$ and $\bfdelta\n = \bfv\n-\phi\bfw_n \to 0$ as $n\to+\infty$ and we also have $\Pi_{\mesh\n} \bfv\n - \bfv\n \to 0$ in $\xbfL^{r'}(\Omega)$ since:
\[
\begin{aligned}
\norm{\xPi_{\mesh\n} \bfv\n-\bfv\n}_{\xbfL^{r'}(\Omega)}^{r'} 
&= \sum_{K\in\mesh\n} \int_{K} \Big |\sum_{\edge,\edge'\in\edges(K)} (\bfv_\edge-\bfv_{\edge'}) \xi_K^\edge\,\zeta_{\edge'}(\bfx)\Big |^{r'}\dx \\
&\lesssim  h\n^{r'}\,\sum_{K\in\mesh\n} h_K^{3-r'}\,\sum_{\edge,\edge'\in\edges(K)}|\bfv_\edge-\bfv_{\edge'}|^{r'}. 
\end{aligned}
\]
Hence we have $\norm{\xPi_{\mesh\n} \bfv\n-\bfv\n}_{\xbfL^{r'}(\Omega)}\lesssim h\n \,\norm{\bfv\n}_{1,r',\edges\n}\lesssim h\n\,\norm{\bfv\n}_{1,r',\mesh\n} \lesssim h\n \snorm{\phi\bfw\n}_{\xbfW^{1,r'}(\Omega)}\lesssim h\n$. Therefore, by the weak convergence of $\bfQ_{n,\delta}$ towards $\bfQ_\delta$ in $\xbfL^{r}(\Omega)$ we have:
\begin{equation}
\label{eq:control-R8}
|R_{8}^{n,\delta}|\to 0 \quad \text{ as $n\to+\infty$ for any fixed $\delta>0$}.
\end{equation}

\medskip
Combining the estimates \eqref{eq:control-R5}-\eqref{eq:control-R6}-\eqref{eq:control-R71}-\eqref{eq:control-R72}-\eqref{eq:control-R8} and passing to limit $n\to+\infty$ in \eqref{eq:d-conv-reg-3}, we obtain that:
\begin{multline}
\label{eq:d-conv-reg-4}
\limsup_{n\rightarrow +\infty} \Big | \int_\Omega(\xP_{\edges\n}\rho\n) \,(\xPi_{\edges\n}\bfu\n)\otimes (\xPi_{\edges\n}\bfu\n): \gradi_{\edges\n} \bfv\n\,\dx  - \int_\Omega \rho \bfu \otimes \bfu :\gradi(\phi \bfw)\, \dx \Big | \\[2ex]
\lesssim \limsup_{n\rightarrow +\infty}\norm{\bfu\n - \bfu_{n,\delta}}_{\xbfL^q(\Omega)} + |R_9^{\delta}|, 
\end{multline}
for some $q\in[1,6)$ and for all $\delta>0$. By \eqref{eq:d-cvg-reg-u-3} we have $R_9^{\delta}\to0$ as $\delta\to0$ and by the uniform in $n$ convergence \eqref{eq:d-cvg-reg-u-2} we finally obtain, letting $\delta\to0$ in \eqref{eq:d-conv-reg-4} that:
\[
 \lim\limits_{n\to+\infty} \int_\Omega(\xP_{\edges\n}\rho\n) \,(\xPi_{\edges\n}\bfu\n)\otimes (\xPi_{\edges\n}\bfu\n): \gradi_{\edges\n} \bfv\n\,\dx = \int_\Omega \rho \bfu \otimes \bfu :\gradi(\phi \bfw)\, \dx.
\]

\medskip
Going back to \eqref{eq:eff_flux7} we obtain:
\begin{multline}
\label{eq:eff_flux8}
\lim\limits_{n\to+\infty} \, \int_\Omega \big( (2\mu+\lambda) \dive_{\mesh\n}\,\bfu\n - a \rho\n^\gamma \big)\,T_k(\rho\n)\,\phi\,\dx   \\ 
=  \int_\Omega \rho \bfu \otimes \bfu :\gradi(\phi \bfw)\, \dx - (2\mu+\lambda)\int_\Omega\dive\,\bfu\,(\bfw\cdot\gradi\phi)\, \dx \\
- \mu\, \int_\Omega\rot\, \bfu \cdot (L(\phi)\,\bfw) \,  \dx
+ a \int_\Omega \overline{\rho^\gamma}\,\bfw\cdot\gradi\phi \, \dx +\int_\Omega \bff\cdot(\phi\bfw) \,\dx.
\end{multline}
Applying the identity \eqref{curlcurl2} to the functions $\bfu$ and $\phi\bfw\in\xbfH^1_0(\Omega)$, we get: %the limit of the right hand side of \eqref{eq:eff_flux6} is
\begin{align*}
&\lim\limits_{n\to+\infty} \ \int_\Omega \big( (2\mu+\lambda) \dive_{\mesh\n}\,\bfu\n - a\rho\n^\gamma \big)\,T_k(\rho\n)\,\phi\,\dx  \\
& \quad =
 \int_\Omega \big ((2\mu+\lambda)\,\dive\,\bfu- a\overline{\rho^\gamma} \big )\,\overline{T_k(\rho)}\,\phi\,\dx 
 + \int_\Omega \rho\, \bfu \otimes \bfu : \gradi (\phi\bfw) \,\dx
 \\
& \qquad 
 -\mu \int_\Omega \gradi\bfu:\gradi(\phi\bfw)\,\dx
 -(\mu+\lambda) \int_\Omega \dive\,\bfu \, \dive(\phi\bfw)\,\dx
 \\
& \qquad + a \int_\Omega \overline{\rho^\gamma} \,\dive(\phi\bfw)\,\dx
 +\int_\Omega \bff\cdot(\phi\bfw)\,\dx.
\end{align*}
We have already proved that the limit triple $(\rho,\bfu,\overline{\rho^\gamma})\in \xL^{3(\gamma-1)}(\Omega)\times\xbfH^1_0(\Omega)\times\xL^{\frac{3(\gamma-1)}{\gamma}}$ satisfies the momentum equation in the weak sense. 
Thus, applying Proposition \ref{prop:convergence_u} to $\bfv=\phi\bfw$ (using the density of $\mcal{C}_c^\infty(\Omega)^3$ in $\xbfW^{1,q}_0(\Omega)$ for all $q\in[1,+\infty)$) yields
\begin{equation*}
\lim\limits_{n\to+\infty} \ \int_\Omega \big( (2\mu+\lambda) \dive_{\mesh\n}\,\bfu\n - a\rho\n^\gamma \big)\,T_k(\rho\n)\,\phi\,\dx  =
 \int_\Omega \big ((2\mu+\lambda)\,\dive\,\bfu- a\overline{\rho^\gamma} \big )\,\overline{T_k(\rho)}\,\phi\,\dx,
\end{equation*}
thus concluding the proof of Lemma \ref{prop:eff_flux1}.
\end{proof}

\subsubsection{Strong convergence of the density and renormalization property} 

\paragraph{Properties of the truncation operators $T_k$.} 

We first state two results that are the discrete counterparts of Lemmas \ref{lem:cvg-Tk}-\ref{lem:osc}. 
\begin{lem}\label{lem:cvg-Tk:disc}
	There exists a constant $C$ such that the following inequality holds for all $1 \leq q < 3(\gamma-1)$, $n\in\xN$ and $k \in\xN^*$:
	\[
	\norm{\overline{T_k(\rho)} - \rho}_{\xL^q(\Omega)} + \norm{T_k(\rho) - \rho}_{\xL^q(\Omega)} + \norm{T_k(\rho\n) - \rho\n}_{\xL^q(\Omega)}
	\leq C k^{\frac{1}{3(\gamma-1)} - \frac{1}{q}}.
	\]	
	Consequently, as $k \rightarrow +\infty$, the sequences $(\overline{T_k(\rho)})_{k \in \mathbb{N}^*}$ and $(T_k(\rho))_{k \in \mathbb{N}^*}$ both converge strongly to $\rho$ in $\xL^q(\Omega)$ for all $q \in [1,3(\gamma-1))$.
\end{lem}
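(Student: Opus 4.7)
The plan is to mimic the continuous proof of Lemma \ref{lem:cvg-Tk} line by line, the key observation being that the two ingredients used there, namely the uniform $\xL^{3(\gamma-1)}(\Omega)$ bound on the densities and the uniform $\xL^1(\Omega)$ bound (coming from the mass constraint), are both available in the discrete setting. The uniform $\xL^{3(\gamma-1)}$ estimate on $(\rho\n)_{n\in\xN}$ is precisely one of the bounds contained in \eqref{unif:est}. For the $\xL^1$ bound, summing the discrete mass equation \eqref{eq:sch_mass} against the constant $1$ (i.e.\ multiplying by $|K|$ and summing over $K\in\mesh\n$), both the convective term $\dive_{\mesh\n}(\rho\n\bfu\n)$ and the diffusive stabilization $\Delta_{\frac{1+\eta}{\eta},\mesh\n}(\rho\n)$ telescope (by conservativity of the fluxes and antisymmetry in $(K,L)$, together with $\bfu\n|_{\dv\Omega}=0$), leaving $h\n^{\xi_1}\sum_{K\in\mesh\n}|K|(\rho_K-\rho^\star)=0$; hence $\int_\Omega\rho\n\dx=|\Omega|\rho^\star$ exactly.

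Having these two controls, the estimate for $\norm{T_k(\rho\n)-\rho\n}_{\xL^q(\Omega)}$ follows as in the continuous case: a Chebyshev inequality applied with the $\xL^1$ bound gives $|\{\rho\n\geq k\}|\leq C/k$, and H\"older's inequality between $\xL^q$ and $\xL^{3(\gamma-1)}$, together with the pointwise inequality $|T_k(\rho\n)-\rho\n|\leq \rho\n\,\mcal{X}_{\{\rho\n\geq k\}}$, yields
\[
\norm{T_k(\rho\n)-\rho\n}_{\xL^q(\Omega)}\leq \norm{\rho\n}_{\xL^{3(\gamma-1)}(\Omega)}\,|\{\rho\n\geq k\}|^{\frac 1 q - \frac{1}{3(\gamma-1)}}\leq C\,k^{\frac{1}{3(\gamma-1)}-\frac 1 q}.
\]

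For the limit density $\rho$, the same argument applies: weak lower semi-continuity of the $\xL^{3(\gamma-1)}$-norm and of the $\xL^1$-norm transfer the uniform bounds from $\rho\n$ to $\rho$, and one then repeats the Chebyshev--H\"older computation. For $\overline{T_k(\rho)}$, note that the weak-$\star$ convergence $T_k(\rho\n)\rightharpoonup^\star \overline{T_k(\rho)}$ in $\xL^\infty(\Omega)$ combined with the weak convergence $\rho\n\rightharpoonup\rho$ in $\xL^{3(\gamma-1)}(\Omega)$ implies that $T_k(\rho\n)-\rho\n\rightharpoonup \overline{T_k(\rho)}-\rho$ weakly in $\xL^q(\Omega)$, so
\[
\norm{\overline{T_k(\rho)}-\rho}_{\xL^q(\Omega)}\leq \liminf_{n\to+\infty}\norm{T_k(\rho\n)-\rho\n}_{\xL^q(\Omega)}\leq C\,k^{\frac{1}{3(\gamma-1)}-\frac 1 q}.
\]
Since $q<3(\gamma-1)$, the exponent $\frac{1}{3(\gamma-1)}-\frac 1 q$ is strictly negative, giving the announced strong convergence as $k\to+\infty$.

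There is no real obstacle here: this lemma is a purely measure-theoretic statement about sequences satisfying the uniform Lebesgue bounds $(\rho\n)\subset\xL^{3(\gamma-1)}(\Omega)\cap\xL^1(\Omega)$, and it is indifferent to whether $(\rho\n)$ arises from a continuous PDE or from the scheme. The only genuinely discrete verification is the $\xL^1$ (total mass) conservation above, which is built into the scheme precisely through the stabilization term $h_\mesh^{\xi_1}(\rho-\rho^\star)$.
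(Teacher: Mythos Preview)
Your proposal is correct and follows essentially the same approach as the paper, which simply states that ``the proofs of these two lemmas follow the same lines as in the continuous case'' (referring to Lemma~\ref{lem:cvg-Tk}). Your additional verification that the discrete mass constraint $\int_\Omega\rho\n\dx=|\Omega|\rho^\star$ holds exactly (by telescoping the convective and diffusive fluxes in \eqref{eq:sch_mass}) is a welcome clarification of a fact the paper uses implicitly elsewhere.
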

%\begin{proof}
%	The proof is similar to that of Lemma \ref{lem:cvg-Tk} in the continuous case. It relies on the fact that $\int_\Omega \rho\n=|\Omega|\rho^\star$ and on the uniform bound on $(\norm{\rho\n}_{\xL^{3(\gamma-1)}(\Omega)})_{n\in\xN}$. 
%\end{proof}

\begin{lem}\label{lem:osc:disc}
	There exists a constant $C$ such that the following estimate holds:
	\begin{equation}
	\sup_{k > 1} ~\limsup_{n \rightarrow +\infty} \norm{T_k(\rho\n) - T_k(\rho)}_{\xL^{\gamma+1}(\Omega)} \leq C.
	\end{equation}	
\end{lem}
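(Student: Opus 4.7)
The plan is to mimic the proof of the continuous analogue (Lemma \ref{lem:osc}) almost verbatim, since the only ingredients required are (i) the pointwise scalar inequality for $T_k$, (ii) weak convergence of $\rho_n$, $\rho_n^\gamma$, and $T_k(\rho_n)$ established in Section \ref{sec:pass-limit}, (iii) the convexity/concavity properties of $t\mapsto t^\gamma$ and $t\mapsto -T_k(t)$, (iv) the discrete weak compactness of the effective viscous flux (Proposition \ref{prop:eff_flux1}), and (v) the uniform $L^2$-bound on $\dive_{\mesh_n}\bfu_n$.

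First I would exploit the elementary pointwise inequality
\[
|T_k(r_1)-T_k(r_2)|^{\gamma+1}\leq (r_1^\gamma-r_2^\gamma)(T_k(r_1)-T_k(r_2)),\qquad r_1,r_2\geq 0,
\]
applied with $r_1=\rho_n$ and $r_2=\rho$. Integrating over $\Omega$ and passing to the $\limsup$ as $n\to+\infty$, using that $\rho_n^\gamma\rightharpoonup \overline{\rho^\gamma}$ weakly in $L^{(1+\eta)}(\Omega)$ and $T_k(\rho_n)\rightharpoonup^\star \overline{T_k(\rho)}$ weakly-$\star$ in $L^\infty(\Omega)$, the right-hand side is split into
\[
\int_\Omega\bigl(\overline{\rho^\gamma T_k(\rho)}-\overline{\rho^\gamma}\,\overline{T_k(\rho)}\bigr)\dx+\int_\Omega\bigl(\overline{\rho^\gamma}-\rho^\gamma\bigr)\bigl(\overline{T_k(\rho)}-T_k(\rho)\bigr)\dx.
\]
The convexity of $t\mapsto t^\gamma$ yields $\overline{\rho^\gamma}\geq\rho^\gamma$ a.e.\ and the concavity of $T_k$ gives $\overline{T_k(\rho)}\leq T_k(\rho)$ a.e., so the second integral is non-positive. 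This gives
\[
\limsup_{n\to+\infty}\int_\Omega|T_k(\rho_n)-T_k(\rho)|^{\gamma+1}\dx\leq\int_\Omega\bigl(\overline{\rho^\gamma T_k(\rho)}-\overline{\rho^\gamma}\,\overline{T_k(\rho)}\bigr)\dx.
\]

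Next I would apply the discrete weak compactness of the effective viscous flux (Proposition \ref{prop:eff_flux1}) with $\phi\equiv 1$ (after a standard truncation/density argument since $T_k(\rho)$ and $\overline{T_k(\rho)}$ are only $L^\infty$; alternatively one localizes and uses a cut-off converging to $\mathbf 1_\Omega$). This replaces the right-hand side by
\[
\frac{2\mu+\lambda}{a}\limsup_{n\to+\infty}\int_\Omega\bigl(T_k(\rho_n)-\overline{T_k(\rho)}\bigr)\dive_{\mesh_n}\bfu_n\dx,
\]
which, upon adding and subtracting $T_k(\rho)$, splits as
\[
\frac{2\mu+\lambda}{a}\limsup_{n\to+\infty}\int_\Omega\bigl(T_k(\rho_n)-T_k(\rho)\bigr)\dive_{\mesh_n}\bfu_n\dx+\frac{2\mu+\lambda}{a}\limsup_{n\to+\infty}\int_\Omega\bigl(T_k(\rho)-\overline{T_k(\rho)}\bigr)\dive_{\mesh_n}\bfu_n\dx.
\]
The second term vanishes in the limit because $\dive_{\mesh_n}\bfu_n\rightharpoonup\dive\bfu$ weakly in $L^2(\Omega)$ (a byproduct of Theorem \ref{thrm:stokes-lim-reg}) and $T_k(\rho)-\overline{T_k(\rho)}\in L^\infty(\Omega)$ is a fixed function, so the product converges to $\int_\Omega(T_k(\rho)-\overline{T_k(\rho)})\dive\bfu\dx$; but since the overall argument here will absorb a term of the form $\|T_k(\rho_n)-T_k(\rho)\|_{L^2}$, we only need a uniform $L^2$ bound. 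Using the uniform bound $\|\dive_{\mesh_n}\bfu_n\|_{L^2(\Omega)}\leq C$ (from \eqref{unif:est} via $\norm{\bfu_n}_{1,2,\mesh_n}\leq C_0$) together with the Cauchy--Schwarz inequality, the first term is bounded by $C\limsup_n\|T_k(\rho_n)-T_k(\rho)\|_{L^2(\Omega)}$.

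Finally, since $\gamma+1>2$, Hölder's inequality on a bounded domain gives $\|T_k(\rho_n)-T_k(\rho)\|_{L^2}\leq|\Omega|^{\frac{\gamma-1}{2(\gamma+1)}}\|T_k(\rho_n)-T_k(\rho)\|_{L^{\gamma+1}}$, and Young's inequality absorbs this into the left-hand side:
\[
\limsup_{n\to+\infty}\|T_k(\rho_n)-T_k(\rho)\|_{L^{\gamma+1}(\Omega)}^{\gamma+1}\leq C+\tfrac12\limsup_{n\to+\infty}\|T_k(\rho_n)-T_k(\rho)\|_{L^{\gamma+1}(\Omega)}^{\gamma+1},
\]
yielding the desired $k$-uniform bound. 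The main obstacle, beyond careful bookkeeping, is justifying the use of Proposition \ref{prop:eff_flux1} with the non-smooth weight $T_k(\rho)-\overline{T_k(\rho)}$ in place of the test function $\phi\in\mathcal{C}_c^\infty(\Omega)$; this is handled by the usual density argument combined with the $L^\infty$ bound $\|T_k(\rho_n)\|_{L^\infty}\leq k$ and the Dominated Convergence Theorem, exactly as in the continuous setting (cf.\ inequality \eqref{eq:c-appl-eff-flux}).
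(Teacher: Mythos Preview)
Your proposal is correct and matches the paper's own approach: the paper explicitly states that the proof of Lemma~\ref{lem:osc:disc} ``follows the same lines as in the continuous case'' (Lemma~\ref{lem:osc}), and your write-up reproduces that argument with the discrete effective viscous flux identity (Proposition~\ref{prop:eff_flux1}) and the uniform bound on $\norm{\dive_{\mesh_n}\bfu_n}_{\xL^2(\Omega)}$ substituted for their continuous counterparts. One minor clarification: the second integral in your split does \emph{not} vanish but rather converges to $\int_\Omega (T_k(\rho)-\overline{T_k(\rho)})\dive\,\bfu\dx$; it is then controlled by $C\limsup_n\norm{T_k(\rho_n)-T_k(\rho)}_{\xL^2(\Omega)}$ via Cauchy--Schwarz and the weak lower semicontinuity $\norm{T_k(\rho)-\overline{T_k(\rho)}}_{\xL^2}\leq\liminf_n\norm{T_k(\rho)-T_k(\rho_n)}_{\xL^2}$, exactly as in \eqref{eq:c-appl-eff-flux}.
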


%\begin{proof}
%	Here again, the proof is similar to that of Lemma \ref{lem:osc} in the continuous case. It relies on the convergence of the effective viscous flux obtained in Proposition \ref{prop:eff_flux1} and on the uniform bound on $(\norm{\dive_{\mesh\n} \, \bfu\n}_{\xL^{2}(\Omega)})_{n\in\xN}$.
%\end{proof}
The proofs of these two lemmas follow the same lines as in the continuous case.

\paragraph{Renormalization equation associated with $T_k$.} 

We first state a discrete renormalization property for truncated functions which is an analogous of the renormalization property stated in Remark \ref{rmk:renorm}. The proof is similar to that of Prop. \ref{prop:renorm} which is given in Appendix \ref{sec:prop:renorm}.

\begin{prop}
\label{prop:renorm:disc}
For any $b \in \mathcal{C}^1([0,+\infty))$,
denote $b_M$ the truncated function such that
\[
b_M(t) = \begin{cases}
\ b(t) \quad & \text{if} ~ t < M, \\
\ b(M) \quad & \text{if} ~ t \geq M,
\end{cases}
\]
and $[b_M]'_+$ its discontinuous derivative:
\[
[b_M]'_+(t) = \begin{cases}
\ b'(t) \quad & \text{if} ~ t < M, \\
\ 0 \quad & \text{if} ~ t \geq M.
\end{cases}
\]
Let $\disc=(\mesh,\edges)$ be a staggered discretization of $\Omega$.
If $(\rho,\bfu)\in\xL_\mesh(\Omega)\times\xbfH_{\mesh,0}(\Omega)$ satisfy the discrete mass balance \eqref{eq:sch_mass} with $\rho>0$ \emph{a.e.} in $\Omega$ (\emph{i.e.} $\rho_K>0$, $\forall K\in\mesh$) then we have:
\begin{align}\label{app:eq:discr-renorm-loc-disc-disc}
\dive \big(b_M(\rho) \bfu \big)_K + \big([b_M]'_+(\rho_K)\rho_K - b_M(\rho_K) \big)\dive (\bfu)_K + R^1_K + R^2_K + R^3_K = 0 \quad \forall K \in \mesh,
\end{align}
where
\[
\dive \big(b_M(\rho) \bfu \big)_K 
= \dfrac{1}{|K|} \sum_{\edge \in \edges(K)}{|\edge| \ b_M(\rho_\edge) \bfu_\edge \cdot \bfn_{K,\edge}} ,
\]
and
\begin{align*}
R^1_K & = \frac{1}{|K|}\sum_{\edge \in \edges(K)}|\edge| r_{K,\edge} \,( \bfu_\edge \cdot n_{K,\edge}) \quad \text{and} \quad 
		r_{K,\edge} = [b_M]'_+(\rho_K)(\rho_\edge-\rho_K) + b_M(\rho_K) - b_M(\rho_\edge), \\
R^2_K & =  h_\mesh^{\xi_2} \  [b_M]'_+(\rho_K) \frac{1}{|K|}\sum_{\edge\in\edges(K)}|\edge| \,\Big(\dfrac{|\edge|}{|D_\edge|}\Big)^\frac{1}{\eta}\,|\rho_K-\rho_L|^{\frac{1}{\eta}-1}\,(\rho_K-\rho_L), \\
R^3_K & = h_\mesh^{\xi_1} [b_M]'_+(\rho_K)(\rho_K - \rho^\star). 
\end{align*}
\end{prop}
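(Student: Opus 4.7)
The plan is to mimic the proof of Proposition \ref{prop:renorm} given in Appendix \ref{sec:prop:renorm}, the only novelty being that $b_M$ has a discontinuous one-sided derivative $[b_M]'_+$. Since the renormalization identity is derived cell by cell by a purely algebraic manipulation of the discrete mass balance, no continuity of the derivative will actually be required.

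First I would start from the discrete mass balance \eqref{eq:sch_mass} restricted to a single primal cell $K\in\mesh$, namely
\[
\dfrac{1}{|K|}\sum_{\edge\in\edges(K)}|\edge|\,\rho_\edge\,\bfu_\edge\cdot\bfn_{K,\edge} \;+\; h_\mesh^{\xi_1}(\rho_K-\rho^\star) \;-\; h_\mesh^{\xi_2}\,\Delta_{\frac{1+\eta}{\eta},\mesh}(\rho)_K \;=\; 0,
\]
and multiply it by the scalar $[b_M]'_+(\rho_K)$. Two of the three resulting terms are, by construction, exactly $R^2_K$ and $R^3_K$ as written in the statement, so the identity reduces to
\[
[b_M]'_+(\rho_K)\,\dive_\mesh(\rho\bfu)_K \;+\; R^2_K \;+\; R^3_K \;=\; 0.
\]
It then remains to rewrite the first term in the desired ``renormalized'' form. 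This is the key algebraic step: for each face $\edge\in\edges(K)$ one applies the identity
\[
[b_M]'_+(\rho_K)\,\rho_\edge \;=\; b_M(\rho_\edge) \;+\; \bigl([b_M]'_+(\rho_K)\,\rho_K - b_M(\rho_K)\bigr) \;+\; r_{K,\edge},
\]
with $r_{K,\edge}$ as defined in the statement. Multiplying by $|\edge|\,\bfu_\edge\cdot\bfn_{K,\edge}$, summing over $\edge\in\edges(K)$, dividing by $|K|$ and recalling that $\sum_{\edge\in\edges(K)}|\edge|\,\bfu_\edge\cdot\bfn_{K,\edge}=|K|\,\dive(\bfu)_K$ yields
\[
[b_M]'_+(\rho_K)\,\dive_\mesh(\rho\bfu)_K \;=\; \dive\bigl(b_M(\rho)\bfu\bigr)_K \;+\; \bigl([b_M]'_+(\rho_K)\rho_K - b_M(\rho_K)\bigr)\,\dive(\bfu)_K \;+\; R^1_K,
\]
and combining with the previous display gives exactly \eqref{app:eq:discr-renorm-loc-disc-disc}.

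The only potential subtlety, and hence the ``main obstacle'', lies in handling the values $\rho_K=M$ where $b_M$ fails to be $\mathcal{C}^1$. However, since the whole derivation is a pointwise, finite-sum algebraic manipulation (no integration by parts in $b$, no chain rule beyond the algebraic one above) and since $r_{K,\edge}$ is defined as the exact algebraic discrepancy rather than a Taylor remainder, picking the right-continuous version $[b_M]'_+$ (which vanishes for $t\geq M$) is sufficient and the identity goes through without modification. This is precisely why the hypothesis $b\in\mathcal{C}^1([0,+\infty))$ suffices and no additional regularity on $b_M$ itself is needed. The second half of the statement (summation over $K$ after multiplication by $|K|$) would then follow by the same reordering of sums as in Proposition \ref{prop:renorm}, and the sign properties of $R^1_\edges$, $R^2_\edges$, $R^3_\mesh$ in the convex case transfer verbatim to $b_M$ whenever $b$ itself is convex.
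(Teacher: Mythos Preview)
Your proposal is correct and follows precisely the route the paper intends: it explicitly states that the proof of Proposition~\ref{prop:renorm:disc} is similar to that of Proposition~\ref{prop:renorm} (Appendix~\ref{sec:prop:renorm}), and your derivation---multiply the discrete mass balance on $K$ by $[b_M]'_+(\rho_K)$ and use the algebraic splitting of $[b_M]'_+(\rho_K)\rho_\edge$---is exactly that argument with $b'$ replaced by $[b_M]'_+$. One small remark: the statement of Proposition~\ref{prop:renorm:disc} only asks for the local identity~\eqref{app:eq:discr-renorm-loc-disc-disc}, so your final paragraph on the global summation and convexity signs is extra (and harmless), not a required part of the proof.
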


\bigskip
\noindent
Now, for any $k\in\xN^*$ we consider the function $L_k$ introduced in Section \ref{sec:continuous} and defined as
\begin{equation*}
L_k(t) = \begin{cases} 
\ t (\ln t - \ln k - 1), & \quad \text{if} ~ t \in [0,k), \\
\ -k,  & \quad \text{if} ~ t \in [k,+\infty).
\end{cases}
\end{equation*}
We recall that $L_k \in \mathcal{C}^0([0,+\infty)) \cap \mathcal{C}^1((0,+\infty))$ and
\[
t L_k'(t) - L_k(t) = T_k(t) \quad \forall t \in [0,+\infty).
\]

\medskip
\begin{prop}\label{prop:c-renorm-Lk:disc}
Under the assumptions of Theorem \ref{main_thrm},
let $(\rho,\bfu)\in \xL^{3(\gamma-1)}(\Omega)\times\xbfH^1_0(\Omega)$ be the limit couple of the sequence $(\rho\n,\bfu\n)_{n\in\xN}$.
Then, for all $k\in\xN^*$, the following inequalities hold:
\begin{align}
&\dive_{\mesh\n}(L_k(\rho\n) \bfu\n) + T_k(\rho\n)\dive_{\mesh\n} \bfu\n + R\n= 0,  \quad &\text{in} ~\mathcal{D}'(\mathbb{R}^3), &\quad \forall n\in\xN. \label{eq:c-Lk-n:disc} \\
&\dive(L_k(\rho) \bfu) + T_k(\rho)\dive\, \bfu \geq 0 \quad &\text{in} ~\mathcal{D}'(\mathbb{R}^3), \label{eq:c-Lk:disc}
\end{align}
where the discrete function $R\n$ satisfies:
$ \displaystyle  \int_\Omega R\n \geq 0.$
\end{prop}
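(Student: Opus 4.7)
\textbf{Plan for the discrete equation \eqref{eq:c-Lk-n:disc}.} I would apply the discrete renormalization identity of Proposition~\ref{prop:renorm} with $b=L_k$, extended via Remark~\ref{rmrk:cond-b} to handle the logarithmic singularity of $L_k'$ at the origin (since $|L_k'(t)| = |\ln t - \ln k| \lesssim t^{-\lambda_0}$ for $t\leq 1$ and any $\lambda_0\in(0,1)$). The algebraic identity $t L_k'(t)-L_k(t)=T_k(t)$ on $(0,+\infty)$ turns the middle term of Proposition~\ref{prop:renorm} into $T_k(\rho_n)\dive_{\mesh_n}\bfu_n$, so one defines $R_n$ as the piecewise constant function $R_n^1+R_n^2+R_n^3$ on the primal cells. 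Since $L_k''(t)=1/t>0$ on $(0,k)$ and $L_k$ is constant on $[k,+\infty)$, the function $L_k$ is convex, and the global estimates stated at the end of Proposition~\ref{prop:renorm} yield $\int_\Omega R_n \,\dx = R^1_\edges+R^2_\edges+R^3_\mesh\geq 0$.

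\textbf{Plan for the limit inequality \eqref{eq:c-Lk:disc}.} I would follow the architecture of the continuous proof of Proposition~\ref{prop:c-renorm-Lk}, adapted to track the sign contributed by the discrete stabilization. First, apply Proposition~\ref{prop:renorm:disc} with $b=T_M$ to the sequence $(\rho_n,\bfu_n)$; summing against smooth test functions and extending $\rho_n,\bfu_n$ by zero outside $\Omega$ gives a discrete identity of the form
\[
\dive_{\mesh_n}(T_M(\rho_n)\bfu_n) + [\rho_n [T_M]'_+(\rho_n) - T_M(\rho_n)]\dive_{\mesh_n}\bfu_n + R_n^M = 0 \quad \text{in }\mathcal{D}'(\xR^3),
\]
with $R_n^M$ satisfying $\int_\Omega R_n^M\dx\geq 0$ by convexity of $T_M$. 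Passing to the weak limit $n\to+\infty$ (using Proposition~\ref{prop:eff_flux1} and the weak-$\star$ convergence of $T_M(\rho_n)$, $[T_M]'_+(\rho_n)\rho_n$ in $\xL^\infty$), one obtains
\[
\dive(\overline{T_M(\rho)}\bfu) = -\overline{[\rho [T_M]'_+(\rho) - T_M(\rho)]\dive\bfu} - \bar R_M \quad \text{in }\mathcal{D}'(\xR^3),
\]
where $\bar R_M$ is the distributional limit of the (globally non-negative) sequence $R_n^M$. Since $\overline{T_M(\rho)}\in\xL^\infty(\Omega)$ and $\bfu\in\xbfH^1_0(\Omega)$, I would then apply Lemma~\ref{lem:diperna} extended as in Remark~\ref{rmk:renorm} to this pair with the regularized function $L_{k,\delta}(t)=L_k(t+\delta)$, producing
\[
\dive(L_{k,\delta}(\overline{T_M(\rho)})\bfu) + T_{k,\delta}(\overline{T_M(\rho)})\dive\bfu = -L_{k,\delta}'(\overline{T_M(\rho)})\,\overline{[\rho [T_M]'_+(\rho) - T_M(\rho)]\dive\bfu} - L_{k,\delta}'(\overline{T_M(\rho)})\,\bar R_M.
\]
Since $L_{k,\delta}'(t)=\ln(t+\delta)-\ln k\leq 0$ whenever $L_{k,\delta}'(t)\neq 0$, the second term on the right is non-negative. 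The first term is handled exactly as in \eqref{eq:c-ineg-Tk}--\eqref{eq:c-appl-eff-flux} by combining Lemma~\ref{lem:cvg-Tk:disc} and the $\xL^{\gamma+1}$ bound of Lemma~\ref{lem:osc:disc}, and vanishes in the limit $M\to+\infty$. Finally, letting $\delta\to 0^+$ by dominated convergence (using $|L_{k,\delta}(t)|\leq k$ and $|T_{k,\delta}(t)|\leq C(k)$ as in the bound after \eqref{eq:c-limit-renorm-Mk}) yields \eqref{eq:c-Lk:disc}.

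\textbf{Main obstacle.} The delicate step will be passing to the limit in the discrete remainder $R_n^M$ and ensuring the resulting distribution $\bar R_M$ is locally non-negative (not merely of non-negative integral), so that the combination $-L_{k,\delta}'(\overline{T_M(\rho)})\,\bar R_M$ has the desired sign. This requires exploiting the face-by-face non-negativity of $R_\edges^1$ (which follows from upwinding and convexity, as $r_{K,\edge}-r_{L,\edge}$ has the sign of $\bfu_\edge\cdot\bfn_{K,\edge}$) and of $R_\edges^2$ (a consequence of monotonicity of $[T_M]'_+$), combined with the \emph{a priori} bounds \eqref{unif:est}--\eqref{estimates2}. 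Controlling the $R_\mesh^3$ term, which has no local sign, will require invoking the global mass constraint and the convexity of $b$ separately.
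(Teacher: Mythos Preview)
Your argument for \eqref{eq:c-Lk-n:disc} is correct and identical to the paper's: apply Proposition~\ref{prop:renorm} (via Remark~\ref{rmrk:cond-b}) with the convex function $b=L_k$.

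For \eqref{eq:c-Lk:disc} there is a genuine sign error that propagates through your whole plan. The truncation $T_M(t)=\min(t,M)$ is \emph{concave}, not convex. Hence, when you invoke Proposition~\ref{prop:renorm:disc} with $b_M=T_M$, the remainder terms have the opposite sign to the one you claim: the conclusion of Proposition~\ref{prop:renorm} (non-negativity of $R^1_\edges,R^2_\edges,R^3_\mesh$) holds for convex $b$, so for concave $T_M$ the corresponding global remainders are $\leq 0$. In particular your statement ``$\int_\Omega R_n^M\dx\geq 0$ by convexity of $T_M$'' is false, and so is the subsequent claim that $-L_{k,\delta}'(\overline{T_M(\rho)})\,\bar R_M\geq 0$.

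The paper's route differs from yours precisely in how this sign issue and your ``main obstacle'' are resolved. Instead of passing the remainder to the limit as a distribution $\bar R_M$ and then trying to multiply it by $L_{k,\delta}'$, the paper fixes a test function $\phi\geq 0$ from the start and works at the level of the tested identity. The concavity of $T_M$ is used \emph{locally}: for each $K,\edge$ one has $r_{K,\edge}\geq 0$ (tangent-line inequality for concave functions), and upwinding makes $r_{K,\edge}\,(\bfu_\edge\cdot\bfn_{K,\edge})\leq 0$, hence $R_1^n\leq 0$; similarly, after splitting $R_2^n=R_{2,1}^n+R_{2,2}^n$, the monotonicity of $[T_M]'_+$ (non-increasing, again concavity) gives $R_{2,1}^n\leq 0$. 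These two non-positive pieces are dropped, yielding the discrete inequality \eqref{eq:d-renorm-weak}. The remaining pieces $R_{2,2}^n$ and $R_3^n$, which have no local sign, are shown to vanish as $n\to+\infty$ using the a~priori estimates \eqref{unif:est} and condition~\eqref{thm:cond_xi2}. Only then does one pass to the limit $n\to+\infty$ to obtain the distributional inequality \eqref{eq:d-limit-renorm}, and subsequently renormalize with $L_{k,\delta}$ at the continuous level. This decomposition (local-sign terms kept as an inequality, signless terms shown to vanish) is exactly what circumvents the obstacle you identified; your proposed mechanism of carrying a limit measure $\bar R_M$ through DiPerna--Lions renormalization would require local sign information that the global estimate $\int R_n^M$ does not provide.
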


\medskip

\begin{proof}
To prove \eqref{eq:c-Lk-n:disc}, we apply Proposition \ref{prop:renorm} (completed by Remark \ref{rmrk:cond-b}) with the function $b=L_k$ which is a convex function satisfying $|L'_k(t)| \leq C |\ln t|$ for $t \leq 1$.
We straightforwardly obtain \eqref{eq:c-Lk-n:disc}.

\medskip
Let $M\in\xN^*$. Applying %the discrete renormalization property for truncated functions (
Proposition \ref{prop:renorm:disc} to the function $T_M(t)$ (\emph{i.e.} $T_M=b_M$ with $b=Id$) we obtain:
\begin{align*}
\dive \big(T_M(\rho) \bfu \big)_K +\big([T_M]'_+(\rho_K)\rho_K - T_M(\rho_K) \big)\, \dive (\bfu)_K + R^1_K + R^2_K + R^3_K =0,  \quad \forall K \in \mesh.
\end{align*}
Let $\phi \in \mathcal{C}^\infty_c(\Omega)$ with $\phi \geq 0$. For $n \in \xN$ define $\phi\n\in\xL_{\mesh\n}(\Omega)$ by ${\phi\n}_{|K}=\phi_K$ the mean value of $\phi$ over $K$, for $K \in \mesh\n$. Multiplying the above identify by $|K|\phi_K$ and summing over $K\in\mesh\n$ yields:
\begin{multline*}
\sum_{K\in \mesh\n} \sum_{\edge \in \edges(K)} |\edge|T_M(\rho_\edge)( \bfu_\edge \cdot \bfn_{K,\edge}) \phi_K   \\
 + \sum_{K\in \mesh\n} \big([T_M]'_+(\rho_K)\rho_K - T_M(\rho_K) \big) \, \phi_K \Big ( \sum_{\edge \in \edges(K)} |\edge|\, \bfu_\edge \cdot \bfn_{K,\edge} \Big ) + R_1^n + R_2^n + R_3^n =0,
\end{multline*}
with
\begin{align*}
R_1^n & = \sum_{K \in \mesh\n} \sum_{\edge \in \edges(K)}|\edge| \Big( [T_M]'_+(\rho_K)(\rho_\edge-\rho_K) + T_M(\rho_K) - T_M(\rho_\edge) \Big) (\bfu_\edge \cdot \bfn_{K,\edge})\phi_K,\\
R_2^n & =  h\n^{\xi_2} \sum_{K \in \mesh\n} \  [T_M]'_+(\rho_K)\phi_K \sum_{\edge\in\edges(K)}|\edge| \,\Big(\dfrac{|\edge|}{|D_\edge|}\Big)^\frac{1}{\eta}\,|\rho_K-\rho_L|^{\frac{1}{\eta}-1}\,(\rho_K-\rho_L), \\
R_3^n & =   h\n^{\xi_1} \sum_{K \in \mesh\n}|K|[T_M]'_+(\rho_K) \phi_K(\rho_K - \rho^\star).
\end{align*}
Since the function $T_M$ is concave and $\rho_\edge$ is the upwind value of the density at the face $\edge$ with respect to $\bfu_\edge\cdot\bfn_{K,\edge}$, we have $R_1^n\leq 0$. The second remainder term can be rearranged as follows:
\[
 \begin{aligned}
  R_2^n & =  h\n^{\xi_2} \sum_{\substack{\edge\in\edgesintn \\ \substack \edge=K|L}} |\edge| \,\Big(\dfrac{|\edge|}{|D_\edge|}\Big)^\frac{1}{\eta}\,|\rho_K-\rho_L|^{\frac{1}{\eta}-1}\,(\rho_K-\rho_L)\, ([T_M]'_+(\rho_K) \phi_K-[T_M]'_+(\rho_L) \phi_L), \\
  &= R_{2,1}^n+ R_{2,2}^n, 
 \end{aligned}
\]
where
\[
\begin{aligned}
  R_{2,1}^n & =  h\n^{\xi_2} \sum_{\substack{\edge\in\edgesintn \\ \substack \edge=K|L}} |\edge| \,\Big(\dfrac{|\edge|}{|D_\edge|}\Big)^\frac{1}{\eta}\,|\rho_K-\rho_L|^{\frac{1}{\eta}-1}\,(\rho_K-\rho_L) \,([T_M]'_+(\rho_K) -[T_M]'_+(\rho_L) )\,  \phi_L, \\[2ex]
  R_{2,2}^n & =  h\n^{\xi_2} \sum_{\substack{\edge\in\edgesintn \\ \substack \edge=K|L}} |\edge| \,\Big(\dfrac{|\edge|}{|D_\edge|}\Big)^\frac{1}{\eta}\,|\rho_K-\rho_L|^{\frac{1}{\eta}-1}\,(\rho_K-\rho_L)\,  [T_M]'_+(\rho_K) \,( \phi_K-\phi_L).
\end{aligned}
\]
Since $T_M$ is concave, we have $R_{2,1}^n \leq 0$.
Hence we get:
\begin{multline}\label{eq:d-renorm-weak}
\sum_{K\in \mesh\n} \sum_{\edge \in \edges(K)} |\edge|T_M(\rho_\edge)( \bfu_\edge \cdot \bfn_{K,\edge}) \phi_K  \\
 + \sum_{K\in \mesh\n} \ \big([T_M]'_+(\rho_K)\rho_K - T_M(\rho_K) \big) \, \phi_K \Big( \sum_{\edge \in \edges(K)} |\edge| \, \bfu_\edge \cdot \bfn_{K,\edge} \Big ) + R_{2,2}^n+ R_3^n \geq 0 .
\end{multline}
We want to pass to the limit $n \to +\infty$ in \eqref{eq:d-renorm-weak}. 
To that end, we show that the remainder terms $R_{2,2}^n$ and $R_3^n$ converge to $0$ as $n \to +\infty$. Observing that for all $K\in\mesh\n$, $|[T_M]'_+(\rho_K)|\leq 1$, and since $\phi$ is a smooth function, we get:
\[
 \begin{aligned}
  |R_{2,2}^n| 
  & \leq  h\n^{\xi_2} \,   \sum_{\substack{\edge\in\edgesintn \\ \substack \edge=K|L}} |\edge| \,\Big(\dfrac{|\edge|}{|D_\edge|}\Big)^\frac{1}{\eta}\,|\rho_K-\rho_L|^{\frac{1}{\eta}}\,| \phi_K-\phi_L|\\
  &\lesssim  h\n^{\xi_2} \,  \norm{\gradi \phi}_{\xbfL^\infty(\Omega)} \, \sum_{\substack{\edge\in\edgesintn \\ \substack \edge=K|L}} |D_\edge| \,\Big(\dfrac{|\edge|}{|D_\edge|}\Big)^\frac{1}{\eta}\,|\rho_K-\rho_L|^{\frac{1}{\eta}} \\
  &\lesssim  h\n^{\xi_2} \,  \norm{\gradi \phi}_{\xbfL^\infty(\Omega)} \,\norm{|\gradi_{\edges\n}(\rho\n)|^{\frac 1\eta}}_{\xbfL^{1}(\Omega)}.
 \end{aligned}
\]
Since $1+\eta>1$, H\"older's inequality yields 
\[
\norm{|\gradi_{\edges\n}(\rho\n)|^{\frac 1\eta}}_{\xbfL^{1}(\Omega)}\leq C(\Omega,\eta)\norm{\gradi_{\edges\n}(\rho\n)}_{\xbfL^{\frac{1+\eta}{\eta}}(\Omega)}^{\frac 1\eta}
\lesssim h\n^{-\frac{1}{1+\eta}(\xi_2 + \frac{5}{4\Gamma}(\frac{3}{1+\eta} + \xi_3))}.
\]
Therefore
\[
|R_{2,2}^n| 
\lesssim h\n^{\frac{\eta}{1+\eta}(\xi_2 -  \frac{5}{4\eta\Gamma}(\frac{3}{1+\eta} + \xi_3))} 
\longrightarrow 0 \quad \text{as} ~ n \rightarrow +\infty.
\]
For $R_3^n$ we may write:
\[
|R_3^n|
\lesssim h_n^{\xi_1} \,  \, \norm{\phi}_{\xL^\infty(\Omega)} \, \sum_{K \in \mesh\n}|K||\rho_K - \rho^\star| 
\lesssim 2\, |\Omega|\, \rho^\star  \, \norm{\phi}_{\xL^\infty(\Omega)} \, h_n^{\xi_1} 
\]
so that $R_{3}^n\to0$ as $n \to +\infty$.
Coming back to \eqref{eq:d-renorm-weak}, it remains to pass to the limit $n\to +\infty$ in the two terms
\[
\begin{aligned}
 \sum_{K\in \mesh\n} \sum_{\edge \in \edges(K)} |\edge|T_M(\rho_\edge)( \bfu_\edge \cdot \bfn_{K,\edge}) \phi_K 
\  \text{and} \  
\sum_{K\in \mesh\n} \big([T_M]'_+(\rho_K)\rho_K - T_M(\rho_K) \big) \, \phi_K \Big ( \sum_{\edge \in \edges(K)} |\edge| \, \bfu_\edge \cdot \bfn_{K,\edge} \Big ).
\end{aligned}
\]
On the one hand, we have by a discrete integration by parts
\begin{align*}
\sum_{K\in \mesh\n} \sum_{\edge \in \edges(K)} |\edge|T_M(\rho_\edge)( \bfu_\edge \cdot \bfn_{K,\edge}) \phi_K
& =  -\int_\Omega (\xP_{\edges\n}T_M(\rho\n))\,(\xPi_{\edges\n}\bfu\n)\cdot\gradi_{\edges\n}\phi\n \,\dx. 
\end{align*}
Then, using the same arguments as those to pass to the limit in the discrete weak formulation of the mass equation (see the proof of Proposition \ref{prop:convergence_rho} and replace $\rho\n$ by $T_M(\rho\n)$ which converges to $\overline{T_M(\rho)}$ in $\xL^\infty(\Omega)$ weak-* topology), we deduce that
\begin{align*}
\lim_{n\rightarrow +\infty} \, \sum_{K\in \mesh\n} \sum_{\edge \in \edges(K)} |\edge|T_M(\rho_\edge)( \bfu_\edge \cdot \bfn_{K,\edge}) \phi_K
= - \int_\Omega \overline{T_M(\rho)} \ \bfu \cdot \gradi \phi \,\dx.
\end{align*}
This is possible because $(T_M(\rho\n))_{n\in\xN}$ is bounded in $\xL^\infty(\Omega)$ (while $(\rho\n)_{n\in\xN}$ is bounded in $\xL^{3(\gamma-1)}$ with $3(\gamma-1) \in (\frac 32,6]$ since $\gamma\in(\frac 32,3]$) and a ``weak BV estimate'' is available for $T_M(\rho\n)$ thanks to the following inequality (recall that $|T_M(r_1)-T_M(r_2)| \leq |r_1-r_2|$ for all $r_1,r_2\geq 0$):
\[
 \sum_{\substack{\edge\in\edgesint \\ \edge=K|L}} |\edge|\, (T_M(\rho_L)-T_M(\rho_K))^2 \,|\bfu_\edge\cdot\bfn_{K,\edge}| \leq \sum_{\substack{\edge\in\edgesint \\ \edge=K|L}} |\edge|\, (\rho_L-\rho_K)^2 \,|\bfu_\edge\cdot\bfn_{K,\edge}|.
\]

\medskip
On the other hand, we have:
%\[
%\begin{aligned}
\begin{multline*}
  \sum_{K\in \mesh\n} \big([T_M]'_+(\rho_K)\rho_K - T_M(\rho_K) \big) \, \phi_K \Big ( \sum_{\edge \in \edges(K)} |\edge| \, \bfu_\edge \cdot \bfn_{K,\edge} \Big ) \\
 = \int_\Omega \big([T_M]'_+(\rho\n)\rho\n - T_M(\rho\n) \big)\, \dive_{\mesh\n} \bfu\n \, \phi \,\dx.
\end{multline*}
% & = \int_\Omega (1-\alpha)\,(\rho\n+l)^\alpha\, \dive_{\mesh\n} \bfu\n \, \phi \,\dx +   \int_\Omega \alpha l \,(\rho\n+l)^{\alpha-1}\, \dive_{\mesh\n} \bfu\n \, \phi \,\dx
%\end{aligned}
%\]
% so that 
% \[
% \begin{aligned}
% \lim_{n\rightarrow +\infty}  - \sum_{K\in \mesh\n} \big([T_M]'_+(\rho_K)\rho_K - T_M(\rho_K) \big) \, \phi_K \Big ( \sum_{\edge \in \edges(K)} |\edge| \, \bfu_\edge \cdot \bfn_{K,\edge} \Big ) \\
% & = \int_\Omega (1-\alpha)\,\overline{(\rho+l)^\alpha\, \dive_\mesh(\bfu)} \ \phi \,\dx +   \int_\Omega \alpha l \,\overline{(\rho\n+l)^{\alpha-1}\, \dive_\mesh(\bfu)} \ \phi \,\dx
% \end{aligned}
% \]
Hence, passing to the limit $n \to +\infty$  in \eqref{eq:d-renorm-weak} we obtain:
\begin{equation}
\label{eq:d-limit-renorm}
%\label{eq:c-limit-renorm}
\dive\big(\overline{T_M(\rho)}\bfu\big) 
+ \overline{\big[\rho [T_M]'_+(\rho) - T_M(\rho)\big]\dive_\mesh \, \bfu} \geq 0
\quad \text{in}~ \mathcal{D}'(\xR^3)
\end{equation}
which corresponds to a relaxed version of Equation \eqref{eq:c-limit-renorm} from Section \ref{sec:continuous}.
For $k\in\xN^*$ and $\delta > 0$, we introduce the regularized function $L_{k,\delta}$ defined as
%\begin{equation}\label{eq:c-reg-Lk:disc}
$L_{k,\delta}(t) = L_k(t+\delta)$,
%\end{equation}
the derivative of which is bounded close to $0$ unlike $L_k$.
Applying Lemma \ref{lem:diperna} (and the second part of Remark \ref{rmk:renorm}) to the pair $(\overline{T_M(\rho)},\bfu)$ (justified since $\overline{T_M(\rho)} \in L^\infty(\Omega)$ for $M$ fixed) with the function
$L_{k,\delta}$ and the source term $g = -\overline{\big[\rho [T_M]'_+(\rho) - T_M(\rho)\big]\dive_\mesh \, \bfu} \in \xL^1_{\rm loc}(\mathbb{R}^3)$, we get:
\begin{equation}
\label{eq:c-limit-renorm-Mk:disc}
\dive\big(L_{k,\delta}\big(\overline{T_M(\rho)}\big)\bfu\big) + T_{k,\delta}\big(\overline{T_M(\rho)}\big) \dive\, \bfu  
\geq  - L_{k,\delta}'\big(\overline{T_M(\rho)}\big) \overline{\big[\rho [T_M]'_+(\rho) - T_M(\rho)\big]\dive_\mesh \, \bfu}
\quad \text{in}~ \mathcal{D}'(\xR^3)
\end{equation}
where $T_{k,\delta}(t) = t L_{k,\delta}'(t)  - L_{k,\delta}(t)$.
Now, exactly as in the continuous case, we pass to the limits $M \rightarrow +\infty$ and then $\delta \rightarrow 0^+$ (see the proof of Prop. \ref{prop:c-renorm-Lk}) to get inequality \eqref{eq:c-Lk:disc}.

\end{proof}

\medskip
\paragraph{Strong convergence of the density}
\begin{prop}{\label{prop:c-cvg-rho:disc}}
Under the assumptions of Theorem \ref{main_thrm},
let $(\rho,\bfu)\in \xL^{3(\gamma-1)}(\Omega)\times\xbfH^1_0(\Omega)$ be the limit couple of the sequence $(\rho\n,\bfu\n)_{n\in\xN}$.
Up to extraction, the sequence $(\rho\n)_{n\in\xN}$ strongly converges towards $\rho$ in $\xL^q(\Omega)$ for all $q\in[1,3(\gamma-1))$. 
\end{prop}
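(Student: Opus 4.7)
The plan is to mirror closely the continuous proof of Proposition \ref{prop:c-cvg-rho}, substituting at each stage the discrete counterparts already established in the preceding lemmas. The starting point will be the discrete analog of inequality \eqref{eq:c-appl-eff-flux}, produced in the course of proving Lemma \ref{lem:osc:disc} via Proposition \ref{prop:eff_flux1} applied with a smooth cutoff equal to one on $\Omega$:
\[
\limsup_{n\to+\infty}\int_\Omega |T_k(\rho\n)-T_k(\rho)|^{\gamma+1}\dx \leq \frac{2\mu+\lambda}{a}\,\limsup_{n\to+\infty}\int_\Omega \big(T_k(\rho\n)-\overline{T_k(\rho)}\big)\,\dive_{\mesh\n}\bfu\n\dx.
\]
I will decompose $T_k(\rho\n)-\overline{T_k(\rho)} = (T_k(\rho\n)-T_k(\rho))+(T_k(\rho)-\overline{T_k(\rho)})$. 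The contribution of the second, deterministic summand passes to the limit by the weak $\xL^2$ convergence of $\dive_{\mesh\n}\bfu\n$ towards $\dive\bfu$ (item~3 of Theorem \ref{thrm:stokes-lim-reg}) tested against the fixed $\xL^\infty$ function $T_k(\rho)-\overline{T_k(\rho)}$, giving $\int_\Omega (T_k(\rho)-\overline{T_k(\rho)})\dive\bfu\dx$.

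The only nontrivial adaptation compared with the continuous setting — and what I expect to be the main subtlety of the proof — concerns the first summand. In the continuous case the two renormalized equations integrate exactly to $\int_\Omega T_k(\rho\n)\dive\bfu\n\dx = \int_\Omega T_k(\rho)\dive\bfu\dx=0$, forcing $\int_\Omega (T_k(\rho\n)-T_k(\rho))\dive\bfu\n\dx \to 0$. At the discrete level, Proposition \ref{prop:c-renorm-Lk:disc} furnishes only one-sided inequalities: integrating \eqref{eq:c-Lk-n:disc} over $\xR^3$ (the discrete divergence term vanishes thanks to the homogeneous Dirichlet condition $\bfu\n=0$ on $\dv\Omega$) and using $\int_\Omega R\n\dx \geq 0$ yields $\int_\Omega T_k(\rho\n)\dive_{\mesh\n}\bfu\n\dx \leq 0$, while testing the distributional inequality \eqref{eq:c-Lk:disc} against a nonnegative smooth cutoff identically one on $\Omega$ produces $\int_\Omega T_k(\rho)\dive\bfu\dx \geq 0$. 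The key observation is that these two bounds point in the correct direction; combining them with the weak $\xL^2$ convergence of $\dive_{\mesh\n}\bfu\n$ against the fixed $\xL^\infty$ function $T_k(\rho)$, I deduce
\[
\limsup_{n\to+\infty}\int_\Omega \big(T_k(\rho\n)-T_k(\rho)\big)\dive_{\mesh\n}\bfu\n\dx \leq -\int_\Omega T_k(\rho)\dive\bfu\dx \leq 0,
\]
so that adding the two pieces recovers exactly the same bound as in the continuous case:
\[
\limsup_{n\to+\infty}\int_\Omega |T_k(\rho\n)-T_k(\rho)|^{\gamma+1}\dx \leq \frac{2\mu+\lambda}{a}\int_\Omega \big(T_k(\rho)-\overline{T_k(\rho)}\big)\dive\bfu\dx.
\]

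It remains to send $k\to+\infty$. I will bound the right-hand side by Cauchy–Schwarz and interpolation between $\xL^1$ and $\xL^{\gamma+1}$: Lemma \ref{lem:cvg-Tk:disc} gives $\norm{T_k(\rho)-\overline{T_k(\rho)}}_{\xL^1(\Omega)} \leq C\,k^{\frac{1}{3(\gamma-1)}-1}$, and Lemma \ref{lem:osc:disc} together with weak-$\xL^{\gamma+1}$ lower semi-continuity keeps $\norm{T_k(\rho)-\overline{T_k(\rho)}}_{\xL^{\gamma+1}(\Omega)}$ uniformly bounded in $k$. Since $\gamma>\tfrac32$ the resulting exponent $\frac{\gamma-1}{2\gamma}\big(\frac{1}{3(\gamma-1)}-1\big)$ is strictly negative, so the right-hand side tends to $0$ as $k\to+\infty$; hence $\lim_{k\to+\infty}\limsup_n \norm{T_k(\rho\n)-T_k(\rho)}_{\xL^1(\Omega)}=0$. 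The triangle inequality
\[
\norm{\rho-\rho\n}_{\xL^1(\Omega)} \leq \norm{\rho\n-T_k(\rho\n)}_{\xL^1(\Omega)}+\norm{T_k(\rho\n)-T_k(\rho)}_{\xL^1(\Omega)}+\norm{T_k(\rho)-\rho}_{\xL^1(\Omega)},
\]
handled by first sending $n\to+\infty$ (using Lemma \ref{lem:cvg-Tk:disc} for the first and third terms) and then $k\to+\infty$, yields strong convergence $\rho\n\to\rho$ in $\xL^1(\Omega)$. Interpolation with the uniform $\xL^{3(\gamma-1)}(\Omega)$ estimate of \eqref{unif:est} then upgrades this to strong convergence in $\xL^q(\Omega)$ for every $q\in[1,3(\gamma-1))$, concluding the proof.
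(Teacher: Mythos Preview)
Your proposal is correct and follows essentially the same approach as the paper. The only cosmetic difference is in the bookkeeping: you split $T_k(\rho\n)-\overline{T_k(\rho)}$ as $(T_k(\rho\n)-T_k(\rho))+(T_k(\rho)-\overline{T_k(\rho)})$ and handle each piece separately, whereas the paper first rewrites the limsup as $\int_\Omega (T_k(\rho)-\overline{T_k(\rho)})\dive\,\bfu\dx + \limsup_n\big(\int_\Omega T_k(\rho\n)\dive_{\mesh\n}\bfu\n\dx - \int_\Omega T_k(\rho)\dive\,\bfu\dx\big)$ and then invokes the combined one-sided inequality \eqref{strong-rho-1}; the ingredients and the final estimate are identical.
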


\begin{proof}
% The renormalized continuity equation applied to the couple $(\rho\n, \bfu\n)$ reads
% \[
% \dive\big(L_k(\rho\n) \bfu\n\big) + T_k(\rho\n) \dive\, \bfu\n = 0 \quad \text{in}~ \mathcal{D}'(\mathbb{R}^3).
% \]
% So by integration
% \begin{equation}
% \int_{\Omega}T_k(\rho\n) \dive\, \bfu\n \dx = 0.
% \end{equation}
% On the other hand, thanks to the previous Proposition \ref{prop:c-renorm-Lk} we have
% \begin{align*}
% & \dive\big(L_k(\rho) \bfu\big) + T_k(\rho) \dive\, \bfu = 0 \quad \text{in}~ \mathcal{D}'(\mathbb{R}^3)
% \end{align*}
% and thus
% \begin{equation}
% \int_{\Omega}T_k(\rho) \dive\, \bfu \dx = 0.
% \end{equation}
Integrating inequalities \eqref{eq:c-Lk-n:disc} and \eqref{eq:c-Lk:disc} and summing, one obtains:
\begin{equation}
\label{strong-rho-1}
\int_{\Omega}T_k(\rho\n) \dive_{\mesh\n} \bfu\n \dx - \int_{\Omega}T_k(\rho) \dive\, \bfu \dx \leq 0, \qquad \forall n\in\xN. 
\end{equation}
Since,
$
|T_k(r_1) - T_k(r_2)|^{\gamma+1} \leq (r_1^\gamma - r_2^\gamma)(T_k(r_1) - T_k(r_2)),
$ for all $r_1$, $r_2\geq0$, 
we have
\begin{align*}
\limsup_{n\to+\infty} \int_{\Omega} |T_k(\rho\n) - T_k(\rho)|^{\gamma+1}\dx 
& \leq \limsup_{n\to+\infty} \int_{\Omega} (\rho\n^\gamma - \rho^\gamma)(T_k(\rho\n) - T_k(\rho))\dx \\
& \leq \int_{\Omega}\big(\overline{\rho^\gamma T_k(\rho)} - \overline{\rho^\gamma}\ \overline{T_k(\rho)} \big)\dx
+ \int_{\Omega}\big(\overline{\rho^\gamma} - \rho^\gamma\big) \big(\overline{T_k(\rho)} - T_k(\rho) \big)\dx.
\end{align*}
Invoking the convexity of the functions $t \mapsto t^\gamma$ and $t \mapsto - T_k(t)$, we have $\overline{\rho^\gamma} \geq \rho^\gamma$ and $\overline{T_k(\rho)} \leq T_k(\rho)$
so that
\begin{align*}
\limsup_{n\to+\infty} \int_{\Omega} |T_k(\rho\n) - T_k(\rho)|^{\gamma+1} \dx
& \leq \int_{\Omega}\big(\overline{\rho^\gamma T_k(\rho)} - \overline{\rho^\gamma}\ \overline{T_k(\rho)} \big)\dx.
\end{align*}
We can now use the weak compactness property satisfied by the effective viscous flux (Prop. \ref{prop:eff_flux1}):
\begin{align*}
&\limsup_{n\to+\infty} \int_{\Omega} |T_k(\rho\n) - T_k(\rho)|^{\gamma+1}\dx \\ 
& \leq \dfrac{2\mu+\lambda}{a} \limsup_{n\to+\infty} \int_{\Omega}\big(T_k(\rho\n) - \overline{T_k(\rho)} \big)\dive_{\mesh\n}\bfu\n \dx \\
&  = \dfrac{2\mu+\lambda}{a}  \int_{\Omega}(T_k(\rho)-\overline{T_k(\rho)}) \, \dive \, \bfu \dx
+ \limsup_{n\to+\infty} \Big ( \int_{\Omega}T_k(\rho\n) \dive_{\mesh\n} \bfu\n \dx - \int_{\Omega}T_k(\rho) \dive\, \bfu \dx \Big ) \\
&  \leq \dfrac{2\mu+\lambda}{a}  \int_{\Omega}(T_k(\rho)-\overline{T_k(\rho)}) \, \dive \, \bfu \dx,
\end{align*}
thanks to \eqref{strong-rho-1}. 
The end of the proof is the same as that of Proposition \ref{prop:c-cvg-rho}: thanks to the previous inequality we show that 
 \begin{equation*}
 \lim_{k \rightarrow +\infty} \limsup_{n\to+\infty} \int_{\Omega} |T_k(\rho\n) - T_k(\rho)|^{\gamma+1}\dx = 0,
 \end{equation*}
 and thus
 \begin{equation*}
 \lim_{k \rightarrow +\infty} \limsup_{n\to+\infty}  \norm{T_k(\rho\n) - T_k(\rho)}_{\xL^1(\Omega)} = 0.
 \end{equation*}
 We conclude to the strong convergence of the density by passing to the limits $n \rightarrow +\infty$, $k \rightarrow +\infty$ in the following inequality
 \[
 \norm{\rho - \rho\n}_{\xL^1(\Omega)} 
 \leq \norm{\rho\n - T_k(\rho\n)}_{\xL^1(\Omega)} + \norm{T_k(\rho\n) - T_k(\rho)}_{\xL^1(\Omega)} + \norm{T_k(\rho) - \rho}_{\xL^1(\Omega)}.
 \]

\end{proof}

%\medskip
%\paragraph{General renormalized continuity equation.}
%\begin{prop}\label{prop:c-renorm:disc}
%	Let $\Omega$ be a Lipschitz bounded domain of $\mathbb{R}^3$, assume that $\gamma > \frac{3}{2}$. 
%	For any function $b \in \mathcal{C}^0([0,+\infty))\cap \mathcal{C}^1((0,+\infty))$ satisfying \eqref{eq:cond-renorm}, it holds
%	\begin{equation}\label{eq:c-renorm-0:disc}
%	\dive(b(\rho)\bfu) + ( b'(\rho)\rho - b(\rho))\dive\, \bfu = 0 \quad \text{in}~ \mathcal{D}'(\mathbb{R}^3).
%	\end{equation} 
%\end{prop}
%
%{\color{red}
%Alors \c{c}a je n'arrive pas encore \`a le montrer \`a cause de termes de restes $R_1^n\leq 0$ et $R_{2,1}^n \leq 0$ dont on contr\^ole juste le signe gr\^ace \`a la concavit\'e de $T_M$ (cf la preuve de la Prop. 5.13) et qui donnent \`a la limite $n\to\infty$ l'in\'egalit\'e:
%\[
% \dive\big(\overline{T_M(\rho)}\bfu\big) 
%+ \overline{\big[\rho [T_M]'_+(\rho) - T_M(\rho)\big]\dive_\mesh \, \bfu} \geq 0
%\]
%Ce qui \`a la fin donnerait
%\[
% \dive(b_k(\rho) \bfu) + ( [b_k]'_+(\rho)\rho - b_k(\rho))\dive\, \bfu \geq 0
%\]
%alors qu'on veut l'\'egalit\'e \`a z\'ero. D'ailleurs j'avais pas vu mais on avait le m\^eme probl\`eme quand $\gamma>\frac 53$...
%
%\medskip
%En \'ecrivant ces lignes je me demande si en refaisant le travail avec $-T_M$ au lieu de $T_M$ on obtient pas aussi l'in\'egalit\'e dans l'autre sens. Je vais y r\'efl\'echir ! 
%}
%%%%%%%%%%%%%%%%%%%%%%%%%%%%%%%%%%%%
%%%%%%%%%%%%%%%%%%%%%%%%%%%%%%%%%%%%
\appendix
%
%\section{Proof of some technical lemmas}

\section{Discrete renormalized equation, proof of Proposition \ref{prop:renorm}}
\label{sec:prop:renorm}

Multiplying by $b'(\rho_K)\mcal{X}_K$ the discrete mass conservation equation \eqref{eq:sch_mass} (together with the definition~\eqref{eq:def_FKedge}), one gets
\begin{multline*}
b'(\rho_K) \dfrac{1}{|K|} \sum_{\edge \in \edges(K)}{|\edge|\rho_\edge \bfu_\edge \cdot \bfn_{K,\edge}} +  h_\mesh^{\xi_1} b'(\rho_K)(\rho_K - \rho^\star) \\ 
+ h_\mesh^{\xi_2} b'(\rho_K) \dfrac{1}{|K|}  \sum_{\substack{\edge\in\edgesint \\ \edge=K|L}}|\edge| \,\Big(\dfrac{|\edge|}{|D_\edge|}\Big)^\frac{1}{\eta}\,|\rho_K-\rho_L|^{\frac{1}{\eta}-1}\,(\rho_K-\rho_L)
= 0. 
\end{multline*}
and then
\begin{align*}
&\dfrac{1}{|K|}\sum_{\edge \in \edges(K)}{|\edge|b(\rho_\sigma) \bfu_\edge \cdot \bfn_{K,\edge}}  
+ \dfrac{1}{|K|}\sum_{\edge \in \edges(K)}{|\edge|\big(b'(\rho_K)\rho_K - b(\rho_K)\big)  \bfu_\edge \cdot \bfn_{K,\edge}} \\
& + \dfrac{1}{|K|}\sum_{\edge \in \edges(K)}{|\edge|r_{K,\edge} \bfu_\edge \cdot \bfn_{K,\edge}} 
+ h_\mesh^{\xi_2} b'(\rho_K)\dfrac{1}{|K|} \sum_{\substack{\edge\in\edgesint \\ \edge=K|L}}|\edge| \,\Big(\dfrac{|\edge|}{|D_\edge|}\Big)^\frac{1}{\eta}\,|\rho_K-\rho_L|^{\frac{1}{\eta}-1}\,(\rho_K-\rho_L)
\\
& + h_\mesh^{\xi_1} b'(\rho_K)(\rho_K - \rho^\star)  = 0
\end{align*}
with
\[
r_{K,\edge} = b'(\rho_K)(\rho_\edge-\rho_K) + b(\rho_K) - b(\rho_\edge),
\]	
which corresponds to Equation \eqref{eq:discr-renorm-loc}.
Multiplying by $|K|$, summing over $K$, rearranging the sums, and using the discrete homogeneous Dirichlet boundary condition on the velocity, we get \eqref{eq:discr-renorm-glob}.

\medskip
Let us assume from now on that $b$ is convex, 
First, we have $r_{K,\edge}=0$ when $\bfu_\edge\cdot\bfn_{K,\edge} \geq 0$ (since then $\rho_\edge = \rho_K$) and, when $\bfu_\edge\cdot\bfn_{K,\edge} \leq0$, we have $r_{K,\edge} \leq 0 $ since $b$ is convex.
Hence $R^1_\edges \geq 0$.\\
Since 
%the function $r \mapsto r^\alpha$ is increasing for $\alpha > 0$ and 
$b$ is convex, we also deduce that
\[
R^2_\edges
=  h_\mesh^{\xi_2} \sum_{\substack{\edge\in\edgesint \\ \edge=K|L}} |\edge| \,\Big(\dfrac{|\edge|}{|D_\edge|}\Big)^\frac{1}{\eta}\,|\rho_K-\rho_L|^{\frac{1}{\eta}-1}\,(\rho_K-\rho_L)(b'(\rho_K)-b'(\rho_L))
\geq 0.
\]
Finally for the last remainder term $R^3_\mesh$, we combine the convexity of $b$ with a Taylor expansion and then use Jensen's inequality (recalling that $\sum_{K\in\mesh}|K|\,\rho_K = |\Omega|\rho^\star$) to get:
\[%\begin{align*}
R^3_\mesh 
 \geq h_\mesh^{\xi_1} \sum_{K\in\mesh}|K|\,(b(\rho_K)-b(\rho^\star)) 
 = h_\mesh^{\xi_1}\, |\Omega| \left( \frac{1}{|\Omega|} \int_\Omega b(\rho)\dx - b\Big( \frac{1}{|\Omega|} \int_\Omega \rho \dx \Big ) \right) 
 \geq 0.
\]

%\end{proof}

\section{Control of the convective term, proof of Lemma \ref{lem:d-weakmom2}}
\label{sec:proof:lmm:estimates_Bbar}

%\medskip
%\begin{lem} \label{app:lmm:convconv}
%Let $\disc=(\mesh,\edges)$ be a staggered discretization of $\Omega$ in the sense of Definition \ref{def:disc}, such that $\theta_\mesh \leq \theta_0$.
%Define
%\[
% R_{\rm conv}(\rho,\bfu,\bfv)= \int_\Omega \divv_\edges(\rho \bfu\otimes \bfu)\cdot \xPi_\edges \bfv\dx+\int_\Omega (\xP_\edges\rho)(\xPi_\edges\bfu)\otimes(\xPi_\edges\bfu): \gradi_\edges \bfv \dx.
%\]
%with the operators $\gradi_\edges$, $\xP_\edges$ and $\Pi_\edges$ defined in \eqref{eq:def_grad_mesh_u}, \eqref{def:pepsrho} and \eqref{def:piepsu}.
%Assume $h_\mesh\leq 1$. Then, there exists $C=C(\Omega,\gamma,\Gamma,\theta_0)$ such that for all $(\rho,\bfu,\bfv)\in\xL_\mesh(\Omega)\times\xbfH_\mesh(\Omega)\times\xbfH_\mesh(\Omega)$:
%\begin{align}\label{app:eq:convconv:3D}
%\big | R_{\rm conv}(\rho,\bfu,\bfv) \big | 
%& \leq C \, h_\mesh^{\frac 12 -\frac{1}{\Gamma}\big(\frac{3}{1+\eta} + \xi_3\big)}
%\norm{h^{\xi_3} \rho^\Gamma}_{\xL^{1+\eta}(\Omega)}^{\frac{1}{\Gamma}}  \norm{\bfu}_{1,2,\mesh}^2\,  \norm{\bfv}_{1,2,\mesh} \\[2ex]
%&+ C \,  h_\mesh^{\xi_2 -\frac 1\eta- \frac{1}{\eta \Gamma}\big(\frac{3}{1+\eta} + \xi_3\big)} \norm{h^{\xi_3}\rho^\Gamma}_{\xL^{1+\eta}(\Omega)}^{\frac{1}{\eta \Gamma}}  \norm{\bfu}_{1,2,\mesh}\,  \norm{\bfv}_{1,2,\mesh}. \nonumber
%\end{align}
%\end{lem}
%
%
%\medskip
%
%\begin{proof}

By definition, recalling that $\bfa\otimes\bfb:\bfc\otimes\bfd= (\bfa\cdot\bfc)\,(\bfb\cdot\bfd)$ for $\bfa,\bfb,\bfc,\bfd\in\xR^3$, we have:
\[
 \int_\Omega (\xP_\edges\rho)(\xPi_\edges\bfu)\otimes(\xPi_\edges\bfu): \gradi_\edges \bfv \dx = \sum_{\substack{\edge\in\edgesint \\ \edge=K|L}} |D_\edge| \, \rho_\edge\, (\bfu_\edge \cdot \bfn_{K,\edge}) \, \bfu_\edge \cdot \Big ( \frac{|\edge|}{|D_\edge|}(\bfv_L-\bfv_K) \Big ).
\]
Reordering the sum and using the definition of the primal fluxes \eqref{eq:def_FKedge} we get:
\[
\begin{aligned}
 - \int_\Omega (\xP_\edges\rho)(\xPi_\edges\bfu)\otimes(\xPi_\edges\bfu): \gradi_\edges \bfv \dx
&=\sum_{K\in \mesh} \bfv_K \cdot \sum_{\edge \in \edges(K)}|\edge| \, \rho_\edge\, (\bfu_\edge \cdot \bfn_{K,\edge}) \, \bfu_\edge \\[2ex]
& =\sum_{K\in \mesh} \bfv_K \cdot \sum_{\edge \in \edges(K)} \overline{F}_{K,\edge}(\rho,\bfu)\, \bfu_\edge + R_1
\end{aligned}
\]
where 
\[
 R_1 = -  h_\mesh^{\xi_2}\,\sum_{K\in \mesh} \bfv_K \cdot \sum_{\substack{\edge \in \edges(K)\\ \edge=K|L}} |\edge| \,\Big(\dfrac{|\edge|}{|D_\edge|}\Big)^\frac{1}{\eta}\,|\rho_K-\rho_L|^{\frac{1}{\eta}-1}\,(\rho_K-\rho_L) \,\bfu_\edge .
\]
By assumption $(H2)$ (conservativity of the dual fluxes) we may write :
\[
\begin{aligned}
 - \int_\Omega (\xP_\edges\rho)(\xPi_\edges\bfu)&\otimes(\xPi_\edges\bfu): \gradi_\edges \bfv \dx     \\
& =\sum_{K\in \mesh} \bfv_K \cdot \sum_{\edge \in \edges(K)} \overline{F}_{K,\edge}(\rho,\bfu)\, \bfu_\edge + R_1    \\
& =\sum_{K\in \mesh} \bfv_K \cdot \sum_{\edge \in \edges(K)}  \Big (  \overline{F}_{K,\edge}(\rho,\bfu)\ \bfu_\edge  
+ \sum_{\edged\in\edgesd(D_\edge), \, \edged\subset K}
\fluxd(\rho,\bfu)\,\bfu_{\edged} \Big ) + R_1.
\end{aligned}
\]
Writing $\bfv_K=\bfv_\edge+\bfv_K-\bfv_\edge$ we get:
\begin{multline}
\label{errQmom1}
- \int_\Omega (\xP_\edges\rho)(\xPi_\edges\bfu)\otimes(\xPi_\edges\bfu): \gradi_\edges \bfv \dx 
\\
=\sum_{K\in \mesh}  \sum_{\edge \in \edges(K)} \bfv_\edge \cdot \Big (  \overline{F}_{K,\edge}(\rho,\bfu)\ \bfu_\edge  
+ \sum_{\edged\in\edgesd(D_\edge), \, \edged\subset K}
\fluxd(\rho,\bfu)\,\bfu_{\edged} \Big )+ R_1 + R_2,
\end{multline}
with 
\[
 R_2= \sum_{K\in \mesh}  \sum_{\edge \in \edges(K)} (\bfv_K-\bfv_\edge) \cdot \Big (  \overline{F}_{K,\edge}(\rho,\bfu)\ \bfu_\edge  
+ \sum_{\edged\in\edgesd(D_\edge), \, \edged\subset K}
\fluxd(\rho,\bfu)\,\bfu_{\edged} \Big ).
\]
By conservativity of the primal fluxes (\ie\ using $\overline{F}_{K,\edge}(\rho,\bfu)=-\overline{F}_{L,\edge}(\rho,\bfu)$ for $\edge=K|L$) we see that the first term in the right hand side of \eqref{errQmom1} is equal to $\int_\Omega \divv_\edges(\rho\bfu\otimes\bfu) \cdot \Pi_\edges \bfv \dx$. Hence:
\[
 \Big |\int_\Omega \divv_\edges(\rho\bfu\otimes\bfu) \cdot \xPi_\edges\bfv \dx  + \int_\Omega (\xP_\edges\rho)(\xPi_\edges\bfu)\otimes(\xPi_\edges\bfu): \gradi_\edges \bfv \dx  \Big | \leq |R_1|+ |R_2|.
\]
Proving  Lemma \ref{lem:d-weakmom2} amounts to bounding $|R_1|$ and $|R_2|$. \\
We begin with $|R_1|$. 
Reordering the sum in $R_{1}$ we, get for $C=(\Omega,\gamma,\Gamma,\theta_0)$:
\begin{align*}
|R_1| 
& = h_\mesh^{\xi_2}\, \Big|\sum_{\substack{\edge \in \edgesint \\ \edge=K|L}} |\edge| \,\Big(\dfrac{|\edge|}{|D_\edge|}\Big)^\frac{1}{\eta}\,|\rho_K-\rho_L|^{\frac{1}{\eta}-1}\,(\rho_K-\rho_L) \,\bfu_\edge\cdot(\bfv_K-\bfv_L) \Big|\\
& \leq C\, h_\mesh^{\xi_2-\frac 1\eta}\,\norm{\rho}_{\xL^\infty(\Omega)}^{\frac{1}{\eta}} \,
	\left(\sum_{\edge \in \edgesint}|D_\edge||\bfu_\edge|^6\right)^{\frac 16}
    \left(\sum_{\edge \in \edgesint}|D_\edge|\left(\frac{|\edge|}{|D_\edge|}|\bfv_K -\bfv_L|\right)^{\frac 65}\right)^{\frac 56} \\
& \leq C\, h_\mesh^{\xi_2-\frac 1\eta}\,\norm{\rho}_{\xL^\infty(\Omega)}^{\frac{1}{\eta}} \, \norm{\Pi_\edges \bfu}_{\xbfL^6(\Omega)}  \, \norm{\gradi_\edges\bfv}_{\xbfL^{\frac 65}(\Omega)^{3}} \\
\end{align*}
Therefore
\begin{align*}
|R_1| 
%& \leq C\, h_\mesh^{\xi_2-\frac 1\eta}\,\norm{\rho}_{\xL^\infty(\Omega)}^{\frac{1}{\eta}} 
%\,\norm{\bfu}_{1,2,\mesh} \, \norm{\gradi_\edges\bfv}_{\xbfL^{\frac 65}(\Omega)^{3}} \\
& \leq 
	C \,h_\mesh^{\xi_2-\frac 1\eta-\frac{3}{(1+\eta)\eta\Gamma}} \,\norm{\rho^\Gamma}_{\xL^{1+\eta}(\Omega)}^{\frac{1}{\eta\Gamma}}\,\norm{\bfu}_{1,2,\mesh} \, \norm{\gradi_\edges\bfv}_{\xbfL^{2}(\Omega)^{3}}
    \\
& \leq  C \,h_\mesh^{\xi_2-\frac 1\eta-\frac{1}{\eta\Gamma}\big(\frac{3}{1+\eta}+\xi_3\big)}
\,\norm{h_\mesh^{\xi_3}\rho^\Gamma}_{\xL^{1+\eta}(\Omega)}^{\frac{1}{\eta \Gamma}}\,\norm{\bfu}_{1,2,\mesh} \, \norm{\bfv}_{1,2,\mesh}.
\end{align*}
Let us now turn to $R_2$. Recalling that $\bfu_\edged=\bfu_\edge+\frac 12(\bfu_{\edge'}-\bfu_\edge)$ and using $(H1)$, we write $R_2=R_{2,1}+ R_{2,2}$ with:
\[ \begin{aligned}
R_{2,1}= & \frac 12 \,
\sum_{K \in \mesh} \sum_{\edge \in \edges(K)} (\bfv_K-\bfv_\edge) \cdot \Big (
\sum_{\begin{array}{c} \scriptstyle \edged\in\edgesd(D_\edge) ,\\[-0.5ex] \scriptstyle \edged\subset K,\ \edged=D_\edge|D_\edge' \end{array}}
\fluxd(\rho,\bfu)\,(\bfu_{\edge'}-\bfu_\edge)\Big ),
\\[2ex]
R_{2,2}= &
\sum_{K \in \mesh} \sum_{\edge \in \edges(K)} (\bfv_K-\bfv_\edge)\cdot \ \bfu_\edge\ \xi_K^\edge \Bigl(\sum_{\edge'\in\edges(K)} \overline{F}_{K,\edge'}(\rho,\bfu) \Bigr).
\end{aligned}\]
The assumption $(H3)$ yields, for $C=C(\Omega,\theta_0)$:%(we have $d=3$ and $\gamma>3$ so $\delta=0$):
\[
\begin{aligned}
|\fluxd(\rho,\bfu)| 
&\leq C \Big ( \norm{\rho}_{\xL^{\infty}(\Omega)} \norm{\xPi_\edges \bfu}_{\xbfL^\infty(\Omega)}\ h_K^{2}  +    \norm{\rho}_{\xL^{\infty}(\Omega)}^{\frac{1}{\eta}}\, h_\mesh^{\xi_2+1-\frac 1\eta}\, h_K \Big )\\
&\leq C \Big ( \norm{\rho}_{\xL^{\infty}(\Omega)} \norm{\bfu}_{\xbfL^\infty(\Omega)}\, h_\mesh \, h_K   +    \norm{\rho}_{\xL^{\infty}(\Omega)}^{\frac{1}{\eta}}\, h_\mesh^{\xi_2+1-\frac 1\eta}\, h_K \Big ).
\end{aligned}
\]
Since $\bfv_K$ is a convex combination of $(\bfv_\edge)_{\edge \in \edges(K)}$:
\begin{multline*} 
\Bigl|\sum_{\edge \in \edges(K)} (\bfv_K-\bfv_\edge) \cdot \Big (
\sum_{\begin{array}{c} \scriptstyle \edged\in\edgesd(D_\edge),\\
	\scriptstyle \edged\subset K,\ \edged=D_\edge|D_\edge' 
	\end{array}}
\fluxd(\rho,\bfu)\,(\bfu_{\edge'}-\bfu_\edge)  \Big ) \Bigr| 
\\ 
\leq C \Big ( \norm{\rho}_{\xL^{\infty}(\Omega)} \norm{\bfu}_{\xbfL^\infty(\Omega)}\ h_\mesh
+    \norm{\rho}_{\xL^{\infty}(\Omega)}^{\frac{1}{\eta}}\, h_\mesh^{\xi_2+1-\frac 1\eta} \Big ) \sum_{ \substack{\edge,\,\edge' \\ \edge'',\,\edge''' \in \edges(K)}}
h_K \ |\bfv_\edge-\bfv_{\edge'}|\ |\bfu_{\edge''}-\bfu_{\edge'''}|,
\end{multline*}
and, for $\edge,\ \edge' \in \edges(K)$, the quantity $|\bfu_\edge-\bfu_{\edge'}|$ (or $|\bfv_\edge-\bfv_{\edge'}|$) appears in the sum a finite number of times which depends on the number of faces of $K$. Hence, applying the Cauchy-Schwarz inequality and Lemma \ref{lmm:H1ns}, we may write
\begin{equation}\label{eq:R1}
\begin{aligned}
|R_{2,1}| & 
\leq C \Big ( \norm{\rho}_{\xL^{\infty}(\Omega)} \norm{\bfu}_{\xbfL^\infty(\Omega)}\ h_\mesh
+    \norm{\rho}_{\xL^{\infty}(\Omega)}^{\frac{1}{\eta}}\, h_\mesh^{\xi_2+1-\frac 1\eta} \Big )\, \norm{\bfu}_{1,2,\edges}\,   \norm{\bfv}_{1,2,\edges}
\\ & 
\leq C \Big ( \norm{\rho}_{\xL^{\infty}(\Omega)} \norm{\bfu}_{\xbfL^\infty(\Omega)}\ h_\mesh
+    \norm{\rho}_{\xL^{\infty}(\Omega)}^{\frac{1}{\eta}}\, h_\mesh^{\xi_2+1-\frac 1\eta} \Big )\, \norm{\bfu}_{1,2,\mesh}\,   \norm{\bfv}_{1,2,\mesh}
\end{aligned}
\end{equation}
We then get, for $C=C(\Omega,\gamma,\Gamma,\theta_0)$:
\[
\begin{aligned}
|R_{2,1}| 
&\leq C \, h_\mesh^{1-\frac 12 - \frac{1}{\Gamma}\big(\frac{3}{1+\eta}+\xi_3\big )}
	 \norm{h_\mesh^{\xi_3} \rho^\Gamma}_{\xL^{1+\eta}(\Omega)}^{\frac{1}{\Gamma}}  \norm{\bfu}_{1,2,\mesh}^2\,  \norm{\bfv}_{1,2,\mesh} \\[2ex]
& + C \,  h_\mesh^{\xi_2 +1-\frac1\eta-\frac{1}{\eta\Gamma}\big(\frac{3}{1+\eta}+\xi_3\big)} \norm{h_\mesh^{\xi_3}\rho^\Gamma}_{\xL^{1+\eta}(\Omega)}^{\frac{1}{\eta \Gamma}}  \norm{\bfu}_{1,2,\mesh}\,  \norm{\bfv}_{1,2,\mesh}.
\end{aligned}
\]
The estimation of $R_{2,2}$ follows similar steps. Indeed by definition of $\bfv_K$, we have  
\[
\sum_{\edge \in \edges(K)} \xi_K^\edge\ (\bfv_K-\bfv_\edge)=0,
\]
and we obtain that:
\[
R_{2,2}=
\sum_{K \in \mesh}\ \sum_{\edge \in \edges(K)} (\bfv_K-\bfv_\edge) \cdot \xi_K^\edge\ (\bfu_\edge-\bfu_K)\ \Bigl[\sum_{\edge'\in\edges(K)} \overline{F}_{K,\edge'}(\rho,\bfu) \Bigr],
\]
so, once again, denoting $\bfu_K=\sum_{\edge \in \edges(K)} \xi_K^\edge\ \bfu_\edge$:
\[
\begin{aligned}
|R_{2,2}| & \leq C \Big ( \norm{\rho}_{\xL^{\infty}(\Omega)} \norm{\bfu}_{\xbfL^\infty(\Omega)}\ h_\mesh
+    \norm{\rho}_{\xL^{\infty}(\Omega)}^{\frac{1}{\eta}}\, h_\mesh^{\xi_2+1-\frac 1\eta} \Big ) 
\sum_{K \in \mesh}\ h_K \sum_{\edge \in \edges(K)}  |\bfv_\edge-\bfv_K|\ |\bfu_\edge-\bfu_K| \\[2ex]
&\leq C \, h_\mesh^{1-\frac 12 - \frac{1}{\Gamma}\big(\frac{3}{1+\eta}+\xi_3\big )}
	 \norm{h_\mesh^{\xi_3} \rho^\Gamma}_{\xL^{1+\eta}(\Omega)}^{\frac{1}{\Gamma}}  \norm{\bfu}_{1,2,\mesh}^2\,  \norm{\bfv}_{1,2,\mesh} \\[2ex]
& + C \,  h_\mesh^{\xi_2 +1-\frac1\eta-\frac{1}{\eta\Gamma}\big(\frac{3}{1+\eta}+\xi_3\big)} \norm{h_\mesh^{\xi_3}\rho^\Gamma}_{\xL^{1+\eta}(\Omega)}^{\frac{1}{\eta \Gamma}}  \norm{\bfu}_{1,2,\mesh}\,  \norm{\bfv}_{1,2,\mesh}.
%+ C \,  h_\mesh^{\xi_2 +1-\frac 1\eta-\frac{3}{(1+\eta)\Gamma}-\frac{\xi_3\alpha}{\eta\Gamma}} \norm{h_\mesh^{\xi_3}\rho^\Gamma}_{\xL^{1+\eta}(\Omega)}^{\frac{\alpha}{\eta \Gamma}}  \norm{\bfu}_{1,2,\mesh}\,  \norm{\bfv}_{1,2,\mesh}.
\end{aligned}
\]

\subsection*{Acknowledgements}
The authors warmly thank Thierry Gallou\"et and Rapha\`ele Herbin for the fruitful discussions with them and their encouragements.
This work was partially supported by a CNRS PEPS JCJC grant and by the SingFlows project, grant ANR-18-CE40-0027 of the French National Research Agency (ANR).

\bibliographystyle{plain}
\bibliography{bibfile.bib}
\end{document}